\documentclass[reqno]{amsart}

\usepackage{bbm, amsfonts, amsthm, amssymb}
\usepackage{amsmath}
\usepackage{dsfont}
\usepackage{bm}
\usepackage[margin=3cm]{geometry}
\usepackage{xcolor}
\usepackage{graphicx,tabularx}
\usepackage{mathrsfs}
\usepackage{enumerate}
\usepackage{tikz-cd}
\usepackage[euler]{textgreek}

\usepackage{mathtools}
\graphicspath{{Figures/}}

\setcounter{tocdepth}{2}

\usepackage[textsize=tiny]{todonotes}
\setlength{\marginparwidth}{2.5cm}


\newcommand{\EE}{\mathbb{E}}

\newcommand{\NN}{\mathbb{N}}
\newcommand{\PP}{\mathbb{P}}
\newcommand{\QQ}{\mathbb{Q}}
\newcommand{\RR}{\mathbb{R}}

\newcommand{\D}{\mathrm{d}}
\newcommand{\ds}{\mathrm{d}s}
\newcommand{\dr}{\mathrm{d}r}
\newcommand{\dt}{\mathrm{d}t}
\newcommand{\du}{\mathrm{d}u}

\newcommand{\dx}{\mathrm{d}x}
\newcommand{\dW}{\mathrm{d}W}

\newcommand{\E}{\mathrm{e}}

\newcommand{\Bb}{\mathcal{B}}
\newcommand{\Cc}{\mathcal{C}}
\newcommand{\Dd}{\mathcal{D}}
\newcommand{\Ee}{\mathcal{E}}
\newcommand{\Ff}{\mathcal{F}}
\newcommand{\Hh}{\mathcal{H}}
\newcommand{\Ii}{\mathcal{I}}

\newcommand{\Vv}{\mathcal{V}}
\newcommand{\Ww}{\mathcal{W}}
\newcommand{\Xx}{\mathcal{X}}
\newcommand{\ep}{\varepsilon}

\newcommand{\vspan}{\textnormal{span}}

\newcommand{\half}{\frac{1}{2}}
\newcommand{\one}{\mathbbm{1}}

\newcommand{\abs}[1]{\left\lvert#1\right\rvert}
\newcommand{\norm}[1]{\left\lVert#1\right\rVert}

\newcommand{\notthis}[1]{}
\newtheorem{theorem}{Theorem}[section]
\newtheorem*{theorem1}{Theorem A.}
\newtheorem*{theorem2}{Theorem B.}
\newtheorem*{theorem3}{Theorem C.}
\newtheorem{corollary}[theorem]{Corollary}
\newtheorem{lemma}[theorem]{Lemma}
\newtheorem{proposition}[theorem]{Proposition}

\theoremstyle{definition}
\newtheorem{definition}[theorem]{Definition}
\newtheorem{remark}[theorem]{Remark}
\newtheorem{assumption}[theorem]{Assumption}
\theoremstyle{plain}
\newtheorem{example}[theorem]{Example}
\numberwithin{equation}{section}


\definecolor{ocean}{rgb}{0,0.1,0.6}

\definecolor{imperialGreen}{RGB}{2,137,59}
\definecolor{imperialBlue}{RGB}{0, 62, 116}
\definecolor{imperialBrick}{RGB}{165,25,0}
\definecolor{imperialProcess}{RGB}{0,133,202}

\usepackage{hyperref}

\title{Kolmogorov equations for stochastic Volterra processes with singular kernels}

\author{Ioannis Gasteratos}
\address{Technische Universit\"at Berlin, Institute of Mathematics}
\email{i.gasteratos@tu-berlin.de}

\author{Alexandre Pannier} 
\address{Université Paris Cité, Laboratoire de Probabilités Statistique et Modélisation}
\email{pannier@lpsm.paris}
 
 
\thanks{\emph{Email addresses}: {\tt i.gasteratos@tu-berlin.de, pannier@lpsm.paris}}
 \subjclass[2020]{60G22, 60H15, 60H20, 45D05, 35K10, 91G20}
\keywords{Stochastic Volterra equations, Volterra processes, stochastic partial differential equations, Kolmogorov equations, fractional Brownian motion, rough volatility, Markovian lift}

\begin{document}

\begin{abstract} 
We associate backward and forward Kolmogorov equations to a class of fully nonlinear Stochastic Volterra Equations (SVEs) with convolution kernels $K$ that are singular at the origin. Working on a carefully chosen Hilbert space $\Hh_1$, we rigorously establish a link between solutions of SVEs and Markovian mild solutions of a Stochastic Partial Differential Equation (SPDE) of transport-type. Then, we obtain two novel Itô formulae for functionals of mild solutions and, as a byproduct, show that their laws solve corresponding Fokker--Planck equations. Finally, we introduce a natural notion of ``singular" directional derivatives along $K$ and prove that (conditional) expectations of SVE solutions can be expressed in terms of the unique solution to a backward Kolmogorov equation on~$\Hh_1$. Our analysis relies on stochastic calculus in Hilbert spaces, the reproducing kernel property of the state space $\Hh_1,$ as well as crucial invariance and smoothing properties that are specific to the SPDEs of interest. In the special case of singular power-law kernels, our conditions guarantee well-posedness of the backward equation either for all values of the Hurst parameter $H,$ when the noise is additive, or for all $H>1/4$ when the noise is multiplicative.
\end{abstract}

\maketitle

\tableofcontents

\section{Introduction}\label{sec:Intro}

We leverage stochastic analysis on Hilbert spaces to
extend the deep and fruitful connection between It\^o diffusions and Partial Differential Equations to the context of stochastic Volterra processes. The latter are neither Markovian nor semimartingales and are given as solutions to Stochastic Volterra Equations (SVEs) of the form
\begin{equation}\label{eq:SVE}
    X_t = X_0(t) + \int_0^t K(t-s) b(X_s) \ds + \int_0^t K(t-s) \sigma(X_s) \, \D W_s,\quad t\ge0,
\end{equation}
where $W$ is a standard $m-$dimensional Wiener process with respect to a filtration~$(\Ff_t)_{t\ge0}$, $b:\RR^d\rightarrow\RR^d, \sigma:\RR^d\rightarrow\RR^{d\times m}$ are measurable maps, $K:\RR^+\rightarrow\RR^{d\times d}$ is a locally square integrable kernel and $X_0:\RR^+\rightarrow\RR^d$ is an $\Ff_0$-measurable initial curve.

Via the choice of~$K,b,\sigma$ and $X_0$, \eqref{eq:SVE} includes many well known examples such as Mandelbrot--van Ness (type I) and Riemann--Liouville (type II) fractional Brownian motions (fBms), rough volatility models~\cite{bayer2023rough},  SDEs driven by additive fBm ~\cite{hairer2005ergodicity}, volatility modulated Volterra processes \cite{barndorff2008time}, affine and polynomial Volterra processes~\cite{abi2019affine,cuchiero2020generalized,abi2024polynomial}, fractional diffusions \cite{henry2010introduction,lototsky2020classical} (we refer to Section~\ref{subsec:examples} for the specifications), while ambit fields describe an extension with spatial interactions~\cite{podolskij2015ambit,barndorff2018ambit}. Due to their ubiquity, Volterra processes are found in a wide array of applications including volatility modeling~\cite{bayer2023rough}, stochastic game theory~\cite{abi2023equilibrium},  electricity price modeling \cite{barndorff2013modelling, bennedsen2022rough}, the study of turbulent flow velocities \cite{barndorff2008stochastic, chevillard2017regularized}, physical systems subject to non-Markovian forces~\cite{henry2010introduction,chen2015fractional}, and climate science \cite{eichinger2020sample}. In turn, their importance in applications has sparked interest in more theoretical aspects such as well-posedness in Banach spaces and connections with superprocesses \cite{zhang2010stochastic, mytnik2015uniqueness}, (functional) limit theorems and homogenization \cite{bai2015convergence, darses2010limit, gehringer2022functional}, stochastic integration \cite{decreusefond2005stochastic}, extensions of rough paths theory \cite{harang2021volterra}, functional inequalities \cite{gasteratos2023transportation} and generalisations to anticipative coefficients \cite{pardoux1990stochastic, alos1997anticipating} to name only a few.

Applications to rough volatility, which prompted considerable research in recent years, including the present work, require the kernel to be singular at the origin i.e. $\lim_{t\downarrow0} \abs{K(t)}=+\infty$. In terms of the popular power-law kernel~$K(t)=t^{H-\half}$, which will be our guiding example in this paper, this corresponds to the ``rough" regime~$H\in(0,\half)$. After closing the door of Markovianity behind us, we are thus thrown out of the realm of semimartingales as well.

\subsection{Goals and obstacles}\label{subsec:Goals}
In this new territory, located beyond the scope of many established methods, we aim to provide a dynamical description of conditional expectations~$(\EE[\phi(X_T)\lvert\Ff_t])_{t\in[0,T]}$ which, in rough volatility modelling, can be thought of as option prices with payoff~$\phi$ and maturity~$T$. To this end, we develop forward and backward Kolmogorov equations, along with their well-posedness theory, and characterise respectively the conditional laws and conditional expectations of Volterra processes. These equations and the Itô formula leading to them lie at the core of the theory of Markov processes and semimartingales, respectively.

We 
reclaim Markovianity with the well-known idea of enlarging the state space of a non-Markovian process to include (relevant information about) its past; we call this a \emph{lifting} procedure. Since the works of Carmona--Coutin~\cite{carmona1998fractional} and Hairer~\cite{hairer2005ergodicity} this approach encountered many successes and allowed to build upon results and intuition from Markov theory.  
The subsequent literature bears witness to the diversity of available lifts and state spaces for each of them; we refer to Section~\ref{subsec:literature} for an overview.
It also suggests that none of the proposed solutions subsumed their predecessors. Notably, Viens and Zhang \cite{viens2019martingale} derive a functional Itô formula for Volterra processes and exhibit the associated Kolmogorov backward PDE. Despite its significance, well-posedness of the latter is only obtained in the case of regular kernels~\cite{wang2022path,barrasso2022gateaux} ($H\ge1/2$) or Gaussian processes~\cite{viens2019martingale,bonesini2023rough,pannier2023path} ($b=0,\sigma=1$ in~\eqref{eq:SVE}). For completeness we also mention the early work~\cite{vyoral2005kolmogorov} for Gaussian processes with regular kernels.

Generally speaking, a Markov process~$(\lambda(t))_{t\ge0},$ evolving on a Polish space $\Xx,$ is called a \emph{lift} of~$(X_t)_{t\ge0}$
if there exists a (continuous) projection operator~$\pi:\Xx\rightarrow\RR^d$ that recovers the Volterra process~$\pi(\lambda(t))=X_t$. A natural idea, put forth in \cite{abi2019markovian,cuchiero2020generalized}, consists in introducing an additional shift variable~$x\ge0$ and defining
\begin{align*}
    \lambda(t,x):= \lambda_0(t+x) + \int_0^t K(t-s+x) b(X_s)\ds + \int_0^t K(t-s+x)\sigma(X_s)\D W_s. 
\end{align*}
It is clear that we should have~$X_t=\lambda(t,0)$ and, since~$X_{t+x}=\lambda(t,x)+X^{\perp}(t,x)$ where $X^{\perp}(t,x)$ is independent from~$\Ff_t$, we also expect that~$(\lambda(t,x))_{x\in[0,T-t]}$ is a ``path-dependent" state variable which adequately captures the history of $X.$ 
This parametrisation is reminiscent of the Heath--Jarrow--Morton--Musiela (HJMM)~\cite{heath1992bond,musiela1993stochastic} framework for forward rates in mathematical finance. Besides, $\lambda$ is linked (at least when $b$ is linear) to the so-called forward variance curves~$(\EE[X_{t+x}\lvert\Ff_t])_{x\ge0}.$ For these reasons we call $\lambda$ the \emph{curve lift} of $X.$

In order to study $\lambda$ independently of $X$ we shall write an equation for its dynamics. Informally we observe that $\lambda$ ought to satisfy the stochastic evolution equation
\begin{align}\label{eq:SPDE_mild_intro}
    \lambda(t) = S(t)\lambda_0 + \int_0^t S(t-s)K b(ev_0(\lambda(s)))\ds + \int_0^t S(t-s)K\sigma(ev_0(\lambda(s)))\D W_s,
\end{align}
where~$f\mapsto S(t)f(\cdot):=f(t+\cdot)$ is the shift semigroup, we have suppressed the $x$--dependence and the point evaluation map~$ev_0$ at $x=0$ plays the role of the projection~$\pi$ introduced above. The lift property is then satisfied as~$ev_0(\lambda(t))=X_t.$ In turn, our attention may shift from $X$ to the Markovian (yet infinite dimensional) $\lambda$ since properties of the former can be obtained from the latter after evaluating at~$x=0.$ Specifically, $\lambda$ embeds all the relevant path-dependent information of $X$ since, for all sufficiently regular test functions $\phi,$ there exists  some measurable function $u$ that satisfies
\begin{equation}\label{eq:IntroSVEConditionalExpectation}
    \EE[\phi(X_T)\lvert\Ff_t]= u(t,\lambda(t)).
\end{equation}
Up to this point, none of the above is rigorous and we have conveniently avoided to specify the state space $\Xx$ of our lift; this is however a non-trivial affair. To make matters worse, the singularity of the kernel leads to a cascade of three puzzles, and hence the three objectives of this work.
\begin{enumerate} \setlength\itemsep{5pt}
    \item[\textbf{A)}] On the one hand, the lift property is satisfied if the evaluation functionals~$ev_x:\Xx\rightarrow\RR^d, x\in\RR^+$ are continuous. On the other hand, the  singular kernel~$K$ cannot be an element of~$\Xx$ since $ev_0(K)=\infty.$\\
    $\boldsymbol{\Longrightarrow}$ Find a Hilbert space with continuous point evaluations (a Reproducing Kernel Hilbert Space, or RKHS) where~\eqref{eq:SPDE_mild_intro} is well-defined.
    \item[\textbf{B)}] The integral equation \eqref{eq:SPDE_mild_intro} is a mild solution of the first order (or transport-type) Stochastic Partial Differential Equation (SPDE)
\begin{equation}\label{eq:strong_SPDE_intro}
    \left\{\begin{aligned}
        &\partial_t\lambda(t,x)=\partial_x \lambda(t,x)+Kb(\lambda(t,0))+K\sigma(\lambda(t,0))\dot{W}_t,\;\;t>0,\; x\in\RR^+\\&
        \lambda(0,x)=\lambda_0(x),\;\; x\in\RR^+.
\end{aligned}\right.
\end{equation}
However, this SPDE has singular coefficients ($K$ is only locally square integrable) and does not admit any useful \footnote[1]{To be a bit more precise, \eqref{eq:strong_SPDE_intro} cannot admit $\Xx-$valued solutions that are simultaneously 
\textit{analytically strong} and \emph{lifts} of $X$ in the sense described above. } solution in semimartingale form.
\\
$\boldsymbol{\Longrightarrow}$ Establish an Itô formula for the mild solution~\eqref{eq:SPDE_mild_intro} that is conducive to deriving Kolmogorov PDEs. 
    \item[\textbf{C)}] In order to derive a Kolmogorov backward PDE, one needs to establish smoothness of the value function~$ y\mapsto u(t,y):=\EE[\varphi(\lambda^{t,y}(T))\lvert \Ff_t],$ where $\varphi\in\Cc^2_b$ and $\lambda$ is the solution of \eqref{eq:SPDE_mild_intro} starting from time $t$ at $y.$ Moreover, directional derivatives of $u$ along the ``singular" directions~$K$ (which, as we mentioned, cannot live in the same space as $y$)  must be introduced and well-defined, thereby adding another level of complexity.      
   \\
    $\boldsymbol{\Longrightarrow}$ Introduce an appropriate notion of regularity and prove that $u$ is in a corresponding class of $\Cc^{1,2}$ functions.
\end{enumerate}

\subsection{Contributions}\label{sec:contributions}
To the best of our knowledge, the present work is the first to address all three of these objectives in the setting of fully non-linear SVEs with singular kernels. Our main results are summarised below (complete versions of the theorems can be found in subsequent sections):

\bigskip
\textbf{A) The state space.} We identify a weighted Sobolev space~$\Xx\equiv\Hh_1:=H^1_w(\RR_+;\RR^d)$ as a natural state space for $\lambda$. $\Hh_1$ is a separable Hilbert space and the shift semigroup~$S$ has the particular property of regularising the kernel since~$K(t+\cdot)$ is continous for any~$t>0.$ Working on this space we obtain mild well-posedness of~\eqref{eq:strong_SPDE_intro}, when $b,\sigma$ are Lipschitz continuous and grow at most linearly, even though the coefficients do not belong to~$H^1_w:$
\begin{theorem1}[See Theorem~\ref{thm:SPDE_wellposedness}]
    The SPDE~\eqref{eq:strong_SPDE_intro} has a unique $\Hh_1-$valued mild solution. 
\end{theorem1}
The weights $w$ that we use, and will later be called admissible (Definition \ref{assumption:admissibleweights}), are chosen to compensate for singularities of $K$ both at  the origin and at infinity. In addition, they must satisfy certain technical regularity conditions ensuring that $K$ and its derivatives are square integrable when shifted away from the origin. 
In the special case of the power-law kernel~$K(t)=t^{H-\half},H\in(0,1/2)$, we prove that~$w(x)=x^\beta\E^{-x},$ for~$\beta\in(1-2H,1),$ provides just the right amount of regularity around zero (for more details we refer the reader to Example \ref{example:powerlaw} below). 

Riding on this success, the flow, Markov, Feller and generalised Feller properties quickly follow. We further obtain, even without semimartingality, a mild Itô formula for~$\lambda$ which relies on recent work of Da Prato, Jentzen and R\"ockner  \cite{da2019mild}. The latter gives rise to an Itô--Volterra formula for $X$ itself, as well as a Fokker--Planck (or forward Kolmogorov) equation, see Sections \ref{subsubsec:ItoSVE},\ref{sec:FPE}.
\bigskip

\textbf{B) A singular Itô formula.}
Unfortunately, the mild Itô formula does not lead to the derivation of a backward PDE and, as alluded to earlier, $\Hh_1-$valued solutions of~\eqref{eq:strong_SPDE_intro} can only be defined in a mild, non-semimartingale form (recall that~$K, \partial_x\lambda(t) \notin \Hh_1$). Once again, we leverage the crucial smoothing property of shifts acting on $K,$ this time to identify an \emph{invariant subspace}~$\mathscr{K}_1\subset \Hh_1$ for the paths of~$\lambda$. The space $\mathscr{K}_1$ \eqref{eq:yspace} is defined as a subspace of functions $y$ such that $y, \partial_xy$ are ``smoothened" when shifted by any positive $t>0.$ Indeed it turns out that, if, for all $\delta>0,$~$S(\delta)\partial_x\lambda(t)\in \Hh_1$ at~$t=0$ then the same holds for all~$t>0.$ Restricting initial conditions to $\mathscr{K}_1,$ we are able to obtain a ``semimartingale-type" It\^o formula for certain functionals of mild solutions $\lambda$ \eqref{eq:SPDE_mild_intro} which reads as follows:
\begin{theorem2}[See Theorem~\ref{thm:SingularIto}]
    Let $F:[0,T]\times\mathscr{K}_1\to\RR$ be sufficiently regular, then for all $t\in[0,T]$ and $\lambda_0\in\mathscr{K}_1$ we have $\PP-$almost surely
\begin{equation*}
\begin{aligned}
    F(t,\lambda(t))&=F(0, \lambda_0)+\int_0^{t}\bigg[ \partial_t F(s, \lambda(s))+\mathcal{D}F(s,\lambda(s))\Big( \partial_x\lambda(s)+K b(ev_0(\lambda(s)))\Big)\bigg]\ds
    \\&\quad +\int_0^t\frac{1}{2}\textnormal{Tr}\Big[ \mathcal{D}^2F(s,\lambda(s))K \sigma(ev_0(\lambda(s)))(K \sigma(ev_0(\lambda(s)))^*\Big]\ds\\&
    \quad     +\int_0^t \mathcal{D}F(s,\lambda(s))\big(K \sigma(ev_0(\lambda(s)))\dW_s\big).
\end{aligned}
   \end{equation*}
\end{theorem2}
Again, an important observation here is that differentials $\Dd F(y)(K),\Dd^2 F(y)(K,K)$ must be taken along directions $K$ that are not elements of $\Hh_1$ or $\mathscr{K}_1.$ For this reason, we define the latter as limits of classical Gateaux derivatives~$D F(y)(S(\delta)K),D^2 F(y)(S(\delta)K,S(\delta)K),$ as $\delta\to 0,$ and name them \emph{singular directional derivatives}. This notion is both natural and necessary to obtain this ``singular It\^o formula"; a precise definition of singular derivatives, along a class of admissible directions $\mathscr{K},$ and several examples are provided in Section \ref{sec:SingularIto}.  Indeed our proof strategy relies on the application of a standard Hilbert space Itô formula to the semimartingale~$(S(\delta)\lambda(t))_{t\ge0}$ and then a limiting argument as~$\delta\to 0.$ Finally, Theorem B. leads directly to a singular (non-mild) Fokker--Planck equation~\eqref{eq:singular_FPE} and plays a major role in our subsequent analysis of the backward PDE. 

\bigskip
\textbf{C) The singular Kolmogorov backward PDE.} 
In virtue of the singular Itô formula, we obtain a Feynman--Kac formula and thus unique probabilistic representations of solutions to a novel singular Kolmogorov backward PDE on $\Hh_1.$ The challenging part consists in proving that $u$ is in fact twice ``singularly" differentiable and that $u,\mathcal{D}u, \mathcal{D}^2u$ are sufficiently regular. This requires a fine analysis of the derivatives of~$\lambda,$ with respect to initial data, and their convergence properties as~$S(\delta)K$ tends to $K$. A complete study of the resulting variation equations and corresponding tangent processes cannot be found elsewhere in the literature. On a slightly technical note, our work here is facilitated once again by the special structure of~\eqref{eq:strong_SPDE_intro} since the nonlinear coefficients $b, \sigma$ only depend on finite-dimensional projections of the solution. Thus, in sharp contrast to SPDEs with Nemytskii (or composition) operators, the nonlinearities here are in fact Fr\'echet differentiable; see also Remark \ref{rem:NemytskiiRem} below for a more detailed discussion. More importantly, unless $\sigma$ is constant, our proof highlights that~$K$ must be in $L^q([0,T];\RR^{d\times d})$ with~$q>4.$ This threshold seems to be structural and not an artefact of our method, as we explain in Remark~\ref{rem:Honefourth}. 
For the power-law kernel~$K(t)=t^{H-\half}$, this corresponds to~$H>1/4.$\footnote[2]{It is worth noting that this ``1/4" condition has appeared in the completely different context of geometric rough path constructions above fBm; see e.g. the classical work \cite{coutin2002stochastic}.} 
\begin{theorem3}[See Theorem \ref{thm:backward_equation_singular}, Corollary \ref{cor:SVEConditionalExp}]\label{th:BackwardPDE_intro}
If $\varphi\in \Cc^2_b(\Hh_1)$, and $K\in L^q([0,T])$ with~$q>4$ or $\sigma$ is constant, then $(t,y)\mapsto u(t,y)=\EE[\varphi(\lambda^{t,y}(T))\lvert \Ff_t]$ is the unique pointwise solution in a subclass of $\Cc^{1,2}_b([0,T]\times\Hh_1)$ to
\begin{equation}\label{eq:BackwardPDE_intro}
    \left\{\begin{aligned}
        &\partial_t u(t,y) + \mathcal{D}u(t,y)\Big( \partial_x y+K b(ev_0(y))  \Big)+\frac{1}{2}\textnormal{Tr}\Big[ \mathcal{D}^2u(t,y)K \sigma(ev_0(y))(K \sigma(ev_0(y)))^*\Big],\;t<T\\&
        u(T,y)=\varphi(y),
    \end{aligned}\right.
\end{equation}
for all~$y\in\mathscr{K}_1$. 
Moreover, it holds that $
\EE[\phi(X_T)\lvert\Ff_t]= u(t,\lambda(t))$ for all $\phi\in\Cc^2_b(\RR^d).$
\end{theorem3}

Consequently, we complete the desired connection between SVEs and PDEs by identifying $u$ in \eqref{eq:IntroSVEConditionalExpectation} as the unique solution of the singular Kolmogorov equation that is posed on a Hilbert space.

\subsection{Relevant literature}\label{subsec:literature} 
Our proposed lift, especially written in the strong formulation~\eqref{eq:strong_SPDE_intro}, is closely related to the HJMM model for forward rates in mathematical finance~\cite{heath1992bond,musiela1993stochastic}, albeit with singular coefficients. Several theoretical results have been established for this Musiela SPDE; these include e.g. well-posedness with different types of noises \cite{filipovic2001consistency,filipovic2003existence, marinelli2010local}, Kolmogorov equations \cite{goldys2001infinite}, invariant measures \cite{tehranchi2005note, carmona2006interest} to name only a few. In particular, Filipovic identifies a class of weighted RKHS as natural state spaces for the solution~\cite[Chapter 5]{filipovic2001consistency}. The conditions he imposes on the weight are financially-driven but unfit for our purposes. These spaces are proposed by Cuchiero and Teichmann~\cite{cuchiero2019markovian} as a state space for Markovian lifts of affine Volterra processes. There, the authors define weakly mild solutions to the equation and derive an exponential affine formula which is essentially a backward PDE with a particular type of test function. 

Simultaneously, Abi Jaber and El Euch \cite{abi2019markovian} introduce the curve lift to discuss the Markov and invariant properties (in $\RR\times\RR_+$) of the Volterra Heston model. The emphasis is on determining the geometry of the state space to guarantee non-negativity of the variance process. 

If one does not wish to exploit the SPDE structure of the curve lift then it is also possible to work without the Musiela parametrisation and define the lift~$\tilde{\lambda}(t,T)=\lambda(t,T-t)$. From an option pricing viewpoint, this shifts the focus from time \emph{to} maturity~$T-t$ to time \emph{of} maturity~$T$. In this context we can mention~\cite{bondi2023kolmogorov} where the authors derive, in the additive noise case, the existence of a backward PDE in an $L^2(\RR^+)$ space directly from a Taylor expansion. If one enforces the assumption~$K(0)<+\infty$ then most of the obstacles outlined above disappear and more tools become available, opening the gates to stochastic control~\cite{possamai2024optimal}. As discussed above, in the closest paper to ours, Viens and Zhang~\cite{viens2019martingale} prove a functional Itô formula for~$\tilde{\lambda}(t)$. Instead of a superficial comparison, we provide in Section~\ref{sec:comparison_VZ} a comprehensive review of their results and of the developments offered by the present paper. 

The other most popular lift in the literature, initiated by Carmona and Coutin in~\cite{carmona1998fractional} for fBms and extended to SVEs by Harms and Stefanovits~\cite{harms2019affine}, is the Ornstein--Uhlenbeck (OU) lift. The latter can be embedded in $L^p(\mu)$ spaces with different measures/weights $\mu$~\cite{hamaguchi2023markovian}, spaces of measures~\cite{cuchiero2019markovian} or negative Sobolev spaces~\cite{huber2024markovian}. Notably, this approach led to advances in stochastic control for Volterra processes~\cite{abi2021linear,hamaguchi2025global} but its wide recognition comes (in our humble opinion) from the Markovian multifactor approximation it entails~\cite{carmona1998fractional,abi2019multifactor}. The lifting method adopted by Hamaguchi in particular~\cite{hamaguchi2023markovian} seems to have similar features as ours. We demonstrate in Section~\ref{subsec:OUlift} how one can pass from the curve lift to the OU lift and vice-versa.

\subsection{Perspectives}
Having resolved (under certain conditions) the three problems outlined above, we believe this paper presents a framework that is not an end in itself but a base camp from which we can meet further objectives. 
For the first time, we have access to a complete and rigorous, Hilbert space based theory that delivers Itô formula and Kolmogorov PDEs for Volterra processes.   
These can be used as tools in applications
such as stochastic Taylor expansions and cubature formulae~\cite{feng2023cubature}, weak error rates analysis~\cite{bonesini2023rough}, support characterisation~\cite{kalinin2021support} and non-linear filtering~\cite{crisan2024uniqueness}. Numerically solving the backward PDE is interesting to the mathematical finance and rough volatility communities where conditional expectations correspond to option prices, see for instance~\cite{pannier2024path} and Section~\ref{sec:option_pricing}. 

A higher summit that sparks a lot of interest is the study of controlled Volterra processes. With the foundations laid out here, we will tackle Hamilton--Jacobi--Bellman equations and their viscosity solutions in the future. Such an expedition has only ever been attempted with the regular kernel oxygen bottle.

The two Fokker--Planck equations, mild and singular, presented in this paper are a central tool for the analysis of invariant measures and (a non-linear version thereof) shall characterise the law of mean-field SVEs as it does in the semimartingale case~\cite{chaintron2022propagation}.

On the foundational level, theoretical work is still required to relax the assumptions made in this paper and to extend the theory to the multiplicative noise case with~$H\le 1/4$ (in the spirit of~\cite{brehier2018kolmogorov}), to non-smooth test functions~$\varphi$ (checking whether the Markov semigroup regularises the test function), to path-dependent test functions and to non-convolution kernels. We believe the latter extension not to require radically new ideas: the change of parametrisation $K(t,s)\sigma(x)=\tilde\sigma(t,s,x)=\hat\sigma(t-s,s,x)$ allows to develop similar arguments as in this paper at the expense of working with time-dependent coefficients since $S(t-s)K\sigma\circ ev_0(\lambda)$ shall be replaced by~$S(t-s)\hat\sigma(\cdot,s,ev_0(\lambda))$ in~\eqref{eq:SPDE_mild_intro}.

In mathematical finance, rough volatility and forward variance models are natural fields of applications but other markets with term structure models may benefit from this forward curve formalism, such as foreign exchange or energy markets~\cite{fontana2024real}. We refer to Section~\ref{sec:option_pricing} for a short overview of option pricing in these different contexts.

\bigskip
\subsection{Examples and applications}\label{subsec:SVEexamples}
\subsubsection{Examples of Volterra processes}\label{subsec:examples} The SVE \eqref{eq:SVE} is multidimensional, fully nonlinear and allows for matrix-valued kernels. Due to this level of generality, our results apply to many Volterra processes that have been widely studied in the literature. Some examples, along with the necessary specifications in terms of the parameters $X_0,b,\sigma, K,$ are given below. In what follows, $\Gamma$ denotes the Euler Gamma function.
\begin{itemize}
    \item The \textbf{Mandelbrot-van Ness fBm}    (type I) can be obtained by taking $d=m=1, b=0, \sigma=1, K(t)=\tfrac{1}{\Gamma(H+1/2)}t^{H-1/2},H\in (0,1)$ and $$X_0(t)=\int_{-\infty}^{0}\bigg[ (t-s)^{H-1/2}-(-s)^{H-1/2}     \bigg]\dW_s\;, t\geq 0$$
    where $W$ is a two--sided, one-dimensional Brownian motion.
    The resulting process 
    \begin{equation}\label{eq:MandelNessRep}
        W^H_t=X_0(t)+\frac{1}{\Gamma(H+1/2)}\int_0^t(t-s)^{H-\half}\dW_s
    \end{equation}
     is the Mandelbrot-van Ness representation of a one-dimensional (standard) fBm with Hurst parameter $H.$ We remark here that the initial curve $X_0$ is smooth in $t\in(0,\infty)$ and posseses the same integrability properties as the kernel $K.$ Thus, after choosing an appropriate family of weighted Sobolev spaces corresponding to $K$ (see Example \ref{example:powerlaw}), all the results of this work can be (trivially) applied to $W^H.$ Using this example as a starting point, our framework may be applied to solutions $X$ of SDEs driven by additive fBm (such as the ones considered in~\cite{hairer2005ergodicity} or the fractional OU process considered in \cite{li2023non}).
     Indeed, taking different kernels in front of the drift and diffusion fits into our framework by adding an extra dimension as follows:
     $$
K(t)= \begin{pmatrix}
    K_1(t) & 1 \\ 0&0
\end{pmatrix},\; b(x_1,x_2)=\begin{pmatrix}
    0 \\b_1(x_1)
\end{pmatrix},\;\sigma(x_1,x_2)\equiv I_{2}, 
     $$
where we set $m=d=2.$ 
Using the initial curve $X_0+x,$ for some $x\in\RR^d$ and $X_0$ as above, the first coordinate of the resulting SVE reads
$$
X^1_t=x+X_0^1(t) + \int_0^t b_1(X_s^1)\ds + \int_0^t K_1(t-s)\D W_s^1=x+\int_0^t b_1(X_s^1)\ds+W^H_t.
$$
     
    \item  The \textbf{Riemann-Liouville fBm} (or type II) is given by the second term on the right-hand side of \eqref{eq:MandelNessRep}. This process is widely used to model rough volatility and differs from type I fBm in that its increments are not stationary. It is obtained from \eqref{eq:SVE} by setting  $d=m=1, X_0=0, b=0, \sigma=1, K(t)=\tfrac{1}{\Gamma(H+1/2)}t^{H-1/2}, H\in (0,1).$ As in the previous example $H$ is called again the Hurst parameter. 
    
    More generally, many of our results (in particular well-posedness of the associated SPDE, It\^o formulae and Fokker-Planck equations) apply to the larger class of \textbf{Brownian semistationary processes} (see e.g. \cite[Chapter 1.5]{barndorff2018ambit}) of the form
    $X_t=\int_{\infty}^tK(t-s)\sigma_s\dW_s$ where $\{\sigma_t\}_{t\geq 0}$ is a stochastic process with locally bounded trajectories, adapted to the filtration generated by $W.$ Apart from taking $\sigma=\sigma_t,$ these processes are obtained with the same specifications as above.  Of course, our results which rely on Markovianity of the lift (i.e. well-posedness and derivation of the backward Kolmogorov PDE) apply to the state-dependent case $\sigma_t=\sigma(X_t)$ for some sufficiently well-behaved $\sigma.$
    
    \item As an important example of Volterra processes with a smooth kernel we mention the \textbf{stationary OU process}. It is obtained  from \eqref{eq:SVE} by the choices  $d=m=1, b=0, \sigma=1, K(t)=e^{-t}$
    $$X_0(t)=\int_{-\infty}^{0}e^{-(t-s)}\dW_s.$$
    \item Lastly, we mention the class of \textbf{Rough volatility models} for the dynamics of an asset price $S$ with stochastic volatility $V$ following Volterra dynamics. We refer to~\cite{bayer2023rough} for an overview of the field.  
    The corresponding joint log-price/volatility process $X_t=(\log(S_t/ S_0), V_t), t\geq 0$ can be obtained from \eqref{eq:SVE} by setting $d=m=2,$ $\rho^2=1-\bar{\rho}^2\in[0,1],$  \begin{align}\label{eq:specs_roughvol}
    & X_0=\begin{pmatrix}0\\ V_0\end{pmatrix},\quad K(t)=\begin{pmatrix}  1 & 0\\ 0 &\tfrac{t^{H-1/2}}{\Gamma(H+1/2)}   
    \end{pmatrix},\\
    &b(x_1, x_2)= \begin{pmatrix}   -\psi^2(x_2)/2 \\ \vartheta(x_2)     
    \end{pmatrix},\quad \sigma(x_1, x_2)= \begin{pmatrix}\rho\psi(x_2) &\bar{\rho}\psi(x_2)\\ 0 &\varsigma(x_2)   
    \end{pmatrix},
    \end{align}
    where $\rho\in[-1,1]$ is a correlation parameter between the Brownian noises that drive $S$ and $V.$ Popular examples include the rough Bergomi model~\cite{bayer2016pricing} where $V$ is given by (a type II) fBm with Hurst parameter $H\in (0, 1/2)$, hence~$\psi(x)=\E^{x},\,\vartheta(x)=0,\,\varsigma(x)=1$, and the rough Heston model~\cite{el2019characteristic} where~$V$ follows fractional CIR dynamics, hence~$\psi(x)=\sqrt{x},\,\vartheta(x)=a_1+a_2x,\,\varsigma(x)=a_3\sqrt{x}$. Extensions of \eqref{eq:specs_roughvol} may involve more sophisticated kernels and higher dimensions.

\end{itemize}
\subsubsection{Option pricing applications}\label{sec:option_pricing}
As one can infer from the last example, a number of asset price models in mathematical finance are accurately represented by SVEs such as~\eqref{eq:SVE}. In turn, as alluded to in the introduction, options prices are conditional expectations of a certain payoff function, represented here by~$\varphi(\lambda(T))$. In this context, such prices are thus characterised by the backward PDE \eqref{eq:BackwardPDE_intro}.

We highlight below that, for a variety of underlying assets of interest, the relevant payoff functions do not only depend on~$X_T$ but on the forward curve~$(\EE[X_{T+x}\lvert \Ff_t])_{x\in[a,b]}$, for some interval~$[a,b]\subset\RR_+$.
We thus start by showcasing the situations in which $\lambda$ can be represented as the conditional expectation $\lambda(t,x)=\EE[X_{t+x}\lvert\Ff_t]$, $t,x\ge0$, making our choice of lift particulary suited for the pricing task at hand. 

Let $b(y)= ay,a\in\RR^{d\times d}$,
in \eqref{eq:SVE} and $R_a$ be the resolvent of the second kind of~$aK$, defined as the solution of~$aK-R_a=aK\star R_a$ where~$\star$ denotes the convolution (this resolvent exists and is unique under Assumption~\ref{assumption:Kernel}). A variation of constants formula of Volterra type~\cite[Chapter 2, Theorem 3.5]{gripenberg1990volterra} yields the expression
\begin{align}\label{eq:variationofconstants}
        X_t = X_0(t) + \int_0^t R_a(t-s)X_0(s)\ds + \frac{1}{a}\int_0^t R_a(t-s)\sigma(X_s)\D W_s.
\end{align}
It follows that $\lambda(t,x)=\EE[X_{t+x}\lvert\Ff_t]$ for all $x\ge0$.

We highlight four model specifications below. The first three are displayed under a risk-neutral measure and with zero interest rate.

\begin{itemize}
    \item \textbf{Rough volatility.} In the rough volatility model described in the last example of Section~\ref{subsec:examples}, $X_t=(\log(S_t/S_0),V_t),\,t\ge0$ is a model for the joint log-price and volatility. For an option of payoff~$\phi\in \Cc^2_b(\RR;\RR)$ over the asset price~$S$, the option price at time~$t\in[0,T]$ is represented by the conditional expectation~$\EE[\phi(S_T)\lvert\Ff_t]$. The latter fits into Theorem~\ref{thm:backward_equation_singular} upon setting~$\varphi(y_1,y_2):=\phi(S_0 \exp\circ \,ev_0(y_1))$ and is therefore characterised by~$u(t,\lambda(t))$ where~$u$ is the unique solution to the PDE~\eqref{eq:BackwardPDE_intro}. A path-dependent PDE for this option price was already derived in~\cite{bonesini2023rough} in the setting of the rough Bergomi model where $V$ is a Gaussian process. This paper makes the leap to fully non-linear Volterra dynamics.
    \item \textbf{Forward variance.} Let the instantaneous variance of an asset be represented by~$(X_t)_{t\ge0}$ as in~\eqref{eq:variationofconstants} such that the forward variance~$(\lambda(t,x))_{x\ge0}$ at time~$t$ satisfies~$\lambda(t,x)=\EE[X_{t+x}\lvert\Ff_t]$. Options on the forward variance, represented by~$\EE[\varphi(\lambda_T)\lvert\Ff_t]$ at time $t$, immediately embed into Theorem~\ref{thm:backward_equation_singular}. As a primary example, the Volatility Index (VIX) at time~$T$ is given by
    $$
\textrm{VIX}_T := \sqrt{\frac{1}{\Delta} \int_T^{T+\Delta} \EE[X_s\lvert\Ff_T] \D s}=\sqrt{\frac{1}{\Delta} \int_0^{\Delta} \lambda(T,x) \D x},
    $$
    where~$\Delta$ is 30 days. It is clear that $\textrm{VIX}_T $ can be represented as~$G(\lambda(T))$ for some~$G:\Hh_1\to\RR$ and thus \eqref{eq:BackwardPDE_intro} characterises VIX options (upon modulating the boundedness conditions imposed on the payoff's derivatives). A path-dependent PDE for VIX options was derived in~\cite{pannier2023path} where $X$ is a Gaussian process.
    \item \textbf{Energy markets.} 
    The empirical studies carried out in~\cite{bennedsen2022rough,mishura2024gaussian} suggested electricity prices to have rough trajectories and to be well modeled by Volterra processes. The prices themselves are not traded (and hence not observed on the market), instead the traded assets are futures with delivery over the time interval~$(\tau_1,\tau_2]$. The future's price at time~$t\le\tau_1$ is given by
    \begin{align}
        F(t,\tau_1,\tau_2) = \int_{\tau_1}^{\tau_2} w(s,\tau_1,\tau_2) f(t,s)\D s,
    \end{align}
    where, letting $(X_t)_{t\ge0}$ be the electricity price in the form~\eqref{eq:variationofconstants}, $f(t,s)=\EE[X_s\lvert\Ff_t]=\lambda(t,s-t),s\ge t,$ are the instantaneous forward prices, and hence there exists some function~$G:\Hh_1\to\RR$ such that 
    $$F(t,\tau_1,\tau_2)
    = \int_{\tau_1-t}^{\tau_2-t} w(t+x,\tau_1,\tau_2)\lambda(t,x)\dx 
    = G(\lambda(t)).$$
    We conclude that option prices on the future~$F(t,\tau_1,\tau_2)$ are solutions to the PDE~\eqref{eq:BackwardPDE_intro}. 
    This modeling setup can be seen as an extension of the approach taken in~\cite{benth2015derivatives} to the Volterra case and it can be generalised to other energy markets with similar traded assets.
    \item \textbf{Interest rates.}
    We eventually come back to forward rate models in the spirit of the HJMM framework.
    Denote by~$r_t(x)$ the forward rate at time~$t$ with maturity~$t+x$; as in the previous example, this process is not traded. The most fundamental contract is the zero-coupon bond which pays one at maturity~$T$. Its price at time~$t<T$ is given by
    $$
P(t,T)=\exp\left(-\int_0^{T-t} r_t(x)\dx\right).
    $$
    We model $(r_t)_{t\ge0}$ as the solution of the mild SPDE~\eqref{eq:SPDE_mild_intro}. This is an extension of the semimartingale HJMM model to the singular Volterra case, but restricts the coefficients to be deterministic and locally state-dependent. Under the $T$-forward measure $\QQ^T$ (see \cite[Chapter 7]{filipovic2009term}), the forward rate is the conditional expectation of the future short rate, hence it reads
    $$
r_t(x) = r_0(t+x) + \int_0^t K(t-s+x) \sigma(r_s(0))\D W_s^T,
    $$
     where~$W^T$ is a $\QQ^T$--Brownian motion. For some $\delta>0$, traded contracts include for instance options on the simple forward rate~$F(T,T+\delta)=\frac{1}{\delta}(\frac{1}{P(T,T+\delta)}-1)$.    
     More generally we consider options with payoff~$\phi$ on the zero-coupon bond~$P(T,T+\delta)$, which price reads
    $$
\EE\left[ \E^{-\int_t^{T+\delta} r_s(0)\ds} \phi(P(T,T+\delta)) \lvert \Ff_t\right]
= P(t,T+\delta) \EE_{\QQ^T} \big[\phi(P(T,T+\delta)) \lvert \Ff_t\big].
    $$
    We then identify $r_t(x)$ with $\lambda(t,x)$ and notice that~$P(T,T+\delta)$ is a smooth map of~$r_T=\lambda(T)$. Finally,
    the conditional expectation on the right side can be represented as~$u(t,\lambda(t))$ where $u$ solves the PDE~\eqref{eq:BackwardPDE_intro}. We refer to \cite{goldys2001infinite} for an exposition of the PDE in the classical HJMM framework.      Our setup also effortlessly scales to multiple dimensions which was the focus of recent studies~\cite{cuchiero2016general,fontana2024real}.
     \end{itemize}

\subsection{Outline} The rest of this paper is organised as follows:

In Section \ref{sec:Notation} we fix all the necessary notation and introduce our standing assumptions and function spaces that will be used throughout this work. We also define a class of admissible weights $\Ww_a$ (Definition \ref{assumption:admissibleweights}) and corresponding scales of weighted Sobolev spaces $\Hh, \Hh_m, m\in\NN$ which we use for the well-posedness of \eqref{eq:strong_SPDE_intro}. Furthermore, we elaborate on the validity of our assumptions for the particular yet central case of power-law kernels (Example \ref{example:powerlaw}).

Section \ref{sec:SPDE_lift} is devoted to the analysis of the SPDE~\ref{eq:strong_SPDE_intro}. Properties of the shift semigroup $\{S(t)\}_{t\geq 0}$ on $\Hh_1$, as well as existence and uniqueness of $\Hh_1-$valued mild solutions (Theorem~\ref{thm:SPDE_wellposedness}) are proved in Subsection~\ref{sec:MildWellPosedness}. Then, we prove lift and temporal regularity properties of mild solutions $\lambda$~\eqref{eq:SPDE_mild_intro} in Subsection~\ref{Sec:LiftRegularity}. In Subsection~\ref{sec:invariance} we show that the subspace $\mathscr{K}_1\subset\Hh_1,$ as described in Subsection~\ref{sec:contributions}, is invariant for the SPDE dynamics.  Flow and regularity properties of $\lambda,$ as well as (generalized) Feller properties of the corresponding Markov semigroup, are then obtained in Subsections~\ref{subsec:MarkovSemigroup}, \ref{sec:Feller} respectively.

 With the basic properties of the underlying Markovian dynamics at hand, we set out to prove several It\^o formulae for functionals of the state process $\lambda$ in  Section \ref{sec:ito}. First, we remark that analytically strong SPDE solutions (and consequently a strong It\^o formula) are only available on the larger Hilbert space $\Hh\supset\Hh_1.$ Since the latter is not an RKHS, the resulting strong It\^o formula (Proposition \ref{prop:StrongIto}, Subsection \ref{subsec:StrongIto}) cannot be projected to $\RR^d$ via evaluation and thus is not useful for the study of the SVE \eqref{eq:SVE}. After these preliminary observations, we invoke results of \cite{da2019mild} to obtain a mild It\^o formula (Theorem \ref{thm:mildIto}) for smooth functions of $\lambda$ in Subsection \ref{subsec:MildIto}. Even though the latter does not lead to a well-posed backward Kolmogorov PDE, it does lead to an It\^o formula for \eqref{eq:SVE} as well as a mild Fokker--Planck (or forward) equation for the laws $\{\mu_t\}_{t\geq 0}\subset\mathscr{P}(\Hh_1)$ of $\lambda.$ These applications are presented in the first two parts of Subsection \ref{sec:applications_mildito}. Finally, Subsection \ref{sec:SingularIto} contains our results on the singular It\^o formula (Theorem \ref{thm:SingularIto}) which is the backbone of the singular Kolmogorov PDE. Here, we define the concept of singular directional derivatives (Definition \ref{dfn:CKclass_alt}) along a set of directions $\mathscr{K}\subset\Hh$ \eqref{eq:SingularDirections} and present several examples and non-examples that shed light on this notion of smoothness. Then, we introduce a class $\Cc^{1,2}_{T,\mathscr{K}}$ (Definition \ref{dfn:CKclass_alt}) of singularly differentiable functionals and proceed to the proof of Theorem \ref{thm:SingularIto} for functionals of this class. As mentioned above, the latter leads to a singular Fokker--Planck equation for the laws  $\{\mu_t\}_{t\geq 0}\subset\mathscr{P}(\Hh_1),$ when tested against $\Cc^{1,2}_{T,\mathscr{K}}$ functions. The latter is presented as a third and last application in the last part of Section \ref{sec:applications_mildito}. Our mild and singular It\^o formulae apply to different classes of test functions~$F$ and lead to Volterra and semimartingale representations of~$F(\lambda)$, respectively. 
 That being said, in this work we only provide a complete well-posedness theory for the corresponding singular Kolmogorov equations. A more detailed comparison between the two can be found in Remark \ref{rem:FPEcomparison}.

Section \ref{sec:BackwardEquation} is entirely devoted to the study of the backward Kolmogorov equation \eqref{eq:BackwardPDE_intro}. The main result of this paper, Theorem \ref{thm:backward_equation_singular}, is stated in the beginning of this section and asserts that a strong (pointwise) solution of \eqref{eq:BackwardPDE_intro} is given by the Markov semigroup of $\lambda.$ The same theorem shows that this probabilistic solution is unique among the class of  $\Cc^{1,2}_{T,\mathscr{K}}$ functions. The characterisation \eqref{eq:IntroSVEConditionalExpectation} for conditional expectations of $X$ is proved in the subsequent Corollary \ref{cor:SVEConditionalExp}. The proof of Theorem~\ref{thm:backward_equation_singular} is postponed to Subsection~\ref{subsec:BackwardProof} and is preceded by several directional differentiability results of~\eqref{eq:SPDE_mild_intro} with respect to initial data in $\Hh_1$. In particular: We obtain well-posedness of extended (in the sense that the  directions lie in $\mathscr{K}$)  first and second variation equations in Subsection \ref{subsec:SingularTangents}; see Lemmas \ref{lem:zetaKWP} and \ref{lem:zetaKKWP}. For ``regular" directions $h, h'\in\Hh_1,$ the corresponding mild solutions $\zeta_h, \zeta_{h, h'}$ coincide with the Gateaux derivatives $D_y\lambda^y(h), D_y^2\lambda^y(h, h')$ (per Corollary \ref{cor:regularTangentProcesses}). In Subsection \ref{subsec:singularDifferentiablity} we show that the map $y\longmapsto\lambda^y$ is ``singularly" differentiable in the direction of $K\notin\Hh_1$ and identify the singular differentials $\Dd\lambda^y(K), \Dd^2\lambda^y(K,K)$ with the processes $\zeta_K, \zeta_{K, K'}.$ Finally, Subsection \ref{subsec:ConvergenceDeltaTo0} contains several auxiliary convergence results for  ``mollified" variants of $\lambda, \zeta_h, \zeta_{h, h'}.$ To be  slightly more precise, we substitute various instances of the singular kernel $K$ (or, more generally, a singular direction $h\in\mathscr{K}),$ appearing either as coefficient of equations or as initial condition of the tangent process $\zeta_{K},$ with the shifted smooth kernels $K_\delta=S(\delta)K, \delta>0,$ and show that the corresponding processes converge as $\delta\to 0$ to their well-defined ``singular" counterparts. These results are necessary for the proof of Theorem \ref{thm:backward_equation_singular} which requires two approximation procedures by smooth functions (in the level of both initial data and directions).

As we have already argued, we believe that the SPDE framework put forth in this paper is both natural and fruitful for the study of SVEs with singular kernels. Its effectiveness in the derivation of Kolmogorov PDEs for SVEs is clearly showcased by our main results.  Indeed, the use of shift semigroups (or transport SPDEs) properly encodes the ``smoothing" of singular kernels away from the origin. Most of the results and constructions stemming from the developed theory are essentially manifestations of this phenomenon. That being said, there are (at least) two other popular and successful choices of Markovian lifts for SVEs in the relevant literature: the Ornstein-Uhlenbeck (OU) lift of Carmona--Coutin~\cite{carmona1998fractional} and the Path-Dependent PDEs (PPDEs) introduced by Viens--Zhang~\cite{viens2019martingale}. In Section~\ref{sec:Conclusion} we provide a detailed comparison of the curve lift and our results to those of the aforementioned papers. In Subsection \ref{subsec:OUlift} we show that the OU lift can be obtained, via Laplace transform, by an invertible transformation of the SPDE \eqref{eq:strong_SPDE_intro}. Ultimately, we discuss PPDEs in more detail in Subsection \ref{sec:comparison_VZ}.

A few auxiliary results related to properties of the state space $\Hh_1$ and the proof of a  Volterra--Gr\"onwall inequality are gathered in Appendix \ref{sec:appendix}.

\section{Notation, function spaces and assumptions}\label{sec:Notation}
The notation $\lesssim$ is used to denote inequality of real numbers up to unimportant constants. Throughout this work we fix a complete filtered probability space $(\Omega, \Ff, \{\Ff_t\}_{t\geq 0}, \PP)$ that supports an $m-$ dimensional standard Brownian motion $W.$ The nonnegative half-line $[0,\infty)$ is denoted by $\RR^+.$ For a compact set $M\subset\RR^d,$ $\Cc(M)$ is the Banach space of real-valued continuous functions on $M$ endowed with the topology of uniform convergence. For $m\in\NN$ and $U\subset\RR^d$ open, $\Cc_c^\infty(U), \Cc^m(U)$ are the classes of smooth, compactly supported and $m-$times continuously differentiable $\RR-$valued functions on $U$ respectively.

The absolute value, Euclidean norm in~$\RR^d$ and Frobenius norm in~$\RR^{d\times d}$ will all be denoted by~$\abs{\cdot}$. The gradient of a function $f:\RR^d\rightarrow \RR$ will be denoted by $\nabla f.$ Finally, for functions $f:[0,T]\times \RR^+\rightarrow\RR^d$ of time/space we use $\partial_t f$ or $\dot{f}$ and either $f'$ or $\partial_x f$ for the temporal/spatial partial derivatives respectively.

For two Banach spaces $E_1, E_2,$ $\mathscr{L}(E_1;E_2)$ is the space of bounded linear operators from $E_1$ to~$E_2.$ Throughout this work, all Hilbert spaces are implicitly assumed to be $\RR-$vector spaces. For two separable Hilbert spaces $H_1, H_2,$ $\mathscr{L}_2(H_1;H_2)$ denotes the space of Hilbert--Schmidt linear operators $A:H_1\rightarrow H_2.$ The latter is a Hilbert space and for $A_1, A_2\in \mathscr{L}_2(H_1;H_2),$ $\{e_k\}_{k\in\NN}\subset H_1$ an orthonormal basis the inner product is given by
$$\langle A_1, A_2\rangle_{\mathscr{L}_2(H_1;H_2)}=\sum_{k\in\NN}\langle A_1e_k, A_2e_k\rangle_{H_2}=:\textnormal{Tr}(A_1^*A_2),$$
where $A_1^*:H_2\rightarrow H_1$ is the adjoint operator. The transpose of a matrix $A$ will be denoted by $A^\top.$ The Lipschitz constant and supremum norms of a function $F:E_1\rightarrow E_2$ will be denoted by $|F|_{Lip}:=\sup_{x\neq y\in E_1}\tfrac{|F(x)-F(y)|_{E_2}}{|x-y|_{E_1}}$ and $|F|_\infty:=\sup_{x\in E_1}|F(x)|_{E_2}.$ The topological support of a Borel measure~$\mu$ on~$E_1$ and the set of all such probability measures on $E_1$ will be denoted by $\textnormal{supp}(\mu)$ and  $\mathscr{P}(E_1)$ respectively. Equality in law of stochastic process will be denoted by $\overset{\textnormal{law}}{=}$ while we shall reserve the notation $\mathcal{L}$ for generators of Markov processes.

In the sequel, and unless explicitly stated otherwise, we shall identify a Hilbert space with its continuous dual (via Riesz representation) without further explanation or changes in notation. The same principle will often be applied to Fr\'echet differentials of functions defined on Hilbert spaces and their corresponding gradients.

The domain of a (possibly unbounded) linear operator $A:E_1\rightarrow E_2$ is denoted by $Dom(A).$ For $\alpha\in(0,1)$ and $I\subset\RR^+$ a compact interval we denote by $\Cc^\alpha(I;E_1)$ the space of $\alpha-$H\"older continuous functions $f:I\rightarrow E_1.$ For $p\in[1,\infty),$ $L^p(\RR^+), L_{loc}^p(\RR^+)$ denote the spaces of $p-$integrable (respectively locally $p-$integrable) classes of real-valued measurable functions up to almost everywhere equality. Given a nonnegative function $w\in L^1_{loc}(\RR^+),$ we denote by $L^p_w(\RR^+)$ the space consisting of classes of real-valued measurable functions 
$f\in L^1_{loc}(\RR^+)$ such that 
$$|f|_{L^p_w}:=\bigg( \int_{\RR^+} |f(x)|^pw(x)\bigg)^{\frac{1}{p}}<\infty.$$
The space $L^p_w(\RR),$ endowed with the norm $|\cdot|_{L^p_w},$ is a Banach space.

Next, we define the weighted Sobolev space $W_w^{1,p}(\RR^+)$ as the vector space consisting of $f\in L^p_w(\RR^+)$ such that $f$ is weakly differentiable and the weak derivative $f'\in L^p(\RR^+).$  If for some $p>1$  
\begin{equation}\label{eq:weight-p-condition}
    w^{-\frac{1}{p-1}}\in L^1_{loc}(\RR^+)
\end{equation}
 then $W_w^{1,p}(\RR^+),$ endowed with the norm 
$|f|_{W^{1,p}_w}:=|f|_{L^p_w}+|f'|_{L^p_w},     $
is a Banach space; see Remark \ref{remark:admissible weights}. Clearly, when~$w=1,$ $W_w^{1,p}(\RR^+)$ coincides with the classical Sobolev space $W^{1,p}(\RR^+).$ The case $p=2$ is special since $H^1_w(\RR^+):=W^{1,2}_w(\RR^+)$ is a Hilbert space with respect to the inner product
$$\langle f, g\rangle_{H^1_w}:=\int_{\RR^+}fgw+\int_{\RR^+}f'g'w=:\langle f,g\rangle_w+\langle f',g'\rangle_w.$$ For $m\in\NN,$ we write $H^m_w(\RR^+):= W^{m,2}_w(\RR^+)$ for the corresponding weighted Sobolev spaces of $m-$times weakly differentiable functions with square integrable derivatives. Their topology is defined in analogy to $H^1_w(
\RR^+)$ and $H^{m}_w(\RR^+)$ is dense in~$L^2_w(\RR^+)$ and $H^{m-1}_w(\RR^+);$ see Corollary \ref{cor:HmDensity} below.

The vector-valued analogues of the weighted Lebesgue and Sobolev spaces are denoted respectively by $L_w^p(\RR^+;\RR^d), W_w^{1,p}(\RR^+; \RR^d)$ and $ H^m_w(\RR^+; \RR^d).$ 

\begin{remark}\label{remark:admissible weights} A proof that condition \eqref{eq:weight-p-condition} implies completeness for the spaces $W_w^{1,p}(\RR^+;\RR^d),$ as well as counterexamples when the latter is violated, can be found in \cite{kufner1984define} (and in particular Theorem 1.11 therein). Besides the latter, the interested reader is referred to \cite{kufner1980weighted} for a comprehensive study of weighted Sobolev spaces. In fact, the weights we use belong to the larger class of Muckenhoupt weights (see e.g. \cite[Chapter V]{stein2016harmonic}).
\end{remark}

Throughout the rest of this work, we fix the notations 
\begin{equation}\label{eq:HSpaceNotations}    \Hh\equiv\Hh_0:=L^2_w(\RR^+;\RR^d), \Hh_m:=H^m_w(\RR^+;\RR^d), m\in\NN.
\end{equation}
Furthermore, we consider, for each $p\geq 1,0\leq s<T$ the Banach space $\mathscr{H}^p_{s,T}(\Hh_1)$ of $\{\mathcal{F}_t\}_{t\geq 0}-$ adapted processes $u\in \Cc([s,T]; L^p(\Omega;\Hh_1)),$ endowed with the norm
\begin{equation}\label{eq:HpSpaces}
    |u|^p_{\mathscr{H}^p_{s,T}} :=\sup_{t\in [s,T]}\EE[|u(t)|^p_{\Hh_1}    ]
\end{equation}
(see e.g. \cite[Section 4.2]{cerrai2001second} for the use of similar spaces in the context of reaction-diffusion SPDEs).
These spaces will be used for the well-posedness of~\eqref{eq:SPDE} in Section~\ref{sec:SPDE_lift}. When the base point $s=0$ we shall simplify our notation and write $\mathscr{H}^p_{T}(\Hh_1)\equiv \mathscr{H}^p_{0,T}(\Hh_1).$ An $L^p-$in-time variant of these spaces, denoted by $\mathscr{Z}^{p}_{s,T}(\Hh_1),$ will be used to obtain well-posedness for singular tangent processes in Section \ref{subsec:SingularTangents}. The reader is referred to Lemmas \ref{lem:zetaKWP}, \ref{lem:zetaKKWP}  for precise definitions and further details. 

By a mild \textit{abuse of notation}, we will say that a matrix-valued function~$A:\RR^+\to\RR^{d\times d}$ belongs to $\Hh_m$ \eqref{eq:HSpaceNotations} if~$Ae_i\in\Hh_m$ for all~$i=1,..,d,$ where $\{e_i\}_{i=1}^{d}$ is the standard basis of $\RR^d,$  and the same convention will be used for membership to subspaces of $\Hh_m.$ Its norm~$\abs{A}_{\Hh_m}$ will stand for~$\sum_{i=1}^d \abs{Ae_i}_{\Hh_m}$ for any~$m\in\NN\cup\{0\}.$ 

 Throughout this work, $DF(x)(h)$ denotes the Gateaux derivative of a map $F:E_1\rightarrow E_2$ along the direction $h\in E_1$ and at the point $x\in E_1.$ For Fr\'echet derivatives $DF: E_1\rightarrow\mathscr{L}(E_1;E_2)$, when they exist, we will use the same notation. Moreover we will use, without exception, the notation $\Cc^{k}(E_1;E_2),\Cc^{k}_b(E_1;E_2), k\in\NN$ for the class of $k-$times continuously Fr\'echet differentiable functions $F:E_1\rightarrow E_2$ and the subset of bounded functions with bounded Fr\'echet derivatives of all orders.
 
 When $E_1=\Hh_1, E_2=\RR$ we will frequently make the following additional \textit{abuse of notation}:
 
 \noindent For any $h_1,h_2\in\Hh_1$ we denote the first and second order Gateaux derivatives~$\Hh_1\ni y\mapsto DF(y)(h_1)\in\RR$ and~$\Hh_1\ni y\mapsto D^2F(y)(h_1,h_2)\in\RR$. 
However,  we will also use the same notations with \textit{matrix-valued directions}~$ h_1,h_2\in H^1_w(\RR^+; \RR^{d\times d})$. In that context, they mean
    \begin{align*}
        DF(y)(h_1)=\big(DF(y)(h_1 e_j)\big)_{j=1}^d \quad \text{and}\quad 
        D^2F(y)(h_1,h_2)=\big(D^2F(y)(h_1e_i,h_2e_j)\big)_{i,j=1}^d,
    \end{align*}
    such that for any~$u,v,w\in\RR^d$ we have
    \begin{equation}\label{eq:AbuseofNotations}
    \begin{aligned}
         &DF(y)(h_1 u)=\sum_{j=1}^d DF(y)(h_1 e_j)u_j,
        \\&D^2F(y)(h_1v,h_2 w)
        = \sum_{i,j=1}^d D^2F(y)(h_1 e_i,h_2 e_j) v_i w_j. 
    \end{aligned}
    \end{equation}

In the sequel,  we will most frequently use the above relations notations with $h_1=h_2=K$ and with $u=b(y),v=\sigma(y)e_i,w=\sigma(y)e_j$.

Throughout the rest of this paper we shall exclusively work with weights that satisfy the following regularity conditions:
\begin{definition}\label{assumption:admissibleweights}
A weight function $w\in C^{\infty}(0,
\infty)$
is said to be \textit{admissible} if it satisfies~\eqref{eq:weight-p-condition} with~$p=2,$ $w^{-1}\in \Cc^{\infty}(0,
\infty)$ and moreover, for any $t\in\RR^+,$
$$\sup_{s\geq t} \frac{w(s-t)}{w(s)}<\infty.$$   
The class of admissible weights is denoted by $\Ww_a.$
\end{definition} 

\begin{remark} The requirement that $w, w^{-1}\in \Cc^{\infty}(0, \infty)$ is not necessary for our analysis and guarantees that $\Cc_c^\infty(0,\infty)$ is dense in $L^2_w(\RR^+;\RR^d)$ (see Corollary \ref{cor:HmDensity}). In fact, all results of this paper continue to hold for weights $w\in \Cc(0, \infty).$     
\end{remark}

Regarding the coefficients of the SVE \eqref{eq:SVE} we have the following assumptions.
\begin{assumption}\label{assumption:SVEassumptions} \
\begin{enumerate}[i)]
    \item There exists $p\geq 2$ such that, for all $t\in\RR^+$, the initial curve $X_0$ satisfies $$\int_0^t \EE\abs{X_0(s)}^p \ds <\infty.$$
    \item[ii)] $b:\RR^d\rightarrow\RR^d$ and $\sigma:\RR^d\rightarrow\mathscr{L}(\RR^m;\RR^d)$ are Lipschitz continuous. 
    \item[iii)] $b,\sigma$ are bounded, twice continuously differentiable with bounded derivatives and their second derivative is $\gamma_0$-H\"older continuous for some~$\gamma_0>0$. 
\end{enumerate}
\end{assumption}

\begin{remark}\label{rem:OptimalityofAssumptions} 
Our assumptions on the coefficients $b,\sigma$ are clearly not optimal and are chosen for a more direct exposition of the main results. Even though we do not aim for optimality in that sense, we discuss below the well-posedness of the SVE \eqref{eq:SVE} with more relaxed assumptions (see Remark \ref{rem:SVEwellposednessLiploc}).
Finally, we note here that Condition iii) is only relevant for the definition and study of the tangent processes (Section \ref{subsec:SingularTangents}) and analysis of the backward Kolmogorov equation (Theorem \ref{thm:backward_equation_singular} and Section \ref{subsec:BackwardProof}). Conditions i) and ii) are thus sufficient until the end of Section \ref{sec:ito}.
\end{remark}

The last set of assumptions is related to the Volterra kernel. 
\begin{assumption}\label{assumption:Kernel} There exists a weight $w\in\Ww_a$ such that 
$K\in L^2_w(\RR^+;\RR^{d\times d})$ and for all $t>0,$ $K(t+\cdot) \in H^1_w(\RR^+; \RR^{d\times d} ).$ Furthermore, there exists $q>2$ such that for any $T>0, h\geq 0,$ $K$ satisfies:
\begin{align}\label{eq:cond_K1}
    &i) \quad
    \int_0^T \abs{ K(t+\cdot)}_{\Hh_1}^q \dt <\infty 
    \\ 
    \label{eq:cond_K2}
    &ii) \quad
    \int_0^T \abs{K(t+\cdot)- K(t+h+\cdot)}^2_{\Hh_1}\dt \lesssim h^{1-2/q} 
    \\ \label{eq:cond_K3}
    &iii) \quad \int_0^t \abs{\partial_x K(s+h+\cdot)}_{\Hh_1}^2\ds<\infty\quad\text{and}\quad
\int_0^T \bigg(\int_0^t|\partial_x K(s+r+\cdot)|^2_{\Hh_1}\ds\bigg)^{1/2} \dr <\infty.
\end{align}

\end{assumption}
\begin{remark}
    While~\eqref{eq:cond_K1} is fundamental to the well-posedness of the SVE~\eqref{eq:SVE} and the SPDE~\eqref{eq:strong_SPDE_intro} (Theorem~\ref{thm:SPDE_wellposedness}), the other conditions are only relevant later on. Estimate~\eqref{eq:cond_K2} is necessary to the sample path continuity of~$\lambda$ (Lemma~\ref{lemma:pathregularity}). Finally, we use~\eqref{eq:cond_K3} to determine invariant properties of the solution (Proposition~\ref{prop:invariant subspaces}) and therefore all three conditions are crucial to obtain the singular Itô formula and backward PDE (Theorems~\ref{thm:SingularIto} and~\ref{thm:backward_equation_singular}).
\end{remark}

Next, we recall the notion of a Reproducing Kernel Hilbert space (RKHS) which is essential for the construction of a suitable state space for our SPDE lift of \eqref{eq:SVE}. See e.g. Section \ref{Sec:LiftRegularity} and in particular~\eqref{eq:liftproperty}.

\begin{definition} Let $H$ be a Hilbert space of functions $f:\RR^+\rightarrow \RR^d.$  $H$ is called a Reproducing Kernel Hilbert Space (RKHS) if, for all $x\in\RR^+,$ the evaluation operator
$$H\ni f\longmapsto ev_{x}(f):= f(x)\in\RR^d      $$
is continuous.
\end{definition}

We thus deduce a lemma that summarises a few useful properties of the weighted Sobolev spaces $H^1_w(\RR^+;\RR^d).$ Its proof is deferred to Appendix \ref{section:Appendix}.

\begin{lemma}\label{lemma:Hwproperties} If $w\in\Ww_a$ is an admissible weight then $H^1_w(\RR^+;\RR^d)$ is a separable Hilbert space and an RKHS.
\end{lemma}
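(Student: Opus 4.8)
The statement to be proved, Lemma~\ref{lemma:Hwproperties}, asserts that for an admissible weight $w\in\Ww_a$ the weighted Sobolev space $H^1_w(\RR^+;\RR^d)$ is (i) a separable Hilbert space and (ii) a reproducing kernel Hilbert space, i.e. the point evaluations $ev_x$ are continuous. It suffices to treat the scalar case $d=1$: the vector-valued space is the $d$-fold Hilbert-space direct sum of the scalar one, so separability and continuity of point evaluations pass to it componentwise.

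\emph{Step 1: Hilbert space and completeness.} The bilinear form $\langle f,g\rangle_{H^1_w}=\langle f,g\rangle_w+\langle f',g'\rangle_w$ is manifestly symmetric, bilinear and positive definite (it vanishes only if $f=0$ a.e., since $w>0$ on $(0,\infty)$), so it is an inner product. Completeness follows from the admissibility condition: since $w$ satisfies \eqref{eq:weight-p-condition} with $p=2$, i.e. $w^{-1}\in L^1_{\mathrm{loc}}(\RR^+)$, the general theory of weighted Sobolev spaces (\cite{kufner1984define}, cited in Remark~\ref{remark:admissible weights}) guarantees that $W^{1,2}_w(\RR^+)$ is a Banach space; being also an inner product space it is a Hilbert space. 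Concretely, if $(f_n)$ is Cauchy in $H^1_w$, then $(f_n)$ and $(f_n')$ are Cauchy in $L^2_w$, hence converge to some $f,g\in L^2_w$; the local integrability of $w^{-1}$ gives, via Cauchy--Schwarz, that $L^2_w(\RR^+)\hookrightarrow L^1_{\mathrm{loc}}(\RR^+)$ continuously, so $f_n\to f$ and $f_n'\to g$ in $L^1_{\mathrm{loc}}$, and then $g=f'$ in the weak sense, i.e. $f\in H^1_w$ with $f_n\to f$ in $H^1_w$.

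\emph{Step 2: Separability.} This again follows because $L^2_w(\RR^+)$ is separable (the admissibility of $w$, in particular $w,w^{-1}\in\Cc^\infty(0,\infty)$, makes $\Cc_c^\infty(0,\infty)$ dense in $L^2_w(\RR^+)$, as recorded in Corollary~\ref{cor:HmDensity}; more simply, $L^2_w$ is isometric to $L^2$ of a $\sigma$-finite measure, hence separable). The map $H^1_w\ni f\mapsto (f,f')\in L^2_w\times L^2_w$ is an isometric embedding onto a (closed, by Step 1) subspace of the separable space $L^2_w\times L^2_w$, and subspaces of separable metric spaces are separable.

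\emph{Step 3: RKHS property.} Fix $x\in\RR^+$. We must bound $|f(x)|$ by $C_x\,|f|_{H^1_w}$. The key analytic input is a local embedding: on any bounded interval $[0,R]$ with $x\le R$, the measure $w\,\mathrm{d}y$ restricted to $[0,R]$ satisfies $\int_0^R w^{-1}\,\mathrm{d}y<\infty$, so by the standard one-dimensional argument we may write, for $0<a<x$,
\begin{equation*}
f(x) = f(a) + \int_a^x f'(y)\,\mathrm{d}y,
\end{equation*}
where the integral is finite because $\int_a^x |f'|\,\mathrm{d}y \le \big(\int_a^x w^{-1}\,\mathrm{d}y\big)^{1/2}\big(\int_a^x |f'|^2 w\,\mathrm{d}y\big)^{1/2}\le C\,|f|_{H^1_w}$; in particular $f$ has a continuous representative on $(0,\infty)$. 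Averaging this identity over $a$ in a suitable interval against the probability measure $w(a)\,\mathrm{d}a/(\int w\,\mathrm{d}a)$ and using Cauchy--Schwarz twice controls $|f(x)|$ by $|f|_{L^2_w}+|f'|_{L^2_w}$ with a constant depending only on $x$ and $w$. At $x=0$ one takes a one-sided limit / average over a right-neighbourhood of $0$, which is legitimate precisely because $w^{-1}$ is locally integrable up to the origin. Hence $ev_x\in\mathscr{L}(H^1_w;\RR)$ for every $x\ge0$, which is the RKHS property.

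\emph{Main obstacle.} The only genuinely delicate point is the behaviour near the singular endpoint $x=0$: one must ensure that the pointwise value $f(0)$ is well-defined and continuously controlled even though $w$ may vanish (or blow up) there. This is exactly where the local integrability $w^{-1}\in L^1_{\mathrm{loc}}(\RR^+)$ (admissibility) is used in an essential way, turning the formal computation of Step~3 into a rigorous estimate; the remaining verifications (inner product axioms, Step~1, Step~2) are routine.
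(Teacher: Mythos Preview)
Your proof is correct and follows essentially the same strategy as the paper: separability via the isometric embedding $f\mapsto(f,f')$ into $L^2_w\times L^2_w$ (with $L^2_w\cong L^2$ via multiplication by $w^{1/2}$), and the RKHS property via the fundamental theorem of calculus, Cauchy--Schwarz exploiting $w^{-1}\in L^1_{\mathrm{loc}}$, and then averaging the resulting pointwise identity against $w(y)\,\mathrm{d}y$. The paper streamlines your Step~3 slightly by always averaging over a \emph{right}-neighbourhood $(x,L)$ of $x$, which handles all $x\ge0$ (including $x=0$) uniformly and avoids your separate treatment of the endpoint.
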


 The standing assumptions \ref{assumption:SVEassumptions}, \ref{assumption:Kernel} on the coefficients and the kernel ensure global well-posedness of the SVE~\eqref{eq:SVE}. Such existence and uniqueness results were derived for initial conditions that are real deterministic~\cite{pardoux1990stochastic,abi2019affine}, or real random~\cite{jourdain2025convex}, but we rely on \cite{zhang2010stochastic} which allows for random initial curves. For this purpose, we check that~$\abs{K(s)}=\abs{ev_0(K(s+\cdot))}\le \abs{K(s+\cdot)}_{\Hh_1}$ for any~$t>0$ thanks to the RKHS property, and hence
   $$ \int_0^t \abs{K(s)}^2 \ds \le
   \int_0^t \abs{K(s+\cdot)}_{\Hh_1}^2\ds ,$$
which tends to zero as $t\to0$ by Cauchy--Schwarz inequality and~\eqref{eq:cond_K1}. Moreover, the linear growth of~$b,\sigma$ is a consequence of Assumption~\ref{assumption:SVEassumptions} ii). Therefore, the conditions of~\cite[Theorem 3.1]{zhang2010stochastic} are met and the SVE \eqref{eq:SVE} admits a unique strong solution that satisfies 
\begin{equation}\label{eq:SVEmoments}
\sup_{t\in[0,T]} \EE[\abs{X_t}^p]<\infty.
\end{equation}

\begin{remark}\label{rem:SVEwellposednessLiploc} Strong well-posedness for \eqref{eq:SVE} is also true under local Lipschitz and linear growth conditions on $b,\sigma,$ see e.g. \cite[Section 3.3]{zhang2010stochastic}. In the case of classical diffusions such statements can be found e.g. in \cite[Chapter V, Theorem 12.1]{rogers2000diffusions}. The proofs follow from the globally Lipschitz case after standard localization arguments.
\end{remark}


\begin{example}[Power-law kernel]\label{example:powerlaw}
    The power-law kernel $K(t)=t^{H-\half}$ with $H\in(0,1)$ is the most popular across the literature on SVEs. We associate to it the weight $w(x)=x^{\beta}\E^{-x}$, $x\in\RR^+$ with $\beta\in\big((1-2H)\wedge0,1\big)$ and show that all assumptions laid out in this section hold.  We emphasise that the condition $\beta>1-2H$ is necessary for the estimates to hold, which in turn guarantees that the kernel is sufficiently regular for equation~\eqref{eq:SPDE} to be well-posed; Theorem \ref{thm:SPDE_wellposedness}. Meanwhile, $\beta<1$ is required for $H^1_w$ to be an RKHS per Lemma \ref{lemma:Hwproperties}. In the regular case~$H\in[\half,1)$, $\beta$ can be set to zero.

    One can verify that $w^{-1}$ is continuous, locally integrable, thus meeting condition~\eqref{eq:weight-p-condition} and for any $0\le t<s$, $\frac{w(s-t)}{w(s)}= \frac{(s-t)^\beta}{s^{\beta}}\E^{t}\le \E^t$. 
    Lemma \ref{lemma:Hwproperties} then guarantees that $H^1_w$ is an RKHS, for any $H\in(0,1)$.
    
    We turn to Assumption \ref{assumption:Kernel} which requires to estimate the following integrals:
    \begin{align*}
        &\int_0^\infty K(x)^2 w(x)\dx 
        = \int_0^\infty x^{2H-1+\beta}\E^{-x}\dx =\Gamma(2H+\beta);\\
        &\int_0^\infty K'(t+x)^2 w(x) \dx 
        = \int_0^\infty (t+x)^{2H-3} w(x) \dx 
        \le t^{2H-3} \Gamma(\beta+1), \quad \text{for all  }t>0.
        \end{align*}
        This proves that $K\in L^2_w(\RR^+,\RR)$ and $K(t+\cdot)\in H^1_w(\RR^+,\RR)$.

        Furthermore \eqref{eq:cond_K1} holds since, for any $\ep>0$, we have the following estimate
        \begin{align*}
        & \int_0^T \abs{K'(s+\cdot)}^q_{\Hh} \ds = \int_0^T\bigg(\int_0^\infty (s+x)^{2H-3} x^\beta e^{-x}dx\bigg)^{q/2} \ds\leq \int_0^T\bigg(\int_0^\infty (s+x)^{2H+
        \beta-3} e^{-x}dx\bigg)^{q/2} \ds  \\&
        \leq \int_0^Te^{q s/2} s^{q(2H-\ep+\beta-2)/2}\bigg(\int_s^\infty x^{\ep-1}e^{-x}dx \bigg)^{q/2} \ds\lesssim \int_0^T s^{q(2H-\ep+\beta-2)/2}\ds=T^{q(2H-\ep+\beta-2)/2+1} ,
    \end{align*}
    where the last exponent is nonnegative if $q(2-\beta-2H+\ep)<2$. Since~$\ep>0$ is arbitrary, this estimate holds for any~$q<\frac{2}{2-2H-\beta}$ (which is greater than $2$ because $\beta>1-2H$) and we can similarly show that~$\int_0^T\abs{K(s+\cdot)}^q_{\Hh}\ds<\infty$.
    
    The next condition~\eqref{eq:cond_K2} also holds. 
    Indeed, for all~$s\in[0,t]$, $x>0$ and~$p>1$, H\"older inequality yields
\begin{align*}
    &\abs{K'(s+h+x)-K'(s+x)}
    \lesssim \int_0^h (s+u+x)^{H-5/2} \du 
    \le h^{1-1/p} \left(\int_0^h (s+u+x)^{p(H-5/2)}\du\right)^{1/p}\\
    &\lesssim h^{1-1/p} \abs{(h+x)^{p(H-5/2)+1}-x^{p(H-5/2)-1}}^{1/p}
    \lesssim h^{1-1/p} (s+x)^{H-5/2+1/p}.
\end{align*}
We observe that, by Fubini theorem and the estimate above
\begin{align*}
       &\int_0^t \abs{K'(s+h+\cdot) - K'(s+\cdot)}_{\Hh}^2 \ds
       \lesssim \int_0^\infty w(x) \left(\int_0^t h^{2-2/p} (s+x)^{2H-5+2/p} \ds \right)\dx\\
       &\lesssim  h^{2-2/p} \int_0^\infty \E^{-x}x^{\beta} x^{2H-4+2/p} \dx 
       = h^{\frac{2p-2}{p}} \Gamma(\beta+2H-3+2/p).
\end{align*}
Setting $q=\frac{2p}{2-p}$ we observe that the condition~$\beta+2H-3+2/p>0$ becomes $q<\frac{2}{2-2H-\beta}$, as the previous estimate. Since $p>1$ is arbitrary and~$\int_0^t \abs{K(s+h+\cdot)-K(s+\cdot)}_{\Hh}^2\ds\lesssim h^{\frac{2p-2}{p}}$, the estimate~\eqref{eq:cond_K2} is satisfied for any $q$ in the non-empty interval~$(2,\frac{2}{2-2H-\beta})$.

Lastly \eqref{eq:cond_K3} is satisfied thanks to \eqref{eq:cond_K1} and the following. For any $t,h>0$ we have 
    \begin{equation*}
        \begin{aligned}        \int_0^t|\partial_x^2S(s+h)K|^2_{\Hh}\ds
        =\int_0^t\int_0^\infty K''(s+h+x)^2w(x)dx\ds
    &=\int_0^\infty w(x)\int_0^t (s+h+x)^{2H-5}\ds \dx\\&
  \lesssim \int_0^\infty x^\beta e^{-x}(h+x)^{2H-4} \dx,
        \end{aligned}
    \end{equation*}
    where we used Fubini's theorem for the variables $x,s.$ It follows that
     \begin{equation*}
\int_0^t|\partial_x^2S(s+h)K|^2_{\Hh}\ds
\lesssim \int_0^\infty(h+x)^{\beta+2H-4} \E^{-x}\dx
\lesssim h^{\beta+2H-3}.
    \end{equation*}
This yields \eqref{eq:cond_K3} 
since the weight condition $\beta>1-2H$ implies $\beta+2H-3>-2$ and a similar estimate holds for~$\int_0^t|\partial_xS(s+h)K|^2_{\Hh}\ds$.
\end{example}

\section{Well-posedness and properties of the SPDE}
\label{sec:SPDE_lift}

This section is devoted to the study of the transport-type system of SPDEs 
\begin{equation}\label{eq:SPDE}
    \left\{\begin{aligned}
        &\partial_t\lambda(t,x)=\partial_x \lambda(t,x)+Kb(\lambda(t,0))+K\sigma (\lambda(t,0))\dot{W}_t\\&
        \lambda(0,x)=\lambda_0(x),\;\; x\in\RR^+,
    \end{aligned}\right.
\end{equation}
where $W$ is a standard $m-$dimensional Wiener process and the initial datum $\lambda_0$ is an $\Hh_1-$valued, $\Ff_0-$measurable random variable.
Before describing an appropriate notion of mild solutions, we introduce some relevant tools from semigroup theory.

\subsection{Shift semigroup on weighted Sobolev spaces and mild solutions}\label{sec:MildWellPosedness}

For $t\geq 0, f\in L^1_{loc}(\RR^+)$ we introduce the left-shift map
\begin{equation}\label{eq:ShiftSemigroup}
    S(t)f(x):=f(x+t),\;\;x\in\RR^+.
\end{equation}
To be slightly more precise, $S(t)$ can first be defined for $f\in \Cc(M)$ for any compact $M\subset\RR^+$ and then uniquely extended by density to $L^1(M)$.

\begin{lemma}\label{lemma:semigroup_pties} If $w\in\Ww_a,$ the following hold:
\begin{enumerate}
\item Let $t\geq 0,$ $C_t:=\sup_{s\geq t
}\frac{w(s-t)}{w(s)}.$ Then $S(t)\in\mathscr{L}(\Hh)$ and for all $f\in\Hh$
$$|S(t)f|_{\Hh}\leq C_t|f|_{\Hh}.$$
    \item The family $\{S(t)\}_{t\geq 0}\subset \mathscr{L}(\Hh)$ defines a strongly continuous semigroup.
    \item The derivative operator $\partial_x: Dom(\partial_x)\subset \Hh\rightarrow \Hh $ generates $S$ on $\Hh$ and $Dom(\partial_x),$ endowed with the graph norm,  coincides with $\Hh_1$ as a Banach space.    
\end{enumerate}
The same conclusions hold with the pair $(\Hh, \Hh_1)$ replaced by $(\Hh_m, \Hh_{m+1})$ for some $m\in\NN.$
\end{lemma}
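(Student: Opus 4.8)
The plan is to establish the three claims in order, relying throughout on the admissibility of $w$ (Definition \ref{assumption:admissibleweights}), and then to indicate why the argument transfers verbatim to the higher Sobolev scale.

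For claim (1), I would argue directly from the definition of the weighted $L^2$ norm. For $f \in \Hh = L^2_w(\RR^+;\RR^d)$ and $t \geq 0$, a change of variables $y = x+t$ gives
\begin{equation*}
  |S(t)f|_{\Hh}^2 = \int_0^\infty |f(x+t)|^2 w(x)\,\dx = \int_t^\infty |f(y)|^2 w(y-t)\,\dy = \int_t^\infty |f(y)|^2 \frac{w(y-t)}{w(y)}\, w(y)\,\dy.
\end{equation*}
Bounding $\frac{w(y-t)}{w(y)} \leq C_t := \sup_{s\geq t} \frac{w(s-t)}{w(s)}$, which is finite precisely by admissibility, and extending the integral over all of $\RR^+$ yields $|S(t)f|_{\Hh}^2 \leq C_t |f|_{\Hh}^2$, i.e.\ $S(t) \in \mathscr{L}(\Hh)$ with the stated bound. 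One should also note $S(0) = \mathrm{Id}$ and that $S(t)S(s) = S(t+s)$ follows immediately from $f(x+t+s) = (S(s)f)(x+t)$, so $\{S(t)\}_{t\geq 0}$ is at least an operator semigroup.

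For claim (2), strong continuity means $|S(t)f - f|_{\Hh} \to 0$ as $t \downarrow 0$ for every fixed $f \in \Hh$. The standard route is a density argument: for $f \in \Cc_c^\infty(0,\infty)$ (which is dense in $\Hh$ by the remark following Definition \ref{assumption:admissibleweights}, cf.\ Corollary \ref{cor:HmDensity}), $S(t)f \to f$ uniformly on the support together with the uniform-in-$t$ local boundedness of $w$ gives the convergence in $\Hh$; then one passes to general $f$ using the uniform bound $\sup_{t \in [0,1]} \|S(t)\|_{\mathscr{L}(\Hh)} \leq \sup_{t\in[0,1]} C_t < \infty$ — here one should check (or invoke admissibility to the effect) that $t \mapsto C_t$ is locally bounded near $0$; a standard $\varepsilon/3$ split then closes the argument. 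For claim (3), I would identify the generator: on smooth compactly supported $f$ one computes $\lim_{t\downarrow 0} t^{-1}(S(t)f - f) = f' = \partial_x f$ in $\Hh$, so $\partial_x$ (with its maximal domain in $\Hh$) extends the generator. Conversely, the generator of a $C_0$-semigroup is closed and densely defined, and one shows its domain is exactly $\{f \in \Hh : f \text{ weakly differentiable}, f' \in \Hh\}$, which by definition is $\Hh_1 = H^1_w(\RR^+;\RR^d)$; the graph norm $|f|_{\Hh} + |\partial_x f|_{\Hh}$ is then literally the $\Hh_1$-norm, so the two Banach spaces coincide. The cleanest way to pin down the domain is to verify that $\lambda - \partial_x$ is bijective from $\Hh_1$ onto $\Hh$ for $\lambda$ large (solving the ODE $\lambda g - g' = f$ by the explicit formula $g(x) = \int_x^\infty e^{-\lambda(y-x)} f(y)\,\dy$ and checking $g \in \Hh_1$ via claim (1)-type estimates), which by the Hille–Yosida / Lumer–Phillips circle of ideas forces $Dom(\partial_x) = \Hh_1$.

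The main obstacle I anticipate is the domain characterisation in claim (3): showing that the abstract generator's domain is not merely contained in but equal to $\Hh_1$, which requires the resolvent computation above and a check that the explicit solution $g$ lies in $\Hh_1$ using admissibility (the weight-ratio bound controls $|g|_{\Hh}$ and, after differentiating, $|g'|_{\Hh}$). Everything else is routine. Finally, for the transfer to $(\Hh_m, \Hh_{m+1})$: since $\partial_x$ commutes with $S(t)$, one has $|S(t)f|_{\Hh_m}^2 = \sum_{k=0}^m |S(t) \partial_x^k f|_{\Hh}^2 \leq C_t^2 |f|_{\Hh_m}^2$, giving claim (1); strong continuity on $\Hh_m$ follows by applying claim (2) to each $\partial_x^k f \in \Hh$, $k = 0,\dots,m$; and the generator on $\Hh_m$ is again $\partial_x$ with domain $\Hh_{m+1}$ by the same resolvent argument applied componentwise to the derivatives, using that $\Cc_c^\infty(0,\infty)$ is dense in $\Hh_m$ (Corollary \ref{cor:HmDensity}).
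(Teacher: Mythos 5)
Your proposal is correct, and parts (1) and (2) proceed exactly as the paper does: the change-of-variables estimate $|S(t)f|_\Hh^2=\int_t^\infty|f(s)|^2\,\frac{w(s-t)}{w(s)}\,w(s)\,\ds\le C_t^2|f|_\Hh^2$ for (1), and the density argument over $\Cc_c^\infty(0,\infty)$ with an $\varepsilon/3$ split for (2) (the paper's explicit construction $f_n=\tilde f_n w^{-1/2}$ is the same idea you sketch). The transfer to $(\Hh_m,\Hh_{m+1})$ via $\partial_x^k f$ is also the same.

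Part (3) is where you diverge from the paper. You propose a resolvent/Hille--Yosida argument: solve $\lambda g-g'=f$ explicitly and deduce $\mathrm{Dom}(\partial_x)=\Hh_1$. This works, but closing it fully requires (a) checking the explicit resolvent lands in $\Hh_1$ and not merely $\Hh$, (b) verifying injectivity of $\lambda-\partial_x$ on $\Hh_1$ (i.e., $e^{\lambda\cdot}\notin\Hh$ for large $\lambda$, which uses that admissible weights can decay at most exponentially), and (c) an explicit bridging step equating the abstract generator domain with $\Hh_1$. The paper instead verifies $\Hh_1\subseteq\mathrm{Dom}(\partial_x)$ directly: for $f\in\Hh_1$ one writes
$$\frac{S(t)f-f}{t}-f' \;=\;\int_0^1\big[S(rt)f'-f'\big]\,\dr$$
pointwise, then applies Jensen and Fubini to get $\big|\frac{S(t)f-f}{t}-f'\big|_\Hh^2\le\int_0^1|S(rt)f'-f'|_\Hh^2\,\dr$, which tends to $0$ by part (2) applied to $f'\in\Hh$ and dominated convergence. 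This avoids the resolvent machinery entirely and is shorter; it is worth noticing that this trick reduces the generator identification to a second application of the already-proved strong continuity. Your route is sound but would require more details than you sketch.
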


We postopone the proof to Appendix \ref{app:shift}.

\begin{remark}\label{rem:H1Semigroup} From a glance at the proof of Lemma \ref{lemma:semigroup_pties} we see that the shift semigroup is strongly continuous when viewed as a family of operators in $\mathscr{L}(\Hh_1).$ Indeed for any $f\in\Hh_1,$ $S(t)f$ is weakly differentiable with $\partial_x S(t)f=S(t)f'.$ Moreover, since $f'\in\Hh,$ strong continuity of $S$ implies that $|S(t)f'|_{\Hh}\leq C_t|f'|_{\Hh}$ for a constant $C_t>0.$ Clearly, the same conclusions hold with $\Hh_1$ replaced by~$\Hh_m$ and $f'$ replaced by higher order derivatives.  
\end{remark}
\begin{corollary}\label{cor:SemigroupBound} There exist $c, C>0$ such that for all $t\geq 0, f\in\Hh_1$
$$|S(t)f|_{\Hh}\leq Ce^{c t}|f|_{\Hh}. $$
In particular, the constant $C_t$  introduced in Lemma \ref{lemma:semigroup_pties}(i) satisfies
$C_t\le C\E^{c t}.$ The same conclusion (albeit with possibly different constants) holds if $\Hh$ is replaced by $\Hh_1.$
\end{corollary}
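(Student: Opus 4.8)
The plan is to deduce this from the classical fact that every strongly continuous semigroup is exponentially bounded, after relating the constant $C_t$ of Lemma~\ref{lemma:semigroup_pties}(i) to the operator norm $\|S(t)\|_{\mathscr{L}(\Hh)}$.

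First I would recall that, by Lemma~\ref{lemma:semigroup_pties}(2), $\{S(t)\}_{t\ge0}$ is a $C_0$-semigroup on $\Hh$, so the uniform boundedness principle gives $M:=\sup_{t\in[0,1]}\|S(t)\|_{\mathscr{L}(\Hh)}<\infty$: if this failed, then along some $t_n\downarrow0$ with $\|S(t_n)\|_{\mathscr{L}(\Hh)}\to\infty$, Banach--Steinhaus would produce $f\in\Hh$ with $\sup_n|S(t_n)f|_{\Hh}=\infty$, contradicting $S(t_n)f\to f$. Since $S(0)=\mathrm{Id}$ we have $M\ge1$, and writing $t=n+r$ with $n\in\NN\cup\{0\}$, $r\in[0,1)$, the semigroup property yields $\|S(t)\|_{\mathscr{L}(\Hh)}\le M^{n}\|S(r)\|_{\mathscr{L}(\Hh)}\le M^{n+1}\le M\E^{t\ln M}$, the standard exponential bound with rate $\omega:=\ln M\ge0$.

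Next I would show $C_t\le\|S(t)\|_{\mathscr{L}(\Hh)}^2$, the reverse inequality $\|S(t)\|_{\mathscr{L}(\Hh)}\le C_t$ being exactly Lemma~\ref{lemma:semigroup_pties}(i). Changing variables $y=x+t$ gives $|S(t)f|_{\Hh}^2=\int_t^\infty|f(y)|^2\,\tfrac{w(y-t)}{w(y)}\,w(y)\,\mathrm{d}y$ for $f\in\Hh$. Fixing $t>0$ and $\ep>0$, admissibility of $w$ gives $C_t=\sup_{r\ge0}\tfrac{w(r)}{w(r+t)}<\infty$, so we may pick $y_0>t$ with $\tfrac{w(y_0-t)}{w(y_0)}\ge C_t-\ep$; since $w\in\Cc^\infty(0,\infty)$ is positive, $y\mapsto\tfrac{w(y-t)}{w(y)}$ is continuous near $y_0$, so any unit-norm $f\in\Hh$ supported in a small enough neighbourhood of $y_0$ satisfies $|S(t)f|_{\Hh}^2\ge C_t-2\ep$; letting $\ep\downarrow0$ proves the claim. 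Combining with the previous step, $C_t\le M^2\E^{2\omega t}$, so the constants $C:=M^2$ and $c:=2\omega\vee1>0$ give $C_t\le C\E^{ct}$ for all $t\ge0$, whence $|S(t)f|_{\Hh}\le C_t|f|_{\Hh}\le C\E^{ct}|f|_{\Hh}$. For the $\Hh_1$-statement, Remark~\ref{rem:H1Semigroup} gives $\partial_xS(t)f=S(t)f'$, so $|S(t)f|_{\Hh_1}^2=|S(t)f|_{\Hh}^2+|S(t)f'|_{\Hh}^2\le C_t^2|f|_{\Hh_1}^2$ and the same $C,c$ work there.

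The only mildly delicate point is the identification $C_t\le\|S(t)\|_{\mathscr{L}(\Hh)}^2$: one must verify that the approximating point $y_0$ can be chosen strictly greater than $t$ (so that $w(y_0-t)$ lies in the range $(0,\infty)$ where $w$ is smooth and positive), which is precisely what finiteness of $C_t$ affords. Alternatively, this identification can be bypassed entirely: the representation $C_t=\sup_{r\ge0}w(r)/w(r+t)$ makes $t\mapsto\ln C_t$ subadditive with $\ln C_0=0$, and together with the local bound $\sup_{t\in[0,1]}C_t<\infty$ (itself a consequence of the $C_0$-semigroup property, as in Step~1), Fekete's lemma directly yields $C_t\le\exp\!\big(\sup_{s\in[0,1]}\ln C_s\big)\,\E^{(\ln C_1)^+t}$, which is the stated bound up to renaming constants.
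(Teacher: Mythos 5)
Your proof is correct and proceeds along the same line as the paper's one-sentence justification, namely the exponential boundedness of a $C_0$-semigroup (which is the content of the cited Da Prato--Zabczyk Theorem A.3, a prerequisite of Hille--Yosida). The genuine value you add is Step~2: the paper's cited theorem only delivers $\|S(t)\|_{\mathscr{L}(\Hh)}\le M\E^{\omega t}$, and by itself that does not bound the specific constant $C_t=\sup_{s\ge t}w(s-t)/w(s)$, since Lemma~\ref{lemma:semigroup_pties}(i) only provides the \emph{upper} bound $\|S(t)\|\lesssim C_t^{1/2}$. To convert an operator-norm bound into a bound on $C_t$ you need the reverse inequality $C_t\le\|S(t)\|^2$, which you establish by localising a unit-norm $f\in\Hh$ near a near-maximiser $y_0>t$ of $y\mapsto w(y-t)/w(y)$ (and, correctly, you observe that $y_0>t$ strictly is available because $w$ is only defined and smooth on $(0,\infty)$). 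That identification is exactly what the paper's "in particular" leaves silent, so your proof is more complete than the paper's.

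Two small remarks. First, there is a mild internal inconsistency in the paper: Lemma~\ref{lemma:semigroup_pties}(i) is stated as $|S(t)f|_{\Hh}\le C_t|f|_{\Hh}$, but its proof in the appendix actually shows $|S(t)f|^2_{\Hh}\le C_t|f|^2_{\Hh}$, i.e.\ $\|S(t)\|\le\sqrt{C_t}$; your identification $C_t=\|S(t)\|^2$ is the one compatible with the proof, and either reading still gives the exponential bound. Second, your "alternative" via Fekete's lemma is not quite independent of Step~2 as stated: to get $\sup_{t\in[0,1]}C_t<\infty$ from the $C_0$-semigroup property you still need $C_t\lesssim\|S(t)\|^2$, which is the identification you claim to bypass. (A genuinely independent route to local boundedness of $C_t$ is the Hille--Phillips result that a finite, measurable, subadditive function on $(0,\infty)$ is locally bounded above, with measurability coming from lower semicontinuity of $t\mapsto C_t$ as a supremum of continuous functions — but that is a different argument.) Neither remark affects the correctness of your primary proof.
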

\begin{proof} Both conclusions follow directly from the Hille-Yosida theorem (see e.g. \cite[Theorem A.3]{da2014stochastic}). 
\end{proof}

\begin{corollary}\label{cor:HmDensity} Let $w\in\mathcal{W}_a$ be an admissible weight per Definition \ref{assumption:admissibleweights} and $m\in\NN.$ Then, $\Cc_c^\infty(0,\infty)\subset \Hh_{m}$ is dense in $\Hh.$  Moreover, $\Hh_{m+1}$ is dense in $\Hh_m.$ 
\end{corollary}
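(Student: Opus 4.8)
The plan is to prove the two assertions by independent arguments. For the first --- density in $\Hh=L^2_w(\RR^+;\RR^d)$ of $\Cc_c^\infty(0,\infty)$, read with values in $\RR^d$ --- I will run the classical truncate-then-mollify scheme; the only weight-specific input, and the only delicate point, is that an admissible $w$ is smooth and strictly positive on $(0,\infty)$, hence bounded above and below on every compact subinterval $[a,b]\subset(0,\infty)$, so that on such an interval the $\Hh$-norm is equivalent to the plain $L^2$-norm and the mollified function can be kept supported inside $(0,\infty)$. For the second --- density of $\Hh_{m+1}$ in $\Hh_m$ --- I will instead invoke semigroup theory: by Lemma~\ref{lemma:semigroup_pties}, $\partial_x$ with domain $\Hh_{m+1}$ is the generator of the strongly continuous shift semigroup $S$ on $\Hh_m$, and the domain of the generator of a $C_0$-semigroup is always dense; so this part is immediate and no genuine obstacle is expected anywhere.

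For the first claim, fix $f\in\Hh$ and $\eta>0$. Since $|f|^2w\in L^1(\RR^+)$, dominated convergence yields $0<a<b<\infty$ with $\int_{(0,a)\cup(b,\infty)}|f|^2w<\eta^2/4$, so $g:=f\one_{[a,b]}$ satisfies $|f-g|_\Hh<\eta/2$. Extending $g$ by zero to $\RR$ gives $g\in L^2(\RR;\RR^d)$; for a standard mollifier $\rho_\ep$ supported in $(-\ep,\ep)$ with $\ep<a/2$, the convolution $g_\ep:=g*\rho_\ep$ belongs to $\Cc_c^\infty(\RR;\RR^d)$ with support inside $[a-\ep,b+\ep]\subset[a/2,2b]\subset(0,\infty)$, hence restricts to an element of $\Cc_c^\infty(0,\infty;\RR^d)$. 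On the fixed compact set $[a/2,2b]$ admissibility gives $0<c_1\le w\le c_2<\infty$, so $|g_\ep-g|_\Hh^2\le c_2\,|g_\ep-g|^2_{L^2(\RR)}\to0$ as $\ep\downarrow0$, while the bound $w\le c_2$ together with the boundedness of all derivatives of $g_\ep$ on $[a/2,2b]$ shows $g_\ep\in\Hh_m$ for every $m\in\NN$ (the same reasoning shows that any $\Cc_c^\infty(0,\infty;\RR^d)$ function lies in every $\Hh_m$, since $w$ is bounded on its compact support). Choosing $\ep$ small enough gives $|f-g_\ep|_\Hh<\eta$; this proves at once $\Cc_c^\infty(0,\infty)\subset\Hh_m$ and its density in $\Hh$, hence also that $\Hh_m$ is dense in $\Hh$.

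For the second claim, recall from Lemma~\ref{lemma:semigroup_pties} (final sentence) together with Remark~\ref{rem:H1Semigroup} that $\{S(t)\}_{t\ge0}$ is a $C_0$-semigroup on the Banach space $\Hh_m$ whose generator is $\partial_x$ with $Dom(\partial_x)=\Hh_{m+1}$, the graph norm being equivalent to the $\Hh_{m+1}$-norm. Since for every $y\in\Hh_m$ the average $\tfrac1t\int_0^t S(s)y\,\ds$ belongs to $Dom(\partial_x)$ and converges to $y$ in $\Hh_m$ as $t\downarrow0$ by strong continuity, the standard density-of-the-generator-domain argument gives that $\Hh_{m+1}$ is dense in $\Hh_m$, as claimed.
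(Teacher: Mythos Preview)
Your proof is correct. The second claim is handled identically to the paper: both of you invoke Lemma~\ref{lemma:semigroup_pties} to identify $\Hh_{m+1}$ with the domain of the generator of the $C_0$-semigroup $S$ on $\Hh_m$, and then use the standard fact that this domain is dense (the paper cites \cite[Corollary~2.5]{pazy2012semigroups} and, like you, spells out the averaging sequence $n\int_0^{1/n}S(t)f\,\dt$).

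For the first claim the routes differ. The paper exploits the isometry $T_w:L^2_w\to L^2$, $f\mapsto fw^{1/2}$ (already introduced in the proof of Lemma~\ref{lemma:Hwproperties}): given $f\in\Hh$ one approximates $fw^{1/2}$ in unweighted $L^2$ by $\tilde f_n\in\Cc_c^\infty(0,\infty)$ and then pulls back via $f_n:=\tilde f_n w^{-1/2}$, which is smooth with compact support because $w^{-1}\in\Cc^\infty(0,\infty)$ by admissibility. You instead run the direct truncate-then-mollify scheme, using only that $w$ is continuous and strictly positive on compact subintervals of $(0,\infty)$ so that the weighted and unweighted $L^2$-norms are equivalent there. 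Your argument is slightly more self-contained (it does not rely on the isometry or on the smoothness of $w^{-1/2}$), while the paper's is shorter because it reuses machinery already in place. Both use the same admissibility hypotheses in an essential way, and both correctly verify the inclusion $\Cc_c^\infty(0,\infty)\subset\Hh_m$ (the paper via $w\in L^1_{loc}$, you via local boundedness of $w$).
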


The proof of this corollary is postponed to Appendix \ref{app:density}.

\begin{remark} We stress here that $\Cc_c^\infty(0,\infty)$ is not dense in the spaces $\Hh_m$ for $m\in\NN$ but it is dense as a subset of $\Hh.$ Indeed, if the former was true, then any $f\in\Hh_m$ would have a unique continuous representative with $ev_0f=f(0)=0.$ Since we do not want to prescribe fixed values at the origin, we also avoid defining $\Hh_m$ as closures of smooth, compactly supported test functions.
\end{remark}

Before we define an appropriate notion of solutions to \eqref{eq:SPDE}, we shall recast it into a stochastic evolution equation on the space $\Hh_1$ as follows:
\begin{equation}\label{eq:SPDE-evolution}
    \left\{\begin{aligned}
        &d\lambda(t)=\big(\partial_x \lambda(t)+Kb(ev_0\lambda(t))\big)\dt+K\sigma (ev_0\lambda(t))d W_t\\&
        \lambda(0)=\lambda_0\in\Hh_1,
    \end{aligned}\right.
\end{equation}
where the stochastic differential is interpreted in the It\^o sense. \noindent A few remarks on the form of \eqref{eq:SPDE-evolution} are in order. On the one hand we have, by virtue of Assumption \ref{assumption:SVEassumptions} and Lemma \ref{lemma:Hwproperties}, that the nonlinear operators 
$$ \Hh_1\ni x\longmapsto b(ev_0(x))\in\RR^d, \sigma(ev_0(x))\in\mathscr{L}(\RR^{m},\RR^d)      $$
are Lipschitz continuous. On the other hand, $\partial_x K$ is not necessarily in $L^2_w(\RR^+;\RR^{d\times d})$ (this can be readily checked in the case of power-law kernels; see Example \ref{example:powerlaw}). In particular, this means we only know that
$$  \Hh_1\ni x\longmapsto Kb(ev_0(x))\in\Hh, K\sigma(ev_0(x))\in L^2_w(\RR^+;\RR^{d\times m})$$
are Lipschitz continuous as $L^2-$valued maps.
Nevertheless, Assumption \ref{assumption:Kernel} is chosen to guarantee that the action of the shift operator on the kernel is regularizing, i.e. for all $t>0,$ $S(t)\partial_xK=\partial_x(S(t)K)\in L^2_w(\RR^+;\RR^{d\times d})$. Thus, even though the SPDE \eqref{eq:SPDE-evolution} is beyond the reach of the classical theory (which can be found, for example, in \cite[Theorem 7.2]{da2014stochastic}), it is still possible to obtain unique  $\Hh_1-$valued mild solutions.

For this reason, we present below a well-posedness result for \eqref{eq:SPDE-evolution} along with a detailed proof. Before doing so we introduce first the notion of mild solutions.

\begin{definition}\label{dfn:mildsolutions} Let $T>0$. We say that an adapted, $\Hh_1-$valued process $\{\lambda(t)\;;t\in[0,T]\}$ is a mild solution to \eqref{eq:SPDE-evolution}   if the following hold:
\begin{enumerate}[i)]
    \item There exists $p\geq 2$ such that $$\PP\bigg(\int_0^T|\lambda(t)|^p_{\Hh_1}\dt<\infty\bigg)=1.$$
    \item For any $t\in[0,T]$ we have 
\begin{equation}\label{eq:SPDE_mild}
\begin{aligned} \lambda(t)=S(t)\lambda_0+\int_0^tS(t-s)Kb_0\big(\lambda(s)\big)\ds+\int_0^tS(t-s)K\sigma_0\big(\lambda(s)\big)\dW_s\;\;\;\PP-\text{a.s.}
\end{aligned}
\end{equation}
with
$b_0:=b\circ ev_0$, $\sigma_0:=\sigma\circ ev_0$.
\end{enumerate}
If $\lambda$ is a unique mild solution with $\lambda(0)=\lambda_0$, we shall often use the notation $\lambda^{0,\lambda_0},$ instead of $\lambda,$ to emphasise the dependence on the initial condition. 
\end{definition}

\begin{theorem}[SPDE well-posedness]\label{thm:SPDE_wellposedness} Let $q>2$ as in Assumption \ref{assumption:Kernel}, $p\geq\frac{2q}{q-2},$ $\lambda_0\in L^p(\Omega;\Hh_1)$ be an $\Ff_0-$measurable initial condition, $T>0.$ Under Assumptions \ref{assumption:SVEassumptions} i)-ii) and \ref{assumption:Kernel} i), there exists a mild solution $\{\lambda(t); t\in[0,T]\}$ to~\eqref{eq:SPDE-evolution}  that satisfies
\begin{equation}\label{eq:lambdaestimate}
    \begin{aligned}
\sup_{t\in[0,T]}\EE\big[ |\lambda(t)|^p_{\Hh_1}   \big]\leq C\bigg(1+\EE|\lambda_0|^p_{\Hh_1}\bigg)    \end{aligned}
\end{equation}
for a deterministic constant $C=C(T,K,w, p)>0.$ Moreover, $\lambda$ is unique in the space $\mathscr{H}^p_T(\Hh_1)$ \eqref{eq:HpSpaces} in the sense that, for any two mild solutions $\lambda_1, \lambda_2$ to \eqref{eq:SPDE-evolution},  $|\lambda_2-\lambda_1|_{\mathscr{H}^p_T(\Hh_1)}=0.$
\end{theorem}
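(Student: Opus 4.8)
The plan is to run a Banach fixed-point argument in the space $\mathscr{H}^p_T(\Hh_1)$ of~\eqref{eq:HpSpaces}. The only non-classical difficulty is that the coefficients $y\mapsto Kb_0(y)$ and $y\mapsto K\sigma_0(y)$ take values in $\Hh=\Hh_0$ and not in $\Hh_1$ (in general $\partial_x K\notin L^2_w(\RR^+;\RR^{d\times d})$), so $\Hh_1$-valued solutions have to be manufactured from the smoothing recorded in Assumption~\ref{assumption:Kernel}: for every $r>0$, $S(r)K\in\Hh_1$ with $\abs{S(r)K}_{\Hh_1}=\abs{K(r+\cdot)}_{\Hh_1}$, and $r\mapsto\abs{K(r+\cdot)}_{\Hh_1}\in L^q([0,T])$ for some $q>2$ by~\eqref{eq:cond_K1}. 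I would define, on $\mathscr{H}^p_T(\Hh_1)$,
\[
\Gamma(\lambda)(t):=S(t)\lambda_0+\int_0^t S(t-s)Kb_0(\lambda(s))\ds+\int_0^t S(t-s)K\sigma_0(\lambda(s))\dW_s,\qquad t\in[0,T],
\]
with $b_0=b\circ ev_0,\ \sigma_0=\sigma\circ ev_0$ as in~\eqref{eq:SPDE_mild}, and show $\Gamma$ is a contraction (after iterating or reweighting), with a fixed point solving~\eqref{eq:SPDE_mild}.

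\textbf{Step 1 (invariance of $\mathscr{H}^p_T(\Hh_1)$).} Since $ev_0:\Hh_1\to\RR^d$ is bounded (Lemma~\ref{lemma:Hwproperties}), Assumption~\ref{assumption:SVEassumptions}(ii) makes $b_0,\sigma_0$ Lipschitz with linear growth on $\Hh_1$. The term $S(t)\lambda_0$ is handled by Corollary~\ref{cor:SemigroupBound}. For the drift I would bound $\abs{S(t-s)Kb_0(\lambda(s))}_{\Hh_1}\lesssim\abs{K(t-s+\cdot)}_{\Hh_1}\big(1+\abs{\lambda(s)}_{\Hh_1}\big)$ (matrix-valued convention plus linear growth), push the $L^p(\Omega)$-norm inside by Minkowski's integral inequality, and apply H\"older in $s$ with exponents $q,\ q/(q-1)$ so that the kernel produces the finite factor $\big(\int_0^T\abs{K(s+\cdot)}_{\Hh_1}^q\ds\big)^{1/q}$ from~\eqref{eq:cond_K1}. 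For the stochastic convolution I would use the Burkholder--Davis--Gundy inequality for $\Hh_1$-valued integrals, bound the Hilbert--Schmidt norm $\abs{S(t-s)K\sigma_0(\lambda(s))}_{\mathscr{L}_2(\RR^m;\Hh_1)}\lesssim\abs{K(t-s+\cdot)}_{\Hh_1}\big(1+\abs{\lambda(s)}_{\Hh_1}\big)$, apply H\"older in $s$ with exponents $q/2,\ q/(q-2)$, and finish with Jensen's inequality --- which is available precisely because $p\ge\frac{2q}{q-2}$ forces the outer exponent $\frac{p(q-2)}{2q}\ge1$. Combining the three terms, and absorbing $\int_0^T\abs{K(s+\cdot)}_{\Hh_1}^q\ds<\infty$ into the constant, gives $\EE\abs{\Gamma(\lambda)(t)}_{\Hh_1}^p\le C\big(1+\EE\abs{\lambda_0}_{\Hh_1}^p\big)+C\int_0^t\big(1+\EE\abs{\lambda(s)}_{\Hh_1}^p\big)\ds$; continuity of $t\mapsto\Gamma(\lambda)(t)$ in $L^p(\Omega;\Hh_1)$ follows from strong continuity of $S$ on $\Hh_1$ (Remark~\ref{rem:H1Semigroup}), dominated convergence and $L^q$-continuity of the translates of $\abs{K(\cdot+\cdot)}_{\Hh_1}$. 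Hence $\Gamma\colon\mathscr{H}^p_T(\Hh_1)\to\mathscr{H}^p_T(\Hh_1)$.

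\textbf{Step 2 (contraction, existence, a priori bound, uniqueness).} Repeating Step~1 with the \emph{Lipschitz} (rather than linear-growth) bounds on $b_0,\sigma_0$ gives, for $\lambda_1,\lambda_2\in\mathscr{H}^p_T(\Hh_1)$,
\[
\sup_{r\le t}\EE\abs{\Gamma(\lambda_1)(r)-\Gamma(\lambda_2)(r)}_{\Hh_1}^p\le C\int_0^t\sup_{r\le s}\EE\abs{\lambda_1(r)-\lambda_2(r)}_{\Hh_1}^p\,\ds,\qquad t\in[0,T].
\]
So $\Gamma$ is a strict contraction on $\mathscr{H}^p_{T_0}(\Hh_1)$ for $T_0$ small (equivalently, on $\mathscr{H}^p_T(\Hh_1)$ for the equivalent norm $\sup_{t\le T}\E^{-\beta t}\EE\abs{\cdot(t)}_{\Hh_1}^p$ with $\beta$ large), and concatenating its fixed points over $[0,T]$ produces $\lambda\in\mathscr{H}^p_T(\Hh_1)$ solving~\eqref{eq:SPDE_mild}, i.e.\ a mild solution in the sense of Definition~\ref{dfn:mildsolutions} (condition (i) is automatic). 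Feeding the linear-growth estimate of Step~1 into $\lambda$ itself yields $\sup_{r\le t}\EE\abs{\lambda(r)}_{\Hh_1}^p\le C(1+\EE\abs{\lambda_0}_{\Hh_1}^p)+C\int_0^t\sup_{r\le s}\EE\abs{\lambda(r)}_{\Hh_1}^p\ds$, so Gr\"onwall's lemma (or the Volterra--Gr\"onwall inequality of Appendix~\ref{sec:appendix}, if one prefers to keep the kernel unabsorbed) gives~\eqref{eq:lambdaestimate}; uniqueness in $\mathscr{H}^p_T(\Hh_1)$ follows the same way from the displayed difference estimate.

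\textbf{Main obstacle.} The crux is the loss of one derivative in the coefficients and its repair: each of the three terms of $\Gamma(\lambda)(t)$ lands in $\Hh_1$ only because every positive shift sends $K$ into $\Hh_1$, at the cost of the merely $L^q$-integrable time-weight $r\mapsto\abs{K(r+\cdot)}_{\Hh_1}$. The tightest technical point is the stochastic-convolution estimate: the BDG $\to$ H\"older-with-exponent-$q/2$ $\to$ Jensen chain is exactly what forces the standing hypothesis $p\ge\frac{2q}{q-2}$; the remaining ingredients (RKHS boundedness of $ev_0$ and the consequent Lipschitz/linear-growth properties of $b_0,\sigma_0$, and the strong continuity and exponential bound of $S$ on $\Hh_1$) are routine.
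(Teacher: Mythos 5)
Your proposal is correct and follows essentially the same route as the paper: a Banach fixed-point argument on $\mathscr{H}^p_T(\Hh_1)$ using the RKHS property of $\Hh_1$ to make $b_0,\sigma_0$ Lipschitz, the $L^q$-in-time integrability of $r\mapsto\abs{S(r)K}_{\Hh_1}$ from~\eqref{eq:cond_K1} to repair the loss of a derivative, H\"older with exponents $q$ (drift) and $q/2$ (stochastic convolution after BDG), and the threshold $p\ge\tfrac{2q}{q-2}$ to close the Jensen/H\"older-in-time step. The only differences are cosmetic (Minkowski's integral inequality in place of one of the paper's nested H\"older applications, and the parenthetical mention of an exponentially weighted norm as an alternative to shrinking the time horizon before concatenating).
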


\begin{proof} Recall that $b_0,\sigma_0$ denote  $b\circ ev_0, \sigma\circ ev_0 $ respectively (as in Definition \ref{dfn:mildsolutions}). We start with the proof of existence of solutions which we show by a fixed point argument on the space $\mathscr{H}^p_T(\Hh_1)$ \eqref{eq:HpSpaces}. To this end we consider, for  $\lambda\in \mathscr{H}^p_T(\Hh_1), 
\lambda_0\in L^p(\Omega;\Hh_1)$ a map 
\begin{equation}\label{eq:lambdaSolutionMap}
    \Ii(\lambda_0, \lambda)(t):=S(t)\lambda_0+\int_0^t S(t-s)Kb_0(\lambda(s))\ds+ \int_0^t S(t-s)K\sigma_0(\lambda(s))\dW_s,\;\;t\geq 0   \end{equation}
    and show that, for each $\lambda_0\in L^p(\Omega;\Hh_1),$ $ \Ii(\lambda_0, \cdot): \mathscr{H}^p_T(\Hh_1)\rightarrow \mathscr{H}^p_T(\Hh_1)$ is a contraction. 

    First, we show that $\Ii(\lambda_0, \cdot)$ maps $\mathscr{H}^p_T(\Hh_1)$ to itself. 
    Indeed, in view of Assumption \ref{assumption:SVEassumptions}(ii), there exists a constant $C_l>0$ such that for all $x\in\RR^d$
    $|b(x)|+|\sigma(x)|\leq C_l(1+|x|).$ Thus, by continuity of the linear evaluation functional $ev_0$ on $\Hh_1,$ Lemma \ref{lemma:Hwproperties}, we have 
    \begin{equation}\label{eq:sigma0growth}
        |b_0(\lambda
    )|+|\sigma_0(\lambda)|\leq C_l(1+|ev_0\lambda|)\leq C'_l(1+|\lambda|_{\Hh_1})
    \end{equation}
    for all $\lambda\in\Hh_1.$ Moreover, Assumption \ref{assumption:Kernel} implies that $S(t-s)K\in\Hh_1$ for all $s\leq t\leq T$ and the almost sure estimate
    \begin{equation}\label{eq:SolutionMapBound}
        \begin{aligned}
|\Ii(\lambda_0, \lambda)&(t)|_{\Hh_1}\leq |S(t)\lambda_0|_{\Hh_1}+ \int_0^t |S(t-s)K|_{\Hh_1}|b_0(\lambda(s))|\ds+ \bigg|\int_0^t S(t-s)K\sigma_0(\lambda(s))\dW_s\bigg|_{\Hh_1}\\&
\lesssim |\lambda_0|_{\Hh_1}+\int_0^t |S(t-s)K|_{\Hh_1}|\big(1+|\lambda(s)|_{\Hh_1}\big)\ds+ \bigg|\int_0^t S(t-s)K\sigma_0(\lambda(s))\dW_s\bigg|_{\Hh_1}
        \end{aligned}
    \end{equation}
    follows by strong continuity of $S$ on $\Hh_1$ (Lemma \ref{lemma:semigroup_pties}). Turning to the drift term on the right-hand side, we use Assumption \ref{assumption:Kernel}(and in particular the fact that $K\in L^2_w(\RR^+;\RR^{d\times d})$), along with H\"older's inequality,   to obtain the bound
     \begin{equation*}
        \begin{aligned}
\int_0^t|S(t-s)K|_{\Hh_1}\big(1+|\lambda(s)|_{\Hh_1}\big)\ds&=\int_0^t\big(|S(t-s)K|_{\Hh}+ |\partial_x S(t-s)K|_{\Hh}|\big)\big(1+|\lambda(s)|_{\Hh_1}\big)\ds\\&
 \lesssim \bigg(\int_0^t|K|^q_{\Hh}+|\partial_x S(t-s)K|_{\Hh}^q\ds\bigg)^{\frac{1}{q}}  \bigg(\int_0^t 1+|\lambda(s)|^{\frac{q}{q-1}}_{\Hh_1}\ds\bigg)^{\frac{q-1}{q}}
\\&
 \lesssim 1+\bigg(\int_0^T|\lambda(s)|^{\frac{q}{q-1}}_{\Hh_1}\ds\bigg)^{\frac{q-1}{q}},
     \end{aligned}
    \end{equation*}
which holds up to constants that depend on $T, q$ and the term involving $K$ is finite from \eqref{eq:cond_K1}. Since $p\geq \tfrac{q}{q-1},$ we apply H\"older's inequality with exponent $p(q-1)/q$ on the last integral, raise to the $p-$th power and then take expectation to arrive at the bound  
 \begin{equation}\label{eq:SolutionMapDriftBnd}
        \begin{aligned}
\sup_{t\in[0,T]}\EE\bigg|\int_0^t S(t-s)Kb_0(\lambda(s))\ds\bigg|^p_{\Hh_1}\lesssim 1+\int_0^T\EE|\lambda(s)|^p_{\Hh_1}\ds\lesssim 1+ |\lambda|^p_{\mathscr{H}^p_T}<\infty.
         \end{aligned}
    \end{equation}
    As for the stochastic integral in \eqref{eq:SolutionMapBound}, an application of the BDG inequality and \eqref{eq:sigma0growth}, followed by H\"older's inequality with exponent $q/2>1$ for the Riemann integrals furnish
    \begin{equation*}
        \begin{aligned}
\EE\bigg|\int_0^t S(t-s)K&\sigma_0(\lambda(s))\dW_s\bigg|^p_{\Hh_1}\lesssim \EE\bigg( \int_0^t |S(t-s)K|^2_{\Hh_1}|\sigma_0(\lambda(s))|^2\ds   \bigg)^{\frac{p}{2}}
\\&\lesssim  \bigg( \int_0^t |S(t-s)K|^q_{\Hh_1}\ds\bigg)^{\frac{p}{q}} \EE\bigg( \int_0^T 1+|\lambda(s)|_{\Hh_1}^{\frac{2q}{q-2}}\ds\bigg)^{\frac{p(q-2)}{2q}},
         \end{aligned}
    \end{equation*}
where the integral involving $K$ is finite by \eqref{eq:cond_K1}. Since $p\geq\frac{2q}{q-2},$ we may apply H\"older's inequality with exponent $\frac{p(q-2)}{q}$ on the last integral and take supremum over $s\in[0,T]$ to obtain
\begin{equation}\label{eq:SolutionMapDiffBnd}
        \begin{aligned}
\sup_{t\in[0,T]}\EE\bigg|\int_0^t S(t-s)K&\sigma_0(\lambda(s))\dW_s\bigg|^p_{\Hh_1}\lesssim 1+|\lambda|^p_{\mathscr{H}^p_T}<\infty.
  \end{aligned}
    \end{equation}
Finally, from the above computations (and in particular from the strong continuity of $S$ on $\Hh_1,$ and BDG inequalities) it follows that 
$[s,T]\ni t\mapsto \Ii(\lambda_0, \lambda)(t)\in L^p(\Omega;\Hh_1)$
is continuous. This, along with~\eqref{eq:SolutionMapBound}, \eqref{eq:SolutionMapDriftBnd}, \eqref{eq:SolutionMapDiffBnd} imply that 
$$ |\Ii(\lambda_0, \lambda)|^p_{\mathscr{H}^p_T}\lesssim 1+|\lambda_0|^p_{L^p(\Omega)}+|\lambda|^p_{\mathscr{H}^p_T}<\infty   $$
for all $p\geq \frac{2q}{q-2}$ so that $\Ii(\lambda_0,\cdot)$ maps $\mathscr{H}^p_T$ to itself for all such $p.$

We proceed to show that the solution map $\Ii$ is a contraction. To this end, we fix $\lambda_1, \lambda_2\in\mathscr{H}^p_T.$ Starting from the drift we have 
    \begin{equation*}
        \begin{aligned}
            \bigg|\int_0^t S(t-s)K\bigg[b_0(\lambda_1(s))-b_0(\lambda_2(s))\bigg]\ds\bigg|^p_{\Hh}&\leq \bigg(\int_0^t \big|S(t-s)K\big|_{L^2_w}\big|b_0(\lambda_1(s))-b_0(\lambda_2(s))\big|\ds\bigg)^p\\&
            \leq C_T|K|^p_{L^2_w}|b|^p_{Lip} \bigg(\int_0^t|ev_0(\lambda_1(s)-\lambda_2(s))|\ds\bigg)^p\\&
            \lesssim  \bigg(\int_0^t|\lambda_1(s)-\lambda_2(s)|_{\Hh_1}\ds\bigg)^p,
        \end{aligned}
    \end{equation*}
    where we used Assumption \ref{assumption:SVEassumptions} and Lemma \ref{lemma:Hwproperties}. From Corollary \ref{cor:SemigroupBound} it follows that 
    \begin{equation} \label{eq:b-estimate}
        \begin{aligned}
            \EE\bigg|&\int_0^t S(t-s)K\bigg[b_0(\lambda_1(s))-b_0(\lambda_2(s))\bigg]\ds\bigg|^p_{\Hh}
            \lesssim |\lambda_1-\lambda_2|^p_{\mathscr{H}^p_T}.
            \end{aligned}
    \end{equation}

     Turning to the stochastic convolution, we apply the BDG inequality, along with Lemmas \ref{lemma:semigroup_pties}, \ref{lemma:Hwproperties} to obtain 
    \begin{equation}\label{eq:sigma-estimate}
        \begin{aligned}
            \EE\bigg| \int_0^t S(t-s)K\bigg[\sigma_0(\lambda_1(s))-\sigma_0(\lambda_2(s))\bigg]&\dW_s\bigg|^p_{\Hh}\lesssim \bigg(\int_0^t |S(t-s)K|^2_{L^2_w}\EE\big|\sigma_0(\lambda_1(s))-\sigma_0(\lambda_2(s))\big|^2\ds\bigg)^{p/2} \\&
            \leq C_T|K|^p_{L^2_w}|\sigma_0|^p_{Lip}\bigg(\int_0^t \EE\big|\lambda_1(s)-\lambda_2(s)\big|_{\Hh_1}^2 \ds\bigg)^{p/2}\\&
            \lesssim 
            \bigg(\int_0^t \EE\big|\lambda_1(s)-\lambda_2(s)\big|_{\Hh_1}^{2} \ds\bigg)^{p/2}\lesssim |\lambda_1-\lambda_2\big|_{\mathscr{H}^p_T}^p.
            \end{aligned}
    \end{equation}

As mentioned after Corollary \ref{cor:SemigroupBound}, $K'$ is not necessarily in $\Hh$ and this leads to slightly different arguments for estimating the spatial derivatives of the drift and diffusion terms. Following the rationale behind estimates \eqref{eq:SolutionMapDriftBnd} and \eqref{eq:SolutionMapDiffBnd},
 Cauchy--Schwarz and BDG inequalities yield
\begin{align*}
    &\abs{\int_0^t \partial_x S(t-s)K\Big[ b_0(x_1(s))-b_0(x_2(s))\Big]\ds }^p_{\Hh}
    \le t^{p/2} \left(\int_0^t  \abs{\partial_xS(t-s)K}_{\Hh}^2\abs{ b_0(\lambda_1(s))-b_0(\lambda_2(s))}^2\ds \right)^{p/2},
    \end{align*}
    \begin{align*}
    \EE\bigg|\int_0^t\partial_x S(t-s)K&\Big[ \sigma_0(\lambda_1(s))-\sigma_0(\lambda_2(s))\Big]\D W_s \bigg|^p_{\Hh}
    \\&\le C_p \EE \left(\int_0^t  \abs{\partial_xS(t-s)K}_{\Hh}^2\abs{ \sigma_0(\lambda_1(s))-\sigma_0(\lambda_2(s))}^2\ds \right)^{p/2}.
\end{align*}
By Lipschitz continuity of $b, \sigma$ and continuity of the linear functional $ev_0:\Hh_1\rightarrow \RR$ it follows that 
$$ |b_0(\lambda_2(s))- b_0(\lambda_1(s))|+ |\sigma_0(\lambda_2(s))- \sigma_0(\lambda_1(s))|\lesssim |ev_0(\lambda_2(s)-\lambda_1(s))|\lesssim |\lambda_2(s)-\lambda_1(s)|_{\Hh_1} $$ for all $s\in[0,t].$ From the latter, along with H\"older's inequality with exponent $q/2>1,$ both terms are bounded by
\begin{align*}
    C \left(\int_0^t \abs{\partial_xS(t-s)K}^q_{\Hh} \ds\right)^{p/q} \left(\int_0^t \abs{\lambda_1(s)-\lambda_2(s)}_{\Hh_1}^{\frac{2q}{q-2}}\ds \right)^{\frac{p}{2}\frac{q-2}{q}}.
\end{align*}
From H\"older's inequality with exponent $p(q-2)/2q>1$ we get 
\begin{align*}
    \left(\int_0^t \abs{\lambda_1(s)-\lambda_2(s)}_{\Hh_1}^{\frac{2q}{q-2}}\ds \right)^{\frac{p}{2}\frac{q-2}{q}}
    \le T^{\frac{p(q-2)}{2q}-1}\int_0^t \abs{\lambda_1(s)-\lambda_2(s)}_{\Hh_1}^{p}\ds, 
\end{align*}
while $\int_0^t \abs{\partial_x S(t-s)K}^q_{\Hh} \ds$ is finite by Assumption~\ref{assumption:Kernel}. Thus,   
    \begin{equation}\label{eq:b-sigma-derivative-estimate}
        \begin{aligned}      \sup_{t\in[0,T]}\EE\bigg|&\partial_x\int_0^t S(t-s)K\bigg[b_0(\lambda_1(s))-b_0(\lambda_2(s))\bigg]\ds\bigg|^p_{\Hh}\\&
        +\EE\abs{\int_0^t\partial_x S(t-s)K\Big[ \sigma_0(\lambda_1(s))-\sigma_0(\lambda_2(s))\Big]\D W_s }^p_{\Hh}
    \leq C_T|\lambda_1-\lambda_2|_{\mathscr{H}^p_T}^p. 
        \end{aligned}
    \end{equation}
    for a constant $C_T>0$ that is increasing in $T.$

    Combining \eqref{eq:b-estimate}, \eqref{eq:sigma-estimate}, \eqref{eq:b-sigma-derivative-estimate} we deduce that 
    \begin{equation*}
        \big|\Ii(\lambda_0, \lambda_1)-\Ii(\lambda_0, \lambda_2)\big|^p_{\mathscr{H}^p_T}\leq F(T)|\lambda_1-\lambda_2|_{\mathscr{H}^p_T}^p
    \end{equation*}
    where $F(T)$ is a continuous increasing function with $F(0)=0$. Hence, by choosing $T_0>0$ small so that $F(T_0)<1$ and $T=T_0$ it follows that 
$\Ii(\lambda_0,\cdot):\mathscr{H}^p_{T_0}\rightarrow\mathscr{H}^p_{T_0}$
    is a contraction mapping. By the Banach fixed point theorem it follows that, for all $\lambda_0,$ there exists a unique $\lambda\in\mathscr{H}^p_{T_0}(\Hh_1)$ such that
    $$\forall t\in[0,T_0],
    \;\;\PP\bigg(\Ii(\lambda_0,\lambda)(t)=\lambda(t)\bigg)=1.$$      The restriction on $T$ can then be removed by concatenating unique solutions on time-intervals of the form $[kT_0, (k+1)T_0]$ for $k\in\NN.$

    Finally, the estimate  \eqref{eq:lambdaestimate} follows from the estimates of the solution map. Indeed, from the proofs of \eqref{eq:SolutionMapDriftBnd}, \eqref{eq:SolutionMapDiffBnd} and the fixed point property of unique solutions we have 
    \begin{equation*}
        \begin{aligned}
        \sup_{t\in[0,T]}\EE |\lambda(t)|^p_{\Hh_1}=  |\lambda|^p_{\mathscr{H}^p_T} = |\Ii(\lambda_0, \lambda)|^p_{\mathscr{H}^p_T} \lesssim 1+\int_0^T\sup_{s\in[0,t]}\EE|\lambda(s)|^p_{\Hh_1}\ds
        \end{aligned}
    \end{equation*}
    and Gr\"onwall's inequality directly implies the desired bound. In turn, this shows that the unique solution satisfies the first requirement from Definition \ref{dfn:mildsolutions}.  The proof is complete.
\end{proof}

\begin{remark} We conclude this subsection with some observations on Theorem \ref{thm:SPDE_wellposedness}:
\begin{enumerate}
    \item[i)] We only proved uniqueness of mild solutions in the space $\mathscr{H}^p_T.$ In particular, this implies that any two mild solutions $\lambda_1, \lambda_2,$ per Definition \ref{dfn:mildsolutions}, must satisfy $\PP(\lambda_1(t)=\lambda_2(t))=1$ for all $t\geq 0.$ However, as we shall show in Lemma \ref{lemma:pathregularity} below, mild solutions are in fact continuous in $t.$ This in particular implies that $\lambda_1, \lambda_2$ are indistinguishable.
    \item[ii)] If the initial condition $\lambda_0$ is deterministic, then the a-priori bound  \eqref{eq:lambdaestimate} holds for all $p\geq 1.$ Indeed, for $p\geq 2q/(q-2)$ and any $p'<p$ we have 
    $$  \sup_{t\in[0,T]}\EE\big[ |\lambda(t)|^{p'}_{\Hh_1}   \big]\leq \sup_{t\in[0,T]}\big(\EE\big[ |\lambda(t)|^p_{\Hh_1}   \big]\big)^{\frac{p'}{p}}\lesssim \big(1+|\lambda_0|^p_{\Hh_1}\big)^{\frac{p'}{p}}\lesssim 1+|\lambda_0|^{p'}_{\Hh_1}.$$
\end{enumerate}
\end{remark}
\begin{remark}
    If instead one were to consider a regular kernel $K\in H_w^1(\RR^+,\RR^{d\times d})$ (e.g.  $K(t)=t^{H-\half},\,H\in[\half,1)$), then one would again fall back on the standard framework of~\cite{da2014stochastic} and recover semimartingality of the solution in~$\Hh_1$. 
\end{remark}

\subsection{Lift property and temporal regularity of mild solutions}\label{Sec:LiftRegularity}
In view of Theorem \ref{thm:SPDE_wellposedness}, we see that, for each $T>0$ the unique mild solution $\{\lambda(t);t\in[0,T]\},$ takes values in $L^\infty([0,T];\Hh_1)$ with probability $1$ (recall \eqref{eq:HpSpaces}) and defines a proper infinite-dimensional lift of the SVE~\eqref{eq:SVE}. Indeed, since $\Hh_1$ is an RKHS, the evaluation operator $ev_0:\Hh_1\rightarrow\RR^d$ at $x=0$ is continuous and for all $\lambda_0\in L^{\frac{2q}{q-2}}(\Omega;\Hh_1)$
\begin{equation} \label{eq:lift_property}
\begin{aligned} ev_0(\lambda(t))&=ev_0S(t)\lambda_0+\int_0^tev_0[S(t-s)K]b(ev_0\lambda(s))\big)\ds+\int_0^tev_0[S(t-s)K]\sigma(ev_0\lambda(s))\big)\dW_s\\&=\lambda_0(t)+\int_0^tK(t-s)b(ev_0\lambda(s))\big)\ds+\int_0^tK(t-s)\sigma(ev_0\lambda(s))\big)\dW_s.
\end{aligned}
\end{equation}
We assume from now on that $\lambda_0=X_0$; from uniqueness of solutions to \eqref{eq:SVE} and sample path continuity, it follows that
\begin{equation}\label{eq:liftproperty}
    \PP\bigg(\forall t\in[0,T],\;ev_0(\lambda(t))=X_t\bigg)=1.
\end{equation}
This property means $X$ is a projection of $\lambda$, in fact it also justifies that we call $\lambda$ a \emph{lift} of $X$.


\begin{lemma}\label{lemma:pathregularity}
Let $p\geq 2q/(q-2)$ and $\lambda_0\in L^p(\Omega;\Hh_1)$ be an $\Ff_0-$measurable random variable and $T>0.$ Under Assumptions \ref{assumption:SVEassumptions} i)-ii) and \ref{assumption:Kernel} i)-ii) the following hold:
    \begin{enumerate}
        \item $\lambda\in \Cc([0,T];\Hh)$ and $\lambda-S(\cdot)\lambda_0\in \Cc^\gamma([0,T];\Hh)$ for all~$\gamma<\half$, $\PP$-almost surely.
        \item 
        $\lambda\in \Cc([0,T];\Hh_1)$ and $\lambda-S(\cdot)\lambda_0\in \Cc^\gamma([0,T];\Hh_1)$ for all~$\gamma<\frac{q-2}{2q}$, $\PP$-almost surely. 
    \end{enumerate}
\end{lemma}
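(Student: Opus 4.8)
The plan is to decompose $\lambda(t)=S(t)\lambda_0+\Phi_1(t)+\Phi_2(t)$ into the shift term, the drift convolution $\Phi_1(t):=\int_0^tS(t-s)Kb_0(\lambda(s))\ds$ and the stochastic convolution $\Phi_2(t):=\int_0^tS(t-s)K\sigma_0(\lambda(s))\dW_s$ (with $b_0,\sigma_0$ as in Definition~\ref{dfn:mildsolutions}), and to prove Hölder continuity of each convolution separately. The term $S(\cdot)\lambda_0$ is continuous in $\Hh$ and in $\Hh_1$ by strong continuity of the shift semigroup (Lemma~\ref{lemma:semigroup_pties}, Remark~\ref{rem:H1Semigroup}); it need not be Hölder, which is precisely why the statement isolates it. For $\Phi_1,\Phi_2$ I would estimate $p$-th moments of time increments and conclude via the Kolmogorov continuity criterion, using throughout that $\sup_{t\le T}\EE|\lambda(t)|^p_{\Hh_1}<\infty$ and $\sup_{t\le T}\EE|\Phi_i(t)|^p_{\Hh_1}<\infty$, both inherited from the estimates in the proof of Theorem~\ref{thm:SPDE_wellposedness} (in particular~\eqref{eq:SolutionMapDriftBnd}, \eqref{eq:SolutionMapDiffBnd}, \eqref{eq:lambdaestimate}), together with the linear growth of $b,\sigma$.

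For part~(1) I would use the semigroup property to split, for $0\le t<t+h\le T$,
\[
\Phi_i(t+h)-\Phi_i(t)=\big(S(h)-I\big)\Phi_i(t)+\int_t^{t+h}S(t+h-s)K\,(\cdot),
\]
where $(\cdot)$ denotes $b_0(\lambda(s))\ds$ for $i=1$ and $\sigma_0(\lambda(s))\dW_s$ for $i=2$. Since $\Phi_i(t)\in\Hh_1=Dom(\partial_x)$, one has $|\big(S(h)-I\big)\Phi_i(t)|_\Hh=|\int_0^hS(r)\partial_x\Phi_i(t)\dr|_\Hh\lesssim h\,|\Phi_i(t)|_{\Hh_1}$, which is $O(h)$ in $L^p(\Omega)$. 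The near-diagonal remainders are controlled by the uniform bound $|S(r)K|_\Hh\le Ce^{cT}|K|_\Hh$ (Corollary~\ref{cor:SemigroupBound}), Hölder's inequality in time and, in the stochastic case, the Burkholder--Davis--Gundy inequality, yielding orders $h$ (drift) and $h^{1/2}$ (noise). Hence $\EE|(\Phi_1+\Phi_2)(t+h)-(\Phi_1+\Phi_2)(t)|^p_\Hh\lesssim h^{p/2}$, and the Kolmogorov criterion gives $\Cc^\gamma([0,T];\Hh)$-regularity for every $\gamma<\tfrac12-\tfrac1p$, hence for all $\gamma<\tfrac12$ upon taking $p$ large (moments of all orders being available, e.g.\ when $\lambda_0$ is deterministic); continuity of $\lambda$ in $\Hh$ then follows.

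Part~(2) is the crux and the anticipated main obstacle. The smoothing step above is no longer available, since under the present hypotheses $\Phi_i(t)\notin\Hh_2$ in general (already $\partial_x\Phi_i(t)$ involves $\partial_xS(t-s)K\notin\Hh_1$): the transport semigroup is not analytic and offers no regularity gain. Instead I would carry the increment \emph{inside} the convolution, writing
\begin{align*}
\Phi_1(t+h)-\Phi_1(t)&=\int_0^t\big(S(h)-I\big)S(t-s)K\,b_0(\lambda(s))\ds+\int_t^{t+h}S(t+h-s)Kb_0(\lambda(s))\ds,\\
\Phi_2(t+h)-\Phi_2(t)&=\int_0^t\big(S(h)-I\big)S(t-s)K\,\sigma_0(\lambda(s))\dW_s+\int_t^{t+h}S(t+h-s)K\sigma_0(\lambda(s))\dW_s,
\end{align*}
and exploiting the identity $|\big(S(h)-I\big)S(t-s)K|_{\Hh_1}=|K((t-s)+\cdot)-K((t-s)+h+\cdot)|_{\Hh_1}$ together with Assumption~\ref{assumption:Kernel}~\eqref{eq:cond_K2}, which gives $\int_0^t|\big(S(h)-I\big)S(t-s)K|^2_{\Hh_1}\ds\le\int_0^T|K(r+\cdot)-K(r+h+\cdot)|^2_{\Hh_1}\dr\lesssim h^{1-2/q}$. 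The drift leading term is then closed by Cauchy--Schwarz in time and the a priori moment bound; the stochastic leading term by BDG followed by Minkowski's integral inequality (exchanging the $L^p(\Omega)$-norm with the time integral, which conveniently removes the need for any pathwise supremum bound on $|\lambda|_{\Hh_1}$) and $\sup_{s\le T}\|\sigma_0(\lambda(s))\|_{L^p(\Omega)}<\infty$; each contributes order $h^{p(q-2)/(2q)}$. The near-diagonal remainders are handled by Hölder in time and~\eqref{eq:cond_K1}—noting that $|S(r)K|_{\Hh_1}$ may blow up as $r\downarrow0$, so only $\int_0^T|S(r)K|^q_{\Hh_1}\dr<\infty$ is available—giving orders $h^{p(q-1)/q}$ and $h^{p(q-2)/(2q)}$. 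Summing all contributions yields $\EE|(\Phi_1+\Phi_2)(t+h)-(\Phi_1+\Phi_2)(t)|^p_{\Hh_1}\lesssim h^{p(q-2)/(2q)}$, and the Kolmogorov criterion delivers $\Cc^\gamma([0,T];\Hh_1)$ for every $\gamma<\tfrac{q-2}{2q}-\tfrac1p$, hence for all $\gamma<\tfrac{q-2}{2q}$ upon taking $p$ large, which also gives continuity of $\lambda$ in $\Hh_1$. In short, the binding exponent in both norms comes from the near-diagonal piece $\int_t^{t+h}$ and, in $\Hh_1$, from the shift-difference piece controlled by~\eqref{eq:cond_K2}; the rest of the argument is bookkeeping with BDG, Hölder and Minkowski.
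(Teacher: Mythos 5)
Your proposal is correct and, for the harder Part (2), takes essentially the paper's route. Part (1) is a genuine (if small) variant worth noting.

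For Part (1), the paper estimates the overlap contribution by carrying the difference onto the kernel and bounding $\int_0^t |S(t+h-s)K-S(t-s)K|_\Hh^2\,\mathrm ds\lesssim h^2$ directly via the fundamental theorem of calculus applied to $K(\cdot+x)$ together with Assumption~\ref{assumption:Kernel}~iii). You instead factor out $(S(h)-I)\Phi_i(t)$ and use that $\Phi_i(t)\in\Hh_1=\mathrm{Dom}(\partial_x)$ (which is established in the proof of Theorem~\ref{thm:SPDE_wellposedness}, e.g.~\eqref{eq:SolutionMapDriftBnd}--\eqref{eq:SolutionMapDiffBnd}), giving $|(S(h)-I)\Phi_i(t)|_\Hh=\big|\int_0^h S(r)\partial_x\Phi_i(t)\,\mathrm dr\big|_\Hh\lesssim h\,|\Phi_i(t)|_{\Hh_1}$. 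Both yield $O(h^p)$ for the overlap and $O(h^{p/2})$ overall, but your version trades the deterministic kernel estimate for a probabilistic $L^p(\Omega;\Hh_1)$-bound on $\Phi_i$; it is cleaner in that it reuses what Theorem~\ref{thm:SPDE_wellposedness} already proved rather than re-deriving kernel regularity. (Note your route cannot be iterated for Part (2) — as you correctly observe, $\Phi_i(t)\notin\Hh_2$ in general, since the shift semigroup does not smooth — so there is no free lunch.)

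For Part (2) you reproduce the paper's argument: carry $(S(h)-I)$ onto the kernel, use~\eqref{eq:cond_K2} for the overlap piece and~\eqref{eq:cond_K1} (via Hölder in time) for the near-diagonal piece, then Kolmogorov. Your use of Minkowski's integral inequality after BDG in place of the paper's Hölder factorization $\abs{\cdot}_{L^2}^{p-2}\int_0^t\abs{\cdot}^2\EE[\ldots]\,\mathrm ds$ is a matter of taste — both only require $\sup_s\EE|\sigma_0(\lambda(s))|^p<\infty$, so the claimed advantage of avoiding a pathwise supremum is shared by the paper's argument as well. Your refined exponent $h^{p(q-1)/q}$ for the near-diagonal drift is correct but immaterial, since the stochastic pieces dominate with $h^{p(q-2)/(2q)}$, as you note.

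One caveat to keep in mind (shared with the paper): the step ``take $p$ large to upgrade $\gamma<\tfrac12-\tfrac1p$ to $\gamma<\tfrac12$'' and its Part (2) analogue require moments of all orders for the quantities entering the increment estimate, i.e.\ $\sup_s\EE|b_0(\lambda(s))|^p,\sup_s\EE|\sigma_0(\lambda(s))|^p<\infty$ (equivalently $\sup_s\EE|X_s|^p<\infty$) for arbitrarily large $p$. This holds when $\lambda_0$ is deterministic or has moments of all orders, which you flag, but is not automatic from the hypotheses of the lemma as stated; the conclusion ``for all $\gamma<\frac{q-2}{2q}$'' should be read with that understanding, just as in the paper.
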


\begin{proof}
Throughout the proof we shall use the notation $b_0,\sigma_0$ for the terms $b\circ ev_0, \sigma\circ ev_0 $ respectively. 
    For all $p>2$, $h>0$ and~$t\in[0,T-h]$, we have
    \begin{align*}
        \EE&\abs{\big(\lambda(t+h)-S(t+h)\lambda_0\big)-\big(\lambda(t)-S(t)\lambda_0\big)}^p_{\Hh} 
        \\
       & \lesssim\;\EE \abs{ \int_0^t \big(S(t+h-s)K - S(t-s)K\big) b_0(\lambda(s))\ds }^p_{\Hh} + \EE\abs{\int_t^{t+h} S(t+h-s)K b_0(\lambda(s))\ds}^p_{\Hh} \\
        &+ \EE\abs{ \int_0^t \big(S(t+h-s)K - S(t-s)K\big) \sigma_0(\lambda(s))\D W_s }^p_{\Hh} + \EE\abs{\int_t^{t+h} S(t+h-s)K \sigma_0(\lambda(s))\D W_s}^p_{\Hh}.
    \end{align*}
By Jensen, BDG and Hölder inequalities
\begin{align*}
    &\EE \abs{ \int_0^t \big(S(t+h-s)K - S(t-s)K\big) b_0(\lambda(s))\ds }^p_{\Hh} +\EE \abs{ \int_0^t \big(S(t+h-s)K - S(t-s)K\big) \sigma_0(\lambda(s))\D W_s }^p_{\Hh}\\
    &\lesssim \EE \abs{ \int_0^t \abs{S(t+h-s)K - S(t-s)K}_{\Hh}^2\big(\abs{b_0(\lambda(s))}^2+\abs{\sigma_0(\lambda(s))}^2\big)\ds }^{p/2}\\
    &\le \abs{S(t+h-\cdot)K - S(t-\cdot)K}_{L^2((0,t),\Hh)}^{p-2} \int_0^t \abs{S(t+h-s)K - S(t-s)K}_{\Hh}^2 \EE\Big[\abs{b_0(\lambda(s))}^p+\abs{\sigma_0(\lambda(s))}^p\Big] \ds \\
    &\lesssim \left(1+\sup_{s\in[0,T]}\EE\abs{\lambda(s,0)}^p\right) \left(\int_0^t \abs{S(t+h-s)K - S(t-s)K}_{\Hh}^2 \ds \right)^{p/2},
\end{align*}
where we exploited the linear growth of $b$ and $\sigma$. Notice that, in view of \eqref{eq:SVEmoments},~$\EE\abs{\lambda(s,0)}^p=\EE\abs{X_s}^p$ is uniformly bounded over~$s\in[0,T]$.  Furthermore,
\begin{align}
    &\int_0^t \abs{S(t+h-s)K - S(t-s)K}_{\Hh}^2 \ds
    = \int_0^t \int_0^\infty w(x) \abs{K(t+h-s+x)-K(t-s+x)}^2\dx \ds \nonumber\\
    &\quad = \int_0^t \int_0^\infty w(x) \abs{\int_0^h K'(s+x+r)\dr }^2\dx \ds
    \le h \int_0^h \int_0^t \int_0^\infty w(x) \abs{K'(s+x+r)}^2 \dx \ds \dr \nonumber\\
    &\quad \le h^{2} \sup_{r\in[0,h)}\int_0^t \abs{K'(s+r+\cdot)}_{\Hh}^2\ds \lesssim h^{2},
    \label{eq:diff_StK_H}
\end{align}
where we used Assumption~\ref{assumption:Kernel}. Similar computations yield 
\begin{align*}
    \EE&\abs{\int_t^{t+h} S(t+h-s)K b_0(\lambda(s))\ds}^p_{\Hh} + \EE\abs{\int_t^{t+h} S(t+h-s)K \sigma_0(\lambda(s))\D W_s}^p_{\Hh}\\
    &\lesssim \left(\int_t^{t+h} \abs{S(t+h-s)K}^2_{\Hh} \ds\right)^{p/2}=  \left(\int_0^h  \abs{S(s)K}^2_{\Hh}\ds\right)^{p/2}
    \le h^{p/2} (C\E^{c h})^{p/2} \abs{K}^p_{\Hh},
\end{align*}
where we used Corollary~\ref{cor:SemigroupBound} for the last inequality. Overall, these amount to
$$
\EE\abs{\big(\lambda-\lambda_0\big)(t+h) - \big(\lambda-\lambda_0\big)(t)}^p_{\Hh} \lesssim h^{p/2}.
$$
Kolmogorov continuity criterion thus asserts that $\lambda-\lambda_0$ admits a version which is~$\gamma-$Hölder continuous for any~$\gamma<1/2-1/p$; since~$p$ can be taken arbitrarily large this concludes the first claim.

Let us turn to regularity in $\Hh_1$. The same computations as above lead to
\begin{align*}
\EE&\abs{\partial_x \big(\lambda(t+h)-\lambda_0(t+h)\big)-\partial_x\big(\lambda(t)-\lambda_0(t)\big)}^p_{\Hh} \\
&\lesssim \left(\int_0^t \abs{\partial_x S(s+h) K - \partial_x S(s) K}^2_{\Hh}\ds \right)^{p/2}
+ \left(\int_0^{h}\abs{\partial_x S(s) K}^2_{\Hh}\ds\right)^{p/2}.
\end{align*}
Condition \eqref{eq:cond_K2} asserts that the first term is smaller than $C h^{\frac{p(q-2)}{2q}}$. H\"older's inequality yields 
\begin{align*}
    \left(\int_0^{h}\abs{\partial_xS(s) K}^2_{\Hh}\ds\right)^{p/2}
    \le \left( h^{q/2-1}\int_0^{h}\abs{\partial_x S(s)K}^q_{\Hh}\ds\right)^{p/q}
    \le h^{\frac{p(q-2)}{2q}} \left(\int_0^{T}\abs{\partial_xS(s) K}^q_{\Hh}\ds\right)^{p/q},
\end{align*}
where the last integral is finite by~\eqref{eq:cond_K1}. 
By virtue of Kolmogorov continuity criterion this proves the claim.

 Finally, strong continuity of the semigroup (Lemma \ref{lemma:semigroup_pties}) entails that $\PP$-almost surely, in both cases described above, $t\mapsto S(t)\lambda_0$ is continuous and hence so is~$\lambda$.
\end{proof}
\begin{corollary}
    Let $p\ge 2$ and $\lambda_0\in L^p(\Omega;\Hh_1)$ be an $\Ff_0-$measurable random variable and $T>0.$ Let Assumption \ref{assumption:SVEassumptions} i)-ii) hold, let~$K$ be the power-law kernel~$K(t)=t^{H-\half}$ with $H\in(0,\half)$ and let the weight be~$w(x)=x^{\beta} \E^{-x}$ with $\beta\in(1-2H,1)$. Then  $\lambda-S(\cdot)\lambda_0\in \Cc^\gamma([0,T];\Hh_1)$ for all~$\gamma<H+\frac{\beta-1}{2}$, $\PP$-almost surely.
\end{corollary}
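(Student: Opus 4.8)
The plan is to obtain this as an immediate consequence of Lemma~\ref{lemma:pathregularity}(2), by letting the integrability exponent $q$ range over all of its admissible values instead of fixing it once and for all.

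First, I would invoke Example~\ref{example:powerlaw}: it is shown there that, for $K(t)=t^{H-\half}$ with $H\in(0,\half)$ and $w(x)=x^{\beta}\E^{-x}$ with $\beta\in(1-2H,1)$, the kernel $K$ satisfies \emph{all} of Assumption~\ref{assumption:Kernel} --- namely~\eqref{eq:cond_K1},~\eqref{eq:cond_K2} and~\eqref{eq:cond_K3} --- not merely for one exponent but for \emph{every} $q$ in the non-empty open interval $\bigl(2,\tfrac{2}{2-2H-\beta}\bigr)$. Since Assumption~\ref{assumption:SVEassumptions}~i)--ii) is among the hypotheses, Lemma~\ref{lemma:pathregularity}(2) applies with any such $q$ (the initial datum being taken with enough moments, i.e.\ $\lambda_0\in L^{p}(\Omega;\Hh_1)$ with $p\ge\tfrac{2q}{q-2}$) and delivers $\lambda-S(\cdot)\lambda_0\in\Cc^{\gamma}([0,T];\Hh_1)$, $\PP$-a.s., for all $\gamma<\tfrac{q-2}{2q}$.

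Second, I would perform the elementary optimisation over $q$. The map $q\mapsto\tfrac{q-2}{2q}=\tfrac12-\tfrac1q$ is strictly increasing, and a short computation gives
$$\lim_{q\uparrow\frac{2}{2-2H-\beta}}\frac{q-2}{2q}=\frac12-\frac{2-2H-\beta}{2}=H+\frac{\beta-1}{2}.$$
Hence, for any target $\gamma<H+\tfrac{\beta-1}{2}$, I can pick $q$ in $\bigl(2,\tfrac{2}{2-2H-\beta}\bigr)$ close enough to the right endpoint that $\gamma<\tfrac{q-2}{2q}$, and then apply the previous step with that $q$. Running this along a sequence $\gamma_n\uparrow H+\tfrac{\beta-1}{2}$ and intersecting the (countably many) resulting full-probability events yields the claim for all $\gamma<H+\tfrac{\beta-1}{2}$ simultaneously.

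No substantial difficulty arises: the one point requiring care is the bookkeeping relating $q$, the Hölder exponent $\tfrac{q-2}{2q}$, and the moment threshold $\tfrac{2q}{q-2}$ from Lemma~\ref{lemma:pathregularity}. As $q\uparrow\tfrac{2}{2-2H-\beta}$ this threshold decreases to $\tfrac{2}{2H+\beta-1}$, so reaching Hölder exponents arbitrarily close to $H+\tfrac{\beta-1}{2}$ forces one to take $\lambda_0\in L^{p}(\Omega;\Hh_1)$ with $p$ large; this is exactly why the statement retains $p$ as a free parameter, a fixed $p$ producing only those exponents $\gamma$ that are strictly below $\tfrac{q-2}{2q}$ for some admissible $q\ge\tfrac{2p}{p-2}$. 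Beyond this, no estimate is needed that is not already contained in Example~\ref{example:powerlaw} and in the proof of Lemma~\ref{lemma:pathregularity}.
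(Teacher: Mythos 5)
Your proof is correct and takes essentially the same route as the paper: both read the Hölder exponent off Lemma~\ref{lemma:pathregularity}(2) after observing, via Example~\ref{example:powerlaw}, that Assumption~\ref{assumption:Kernel} holds for every $q\in\bigl(2,\tfrac{2}{2-2H-\beta}\bigr)$, and that $\tfrac{q-2}{2q}$ sweeps out the interval $\bigl(0,H+\tfrac{\beta-1}{2}\bigr)$ as $q$ approaches the right endpoint. Your additional remark about the interaction between $q$ and the moment threshold $p\ge\tfrac{2q}{q-2}$ from Lemma~\ref{lemma:pathregularity} is a genuine subtlety that the paper's one-line proof leaves implicit, and your reading of it (higher $\gamma$ requires higher $p$) is the correct one.
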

\begin{proof}
    The first two conditions \eqref{eq:cond_K1} and \eqref{eq:cond_K2} of Assumption~\ref{assumption:Kernel} hold for any $q\in(2,\frac{2}{2-2H-\beta})$ which is equivalent to $\frac{q-2}{2q}\in(0,H+\frac{\beta-1}{2})$.
\end{proof}
\begin{remark}
For an arbitrary $\delta>0$, the Volterra process $X$, solution to~\eqref{eq:SVE}, has $(H-\delta)$-Hölder continuous trajectories. 
Since one has to fix a value of $\beta=1-\ep$ with $\ep>0$, $\lambda-S(\cdot)\lambda_0$ achieves ($H-\ep/2-\delta$)-Hölder regularity in $\Hh_1$  (which depends on $\ep>0$), for any $\delta>0$.
\end{remark}

\subsection{Invariant subspaces}\label{sec:invariance}

In this section we prove invariance properties for the solution flow of \eqref{eq:SPDE}. These are crucial for both our study of the backward Kolomogorov equation (Theorem \ref{thm:backward_equation_singular}) as well as the proof of the singular It\^o formula (Theorem \ref{thm:SingularIto}).

In the sequel, we shall distinguish the following linear subspaces of $\Hh, \Hh_1:$

\begin{align}
\label{eq:SingularDirections}
     &\mathscr{K}:=\big\{ h\in\Hh\;|\; \forall t>0: S(t)h\in\Hh_1 \big\}=\bigcap_{t>0}S(t)^{-1}(\Hh_1)\subset\Hh,\\
\label{eq:yspace}
&\mathscr{K}_1:=\bigg\{  y\in\Hh_1\bigg|\;   \partial_x y\in \mathscr{K},  \abs{S(\cdot)\partial_x y}_{\Hh_1}\in L^1_{loc}(\RR^+)\bigg\}.
\end{align}
Let us also recall the notation~$\Hh_2:=  H^2_w (\RR^+;\RR^d)$ \eqref{eq:HSpaceNotations} for the weighted Sobolev space of twice weakly differentiable functions with square integrable derivatives.
\begin{remark}\label{rem:InvariantSpaces} Since $\Hh_1\subset\mathscr{K}$ and $\Hh_1$ is dense in $\Hh$ (per Corollary \ref{cor:HmDensity})  $\mathscr{K}$ is a dense subspace of $\Hh.$ Moreover, we have the inclusions
$ \Hh_2\subset\mathscr{K}_1\subset \Hh_1,$
the second of which is trivial, while the first follows from the strong continuity of $S,$ since  
$$\int_0^T|\partial_xS(t)y|_{\Hh_1}\dt\leq\int_0^T|S(t)y|_{\Hh_2}\dt\lesssim |y|_{\Hh_2}\int_0^Te^{ct}\dt<\infty$$
for any $y\in\Hh_2, T>0.$    
\end{remark}

We are now ready to show that $\mathscr{K}_1$ is invariant under the solution flow of the SPDE \ref{eq:SPDE}. The subspace $\mathscr{K}\subset\Hh$ mainly serves as the collection of ``singular" directions for our notion of singular directional derivatives (see Section \ref{subsec:singularDifferentiablity} below).

\begin{proposition}[Invariant subspace]\label{prop:invariant subspaces} Let Assumptions \ref{assumption:SVEassumptions} i)-ii) and \ref{assumption:Kernel} hold. The linear subspace $\mathscr{K}_1\subset\Hh_1$ \eqref{eq:yspace} is invariant for the unique mild solution \eqref{eq:SPDE_mild}. 
\end{proposition}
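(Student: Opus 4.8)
The plan is to work from the mild-solution formula \eqref{eq:SPDE_mild}, differentiate it once in space, and then verify the two defining conditions of $\mathscr{K}_1$ in \eqref{eq:yspace} term by term, exploiting throughout the regularising effect of positive shifts on $\partial_x K$ that is built into Assumption \ref{assumption:Kernel}.

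First I would fix $\lambda_0\in\mathscr{K}_1$ and $t\in[0,T]$. Since $\lambda(t)\in\Hh_1$ $\PP$-a.s.\ by Theorem \ref{thm:SPDE_wellposedness}, membership in $\Hh_1$ is free, and by Lemma \ref{lemma:pathregularity} the trajectory $s\mapsto\abs{\lambda(s)}_{\Hh_1}$ is $\PP$-a.s.\ bounded on $[0,T]$, which together with the linear growth of $b,\sigma$ (Assumption \ref{assumption:SVEassumptions} ii)) gives a pathwise bound on $\abs{b_0(\lambda(s))}+\abs{\sigma_0(\lambda(s))}$. The key preliminary step is to establish, as an identity in $\Hh$,
\begin{equation*}
\partial_x\lambda(t) = S(t)\partial_x\lambda_0 + \int_0^t \partial_x K(t-s+\cdot)\, b_0(\lambda(s))\ds + \int_0^t \partial_x K(t-s+\cdot)\,\sigma_0(\lambda(s))\,\dW_s ,
\end{equation*}
which I would obtain by commuting the bounded operator $\partial_x\in\mathscr{L}(\Hh_1;\Hh)$ past the Bochner and It\^o integrals in \eqref{eq:SPDE_mild}; this is legitimate because the mild-solution integrands are $\Hh_1$-valued on $s<t$ with the required norm-integrability guaranteed by \eqref{eq:cond_K1}, and $\partial_xS(t-s)K=\partial_x K(t-s+\cdot)$, $\partial_xS(t)\lambda_0=S(t)\partial_x\lambda_0$.

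Next, to show $\partial_x\lambda(t)\in\mathscr{K}$, I would apply $S(\delta)$ with $\delta>0$ to the identity above (again commuting $S(\delta)\in\mathscr{L}(\Hh)$ with the integrals) and check that each of the three resulting terms lies in $\Hh_1$: the initial-datum term because $\partial_x\lambda_0\in\mathscr{K}$ so $S(\delta)\partial_x\lambda_0\in\Hh_1$, and $S(t)$ preserves $\Hh_1$ (Remark \ref{rem:H1Semigroup}); the drift and diffusion convolutions because the shifted kernel $\partial_x K(t-s+\delta+\cdot)$ is now $\Hh_1$-valued (its argument being bounded below by $\delta>0$) with $\int_0^t\abs{\partial_x K(t-s+\delta+\cdot)}_{\Hh_1}\ds<\infty$ and $\int_0^t\abs{\partial_x K(t-s+\delta+\cdot)}^2_{\Hh_1}\ds<\infty$ by Assumption \ref{assumption:Kernel} iii), so the two integrals are genuinely $\Hh_1$-valued. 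This yields $S(\delta)\partial_x\lambda(t)\in\Hh_1$ $\PP$-a.s.\ for each fixed $\delta$; I would then upgrade to ``for all $\delta>0$, off a single null set'' by running $\delta$ along a sequence $\delta_n\downarrow0$ and using $S(\delta)=S(\delta-\delta_n)S(\delta_n)$ together with the invariance of $\Hh_1$ under $S$. For the remaining requirement $\abs{S(\cdot)\partial_x\lambda(t)}_{\Hh_1}\in L^1_{\mathrm{loc}}(\RR^+)$, I would integrate $r\mapsto\abs{S(r)\partial_x\lambda(t)}_{\Hh_1}$ over $[0,R]$ using the triangle inequality on the same three terms: the initial-datum term is controlled by the $L^1_{\mathrm{loc}}$ clause in $\lambda_0\in\mathscr{K}_1$ (since $\int_0^R\abs{S(t+r)\partial_x\lambda_0}_{\Hh_1}\dr\le\int_0^{t+R}\abs{S(u)\partial_x\lambda_0}_{\Hh_1}\du$); the drift term by Fubini and a change of variables, reducing to $\int_0^{T+R}\abs{\partial_x K(u+\cdot)}_{\Hh_1}\du<\infty$; and the stochastic term by first taking expectations, then Jensen and the Burkholder inequality, reducing to exactly the second integral condition in \eqref{eq:cond_K3} (and using \eqref{eq:lambdaestimate} to bound $\sup_{s\le T}\EE\abs{\lambda(s)}^2_{\Hh_1}$). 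Combining the three points gives $\lambda(t)\in\mathscr{K}_1$ $\PP$-a.s.

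I expect the second step — promoting $\partial_x\lambda(t)$ to an element of $\mathscr{K}$ — to be where the real work lies. The point is that $\partial_x\lambda(t)$ itself is in general \emph{not} in $\Hh_1$, because $\partial_x K(t-s+\cdot)$ becomes singular as $s\uparrow t$; the whole argument rests on the fact that any strictly positive shift pushes this singularity away from the origin, which is precisely what conditions \eqref{eq:cond_K3} are designed to quantify, and on the semigroup trick that converts pointwise-in-$\delta$ statements into one valid for all $\delta>0$ simultaneously. The statement as phrased is for fixed $t$; the pathwise version follows along the same route after remarking that $S(\cdot)$ maps $\mathscr{K}_1$ into itself (so only the convolution part $\lambda(t)-S(t)\lambda_0$, which is $\Hh_1$-continuous by Lemma \ref{lemma:pathregularity}, needs to be controlled, and this can be arranged on a countable dense set of times).
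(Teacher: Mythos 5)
Your proposal is correct and follows essentially the same route as the paper: decompose the mild solution into its three constituent terms (shifted initial datum, drift convolution, stochastic convolution), and verify each lies in $\mathscr{K}_1$ by appealing to $\lambda_0\in\mathscr{K}_1$ for the first, and to the regularising shifts encoded in Assumption~\ref{assumption:Kernel}(iii) for the other two, with BDG/Fubini for the stochastic integral. One small refinement you offer that the paper leaves implicit is the countable-sequence argument upgrading ``$S(\delta)\partial_x\lambda(t)\in\Hh_1$ a.s.\ for each fixed $\delta$'' to ``for all $\delta>0$ off a single null set''; this is a legitimate subtlety and your semigroup factorisation $S(\delta)=S(\delta-\delta_n)S(\delta_n)$ handles it cleanly.
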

\begin{proof}
For any~$y\in\Hh_1$, $\lambda^{0,y}$ is the mild solution \eqref{eq:SPDE_mild} started at~$\lambda_0=y$. We will show that~$\mathscr{K}_1$ is invariant i.e., for any initial datum $y\in\mathscr{K}_1,$ 
$\lambda^{0,y}(T)\in \mathscr{K}_1$ almost surely for any $T>0.$  
To this end, we fix an initial datum $y\in \mathscr{K}_1$ and proceed by showing that each term on the right-hand side of \eqref{eq:SPDE_mild} is an element of $\mathscr{K}_1$ almost surely.

Indeed, for any $T>0$ we have 
$S(T)y\in\Hh_1, \partial_x S(T)y=S(T)\partial_xy\in\Hh.$ Moreover, for all $t>0$ we have $S(t)\partial_x S(T)y=S(T)[S(t)\partial_xy]\in\Hh_1,$ since $S(t)\partial_xy\in\Hh_1,$ and for any $T_0>0$ we have
$$\int_0^{T_0} |S(t)\partial_xS(T)y|_{\Hh_1}\dt\lesssim \int_0^{T_0} |S(t)\partial_xy|_{\Hh_1}\dt<\infty    $$
which follows from the fact that $y\in\mathscr{K}_1$ and Corollary \ref{cor:SemigroupBound}. Thus, $S(T)y\in \mathscr{K}_1$ for any $y\in\mathscr{K}_1.$ 

As for the drift term, the previous computations show that 
$\int_0^T S(T-s)Kb_0(\lambda^{0,y}(s))\ds\in\Hh_1,    $ hence $\partial_x\int_0^T S(T-s)Kb_0(\lambda^{0,y}(s))\ds\in\Hh$ almost surely and  we have by linear growth and  Assumption \ref{assumption:Kernel}
\begin{align*}   \EE\bigg|S(t)\partial_x\bigg(\int_0^T S(T-s)Kb_0(\lambda^{0,y}(s))\ds \bigg)\bigg|_{\Hh_1}
&\lesssim  
\sup_{s\in[0,T]}\EE\left[1+\abs{\lambda^{0,y}(s)}_{\Hh_1}\right] \int_0^T |S(t+s)K|_{\Hh_2}\ds\\
&\lesssim \bigg(\int_0^T |S(t+s)K|^2_{\Hh_2}\ds\bigg)^{\frac{1}{2}}<\infty
\end{align*}
Furthermore, for any $T_0>0$, Assumption \ref{assumption:Kernel} also entails
\begin{align*}
\int_0^{T_0}\bigg|S(t)\partial_x\bigg(\int_0^T &S(T-s)Kb_0(\lambda^{0,y}(s))\bigg)\ds\bigg|_{\Hh_1}\dt\lesssim \int_0^{T_0}\bigg(\int_0^T |S(t+s)K|^2_{\Hh_2}\ds\bigg)^\frac{1}{2}\dt<\infty.
\end{align*}
Thus, for any $T>0$ and $y\in \mathscr{K}_1,$  $\int_0^T S(T-s)Kb_0(\lambda^{0,y}(s))\ds\in\mathscr{K}_1  $ almost surely.

We conclude the proof by showing similar estimates for the stochastic integral. Indeed, we have already shown that $\int_0^T S(T-s)K\sigma_0(\lambda^{0,y}(s))\dW_s\in\Hh_1,    $ hence $\partial_x\int_0^T S(T-s)K\sigma_0(\lambda^{0,y}(s))\dW_s\in\Hh$ almost surely. By virtue of the BDG inequality and for any $t>0$ we have
\begin{equation*}
    \begin{aligned}    &\EE\bigg|S(t)\partial_x\int_0^T S(T-s)K\sigma_0(\lambda^{0,y}(s))\dW_s      \bigg|_{\Hh_1}\lesssim \bigg(\int_0^T |\partial_xS(t+T-s)K|^2_{\Hh_1}\EE|\sigma_0(\lambda^{0,y}(s))|^2\ds\bigg)^{1/2}\\&
    \quad 
        \lesssim \sup_{s\in[0,T]}\EE\left[1+\abs{\lambda^{0,y}(s)}_{\Hh_1}^2\right]^\half \bigg(\int_0^T |\partial_xS(t+T-s)K|^2_{\Hh_1}\ds\bigg)^{1/2}\leq \bigg(\int_0^T |S(t+s)K|^2_{\Hh_2}\ds\bigg)^{1/2},
    \end{aligned}
\end{equation*}
where we used linear growth of $\sigma$ and the last integral is finite by Assumption \ref{assumption:Kernel}. In a similar fashion, for each $t,T_0>0,$ 
\begin{equation*}
    \begin{aligned}
\EE\int_0^{T_0}&\bigg|S(t)\partial_x\int_0^T S(T-s)K\sigma_0(\lambda^{0,y}(s))\dW_s      \bigg|_{\Hh_1}\dt\\&=\int_0^{T_0}\EE\bigg|S(t)\partial_x\int_0^T S(T-s)K\sigma_0(\lambda^{0,y}(s))\dW_s      \bigg|_{\Hh_1}\dt\lesssim \int_0^{T_0} \bigg(\int_0^T |S(t+s)K|^2_{\Hh_2}\ds\bigg)^{1/2}   \dt,
     \end{aligned}
\end{equation*}
where we used Fubini's theorem for the expectation and Riemann integral with respect to $t.$ Again, the last integral is finite  by Assumption \ref{assumption:Kernel}. We have showed that for all $T>0$ and $y\in\mathscr{K}_1,$ $\int_0^T S(T-s)K\sigma_0(\lambda^{0,y}(s))\dW_s\in\mathscr{K}_1$ almost surely. In view of the preceding arguments, the same is true for $\lambda^{0,y}(T).$ The proof is complete.    \end{proof}

\subsection{The flow and Markov properties}\label{subsec:MarkovSemigroup}
In this section and the next, we are interested in the mild solution started at~$y\in L^p(\Omega;\Hh_1)$ at time~$s\ge0$ which reads
\begin{equation}\label{eq:generalised_SPDE}
    \lambda^{s,y}(t) = S(t-s)y + \int_s^t S(t-r)K b_0(\lambda^{s,y}(r))\dr + \int_s^t S(t-r)K \sigma_0(\lambda^{s,y}(r))\D W_r.
\end{equation}
Notice that the solution to~\eqref{eq:SPDE_mild} simply corresponds to~$\lambda^{0,\lambda_0}$ and that well-posedness and continuity of Theorem \ref{thm:SPDE_wellposedness} extends without modification to~\eqref{eq:generalised_SPDE}. 
From here onwards, we may omit the initial time if it is zero and write instead~$\lambda^y:=\lambda^{0,y}$.

Under the assumptions of Theorem~\ref{thm:SPDE_wellposedness}, the SPDE \eqref{eq:generalised_SPDE} has a unique probabilistically strong solution, hence there exists a measurable map~$F:\Hh_1\times \Cc(\RR^+;\RR^m)\to \Cc(\RR^+;\Hh_1)$ such that~$F(y,W):=\lambda^{0,y}$. 
Our first goal is to establish the following flow properties.
\begin{lemma}\label{lemma:flow_pties}
Let Assumptions \ref{assumption:SVEassumptions} i)-ii) and \ref{assumption:Kernel} i)-ii) hold for some $q>2$. Define the Brownian motion~$W^t_\tau := W_{\tau+t}-W_\tau$ for all~$\tau\ge0$.
    For all~$0\le s \le t$, $y\in L^p(\Omega;\Hh_1)$ with $p>\frac{2q}{q-2}$ the following relations hold $\PP$-almost surely in $\Cc(\RR^+;\Hh_1)$:
    \begin{enumerate}
        \item $\lambda^{s,y} =\lambda^{t,\lambda^{s,y}(t)}$;
        \item $\lambda^{s,y}(t+\cdot) =F(\lambda^{s,y}(t), W^t)$. 
    \end{enumerate}
\end{lemma}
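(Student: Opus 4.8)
The plan is to deduce both identities from the uniqueness part of Theorem~\ref{thm:SPDE_wellposedness} --- which, as observed after \eqref{eq:generalised_SPDE}, applies verbatim to the equation started at an arbitrary time --- combined with the semigroup property of $S$ (Lemma~\ref{lemma:semigroup_pties}) and the usual rules for moving bounded operators and deterministic time-changes through Bochner and It\^o integrals in Hilbert space. As a preliminary remark, the a priori bound \eqref{eq:lambdaestimate} gives $\lambda^{s,y}(t)\in L^p(\Omega;\Hh_1)$, while adaptedness of $\lambda^{s,y}$ makes $\lambda^{s,y}(t)$ an $\Ff_t$-measurable random variable; hence it is an admissible initial datum for \eqref{eq:generalised_SPDE} started at time $t$, and $\lambda^{t,\lambda^{s,y}(t)}$ is well defined.

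\emph{Part (1).} Fix $0\le s\le t$ and $\tau\ge t$. Splitting each integral in \eqref{eq:generalised_SPDE} at the intermediate time $t$, using $S(\tau-r)=S(\tau-t)S(t-r)$ for $r\in[s,t]$, pulling the bounded operator $S(\tau-t)$ out of both the Bochner and the stochastic integral over $[s,t]$, and recognising that the resulting bracket is precisely $\lambda^{s,y}(t)$ by \eqref{eq:generalised_SPDE}, one arrives at
\begin{equation*}
\lambda^{s,y}(\tau)=S(\tau-t)\lambda^{s,y}(t)+\int_t^\tau S(\tau-r)Kb_0(\lambda^{s,y}(r))\dr+\int_t^\tau S(\tau-r)K\sigma_0(\lambda^{s,y}(r))\dW_r.
\end{equation*}
Thus $(\lambda^{s,y}(\tau))_{\tau\ge t}$ is a mild solution of \eqref{eq:generalised_SPDE} started at time $t$ from $\lambda^{s,y}(t)$, and the uniqueness assertion of Theorem~\ref{thm:SPDE_wellposedness} forces $\lambda^{s,y}(\tau)=\lambda^{t,\lambda^{s,y}(t)}(\tau)$ for each $\tau\ge t$, $\PP$-almost surely; since both sides are continuous in $\tau$ (Lemma~\ref{lemma:pathregularity}), the identity holds simultaneously for all $\tau\ge t$, $\PP$-almost surely.

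\emph{Part (2).} Put $\mu(\tau):=\lambda^{s,y}(t+\tau)$ and let $W^t_\tau=W_{t+\tau}-W_t$ be the shifted Brownian motion, together with the filtration $(\Ff_{t+u})_{u\ge0}$, with respect to which $W^t$ is a standard $m$-dimensional Brownian motion and, since the increments of $W$ after time $t$ are independent of $\Ff_t$, $\lambda^{s,y}(t)$ is $\Ff_t$-measurable and independent of $W^t$. Applying the identity of part~(1) at time $t+\tau$ and substituting $r=t+u$ --- the substitution being legitimate in the stochastic integral because $u\mapsto S(\tau-u)K\sigma_0(\mu(u))$ is adapted to $(\Ff_{t+u})_u$ and $W^t$ is a Brownian motion for it --- exhibits $\mu$ as a mild solution of \eqref{eq:SPDE-evolution} driven by $W^t$, started at time $0$ from $\lambda^{s,y}(t)$. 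By pathwise uniqueness, the measurable solution map yields such a solution when applied to its data; hence $\mu=F(\lambda^{s,y}(t),W^t)$, $\PP$-almost surely, which is the claim.

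I expect the genuine work to be purely organisational: once uniqueness (Theorem~\ref{thm:SPDE_wellposedness}) is in hand, the two points deserving a word of care are (i) that bounded operators, respectively a deterministic time-change, may be moved inside the infinite-dimensional stochastic convolution, and (ii) the (classical, Yamada--Watanabe type) fact that the solution map $F$, constructed via deterministic initial data, continues to represent the unique mild solution when evaluated at an initial datum that is measurable with respect to the initial $\sigma$-algebra and independent of the driving noise. Neither step presents any real difficulty.
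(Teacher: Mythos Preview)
Your proof is correct and follows essentially the same approach as the paper: split at time $t$ using the semigroup property and invoke pathwise uniqueness for part~(1), then perform the time-shift $r=t+u$ and again appeal to uniqueness for part~(2). Your exposition is in fact slightly more explicit than the paper's about the two technical points you flag at the end (moving $S(\tau-t)$ through the stochastic convolution, and the Yamada--Watanabe-type passage from deterministic to $\Ff_t$-measurable initial data in the solution map $F$), both of which the paper glosses over.
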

\begin{proof}
For all~$0\le s \le t \le \theta\le T$, the semigroup property allows us to write
\begin{align*}
    \lambda^{s,y}(\theta) 
    &= S(\theta-t)\left(S(t-s)y + \int_s^t S(t-r) \Big[ K b_0(\lambda^{s,y}(r))\dr + K\sigma_0(\lambda^{s,y}(r))\D W_r\Big] \right)\\
    &\quad + \int_t^\theta S(\theta-r) \Big[  K b_0(\lambda^{s,y}(r))\dr+  K \sigma_0(\lambda^{s,y}(r))\D W_r\Big] \\
    &= S(\theta-t) \lambda^{s,y}(t)+ \int_t^\theta S(\theta-r) \Big[  K b_0(\lambda^{s,y}(r))\dr + K \sigma_0(\lambda^{s,y}(r))\D W_r\Big].
\end{align*}
This equation holds $\PP$-almost surely for all~$\theta\in[t,T]$ and is identical to the one satisfied by~$\lambda^{t,\lambda^{s,y}(t)}$; pathwise uniqueness thus entails~$\lambda^{s,y}=\lambda^{t,\lambda^{s,y}_t}$ $\PP$-almost surely. 
Furthermore, for all~$\theta>0$, a change of variables yields
\begin{align*}
    \lambda^{s,y}(t+\theta)
    &= S(\theta)\lambda^{s,y}(t)+ \int_0^{\theta} S(\theta-r) \Big[ Kb_0(\lambda^{s,y}(t+r))\dr + K\sigma_0(\lambda^{s,y}(t+r))\D W_{r}^t\Big].
\end{align*}
We note that~$\lambda^{s,y}(t+\cdot)$ satisfies the same equation as~$F(\lambda^{s,y}(t),W^t)$.
Pathwise uniqueness of \eqref{eq:generalised_SPDE} intervenes again and yields the claim.
\end{proof}
It appears from Lemma \ref{lemma:flow_pties} that $\lambda^{s,y}(t+\cdot)$ depends on $\Ff_t$ only through $\lambda^{s,y}(t)$. The Markov property makes this observation more precise. For $\varphi:\Hh_1\to\RR$ bounded measurable, let us define 
\begin{align}\label{eq:MarkovSemigroup}
    P_{s,t}\varphi(y):=\EE[\varphi(\lambda^{s,y}(t))].
\end{align}
On account of the flow property (Lemma \ref{lemma:flow_pties}(2)) we have in particular~$\lambda^{s,y}(s+\cdot)=F(y,W^s)$ and hence
\begin{align*}
    \lambda^{s,y}(t)=\lambda^{s,y}(s+(t-s))=F(y,W^s)(t-s)\overset{\textnormal{law}}{=}F(y,W^0)(t-s)=\lambda^{0,y}(t-s)
\end{align*} 
and hence the relation
$$
P_{s,t}\varphi(y) = \EE\Big[ \varphi\big(\lambda^{s,y}(t)\big)\Big]
= \EE\Big[ \varphi\big(\lambda^{0,y}(t-s)\big)\Big] 
= P_{0,t-s}\varphi(y).
$$
This homogeneity justifies that we narrow down our analysis to the family of semigroups~$P_t:=P_{0,t}$ for all~$t>0$.
\begin{proposition}[Markov property of $\lambda$]\label{prop:Markov}
    Let Assumptions \ref{assumption:SVEassumptions} i)-ii) and \ref{assumption:Kernel} i)-ii) hold.
    For all~$s\ge 0,\,y\in\Hh_1$, the process $(\lambda^{s,y}(t))_{t\in[s,\infty)}$ has the Markov property, that is: for all $0\le s\le t \le T$ and bounded measurable functions~$\varphi:\Hh_1\to\RR$, we have
    \begin{align*}
    \EE\left[\varphi(\lambda^{s,y}(T)) \big\lvert \Ff_t\right]=
    \EE\left[\varphi(\lambda^{s,y}(T)) \big\lvert \Ff_t^\lambda\right]
    =P_{T-t}\varphi(\lambda^{s,y}(t)),
    \end{align*}    where~$\Ff_t^\lambda:=\sigma(\{\lambda^{s,y}(r), r\in[s,t]\})$.
\end{proposition}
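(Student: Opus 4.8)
The plan is to deduce the Markov property from the cocycle/flow property of Lemma~\ref{lemma:flow_pties} together with the classical independence (``freezing'') lemma, and then to transfer the statement from $\Ff_t$ to $\Ff^\lambda_t$ by the tower property. First I would record two elementary facts. (i) By adaptedness of the mild solution, $\lambda^{s,y}(t)$ is $\Ff_t$-measurable; and since $\Ff^\lambda_t=\sigma(\{\lambda^{s,y}(r),\,r\in[s,t]\})$ contains $\lambda^{s,y}(t)$, it is in fact $\Ff^\lambda_t$-measurable as well. (ii) The shifted process $W^t$ is a standard $m$-dimensional Brownian motion independent of $\Ff_t$, since $W$ is a Brownian motion with respect to the filtration $(\Ff_r)_{r\ge0}$.

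Next I would invoke Lemma~\ref{lemma:flow_pties}(2) to write, $\PP$-almost surely,
\[
\lambda^{s,y}(T)=\lambda^{s,y}\big(t+(T-t)\big)=F\big(\lambda^{s,y}(t),W^t\big)(T-t),
\]
where $F:\Hh_1\times\Cc(\RR^+;\RR^m)\to\Cc(\RR^+;\Hh_1)$ is the measurable solution map of \eqref{eq:generalised_SPDE} introduced above. Setting $g(z,\omega):=\varphi\big(F(z,\omega)(T-t)\big)$, this is a bounded Borel measurable map on $\Hh_1\times\Cc(\RR^+;\RR^m)$, being the composition of the measurable map $F$, the continuous evaluation $\Cc(\RR^+;\Hh_1)\ni f\mapsto f(T-t)\in\Hh_1$, and the bounded measurable $\varphi$. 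Hence $\varphi(\lambda^{s,y}(T))=g(\lambda^{s,y}(t),W^t)$ $\PP$-a.s.

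Now I would apply the independence lemma (see e.g.~\cite{da2014stochastic}): since $\lambda^{s,y}(t)$ is $\Ff_t$-measurable, $W^t$ is independent of $\Ff_t$, and $\Hh_1$ is separable (Lemma~\ref{lemma:Hwproperties}) while $\Cc(\RR^+;\RR^m)$ is Polish, it follows that
\[
\EE\big[\varphi(\lambda^{s,y}(T))\,\lvert\,\Ff_t\big]=\EE\big[g(\lambda^{s,y}(t),W^t)\,\lvert\,\Ff_t\big]=\psi\big(\lambda^{s,y}(t)\big),\qquad \psi(z):=\EE\big[g(z,W^t)\big].
\]
Because $W^t\overset{\textnormal{law}}{=}W$ as $\Cc(\RR^+;\RR^m)$-valued random variables and $F(z,W)=\lambda^{0,z}$ by definition of $F$, one gets $\psi(z)=\EE\big[\varphi(\lambda^{0,z}(T-t))\big]=P_{T-t}\varphi(z)$, which is the first claimed identity. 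For the conditioning on $\Ff^\lambda_t$, I would use $\Ff^\lambda_t\subset\Ff_t$ together with fact (i), which says $P_{T-t}\varphi(\lambda^{s,y}(t))$ is $\Ff^\lambda_t$-measurable; the tower property then gives
\[
\EE\big[\varphi(\lambda^{s,y}(T))\,\lvert\,\Ff^\lambda_t\big]=\EE\Big[P_{T-t}\varphi(\lambda^{s,y}(t))\,\lvert\,\Ff^\lambda_t\Big]=P_{T-t}\varphi(\lambda^{s,y}(t)).
\]
I do not expect any serious difficulty: the only point that deserves care is the rigorous use of the independence lemma in this infinite-dimensional setting, which rests on the joint Borel measurability of $(z,\omega)\mapsto F(z,\omega)$ — already available — and on separability of $\Hh_1$; both are in place. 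Everything else is routine bookkeeping with the flow property and the tower property.
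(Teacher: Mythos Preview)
Your proposal is correct and follows essentially the same route as the paper: both invoke the flow property (Lemma~\ref{lemma:flow_pties}(2)) to write $\lambda^{s,y}(T)=F(\lambda^{s,y}(t),W^t)(T-t)$, apply the independence/freezing lemma using that $W^t$ is independent of $\Ff_t$ to obtain $P_{T-t}\varphi(\lambda^{s,y}(t))$, and then use $\Ff_t^\lambda\subset\Ff_t$ together with the $\Ff_t^\lambda$-measurability of $P_{T-t}\varphi(\lambda^{s,y}(t))$ and the tower property for the second equality. Your version is slightly more explicit about the measurability of $g$ and the role of separability, but the argument is the same.
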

\begin{proof}
Let $0\le s\le t\le T$ and $y\in\Hh_1$. Leveraging the flow property in Lemma \ref{lemma:flow_pties}(2) and the independence of $W^t$ with respect to~$\mathcal{F}_t$, we have
    \begin{align*}
        \EE\Big[\varphi(\lambda^{s,y}(T)) \big\lvert \Ff_t\Big] 
        &= \EE\Big[\varphi \circ F\big(\lambda^{s,y}(t), W^t\big)(T-t) \big\lvert \Ff_t\Big] \\
        &= \EE\Big[\varphi \circ F\big(\xi, W^t\big)(T-t) \Big]\Big\lvert_{\xi=\lambda^{s,y}(t)} \\
        &= \EE\big[\varphi\big(\lambda^{0,\xi}(T-t)\big)\big] \big\lvert_{\xi=\lambda^{s,y}(t)}\\
        &=P_{T-t} \varphi(\lambda^{s,y}(t)).
    \end{align*}
    In particular, $P_{T-t} \varphi(\lambda^{s,y}(t))$ is $\Ff_t^\lambda$-measurable and~$\Ff_t^\lambda\subset\Ff_t.$ Taking conditional expectations with respect to~$\Ff_t^\lambda$ in the equation above finishes the proof.
\end{proof}


\subsection{The Feller and generalized Feller 
properties}\label{sec:Feller}
We conclude this section with an investigation of further regularity properties of the Markov semigroup.
\begin{proposition}\label{Prop:Feller} 
Let Assumptions \ref{assumption:SVEassumptions} i)-ii) and \ref{assumption:Kernel} i)-ii) hold. 
\begin{enumerate}
    \item For all $p\ge1$, $s\in[0,T]$ and $y,z\in\Hh_1$, the mild solution ~\eqref{eq:generalised_SPDE} satisfies 
    \begin{equation}\label{eq:initial_regularity}
\EE\left[\sup_{t\in[s,T]}\abs{\lambda^{s,y}(t)-\lambda^{s,z}(t)}_{\Hh_1}^p\right] \lesssim \abs{y-z}^p_{\Hh_1}.
    \end{equation}
    \item The semigroup $\{P_{t}\}_{t\ge0}$ has the Feller property: $P_t: \Cc_b(\Hh_1)\to \Cc_b(\Hh_1)$ for every~$t\ge0$.
\end{enumerate}
\end{proposition}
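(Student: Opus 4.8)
\medskip
\noindent\textbf{Proof plan.}
The plan is to first establish the quantitative stability estimate~(1) and then derive the Feller property~(2) as a soft consequence.

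For~(1), fix $s\in[0,T]$ and $y,z\in\Hh_1$, set $D(t):=\lambda^{s,y}(t)-\lambda^{s,z}(t)$, and subtract the two mild formulations~\eqref{eq:generalised_SPDE}:
\begin{equation*}
D(t)=S(t-s)(y-z)+\int_s^t S(t-r)K\,B_r\,\dr+\int_s^t S(t-r)K\,\Sigma_r\,\dW_r,
\end{equation*}
where $B_r:=b_0(\lambda^{s,y}(r))-b_0(\lambda^{s,z}(r))$ and $\Sigma_r:=\sigma_0(\lambda^{s,y}(r))-\sigma_0(\lambda^{s,z}(r))$, both of norm $\lesssim|D(r)|_{\Hh_1}$ by the Lipschitz property of $b,\sigma$ (Assumption~\ref{assumption:SVEassumptions}) and continuity of $ev_0$ on the RKHS $\Hh_1$ (Lemma~\ref{lemma:Hwproperties}). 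Writing $g(t):=\EE\big[\sup_{r\in[s,t]}|D(r)|_{\Hh_1}^p\big]$, the target is the Gronwall-type bound $g(t)\le C|y-z|_{\Hh_1}^p+C\int_s^t g(r)\,\dr$. Since $y,z$ are deterministic, the remark following Theorem~\ref{thm:SPDE_wellposedness} provides $\sup_{r\in[s,T]}\EE|D(r)|_{\Hh_1}^p<\infty$ for all $p\ge1$, and the estimates below will simultaneously show $g(T)<\infty$, so Gronwall applies. It suffices to argue for $p$ large (above a threshold fixed by $q$), since for smaller $p'$ one has $\EE[\sup_r|D(r)|_{\Hh_1}^{p'}]\le(\EE[\sup_r|D(r)|_{\Hh_1}^{p}])^{p'/p}\lesssim|y-z|_{\Hh_1}^{p'}$ by Jensen's inequality.

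The first two terms are routine. Corollary~\ref{cor:SemigroupBound} gives $\sup_{r\in[s,T]}|S(r-s)(y-z)|_{\Hh_1}\le C|y-z|_{\Hh_1}$. For the drift, since $S(t-r)K\in\Hh_1$ for $r<t$, Hölder's inequality in time with exponent $q$ yields a $t$-uniform bound
\begin{equation*}
\sup_{\tau\in[s,t]}\Big|\int_s^\tau S(\tau-r)K\,B_r\,\dr\Big|_{\Hh_1}
\lesssim\Big(\int_0^T|S(v)K|_{\Hh_1}^q\,\dv\Big)^{1/q}\Big(\int_s^t|D(r)|_{\Hh_1}^{q/(q-1)}\,\dr\Big)^{(q-1)/q},
\end{equation*}
the first factor being finite by~\eqref{eq:cond_K1} (recall $|S(v)K|_{\Hh_1}=|K(v+\cdot)|_{\Hh_1}$); raising to the power $p$, applying Hölder in time once more (using $p\ge q/(q-1)$) and taking expectations bounds the drift contribution by $C\int_s^t g(r)\,\dr$.

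The delicate term is the stochastic convolution, and this is the main obstacle: because $K\notin\Hh_1$, the integrand $S(t-r)K$ is $\Hh_1$-valued only after the shift has regularised it, so one cannot feed $K\Sigma_r$ into a Burkholder-type maximal inequality in $\Hh_1$ directly. Instead I would use the factorization method (cf.\ \cite[Section~5.3]{da2014stochastic}): fix $\alpha\in\big(1/p,(q-2)/(2q)\big)$ — a non-empty interval once $p>2q/(q-2)$, since $q>2$ — and write the stochastic convolution as $c_\alpha\int_s^t(t-\rho)^{\alpha-1}S(t-\rho)Y_\alpha(\rho)\,\D\rho$ with $Y_\alpha(\rho):=\int_s^\rho(\rho-r)^{-\alpha}S(\rho-r)K\,\Sigma_r\,\dW_r$. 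Uniform boundedness of $S$ on $[0,T]$ (Lemma~\ref{lemma:semigroup_pties}) together with Hölder in $\rho$ (using $\alpha>1/p$) gives $\sup_{\tau\in[s,t]}|\,\cdot\,|_{\Hh_1}^p\lesssim\int_s^t|Y_\alpha(\rho)|_{\Hh_1}^p\,\D\rho$; the BDG inequality applied to $Y_\alpha$, which carries no supremum, bounds $\EE|Y_\alpha(\rho)|_{\Hh_1}^p$ by $\EE\big(\int_s^\rho(\rho-r)^{-2\alpha}|S(\rho-r)K|_{\Hh_1}^2|D(r)|_{\Hh_1}^2\,\dr\big)^{p/2}$; and a threefold Hölder inequality in $r$ — placing $(\rho-r)^{-2\alpha}$ in $L^{r_1}$ with $2\alpha r_1<1$, the factor $|S(\rho-r)K|_{\Hh_1}^2$ in $L^{q/2}$ (finite by~\eqref{eq:cond_K1}), and $|D(r)|_{\Hh_1}^2$ in the conjugate $L^{r_3}$ — reduces the latter to $C\big(\int_s^\rho|D(r)|_{\Hh_1}^{2r_3}\,\dr\big)^{p/(2r_3)}\le C\int_s^\rho|D(r)|_{\Hh_1}^p\,\dr$ provided $p\ge 2r_3$. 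The constraint $\alpha<(q-2)/(2q)$ is exactly what makes the exponent $r_1>q/(q-2)$ admissible, so that no integrability of $K$ beyond~\eqref{eq:cond_K1} is needed. Collecting the three contributions gives the desired inequality, and Gronwall's lemma yields~\eqref{eq:initial_regularity}.

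Part~(2) then follows easily. Boundedness is clear: $|P_t\varphi(y)|\le|\varphi|_\infty$. For continuity, let $y_n\to y$ in $\Hh_1$; estimate~\eqref{eq:initial_regularity} with $s=0$ gives $\lambda^{y_n}(t)\to\lambda^{y}(t)$ in $L^p(\Omega;\Hh_1)$, hence in probability, so any subsequence contains a further subsequence along which $\lambda^{y_n}(t)\to\lambda^y(t)$ $\PP$-a.s.; continuity of $\varphi$ and dominated convergence give $\EE[\varphi(\lambda^{y_n}(t))]\to\EE[\varphi(\lambda^y(t))]$ along that sub-subsequence, and since every subsequence admits one with the same limit, $P_t\varphi(y_n)\to P_t\varphi(y)$, i.e.\ $P_t\varphi\in\Cc_b(\Hh_1)$.
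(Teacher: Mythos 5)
Your argument is correct and uses a genuinely different technique from the paper for the stochastic convolution. The paper first establishes the pointwise-in-time bound $\sup_{t\in[s,T]}\EE\abs{\lambda^{s,y}(t)-\lambda^{s,z}(t)}_{\Hh_1}^p\lesssim\abs{y-z}^p_{\Hh_1}$ by Gr\"onwall, and then upgrades to the supremum inside the expectation by estimating $\EE\abs{\Delta\lambda^{s,y,z}(t)-\Delta\lambda^{s,y,z}(t')}_{\Hh_1}^p$ (with $\Delta\lambda^{s,y,z}:=\lambda^{s,y}-\lambda^{s,z}-S(\cdot-s)(y-z)$) and invoking the Kolmogorov continuity criterion; this step crucially uses the kernel time-H\"older condition~\eqref{eq:cond_K2}. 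You instead use the Da~Prato--Zabczyk factorization to control $\sup_\tau$ of the stochastic convolution directly, needing only~\eqref{eq:cond_K1}. So the two proofs trade off: yours requires one less kernel hypothesis (only $L^q$ integrability of $S(\cdot)K$, not its H\"older modulus), whereas the paper's Kolmogorov-based step gives a slightly lower threshold on $p$ and, as a by-product, a pathwise H\"older modulus for $\Delta\lambda^{s,y,z}$. Both yield~\eqref{eq:initial_regularity} for all $p\ge1$ after the Jensen reduction. Two small remarks on the write-up. First, the parenthetical ``non-empty once $p>2q/(q-2)$'' is true of the interval $(1/p,(q-2)/(2q))$ alone, but tracking all the exponent constraints (the triple H\"older requires $r_1\ge\frac{pq}{p(q-2)-2q}$ together with $2\alpha r_1<1$ and $\alpha>1/p$) gives the effective threshold $p>4q/(q-2)$; since you immediately pass to ``$p$ large'' this is immaterial, but the stated bound may mislead. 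Second, running Gr\"onwall on $g(t)=\EE[\sup_{r\le t}|D(r)|_{\Hh_1}^p]$ requires $g(T)<\infty$ a priori; the cleanest way to dispatch this is exactly the paper's two-step structure --- first the pointwise bound $\sup_r\EE|D(r)|_{\Hh_1}^p\lesssim|y-z|^p_{\Hh_1}$ by Gr\"onwall (here finiteness is immediate from~\eqref{eq:lambdaestimate}), and then observe that your sup estimates already bound $g(t)$ by $C|y-z|^p_{\Hh_1}+C\int_s^t\EE|D(r)|_{\Hh_1}^p\,\dr$, so no second Gr\"onwall is actually needed. Part~(2) is identical to the paper's argument (continuity in probability plus a subsequence/dominated-convergence step) and is fine.
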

\begin{proof}
(1) The same arguments as in the proof of Theorem \ref{thm:SPDE_wellposedness} yield, thanks to the semigroup's strong continuity, coefficients' Lipschitz continuity, Jensen and BDG inequality, and Gr\"onwall's lemma
\begin{align*}
\EE\left[\abs{\lambda^{s,y}(t)-\lambda^{s,z}(t)}_{\Hh_1}^p\right] \lesssim \abs{y-z}^p_{\Hh_1},
\end{align*}
for all $p>\frac{2q}{q-2}$; the extension to all~$p\ge1$ follows from Jensen's inequality. For $y,z\in\Hh_1$ and $0\le s<t'<t\le T$, define $\Delta\lambda^{s,y,z}(t):=\lambda^{s,y}(t)-\lambda^{s,z}(t)-S(t-s)(y-z)$.
We then have by H\"older's inequality
 \begin{align*}
        \EE&\left[\abs{\Delta\lambda^{s,y,z}(t)-\Delta\lambda^{s,y,z}(t')}_{\Hh_1}^p\right] \\
        &\lesssim  
        \EE\left[\left(\int_{t'}^t \abs{S(t-r)K}^2_{\Hh_1} \abs{\lambda^{s,y}(r)-\lambda^{s,z}(r)}_{\Hh_1}^2 \dr\right)^{p/2}\right]\\
        &\quad + \EE\left[\left(\int_s^{t'} \abs{\big(S(t-r)-S(t'-r)\big)K}^2_{\Hh_1} \abs{\lambda^{s,y}(r)-\lambda^{s,z}(r)}_{\Hh_1}^2 \dr\right)^{p/2}\right] \\
        &\lesssim 
          \left( \int_{t'}^t \abs{S(t-r)K}^2_{\Hh_1}\dr\right)^{\frac{p-2}{2}}  \EE\int_{t'}^t \abs{S(t-r)K}^2_{\Hh_1} \abs{\lambda^{s,y}(r)-\lambda^{s,z}(r)}_{\Hh_1}^p \dr
         \\
        &\quad +\left( \int_s^{t'} \abs{\big(S(t-r)-S(t'-r)\big)K}^2_{\Hh_1}\dr\right)^{\frac{p-2}{2}} \EE\int_s^{t'} \abs{\big(S(t-r)-S(t'-r)\big)K}^2_{\Hh_1} \abs{\lambda^{s,y}(r)-\lambda^{s,z}(r)}_{\Hh_1}^p \dr \\
        &\lesssim \abs{y-z}^p_{\Hh_1}\bigg(
        \left(\int_0^{t-t'} \abs{S(r)K}^2_{\Hh_1} \dr\right)^{p/2} + \left( \int_s^{t'} \abs{\big(S(t-r)-S(t'-r)\big)K}^2_{\Hh_1}\dr\right)^{p/2}\Bigg).
    \end{align*}
    Assumption \ref{assumption:Kernel}, and in particular estimates~\eqref{eq:cond_K1} and \eqref{eq:cond_K2}, yields
    \begin{align*}
        &\int_0^{t-t'} \abs{S(r)K}^2_{\Hh_1} \dr \le (t-t')^{\frac{q-2}{2}} \left(\int_0^{t-t'} \abs{S(r)K}^q_{\Hh_1} \dr\right)^{2/q};\\
        & \int_0^s \abs{\big(S(t-r)-S(t'-r)\big)K}^2_{\Hh_1}\dr \lesssim (t-t')^{\frac{q-2}{2}} .
    \end{align*}
    Kolmogorov continuity criterion entails that for all $\gamma\in(0,(q-2)/4-1/p)$ and $p>\frac{4}{q-2}$
    \begin{align*}
        \EE\left[\sup_{0\le t'\neq t\le T} \frac{\abs{\Delta\lambda^{s,y,z}(t)-\Delta\lambda^{s,y,z}(t')}_{\Hh_1}^p}{\abs{t-t'}^{\gamma p}}\right] \le C\abs{y-z}^p_{\Hh_1}.
    \end{align*}
    Jensen's inequality grants the inequality for any~$p\ge1$. 
    In particular, since $\Delta\lambda^{s,y,z}(s)=0$, it holds~$\EE[\sup_{t\in[s,T]}\abs{\Delta\lambda^{s,y,z}(t)}^p]\le C\abs{y-z}^p_{\Hh_1}$. This yields the claim since~$\abs{S(t-s)(y-z)}_{\Hh_1}\lesssim \abs{y-z}_{\Hh_1}$ by Lemma~\ref{lemma:semigroup_pties}.
    
(2)
The estimate above implies that the map $y\mapsto \lambda^{s,y}(t)$ is continuous in probability. Hence for any $\varphi\in  \Cc_b(\Hh_1)$ and any sequence $\{y_n\}_{n\in\NN}$ in $\Hh_1$ converging to~$y\in\Hh_1$, we have
    $$
\lim_{n\to\infty}P_{t}\varphi(y_n) = \lim_{n\to\infty} \EE\big[\varphi(\lambda^{s,y_n}(t))\big]
= \EE\big[\varphi(\lambda^{s,y}(t))\big]
= P_{t}\varphi(y).
    $$
    This proves that~$P_{t}\varphi\in  \Cc_b(\Hh_1)$ and concludes the proof.
\end{proof}
\begin{remark}
    Theorem 4.9 of \cite{gawarecki2010stochastic} show that the Markov and Feller properties combined imply the strong Markov property:  $\EE\big[\varphi(\lambda^{y}(t)) \big\lvert \Ff_\tau\big]= \EE\big[\varphi\big(\lambda^{y}(t)\big)\big\lvert \lambda^{y}(\tau)\big]$ for all $\{\Ff_t\}_{t\ge0}$-stopping times~$\tau$.
\end{remark}
The strong Feller property holds if the Markov semigroup regularises the test function, in the sense that $P_t\varphi\in \Cc_b(\Hh_1)$ for all bounded measurable~$\varphi$. Hamaguchi~\cite[Section 3.2]{hamaguchi2023markovian} demonstrates, for a different Markovian lift but in a similar setup as ours where the Brownian motion~$W$ is finite-dimensional, that such a property cannot hold. This is due to the degeneracy of the noise for SPDEs such as~\eqref{eq:generalised_SPDE}. We can remedy to this issue with a detour to generalised Feller processes. 
In the finite-dimensional theory, two definitions of Feller semigroups coexist. One is related to the space of continuous bounded functions $ \Cc_b(\Hh_1)$ and another one based on the space of continuous functions vanishing at infinity $\Cc_0(\Hh_1)$. The former generalises to the infinite-dimensional case in a straightforward manner, as we just presented, but is seldom strongly continuous. The latter formulation, however, needs revamping when working with infinite dimensions. Generalized Feller processes~\cite{dorsek2010semigroup,cuchiero2020generalized,cuchiero2023ramifications} were introduced with the purpose of establishing a Feller theory in non-locally compact (typically infinite-dimensional) spaces, with the immediate benefit of granting strong continuity to the semigroup. The semigroups act on weighted spaces analogous to the space~$\Cc_0$ in non-locally compact spaces.  
We take~\cite{cuchiero2020generalized} as reference to recall the notations. 

We first note that $\Hh_1$ is a completely regular Hausdorff topological space. We say that $\varrho\colon \Hh_1\to (0,+\infty)$ is an admissible weight function if the sublevel sets~$K_R:= \{x\in \Hh_1: \varrho(x)\le R\}$ are compact for all~$R>0$~\cite[Definition 2.1]{cuchiero2020generalized}. 
The vector space
\[
\mathrm{B}^\varrho(\Hh_1) := \left\{ \varphi\colon \Hh_1\to \RR: \sup_{x\in \Hh_1}\varrho(x)^{-1} \abs{\varphi(x)} <\infty \right\},
\]
equipped with the norm
\begin{equation}\label{eq:rhonorm}
\abs{\varphi}_\varrho:= \sup_{x\in \Hh_1} \varrho(x)^{-1}\abs{\varphi(x)},
\end{equation}
is a Banach space, and~$ \Cc_b(\Hh_1) \subset \mathrm{B}^\varrho(\Hh_1)$. Moreover, the space~$\Bb^\varrho (\Hh_1)$ is defined as the closure of~$ \Cc_b(\Hh_1)$ with respect to~$\abs{\cdot}_{\varrho}$~\cite[Definition 2.3]{cuchiero2020generalized}, and is also a Banach space when equipped with the norm~\eqref{eq:rhonorm}.
A useful characterisation, proven in  \cite[Theorem 2.2]{cuchiero2023ramifications}, states that $\varphi\in\Bb^\varrho(\Hh_1)$ if and only if, for all~$R>0$, $\varphi\lvert_{K_R}\in  \Cc_b(K_R)$ and \begin{equation}\label{eq:VanishInfty}
    \lim_{R\to\infty}\sup_{x\in\Hh_1\setminus K_R}\frac{\abs{\varphi(x)}}{\varrho(x)}=0.
\end{equation} 
Generalised Feller semigroups consist of bounded, positive, linear, bounded operators and are automatically strongly continuous.
\begin{definition}[Definition 2.5 of~\cite{cuchiero2020generalized}]
A family of bounded linear operator~$P_t:\Bb^\varrho(\Hh_1)\to\Bb^\varrho(\Hh_1)$ for~$t\ge0$ is called a generalised Feller semigroup if
\begin{enumerate}
    \item[i)] $P_0=I$, the identity on~$\Bb^\varrho(\Hh_1)$,
    \item[ii)] $P_{t+s}= P_t P_s$, for all~$t,s\ge0$,
    \item[iii)] For all~$\varphi\in\Bb^\varrho(\Hh_1)$ and~$y\in\Hh_1$, $\lim_{t\downarrow0} P_t \varphi(y) = \varphi(y)$,
    \item[iv)] There exist~$C>0$ and~$\ep>0$ such that for all~$t\in[0,\ep],$ $\norm{P_t}_{\mathscr{L}(\Bb^\varrho(\Hh_1))}\le C$,
    \item[v)] $P_t$ is positive for all~$t\ge0$, that is, for any $\varphi\in\Bb^\varrho(\Hh_1)$ such that $\varphi\ge0$, then $P_t \varphi\ge0$.
\end{enumerate}
\end{definition}
Crucially, generalized Feller semigroups are strongly continuous by \cite[Theorem 2.8]{cuchiero2023ramifications}. Moreover, this theory found applications to the study of the large-time behaviour of Volterra process~\cite{friesen2024stationary,jacquier2025large}.

One can prove that a time-homogeneous Markov process has the generalized Feller property using the conditions displayed in~\cite[Theorem 2.21]{cuchiero2020generalized}.
The strategy consists in embedding a closed subspace~$\Ee$ into a dual space in order to leverage the weak-$\star$-topology. Since $\Hh_1$ is a Hilbert space, we simply define~$\Ee=\Hh_1^\star=\Hh_1$, equipped with its weak topology. The Banach-Alaoglu theorem ensures that~$\varrho(x)=1+\abs{x}_{\Hh_1}$ is an admissible weight function because closed balls with respect to the $\Hh_1$-norm are compact in the weak topology.

\begin{proposition}
Let Assumptions \ref{assumption:SVEassumptions} i)-ii) and \ref{assumption:Kernel} i)-ii) hold.
    The time-homogeneous Markov semigroup $(P_t)_{t\ge0}$ satisfies the following:
    \begin{enumerate} 
        \item There are constants $C>0$ and $\ep>0$ such that
        $$
P_t\varrho(y)\le C \varrho(y), \qquad \text{for all   }y \in\Hh_1, t\in[0,\ep].
        $$
        \item It holds that 
        $$
\lim_{t\to0} P_t \varphi(y)=\varphi(y), \qquad \text{for all   } y\in\Hh_1, \; \varphi\in \Bb^\varrho(\Hh_1).
        $$
        \item For all $\varphi$ in a dense subset of $\Bb^\varrho(\Hh_1)$, the map $y \mapsto P_t \varphi(y)$ lies in $\Bb^\varrho(\Hh_1)$.
    \end{enumerate}
    Thereofore, $(P_t)_{t\ge0}$ is a generalized Feller and a strongly continuous semigroup.
\end{proposition}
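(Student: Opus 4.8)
The plan is to verify the three displayed conditions, which are precisely the hypotheses of \cite[Theorem 2.21]{cuchiero2020generalized} in the setting fixed just before the statement: $\Ee=\Hh_1$ equipped with its weak topology, the admissible weight $\varrho(y)=1+\abs{y}_{\Hh_1}$ (admissible by Banach--Alaoglu, as noted above), and $P_t\varphi(y)=\EE[\varphi(\lambda^{0,y}(t))]$ with $\lambda^{0,y}$ the mild solution \eqref{eq:SPDE_mild} started from the deterministic datum $y\in\Hh_1$. Once (1)--(3) are checked, that theorem upgrades $(P_t)_{t\ge0}$ to a generalized Feller semigroup on $\Bb^\varrho(\Hh_1)$, and strong continuity is then automatic by \cite[Theorem 2.8]{cuchiero2023ramifications}.

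For item (1) I would simply read off the moment bound \eqref{eq:lambdaestimate} of Theorem~\ref{thm:SPDE_wellposedness}: since $y$ is deterministic the estimate holds with $p=1$ (see the remark following that theorem), and since its constant is increasing in the time horizon it can be taken uniform over $t\in[0,\ep]$ for any fixed $\ep>0$. Then $P_t\varrho(y)=1+\EE[\abs{\lambda^{0,y}(t)}_{\Hh_1}]\le C\,\varrho(y)$ for all $t\in[0,\ep]$, which is (1).

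For item (2) I would first treat $\varphi\in\Cc_b(\Hh_1)$: by the sample-path continuity of $\lambda^{0,y}$ in $\Hh_1$ (Lemma~\ref{lemma:pathregularity}) we have $\lambda^{0,y}(t)\to\lambda^{0,y}(0)=y$ in $\Hh_1$ almost surely as $t\downarrow0$, so dominated convergence yields $P_t\varphi(y)\to\varphi(y)$. The extension to arbitrary $\varphi\in\Bb^\varrho(\Hh_1)$ is the one point needing a little care: given $\eta>0$ I would pick $\psi\in\Cc_b(\Hh_1)$ with $\abs{\varphi-\psi}_\varrho<\eta$, split $\abs{P_t\varphi(y)-\varphi(y)}$ into the three terms $\abs{P_t(\varphi-\psi)(y)}$, $\abs{P_t\psi(y)-\psi(y)}$, $\abs{\psi(y)-\varphi(y)}$, and control the first via $\abs{P_t(\varphi-\psi)(y)}\le\abs{\varphi-\psi}_\varrho\,P_t\varrho(y)\lesssim\eta\,\varrho(y)$ uniformly on $[0,\ep]$ using item (1); the middle term vanishes by the $\Cc_b$ case and the last is at most $\eta\,\varrho(y)$, so letting $t\downarrow0$ and then $\eta\downarrow0$ gives (2).

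For item (3) I would take the dense subset to be $\Cc_b(\Hh_1)$ itself, which is dense in $\Bb^\varrho(\Hh_1)$ by the very definition of the latter as the $\abs{\cdot}_\varrho$-closure of $\Cc_b(\Hh_1)$: the Feller property of Proposition~\ref{Prop:Feller}(2) states exactly that $P_t\varphi\in\Cc_b(\Hh_1)\subset\Bb^\varrho(\Hh_1)$ for such $\varphi$. No genuinely hard step arises here; the only subtlety is the uniform-in-$t$ control needed in the density argument for item (2), which is precisely why item (1) has to be established first.
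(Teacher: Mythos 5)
Your proof is correct and follows essentially the same route as the paper: item (1) via the a priori moment bound \eqref{eq:lambdaestimate} (with $p=1$ for deterministic initial data), item (2) via a three-term density argument controlled by item (1) together with the $\Cc_b$ case (continuity of $t\mapsto\lambda^{0,y}(t)$ in $\Hh_1$), and item (3) via the ordinary Feller property of Proposition~\ref{Prop:Feller}(2) applied to the dense subset $\Cc_b(\Hh_1)$. The paper's own proof arranges the split in item (2) in a slightly different order but is substantively identical.
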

\begin{proof}\
\begin{enumerate}
    \item  The first property is a direct consequence of the estimate \eqref{eq:lambdaestimate}.
    \item  Let $\ep>0$. 
Since $\varphi\in\Bb^\varrho(\Hh_1)$ there exists $(\varphi_n)_{n\in\NN}$ in $ \Cc_b(\Hh_1)$ such that $\varphi_n$ converges to $\varphi$ with respect to~$\abs{\cdot}_{\varrho}$. Then for all $n\in\NN$ and $t\in(0,1)$,
$$
\abs{P_t \varphi(y) - \varphi(y)} \le \abs{P_t\varphi(y)-P_t \varphi_n(y)} + \abs{P_t \varphi_n(y) - \varphi_n(y)} + \abs{\varphi_n(y) - \varphi(y)}.
$$
Notice that 
$$
\abs{P_t\varphi(y)-P_t \varphi_n(y)} 
=\abs{\EE\big[ \varphi(\lambda^y_t)-\varphi_n(\lambda^y_t)\big]} \le \EE[\varrho(\lambda^y_t)]\sup_{x\in\Hh_1} \frac{\abs{\varphi(x)-\varphi_n(x)}}{\varrho(x)} 
=  \EE\big[1+\abs{\lambda^y_t}_{\varrho}\big] \abs{\varphi-\varphi_n}_{\varrho},
$$
and similarly we have~$\abs{\varphi_n(y)-\varphi(y)}\le \varrho(y) \abs{\varphi-\varphi_n}_{\varrho}$. From \eqref{eq:lambdaestimate}, we have~$\EE[\varrho(\lambda^y_t)]\lesssim C\varrho(y)$ for some constant~$C>0$ hence we let $n_0\in\NN$ be such that~$\abs{\varphi-\varphi_n}_{\varrho}(1+C)\varrho(y)\le  \ep$ for all~$n_0\le n$. For such $n$, $\varphi_n\in\Cc_b(\Hh_1)$ is a continuous function, hence $t\mapsto \varphi_n(\lambda^y(t))$ is continuous almost surely and this implies the existence of some~$\delta>0$ such that for all $t\le \delta$, we have~$\abs{\EE\big[ \varphi_n(\lambda^y_t)-\varphi_n(y)\big]}\le \ep$. It thus holds~$\abs{P_t\varphi(y)-\varphi(y)}\le2\ep$ for all~$t\le\delta$.
\item Since $(P_t)_{t\ge0}$ has the Feller property, $P_t \varphi\in  \Cc_b(\Hh_1)\subset\Bb^\varrho(\Hh_1)$  for all $\varphi\in  \Cc_b(\Hh_1)$. The latter is a dense subset of $\Bb^\varrho(\Hh_1)$, which yields the claim.
\end{enumerate}
The conclusions are precisely the statements of Thereom 2.21 of \cite{cuchiero2020generalized} and Theorem 2.8 of \cite{cuchiero2023ramifications}, respectively. 
\end{proof}


\section{It\^o formulae and Fokker--Planck equations}\label{sec:ito} Up until this point we have connected the strong solution $X$ of \eqref{eq:SVE} to (probabilistically strong) analytically mild solutions $\lambda$ of the SPDE \eqref{eq:SPDE} that take values on $\Hh_1.$ This section is devoted to the proof of It\^o formulae for $\lambda$ which yield, as byproducts, an It\^o formula for $X$ as well as Fokker-Planck equations for the law of $\lambda.$

 As we have already mentioned above, our framework allows for singular kernels $K$ that are not necessarily elements of $\Hh_1.$ This means in particular, that the mild solutions $\lambda$ \eqref{eq:SPDE_mild} cannot be viewed as $\Hh_1-$valued semimartingales of the form
 
$$\lambda(t,x)=\lambda_0(x)+\int_0^t\bigg(\partial_x\lambda(t)+K(x)b_0(\lambda(s))\bigg)\ds+\int_0^tK(x)\sigma_0(\lambda(s))\dW_s.          $$
Indeed, notice that such a representation cannot make sense on $\Hh_1$ (or any other RKHS for that matter): evaluating the solution at $x=0$ and using the lift property  \eqref{eq:liftproperty} would result to the contradicting statement $X_t=\infty$ for all $t.$ In the first part of this section, we show that such expressions can only be justified by viewing $\lambda$  as a process that takes values on the larger Hilbert space $\Hh.$ 

The absence of analytically strong solutions means that classical infinite-dimensional It\^o formulae are of little to no use for mild solutions. Viewing $\lambda$ as a semimartingale in $\Hh$ (which is not an RKHS) prevents us from obtaining any useful information for the Volterra process $X.$ In the second part of this section we show that mild It\^o formulae, similar to the ones developed in \cite{da2019mild}, provide a viable way to circumvent such problems. Last but not least, we derive a ``singular" Itô formula which combines the best of both approaches---semimartingale formulation and RKHS. 
\subsection{Strong Itô formula}\label{subsec:StrongIto} In view of the well-posedness result Theorem \ref{thm:SPDE_wellposedness} and lift property \eqref{eq:liftproperty} we can write
\begin{equation}\label{eq:RandomSPDEmildSolution}
\begin{aligned}
    \lambda(t)=S(t)\lambda_0+\int_0^t S(t-s)Kb(X_s)\ds+\int_0^t S(t-s)K\sigma(X_s)\dW_s.
\end{aligned}
\end{equation}
This in turn can be viewed as a mild solution of the evolution equation
\begin{equation}\label{eq:SPDE-RandomCoefficients}
    \left\{\begin{aligned}
        &\D\Lambda(t)=\big(\partial_x \Lambda(t)+B(t)\big)\dt+\Sigma(t)\dW_t\\&
        \Lambda(0)=\lambda_0
    \end{aligned}\right.
\end{equation}
with measurable, random coefficients $B(t)(\omega):=Kb(X_t(\omega))\in\Hh, \Sigma(t)(\omega):=K\sigma(X_t(\omega))\in\mathscr{L}(\RR^m;\Hh),$ $ \omega\in\Omega$ and values on the Hilbert space $\Hh.$

\begin{definition}[Strong Solutions] Let $T>0$. An $\Hh-$valued strong solution of \eqref{eq:SPDE-RandomCoefficients} on $[0,T]$ is an $\Hh-$valued mild solution $\lambda$ if the following hold:
\begin{enumerate}
    \item[i)] $ \Lambda(t)\in Dom(\partial_x)$ almost everywhere on $[0,T]\times\Omega$
    \item[ii)] It holds $$\EE\int_0^T|K\sigma(X_t)|^2_{\mathscr{L}_2(\RR^m;\Hh)}\dt<\infty.$$
\end{enumerate}

\end{definition}

\begin{lemma}[Semimartingale formulation] Let $T>0, \lambda_0\in L^2(\Omega;\Hh_1).$ Under Assumptions \eqref{assumption:SVEassumptions} i)-ii) and \eqref{assumption:Kernel} i)-ii), the process $\lambda$ in \eqref{eq:RandomSPDEmildSolution} is an $\Hh-$valued strong solution of \eqref{eq:SPDE-RandomCoefficients} on $[0,T].$ Moreover, $\lambda\in \Cc([0,T];\Hh)$ and for all $t\in[0,T]$ it satisfies 
\begin{equation}\label{eq:lambdaSemimart}
\lambda(t)=\lambda(0)+\int_0^t\bigg(\partial_x\lambda(s)+Kb(X_s)    \bigg)\ds +\int_0^t K\sigma(X_s)\dW_s
\end{equation}
almost surely.    
\end{lemma}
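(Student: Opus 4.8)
The plan is to verify the two defining conditions of an $\Hh$-valued strong solution, to invoke the sample-path regularity already established for the continuity claim, and then to derive the semimartingale identity \eqref{eq:lambdaSemimart} directly, by integrating $\partial_x\lambda$ and exploiting that the shift semigroup regularises the singular kernel. Condition (i) is immediate: Theorem~\ref{thm:SPDE_wellposedness} gives $\lambda(t)\in\Hh_1=Dom(\partial_x)$ $\PP$-a.s.\ for every $t$ (Lemma~\ref{lemma:semigroup_pties}(3)), and $\int_0^T\EE|\partial_x\lambda(s)|_{\Hh}\ds\le\int_0^T\EE|\lambda(s)|_{\Hh_1}\ds<\infty$ by \eqref{eq:lambdaestimate}. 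For condition (ii), the linear growth of $\sigma$ yields $|K\sigma(X_t)e_j|_{\Hh}\lesssim|K|_{\Hh}(1+|X_t|)$ for each $e_j$ in the standard basis of $\RR^m$, hence $|K\sigma(X_t)|^2_{\mathscr{L}_2(\RR^m;\Hh)}\lesssim|K|^2_{\Hh}(1+|X_t|^2)$ and $\EE\int_0^T|K\sigma(X_t)|^2_{\mathscr{L}_2(\RR^m;\Hh)}\dt<\infty$ by \eqref{eq:SVEmoments}. Continuity $\lambda\in\Cc([0,T];\Hh)$ is Lemma~\ref{lemma:pathregularity}(1).

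For the identity \eqref{eq:lambdaSemimart}, decompose $\lambda=S(\cdot)\lambda_0+g+M$ with $g(s):=\int_0^s S(s-r)Kb(X_r)\dr$ and $M(s):=\int_0^s S(s-r)K\sigma(X_r)\dW_r$, and compute $\int_0^t\partial_x\lambda(s)\ds$ term by term. Since $\lambda_0\in Dom(\partial_x)$, the orbit $s\mapsto S(s)\lambda_0$ is continuously differentiable in $\Hh$, so $\int_0^t\partial_xS(s)\lambda_0\ds=S(t)\lambda_0-\lambda_0$. For $g$, Assumption~\ref{assumption:Kernel} provides $S(s-r)K\in\Hh_1$ for $r<s$ with $\partial_xS(s-r)K=S(s-r)\partial_xK\in\Hh$, together with $\int_0^T|S(u)\partial_xK|^q_{\Hh}\du<\infty$ since $|S(u)\partial_xK|_{\Hh}\le|K(u+\cdot)|_{\Hh_1}$; truncating the upper endpoint of the $r$-integral away from the singularity at $r=s$ and invoking the closedness of $\partial_x$ (Hille's theorem followed by a limiting argument) yields $g(s)\in Dom(\partial_x)$ with $\partial_xg(s)=\int_0^s S(s-r)\partial_xK\,b(X_r)\dr$. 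Fubini's theorem then gives $\int_0^t\partial_xg(s)\ds=\int_0^t\big(\int_0^{t-r}S(v)\partial_xK\dv\big)b(X_r)\dr$, and since $\int_0^{u}S(v)\partial_xK\dv=S(u)K-K$ — the fundamental theorem of calculus in $\Hh$ combined with $S(\ep)K\to K$ as $\ep\downarrow0$ — we get $\int_0^t\partial_xg(s)\ds=g(t)-\int_0^t Kb(X_r)\dr$. The same scheme, now commuting $\partial_x$ through a stochastic integral and using the stochastic Fubini theorem, gives $M(s)\in Dom(\partial_x)$, $\partial_xM(s)=\int_0^s S(s-r)\partial_xK\,\sigma(X_r)\dW_r$ and $\int_0^t\partial_xM(s)\ds=M(t)-\int_0^t K\sigma(X_r)\dW_r$. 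Adding the three contributions and using $\partial_x\lambda(s)=\partial_xS(s)\lambda_0+\partial_xg(s)+\partial_xM(s)$ for a.e.\ $s$,
\begin{equation*}
\lambda(t)=S(t)\lambda_0+g(t)+M(t)=\lambda_0+\int_0^t\big(\partial_x\lambda(s)+Kb(X_s)\big)\ds+\int_0^t K\sigma(X_s)\dW_s,
\end{equation*}
which is \eqref{eq:lambdaSemimart}; in particular $\lambda$ solves \eqref{eq:SPDE-RandomCoefficients} in the strong sense.

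The main obstacle is the interchange $\partial_x\int_0^s S(s-r)Kb(X_r)\dr=\int_0^s S(s-r)\partial_xK\,b(X_r)\dr$ and its stochastic counterpart: these are legitimate only because the shift smooths the singular kernel, so that $S(u)\partial_xK=\partial_xK(u+\cdot)\in\Hh$ for $u>0$ with $u\mapsto|S(u)\partial_xK|_{\Hh}$ square integrable near the origin — precisely what \eqref{eq:cond_K1} guarantees — and one must take care never to evaluate $\partial_xK$ at $x=0$. Everything has to be carried out in $\Hh$ rather than in $\Hh_1$: since $K\notin\Hh_1$, a strong formulation on an RKHS is impossible, as evaluating \eqref{eq:lambdaSemimart} at $x=0$ would otherwise force $X_t=\infty$.
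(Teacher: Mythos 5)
Your verification of the two defining conditions and the citation of Lemma~\ref{lemma:pathregularity}(1) for continuity match what the paper does. Where you depart is the final step: the paper establishes the semimartingale identity \eqref{eq:lambdaSemimart} by invoking the black-box result \cite[Theorem 3.2]{gawarecki2010stochastic} (mild solution $+$ domain condition $+$ integrability $\Rightarrow$ analytically strong solution), whereas you reconstruct it by hand. Concretely, you decompose $\lambda=S(\cdot)\lambda_0+g+M$, push $\partial_x$ inside each Bochner/stochastic convolution via Hille's theorem and closedness of $\partial_x$ (legitimate because $S(s-r)K\in\Hh_1$ for $r<s$ and $u\mapsto|S(u)\partial_x K|_{\Hh}\in L^1_{loc}$ by \eqref{eq:cond_K1}), then swap the order of integration by deterministic/stochastic Fubini and use $\int_0^u S(v)\partial_xK\,\dv = S(u)K-K$, which follows from the semigroup FTC on $(\epsilon,u]$ and strong continuity of $S$ at zero. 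The advantages of your route are that it is self-contained — no appeal to an external theorem whose hypotheses must be matched against the paper's conventions — and that it exposes exactly where the regularisation of the singular kernel by the shift enters (you never evaluate $\partial_xK$ at $x=0$); the price is a longer argument and the need to justify the two Fubini interchanges, which the abstract result subsumes. Both are correct; the paper's citation-based proof is faster, yours is more transparent. One very minor point: you invoke \eqref{eq:lambdaestimate} for $\int_0^T\EE|\lambda(s)|_{\Hh_1}\ds<\infty$, and that bound is stated there for $p\ge\frac{2q}{q-2}>2$ while the lemma's hypothesis is $\lambda_0\in L^2$; this discrepancy is already present in the paper's own lemma statement, so it does not reflect a gap in your argument, but it is worth noticing that one really needs $\lambda_0\in L^p$ for the requisite $p$ (or to interpolate) before the estimate applies.
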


\begin{proof} As we showed in Lemma \ref{lemma:semigroup_pties}(ii) we have $Dom(\partial_x)=\Hh_1.$ Moreover, from Theorem \ref{thm:SPDE_wellposedness} we have that for all $t\in[0,T]$  $\lambda(t)\in\Hh_1$ almost surely.  Since $b$ and $\sigma$ are continuous and have linear growth we have $$\sup_{t\in[0,T]} \EE[\abs{X_t}^p] < \infty$$ (see e.g. \cite[Lemma 3.1]{abi2019affine}). From the linear growth of $\sigma,$ along with the moment estimate for the solution of the SVE we have
 \begin{equation*}
     \begin{aligned}      \EE\int_0^T|K\sigma(X_s)|^2_{\mathscr{L}_2(\RR^m;\Hh)}\ds=\EE\int_0^T\sum_{k=1}^{n}|K\sigma_{k}(X_s)|^2_{\Hh}\ds\lesssim T|K|^2_{\Hh}\bigg(1+\sup_{t\in[0,T]} \EE[\abs{X_t}^2]\bigg)<\infty,
     \end{aligned}
 \end{equation*}
  where $\sigma_k(x)\in\RR^d, k=1,\dots, m,$ are the column vectors of the diffusion matrix.
 In view of \cite[Theorem 3.2]{gawarecki2010stochastic} it follows that $\lambda\in \Cc([0,T];\Hh)$ almost surely, is a strong solution to \eqref{eq:SPDE-RandomCoefficients} and can be written as the $\Hh-$valued semimartingale \eqref{eq:lambdaSemimart}. The proof is complete.
\end{proof}

\begin{remark} It is important to observe the differences between the transport SPDEs \eqref{eq:SPDE-RandomCoefficients} and \eqref{eq:SPDE_mild}. 
The former is cast as an evolution on $\Hh$ with random coefficients while the latter is posed as an evolution equation on the RKHS $\Hh_1$ with (non-random) Lipschitz nonlinearities. More importantly,  solutions of the former do not a priori satisfy the Markov property due to the presence of random coefficients. In sharp contrast, solutions of \eqref{eq:SPDE} are Markovian as shown in Proposition \ref{prop:Markov}. The point here is that a mild solution of  \eqref{eq:SPDE}, starting from ``smooth" initial conditions $\lambda_0\in\Hh_1,$ can be viewed as a strong solution of \eqref{eq:SPDE-RandomCoefficients} that takes values on the larger space $\Hh.$ 
\end{remark}

 The point of view we adopt here allows us to obtain a ``strong" It\^o formula for the unique mild solution of \eqref{eq:SPDE} when viewed as a process with values on $\Hh.$

\begin{proposition}[Strong It\^o formula]\label{prop:StrongIto} Let $T>0$ and  $F : [0,T ]\times\Hh\rightarrow\RR$ be continuous and such that its Fr\'echet partial derivatives $\partial_tF , D F, D^2F$ are continuous and bounded on bounded subsets of $[0,T ]\times\Hh$. Let Assumptions \eqref{assumption:SVEassumptions} i)-ii) and \eqref{assumption:Kernel} i)-ii) hold. With $\lambda$ being the unique mild solution of \eqref{eq:SPDE} on $[0,T]$ with initial condition $\lambda_0\in L^2(\Omega;\Hh_1)$ we have 
\begin{equation}\label{eq:StrongIto}
  \begin{aligned}   F\big(t,\lambda(t)\big)&=F\big(s,\lambda(s)\big)
  +\int_s^t\big\langle D F(r,\lambda(r)), K\sigma(X_r)\dW_r\big\rangle_{\Hh}\\&+\int_s^t\bigg(\partial_tF\big(r,\lambda(r)\big)+ \big\langle D F(r,\lambda(r)), \partial_x\lambda(r)+Kb(X_r)\big\rangle_{\Hh}\bigg)\dr\\&
    +\frac{1}{2}\int_s^t\textnormal{Tr}\bigg[D^2F(r,\lambda(r))K\sigma(X_r)\big(K\sigma(X_r)\big)^*\bigg]\dr
  \end{aligned}
\end{equation}
almost surely for all $s\leq t\in[0,T].$   
\end{proposition}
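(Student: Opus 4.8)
The plan is to deduce \eqref{eq:StrongIto} from the classical It\^o formula in Hilbert space, applied to $\lambda$ viewed as a process with values in the larger space $\Hh$. By the Semimartingale formulation lemma, in $\Hh$ the process $\lambda$ has the It\^o representation \eqref{eq:lambdaSemimart}, i.e.\ it is of the form $\lambda(t)=\lambda(0)+\int_0^t\psi(s)\ds+\int_0^t\Psi(s)\dW_s$ with $\psi(s):=\partial_x\lambda(s)+Kb(X_s)\in\Hh$ and $\Psi(s):=K\sigma(X_s)\in\mathscr{L}_2(\RR^m;\Hh)$. I would then invoke a general It\^o formula for processes of this type, e.g.\ \cite[Theorem~4.32]{da2014stochastic} or \cite[Theorem~2.10]{gawarecki2010stochastic}. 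Its hypotheses are readily checked: $\psi$ is adapted and, since $\lambda\in\Cc([0,T];\Hh_1)$ by Lemma~\ref{lemma:pathregularity}, the map $s\mapsto\partial_x\lambda(s)$ is $\PP$-a.s.\ continuous (hence bounded) in $\Hh$, while $|Kb(X_s)|_\Hh\le|K|_\Hh|b|_\infty$; therefore $\psi\in L^1([0,T];\Hh)$ $\PP$-a.s. The bound $\EE\int_0^T|\Psi(s)|^2_{\mathscr{L}_2(\RR^m;\Hh)}\ds<\infty$ is exactly the one established in the Semimartingale formulation lemma, and $F,\partial_tF,DF,D^2F$ are continuous by assumption.

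The only point requiring care is that the cited It\^o formula is usually stated for $F$ and its derivatives globally bounded (and uniformly continuous), whereas here they are only bounded on bounded sets; this is handled by a standard localisation. Since $\lambda$ has $\PP$-a.s.\ continuous paths in $\Hh$ (Semimartingale formulation lemma), the stopping times $\tau_N:=\inf\{t\in[0,T]:|\lambda(t)|_\Hh\ge N\}\wedge T$ increase to $T$ $\PP$-a.s., and $\lambda(r)$ remains in the closed ball $\overline B_N:=\{x\in\Hh:|x|_\Hh\le N\}$ for $r\le\tau_N$. On $[0,T]\times\overline B_N$ the functions $F,\partial_tF,DF,D^2F$ are bounded and uniformly continuous, so the It\^o formula applies to $F(\cdot,\lambda(\cdot))$ on $[s,t\wedge\tau_N]$ — if the precise version invoked requires still more regularity, one first mollifies $F$ on $\overline B_N$ by finite-dimensional approximations and passes to the limit, using that $\lambda$ stays in a fixed bounded set — and yields \eqref{eq:StrongIto} with $t$ replaced by $t\wedge\tau_N$.

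Finally I would let $N\to\infty$. By the local property of the stochastic integral, the stopped stochastic term coincides up to indistinguishability on $[0,\tau_N]$ with $\int_s^{t\wedge\tau_N}\langle DF(r,\lambda(r)),\Psi(r)\dW_r\rangle_\Hh$, whose integrand $r\mapsto\Psi(r)^*DF(r,\lambda(r))$ lies in $L^2([0,T];\RR^m)$ $\PP$-a.s.\ (it is bounded along the continuous path of $\lambda$ and $\Psi\in L^2$); together with $\tau_N\uparrow T$ this gives convergence of the stochastic term. The Lebesgue integrals converge $\PP$-a.s.\ by dominated convergence, their integrands being bounded on the a.s.\ compact range of the path uniformly in $N$, and $F(t\wedge\tau_N,\lambda(t\wedge\tau_N))\to F(t,\lambda(t))$ $\PP$-a.s.\ by continuity. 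This proves \eqref{eq:StrongIto} for fixed $s\le t$, hence simultaneously for all $s\le t\in[0,T]$ since both sides are $\PP$-a.s.\ continuous in $(s,t)$. The main obstacle is essentially this bookkeeping passage from the "bounded on bounded sets" hypothesis on $F$ to the classical formula, which is made possible by the $\Hh$-path continuity of $\lambda$; note also that the extra spatial regularity $\lambda_0\in L^2(\Omega;\Hh_1)$ (rather than merely $L^2(\Omega;\Hh)$) is precisely what turns the transport term $\partial_x\lambda$ into a genuine, Bochner-integrable $\Hh$-valued drift, without which \eqref{eq:SPDE-RandomCoefficients} would possess no strong solution.
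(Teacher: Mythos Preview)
Your approach is essentially the same as the paper's: invoke the Semimartingale formulation lemma to obtain the $\Hh$-valued representation \eqref{eq:lambdaSemimart}, and then apply the standard Hilbert-space It\^o formula (\cite[Theorem~2.9]{gawarecki2010stochastic} or \cite[Theorem~4.32]{da2014stochastic}). The paper simply cites these references and does not spell out the localisation, since those theorems already accommodate derivatives that are merely bounded on bounded sets. One small slip: under Assumption~\ref{assumption:SVEassumptions} i)--ii) the drift $b$ is only Lipschitz (hence of linear growth), not bounded, so $|b|_\infty$ need not be finite; replace that bound by $|Kb(X_s)|_\Hh\le C|K|_\Hh(1+|X_s|)$ and use the a.s.\ continuity of $s\mapsto X_s=ev_0(\lambda(s))$ to conclude $\psi\in L^1([0,T];\Hh)$ pathwise.
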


\begin{proof} In view of \eqref{eq:lambdaSemimart}, the conclusion is a direct consequence of \cite[Theorem 2.9]{gawarecki2010stochastic} or \cite[Theorem 4.32]{da2014stochastic}.
\end{proof}

\begin{remark}\label{rem:ItoHvsH1} We emphasise once again that strong It\^o formula \eqref{eq:StrongIto} holds with respect to the Hilbert space $\Hh$ and not $\Hh_1.$ Despite its simplicity, $\Hh$ does not enjoy the RKHS property and hence such an It\^o formula is not suitable to obtain information about smooth functionals of the SVE solution $X_\cdot=ev_0(\lambda(\cdot)).$ This is due to the fact that the evaluation functional $ev_0$ is not continuous and a fortiori not differentiable when defined on $\Hh.$
\end{remark}

\subsection{Mild Itô formula}\label{subsec:MildIto}
We start this section with a definition of mild It\^o processes, as introduced in \cite{da2019mild}, adapted to our framework.
\begin{definition}[Mild It\^o Processes]\label{dfn:mildItoprocess} Let $I=[s,t]\subset\RR^+$ and $\Delta_I=\{ (t_1, t_2)\in I^2: t_1<t_2\}$ the corresponding $1-$simplex. Let $S:\Delta_I\mapsto\mathscr{L}(\Hh_1)$ be a continuous mapping satisfying for all $t_1,t_2,t_3\in I$ with  $t_1<t_2<t_3.$
 Additionally, let $Y: I\times\Omega\rightarrow\Hh_1$ and $Z: I\times\Omega\rightarrow\mathscr{L}_2(\RR^m;\Hh_1)$ be progressively measurable stochastic processes such that
 $$\int_{s}^{t_1} |S_{r,t_1}Y_r|_{\Hh_1}\dr<\infty,\;\;\int_{s}^{t_1} |S_{r,t_1}Z_r|^2_{\mathscr{L}_2(\RR^m;\Hh_1)}\dr<\infty       $$
 almost surely for all $t_1\in I.$ A progressively measurable stochastic process $\Lambda:I\times\Omega\rightarrow \Hh_1$ satisfying
$$\Lambda(t_1)=S_{s,t_1}\Lambda(s)+\int_{s}^{t_1}S_{r,t_1}Y_r\dr+\int_s^{t_1}S_{r,t_1}Z_r \dW_r$$
almost surely for all $t_1\in I$ is called a mild It\^o process (with semigroup $S,$ mild drift $Y$ and mild diffusion $Z).$
\end{definition}

The class of mild It\^o process is tailor-made to include mild solutions of a large class of SPDEs. Intuitively, the unique mild solution $\lambda$ \eqref{eq:generalised_SPDE} shall be thought of as a mild It\^o process. However, since the singular kernel $K$ is not necessarily in $\Hh_1,$ it does not satisfy the above definition. In the next lemma we show that a ``mollified" version of $\lambda$ is indeed a mild It\^o process per Definition \ref{dfn:mildItoprocess}.
\begin{lemma}\label{lem:ShiftedMildProcess} Let $\delta, T>0, \{S(t)\}_{t\geq 0}\subset\mathscr{L}(\Hh_1)$ be the shift semigroup and $\lambda$ the unique mild solution of \eqref{eq:SPDE}. Under Assumptions \ref{assumption:SVEassumptions} i)-ii) and \ref{assumption:Kernel} i)-ii) the $\Hh_1-$valued process $$\Lambda_\delta:=S(\delta)\lambda$$ is a mild It\^o process with semigroup $S_{s,t}:=S(t-s),$ mild drift $Y^\delta_t:=S(\delta)Kb_0(\lambda(t))$ and mild diffusion 
$Z^\delta=S(\delta)K\sigma_0(\lambda(t)),$ $s\leq t\in[0,T].$
\end{lemma}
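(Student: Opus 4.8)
The plan is to obtain the asserted representation by applying the bounded operator $S(\delta)$ to the mild solution identity \eqref{eq:SPDE_mild}, and then to check the measurability and integrability requirements of Definition~\ref{dfn:mildItoprocess} one at a time.

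First I would record the structural facts. Since $\delta>0$, Assumption~\ref{assumption:Kernel} gives $S(\delta)K=K(\delta+\cdot)\in\Hh_1$, so that $Y^\delta_t=S(\delta)Kb_0(\lambda(t))$ is $\Hh_1$-valued and $Z^\delta_t=S(\delta)K\sigma_0(\lambda(t))$ is $\mathscr{L}_2(\RR^m;\Hh_1)$-valued. The two-parameter family $S_{s,t}:=S(t-s)$ inherits the evolution relation $S_{t_2,t_3}S_{t_1,t_2}=S_{t_1,t_3}$ and $S_{t,t}=I$ from the semigroup property of the shift, and is strongly continuous on $\Delta_I$ by Lemma~\ref{lemma:semigroup_pties} and Remark~\ref{rem:H1Semigroup}; hence it is an admissible ``semigroup'' in the sense of Definition~\ref{dfn:mildItoprocess}. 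Finally, $\lambda$ is adapted with $\PP$-a.s.\ continuous $\Hh_1$-valued paths by Lemma~\ref{lemma:pathregularity}, and $y\mapsto S(\delta)Kb_0(y)$, $y\mapsto S(\delta)K\sigma_0(y)$ are continuous ($ev_0$ being continuous on $\Hh_1$, $b,\sigma$ Lipschitz, and $S(\delta)K\in\Hh_1$), so $\Lambda_\delta=S(\delta)\lambda$, $Y^\delta$ and $Z^\delta$ are progressively measurable.

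Next I would verify the integrability conditions of Definition~\ref{dfn:mildItoprocess}. Because the shift semigroup commutes, $S(t_1-r)\big(S(\delta)K\big)=S(t_1-r+\delta)K\in\Hh_1$, so using Corollary~\ref{cor:SemigroupBound} on $\Hh_1$ together with the matrix--vector convention \eqref{eq:AbuseofNotations},
\[
\abs{S(t_1-r)Y^\delta_r}_{\Hh_1}\le\abs{S(t_1-r)\big(S(\delta)K\big)}_{\Hh_1}\abs{b_0(\lambda(r))}\le C\E^{cT}\abs{S(\delta)K}_{\Hh_1}\abs{b_0(\lambda(r))}.
\]
The linear growth estimate \eqref{eq:sigma0growth}, combined with the $\PP$-a.s.\ finiteness of $\sup_{r\in[0,T]}\abs{\lambda(r)}_{\Hh_1}$, then yields $\int_s^{t_1}\abs{S(t_1-r)Y^\delta_r}_{\Hh_1}\dr<\infty$ $\PP$-a.s. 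The same computation, summed over the columns of $\sigma$, gives $\abs{S(t_1-r)Z^\delta_r}^2_{\mathscr{L}_2(\RR^m;\Hh_1)}\lesssim\abs{S(\delta)K}^2_{\Hh_1}\big(1+\abs{\lambda(r)}_{\Hh_1}\big)^2$, whence $\int_s^{t_1}\abs{S(t_1-r)Z^\delta_r}^2_{\mathscr{L}_2(\RR^m;\Hh_1)}\dr<\infty$ $\PP$-a.s.

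Finally I would apply $S(\delta)\in\mathscr{L}(\Hh_1)$ to \eqref{eq:generalised_SPDE} with base point $s$. Being bounded and deterministic, $S(\delta)$ commutes with the Bochner and with the $\Hh_1$-valued It\^o integral (this is the only mildly delicate point; it is classical, cf.\ \cite{da2014stochastic}), so
\[
S(\delta)\lambda(t_1)=S(\delta)S(t_1-s)\lambda(s)+\int_s^{t_1}S(\delta)S(t_1-r)Kb_0(\lambda(r))\dr+\int_s^{t_1}S(\delta)S(t_1-r)K\sigma_0(\lambda(r))\dW_r.
\]
Using $S(\delta)S(t_1-s)=S(t_1-s)S(\delta)$ and $S(\delta)S(t_1-r)K=S(t_1-r)\big(S(\delta)K\big)$, the right-hand side equals $S(t_1-s)\Lambda_\delta(s)+\int_s^{t_1}S(t_1-r)Y^\delta_r\dr+\int_s^{t_1}S(t_1-r)Z^\delta_r\dW_r$, which is precisely the defining identity of a mild It\^o process with the stated semigroup, drift and diffusion. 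All other steps are routine bookkeeping with the semigroup property and the a priori bounds already established.
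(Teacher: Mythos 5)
Your proof is correct and in one respect more complete than the paper's: you explicitly verify the defining identity of a mild It\^o process by applying the bounded, deterministic operator $S(\delta)$ to the mild formulation \eqref{eq:generalised_SPDE} and commuting it through the Bochner and It\^o integrals; the paper only verifies the integrability requirements of Definition~\ref{dfn:mildItoprocess} and leaves the identity implicit. Where the two diverge is in the integrability estimate. The paper splits $\abs{S_{r,t}Y^\delta_r}_{\Hh_1}$ into its $\Hh$-part and derivative $\Hh$-part, applies Corollary~\ref{cor:SemigroupBound} on $\Hh$ to the former and the estimate \eqref{eq:uniformDeltaEstimate} to the latter, and checks finiteness \emph{in expectation}; this yields bounds that are uniform in $\delta\in(0,1)$, which is precisely what Theorem~\ref{thm:mildIto} then exploits when letting $\delta\to0$. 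You instead apply Corollary~\ref{cor:SemigroupBound} directly on $\Hh_1$ to the fixed element $S(\delta)K\in\Hh_1$ and conclude $\PP$-a.s.\ finiteness from $\sup_{r\le T}\abs{\lambda(r)}_{\Hh_1}<\infty$ a.s. This is shorter and entirely sufficient for the lemma as stated (Definition~\ref{dfn:mildItoprocess} only asks for a.s.\ finiteness, and $\delta$ is fixed), but it produces a constant $\abs{S(\delta)K}_{\Hh_1}$ that degenerates as $\delta\downarrow0$, so it cannot be recycled for the subsequent limiting argument. Both routes are valid for the statement at hand.
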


\begin{proof} Let $s,t\in[0,T].$ In view of Lemma \ref{lemma:semigroup_pties},2. (see also Remark \ref{rem:H1Semigroup}) the shift semigroup $S$ satisfies all the required properties. It remains to verify the bounds on $Y^\delta, Z^\delta.$ From the linear growth of $b,$ the boundedness of the linear operator $ev_0:\Hh_1\rightarrow\RR^d$ and Corollary \ref{cor:SemigroupBound} 
\begin{equation*}
    \begin{aligned}
      \EE \int_{s}^{t} |S_{r,t}Y^\delta_r|_{\Hh_1}\dr&= \EE\int_{s}^{t} |S(t-r)S(\delta)Kb_0(\lambda(r))|_{\Hh}\dr\\&
       +\EE\int_{s}^{t} |S(t-r)S(\delta)K'b_0(\lambda(r))|_{\Hh}\dr\\&
       \lesssim \bigg(1+\sup_{r\in[0,T]}\EE|\lambda(r)|_{\Hh_1}\bigg)\int_{s}^{t}e^{c(t-r+\delta)}|K|_{\Hh}\dr\\&       +\bigg(1+\sup_{r\in[0,T]}\EE|\lambda(r)|_{\Hh_1}\bigg)\int_{s}^{t}|K'(t-r+\delta+\cdot)|_{\Hh}\dr.
    \end{aligned}
\end{equation*}
From the estimate \eqref{eq:lambdaestimate}, the prefactors are finite. For the first term on the last estimate we have 
   $\int_{s}^{t}e^{c(t-r+\delta)}|K|_{\Hh}\dr\leq Te^{T+\delta}|K|_{\Hh}.$ As for the last term, 
\begin{equation}\label{eq:uniformDeltaEstimate}
\begin{aligned}
     \int_{s}^{t}|K'(t-r+\delta+\cdot)|_{\Hh}\dr
     \le \int_0^{t-s} |K'(r+\cdot)|^2_{\Hh}\dr,
\end{aligned}    
\end{equation}  
where the last term is finite from Assumption \eqref{assumption:Kernel}.

Turning to the mild diffusion term we have 
\begin{equation*}
    \begin{aligned}
         \EE \int_{s}^{t} |S_{r,t}Z^\delta_r|^2_{\mathscr{L}_2(\RR^m;\Hh_1)}\dr&=\EE \int_{s}^{t}\sum_{k=1}^{m} |S(t-r)S(\delta)K\sigma_{k}(ev_0(\lambda(r))|^2_{\Hh_1}\dr\\&
         \lesssim_m \bigg(1+\sup_{r\in[0,T]}\EE|\lambda(r)|^2_{\Hh_1}\bigg)\int_{s}^{t}| S(t-r)S(\delta)K |^2_{\Hh_1}\dr.
    \end{aligned}
\end{equation*}
From the previous estimates we deduce that 
\begin{equation*}
     \EE \int_{s}^{t} |S_{r,t}Y^\delta_r|_{\Hh_1}\dr+\EE \int_{s}^{t} |S_{r,t}Z^\delta_r|^2_{\mathscr{L}_2(\RR^m;\Hh_1)}\dr<\infty.
\end{equation*}
The proof is complete.  
\end{proof}

Mild It\^o processes are clearly not semimartingales and are not expected to satisfy a classical It\^o formula. Fortunately, a mild It\^o formula was developed in \cite{da2019mild} (see also \cite{huber2024markovian, vogler2024lions} for applications of mild It\^o formulae; as well as \cite[Lemma 5.4]{gasteratos2023moderate} for a similar type of It\^o formula in the context of slow-fast systems of SPDEs). The following is the main result of this section and shows that a mild It\^o formula also holds for mild solutions of \eqref{eq:SPDE} despite the irregularity of coefficients.

\begin{theorem}[Mild It\^o formula]\label{thm:mildIto} Let Assumptions \ref{assumption:SVEassumptions} i)-ii) and \ref{assumption:Kernel}  i)-ii) hold, $T>0, \lambda_0\in L^2(\Omega;\Hh_1),$ $\lambda\in C([0,T];\Hh_1)$ denote the unique mild solution of \eqref{eq:SPDE} on the interval $[0,T]$ and $\Vv$ be a separable Hilbert space. Next let $F\in \Cc^{1,2}([0,T]\times\Hh_1; \Vv)$
such that, for some $\rho>0$ and a constant $C>0,$ the partial Fr\'echet derivatives $$[0,T]\times\Hh_1\ni(t,\lambda)\longmapsto \partial_t F(t,\lambda)\in \Vv, D F(t,\lambda)\in\mathscr{L}(\Hh_1;\Vv), D^2 F(t,\lambda)\in\mathscr{L}(\Hh_1;\mathscr{L}(\Hh_1, \Vv))$$
satisfy
\begin{equation}\label{eq:MildItoFgrowth}
\big|F\big(t, \lambda\big)\big|_\Vv+\big|\partial_tF\big(t, \lambda\big)\big|_\Vv+\big|DF\big(t, \lambda\big)\big|_{\mathscr{L}(\Hh_1;\Vv)}+\big|D^2 F(t,\lambda)\big|_{\mathscr{L}(\Hh_1;\mathscr{L}(\Hh_1, \Vv))}\leq C\big(1+|\lambda|^\rho_{\Hh_1}\big)
\end{equation}
for all $\lambda\in\Hh_1.$
With probability $1,$ for all $s\leq t\in[0,T],$ it holds that   
        \begin{equation}\label{eq:MildIto}
    \begin{aligned}  F\big(t, \lambda(t)\big)&=F\big(s, S(t-s)\lambda(s)\big)+\int_s^t D F\big(r, S(t-r)\lambda(r)\big)S(t-r)K\sigma_0\big(\lambda(r)\big)\dW_r\\&
  +\int_s^t\bigg[\partial_tF\big(r, S(t-r)\lambda(r)\big)+D F\big(r, S(t-r)\lambda(r)\big)S(t-r)Kb_0\big(\lambda(r)\big)\bigg]\dr\\&
  +\frac{1}{2}\int_s^t\textnormal{Tr}\bigg[ D^2F\big(r,S(t-r)\lambda(r)\big)S(t-r)K\sigma_0\big(\lambda(r)\big)\Big( S(t-r)K\sigma_0\big(\lambda(r)\big)\Big)^*\bigg]\dr.    
    \end{aligned}
\end{equation}
\end{theorem}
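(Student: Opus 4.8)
The plan is to obtain \eqref{eq:MildIto} by applying the mild Itô formula of \cite{da2019mild} to a mollified version of $\lambda$, and then pass to the limit as the mollification parameter tends to zero. Concretely, for $\delta>0$ set $\Lambda_\delta:=S(\delta)\lambda$, which by Lemma~\ref{lem:ShiftedMildProcess} is a genuine $\Hh_1$-valued mild Itô process with semigroup $S_{s,t}=S(t-s)$, mild drift $Y^\delta_r=S(\delta)Kb_0(\lambda(r))$ and mild diffusion $Z^\delta_r=S(\delta)K\sigma_0(\lambda(r))$. Since the shift regularises the kernel, $S(\delta)K\in\Hh_1$ and $S(\delta)K\sigma_0(\lambda(r))\in\mathscr{L}_2(\RR^m;\Hh_1)$, so the standing hypotheses of \cite[Corollary~1]{da2019mild} (or the relevant mild Itô formula therein) are met for the process $\Lambda_\delta$ and the test function $F\in\Cc^{1,2}([0,T]\times\Hh_1;\Vv)$ satisfying the polynomial growth bound~\eqref{eq:MildItoFgrowth}. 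Applying that result at the mollified level gives, $\PP$-a.s. for all $s\le t$,
\begin{equation*}
\begin{aligned}
F\big(t,\Lambda_\delta(t)\big)&=F\big(s,S(t-s)\Lambda_\delta(s)\big)+\int_s^t DF\big(r,S(t-r)\Lambda_\delta(r)\big)S(t-r)S(\delta)K\sigma_0(\lambda(r))\dW_r\\
&\quad+\int_s^t\Big[\partial_t F\big(r,S(t-r)\Lambda_\delta(r)\big)+DF\big(r,S(t-r)\Lambda_\delta(r)\big)S(t-r)S(\delta)Kb_0(\lambda(r))\Big]\dr\\
&\quad+\frac12\int_s^t\textnormal{Tr}\Big[D^2F\big(r,S(t-r)\Lambda_\delta(r)\big)S(t-r)S(\delta)K\sigma_0(\lambda(r))\big(S(t-r)S(\delta)K\sigma_0(\lambda(r))\big)^*\Big]\dr.
\end{aligned}
\end{equation*}

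The second and main step is to let $\delta\downarrow0$ term by term. The key point is that $\Lambda_\delta(r)=S(\delta)\lambda(r)\to\lambda(r)$ in $\Hh_1$ by strong continuity of the shift semigroup on $\Hh_1$ (Lemma~\ref{lemma:semigroup_pties}, Remark~\ref{rem:H1Semigroup}), for each fixed $r$, and hence also $S(t-r)S(\delta)\lambda(r)\to S(t-r)\lambda(r)$ in $\Hh_1$; moreover $S(t-r)S(\delta)K\to S(t-r)K$ in $\Hh_1$ for each $r<t$ (again by strong continuity, since $S(t-r)K\in\Hh_1$ by Assumption~\ref{assumption:Kernel}). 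Continuity of $F,\partial_t F,DF,D^2F$ then gives pointwise (in $r$) convergence of each integrand, while the polynomial growth bound~\eqref{eq:MildItoFgrowth}, the linear growth of $b_0,\sigma_0$, the moment estimate~\eqref{eq:lambdaestimate}, the uniform-in-$\delta$ bounds $|S(t-r)S(\delta)K|_{\Hh_1}\le C\E^{cT}|K(t-r+\cdot)|_{\Hh_1}$ and $|S(t-r)S(\delta)K|_{\Hh}\le C\E^{cT}|K|_{\Hh}$, together with Assumption~\ref{assumption:Kernel} i), furnish integrable dominating functions. Dominated convergence then handles the boundary term, the two Lebesgue integrals, and the trace term directly; for the stochastic integral one first shows $L^2(\Omega)$-convergence of the integrand in $\mathscr{L}_2(\RR^m;\Hh_1)$-norm (using the same dominations) and invokes the Itô isometry, or argues along a subsequence converging $\PP$-a.s. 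Taking a further subsequence, all terms converge $\PP$-a.s.\ simultaneously, which yields~\eqref{eq:MildIto}.

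The main obstacle is the $\delta\to0$ limit of the stochastic integral together with the trace term, because here one needs the domination to hold \emph{uniformly in $\delta$} in the $\Hh_1$-norm (not merely the $\Hh$-norm), and $\partial_x K$ need not lie in $\Hh$. This is exactly what Assumption~\ref{assumption:Kernel} is designed to provide: the bound $\int_0^T|K(t-r+\cdot)|_{\Hh_1}^q\,\dr<\infty$ with $q>2$ (estimate~\eqref{eq:cond_K1}) lets one control $\int_s^t|S(t-r)S(\delta)K|_{\Hh_1}^2\,\dr$ uniformly in $\delta\in(0,1]$ via Hölder, and similarly the drift-type term $\int_s^t|S(t-r)S(\delta)K|_{\Hh_1}\,\dr$ as in~\eqref{eq:uniformDeltaEstimate}. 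A minor secondary point is that the mild Itô formula of \cite{da2019mild} is typically stated for a $\Vv$-valued (real-valued, when $\Vv=\RR$) test function with derivatives bounded on bounded sets; one either restricts to the stopped process $\lambda(\cdot\wedge\tau_N)$ with $\tau_N=\inf\{t:|\lambda(t)|_{\Hh_1}>N\}$, applies the formula there, and removes the localisation using~\eqref{eq:lambdaestimate} and $\tau_N\uparrow\infty$ a.s., or verifies directly that~\eqref{eq:MildItoFgrowth} suffices; this is routine. Finally, the identity is first established for fixed $s\le t$ on a full-measure set depending on $(s,t)$, and then upgraded to hold $\PP$-a.s.\ simultaneously for all $s\le t$ using the $\PP$-a.s.\ continuity of $\lambda$ in $\Hh_1$ (Lemma~\ref{lemma:pathregularity}) and of both sides in $(s,t)$.
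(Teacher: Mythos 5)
Your proposal is correct and matches the paper's own proof essentially step for step: mollify with $S(\delta)$, apply the mild Itô formula of \cite{da2019mild} to $\Lambda_\delta=S(\delta)\lambda$ (Lemma~\ref{lem:ShiftedMildProcess}), and pass to the limit $\delta\to0$ via dominated convergence using the strong continuity of the shift semigroup, estimate~\eqref{eq:lambdaestimate}, and the uniform-in-$\delta$ kernel bounds from Assumption~\ref{assumption:Kernel}. The paper organises the limit in two steps—first $\rho=0$ via direct domination, then $\rho>0$ by the very localisation with stopping times $\tau^n=\inf\{t:|\lambda(t)|_{\Hh_1}\geq n\}$ that you flag as a ``minor secondary point''—so there is no genuine gap, only a slightly different ordering of the same ingredients.
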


\begin{proof} Let $\delta\in(0,1).$ In view of Lemma \ref{lem:ShiftedMildProcess}, $\Lambda_\delta:=S(\delta)\lambda$ is a mild It\^o process. Therefore, we can apply \cite[Theorem 1]{da2019mild} for each $\delta>0$ to obtain
\begin{equation}\label{eq:deltaMildIto}
    \begin{aligned}  F\big(&t, \Lambda_\delta(t)\big)=F\big(s, S(t-s)\Lambda_\delta(s)\big)+\int_s^t D F \big(r, S(t-r)\Lambda_\delta(r)\big)S(t-r+\delta)K\sigma_0\big(\lambda(r)\big)\dW_r\\&
  +\int_s^t\bigg[ \partial_tF\big(r, S(t-r)\Lambda_\delta(r)\big)+D F\big(r, S(t-r)\Lambda_\delta(r)\big)S(t-r+\delta)Kb_0\big(\lambda(r)\big)\bigg]\dr\\&
  +\frac{1}{2}\int_s^t\textnormal{Tr}\bigg[ D^2F\big(r,S(t-r)\Lambda_\delta(r)\big)S(t-r+\delta)K\sigma_0\big(\lambda(r)\big)\Big( S(t-r+\delta)K\sigma_0\big(\lambda(r)\big)\Big)^*\bigg]\dr      \end{aligned}
\end{equation}
for all $s\leq t\in [0,T]$ almost surely. In order to pass to the limit as $\delta\to 0$ we proceed in two steps:

\noindent\underline{\textit{Step 1:} $\rho=0$.} In this case, $F$ and all its partial derivatives are  globally bounded and \eqref{eq:MildIto}
 follows by the Dominated Convergence Theorem. Indeed, by continuity of the shift semigroup (Lemma \ref{lemma:semigroup_pties}.2.) we have, for each $r\in[s,t],$ that $|\Lambda_\delta(r)-\lambda(r)|_{\Hh_1}\rightarrow 0$ as $\delta\to 0$ almost surely.  By continuity of $F$ and its derivatives it follows that 
  $F\big(t, \Lambda_\delta(t)\big)\rightarrow F\big(t, \lambda(t)\big),$   $F\big(s, S(t-s)\Lambda_\delta(s)\big)\rightarrow F\big(s, S(t-s)\lambda(s)\big),$$ DF_\lambda\big(r, S(t-r)\Lambda_\delta(r)\big)\rightarrow DF_\lambda\big(r, S(t-r)\lambda(r)\big)$ $, \partial_t F\big(r, S(t-r)\Lambda_\delta(r)\big)\rightarrow \partial_t F\big(r, S(t-r)\lambda(r))\big), D^2F\big(r,S(t-r)\Lambda_\delta (r)\big)\rightarrow D^2F\big(r,S(t-r)\lambda(r)\big)$ almost surely as $\delta\to 0$ in their respective topologies. By continuity of the shift semigroup we also have $ S(t-r+\delta)Kb_0\big(\lambda(r)\big)\rightarrow (t-r)Kb_0\big(\lambda(r)\big) , S(t-r+\delta)K\sigma_0\big(\lambda(r)\big)\rightarrow S(t-r)K\sigma_0\big(\lambda(r)\big)$ almost surely.

  Now for the first Riemann integral we use the boundedness of $\partial_t F, D F$ to bound its $L^1(\Omega)-$norm from above by 
    $$\big( |\partial_tF|_\infty+D F|_{\infty}\big)\int_s^t \bigg[1+\EE\big|S(t-r+\delta)Kb_0\big(\lambda(r)\big)\big|_{\Hh_1}\bigg]\dr.$$
 The integral of this expectation is uniformly bounded in $\delta\in(0,1).$
 In turn, from the computations in Lemma \ref{lem:ShiftedMildProcess} (and in particular \eqref{eq:uniformDeltaEstimate}) the latter is upper bounded, up to an unimportant constant and uniformly over $\delta\in(0,1),$ by 
 $$ \bigg(1+\sup_{r\in[0,T]}\EE|\lambda(r)|_{\Hh_1}\bigg)T^{1/2}\int_{0}^{T}|K'(r+\cdot)|^2_{\Hh}\dr.$$
 In view of Assumption \ref{assumption:Kernel} and \eqref{eq:lambdaestimate} the latter is finite. From similar arguments, along with the It\^o isometry , we can obtain uniform-in-$\delta\in(0,1)$ bounds for the $L^1(\Omega)$ norms of the rest of the terms in \eqref{eq:deltaMildIto}. 
 
 Passing if necessary to subsequences and invoking dominated convergence, all terms in \eqref{eq:deltaMildIto} converge to the corresponding terms in \eqref{eq:MildIto} almost surely  as $\delta\to 0.$ 
 
 \noindent\underline{\textit{Step 2:} $\rho>0$.}  We proceed with a localization argument. To this end, let $t>0$ and for each $n\in\NN$ consider the stopping times $$\tau^n=\inf\{ r\geq s: |\lambda(r)|_{\Hh_1}\geq n\}.$$
 Substituting $t$ by the bounded stopping times $t\wedge \tau^n$ we see that the arguments of $F$ and all its partial derivatives remain bounded on $[s,t\wedge \tau^n].$ Moreover, the growth conditions \ref{eq:MildItoFgrowth} imply that the latter are bounded on bounded subsets of $[0,T]\times\Hh_1.$ Thus, we can repeat all the arguments in Step 1 to obtain the almost sure equality 
   \begin{equation}\label{eq:MildItoStoppingTime}
    \begin{aligned}  F\big(&t\wedge \tau^n, \lambda(t\wedge \tau^n)\big)=F\big(s, S(t\wedge \tau^n-s)\lambda(s)\big)\\&+\int_s^{t\wedge \tau^n} D F\big(r, S(t\wedge \tau^n-r)\lambda(r)\big)S(t\wedge \tau^n-r)K\sigma_0\big(\lambda(r)\big)\dW_r\\&
  +\int_s^{t\wedge \tau^n}\bigg[\partial_tF\big(r, S(t\wedge \tau^n-r)\lambda(r)\big)+D F\big(r, S(t\wedge \tau^n-r)\lambda(r)\big)S(t\wedge \tau^n-r)Kb_0\big(\lambda(r)\big)\bigg]\dr\\&
  +\frac{1}{2}\int_s^{t\wedge \tau^n}\textnormal{Tr}\bigg[ D^2F\big(r,S(t\wedge \tau^n-r)\lambda(r)\big)S(t\wedge \tau^n-r)K\sigma_0\big(\lambda(r)\big)\Big( S(t\wedge \tau^n-r)K\sigma_0\big(\lambda(r)\big)\Big)^*\bigg]\dr.    
    \end{aligned}
\end{equation}
From Lemma \eqref{lemma:pathregularity} and Chebyshev's inequality it follows that for all $R>0, n\in\NN$
$$\PP(   \tau^n <R)\leq  \PP\bigg(  \sup_{r\in[s, t\wedge R]}|\lambda(r)|_{\Hh_1}\geq n \bigg)\leq \frac{1}{n}\EE\bigg[ \sup_{t\in[0, T]}|\lambda(t)|_{\Hh_1}\bigg]<\infty    $$
Hence the non-decreasing sequence of stopping times $\tau^n\rightarrow \infty$ almost surely. Using the latter, continuity of $F$ and its partial derivatives and the estimate 
$$ \sup_{n\in\NN, t,r\in[0,T]}\EE|S(t\wedge \tau^n-r)\lambda(r)|^p_{\Hh_1}\lesssim e^{pc T}\sup_{ r\in[0,T]} \EE| \lambda(r)|^p_{\Hh_1}<\infty, p\geq 1    $$
(with $c$ as in Corollary \ref{cor:SemigroupBound}) we can conclude once again by dominated convergence upon taking $n\to\infty.$ Since the argument is standard, we shall only sketch here the convergence of the stochastic integral in \eqref{eq:MildItoStoppingTime}.  Indeed, by writing this term as 
\begin{equation*}
    \begin{aligned}
        \int_s^{t} D F\big(r, S(t\wedge \tau^n-r)\lambda(r)\big)S(t\wedge \tau^n-r)K\sigma_0\big(\lambda(r)\big)\mathds{1}_{[0,\tau^n]}(r)\dW_r,
    \end{aligned}
\end{equation*}
taking expectation and applying It\^o isometry we have
\begin{equation*}
    \begin{aligned}
        \EE\bigg|&\int_s^{t} D F\big(r, S(t\wedge \tau^n-r)\lambda(r)\big)S(t\wedge \tau^n-r)K\sigma_0\big(\lambda(r)\big)\mathds{1}_{[0,\tau^n]}(r)\dW_r\\&-\int_s^{t} D F\big(r, S(t-r)\lambda(r)\big)S(t-r)K\sigma_0\big(\lambda(r)\big)\mathds{1}_{[0,\infty]}(r)\dW_r\bigg|^2_{\Vv}\\&
        =\int_s^{t} \EE\bigg| D F\big(r, S(t\wedge \tau^n-r)\lambda(r)\big)S(t\wedge \tau^n-r)K\sigma_0\big(\lambda(r)\big)\mathds{1}_{[0,\tau^n]}(r)\\&- D F\big(r, S(t-r)\lambda(r)\big)S(t-r)K\sigma_0\big(\lambda(r)\big)\mathds{1}_{[0,\infty]}(r)\bigg|^2_{\Vv}\dr,
    \end{aligned}
\end{equation*}
where, due to continuity of $DF$ and the semigroup $S$ and the almost sure convergence $\tau^n\rightarrow\infty,$ the integrand converges to $0$ almost surely for all $r\in[s,t],$ as $n\to\infty,$  Furthermore, by the growth condition \ref{eq:MildItoFgrowth}, triangle inequality for the kernel terms, the (pathwise) difference bound from Assumption \ref{assumption:Kernel} ii) and the linear growth estimate \eqref{eq:sigma0growth} we have 
\begin{equation*}
    \begin{aligned}
        \EE\int_s^{t} &\big| D F\big(r, S(t\wedge \tau^n-r)\lambda(r)\big)S(t\wedge \tau^n-r)K\sigma_0\big(\lambda(r)\big)\mathds{1}_{[0,\tau^n]}(r)\big|_\Vv^2\dr\\&\lesssim 
        \EE\bigg[\big(1+ \sup_{r\in[s,t]}|\lambda(r)|^{2\rho}_{\Hh_1}\big)\int_s^{t}\big(|S(t\wedge \tau^n-r)K-S(t-r)K|^2_{\Hh_1}+|S(t-r)K|^2_{\Hh_1}\big)|\sigma_0\big(\lambda(r)\big)|^2\dr\bigg]
        \\&\lesssim \EE\bigg[\big(1+ \sup_{r\in[s,t]}|\lambda(r)|^{2\rho}_{\Hh_1}\big)
        \bigg( 1+(t-\tau^n\wedge t)^{\frac{q-2}{q}}+\int_0^T|S(r)K|^2_{\Hh_1}\dr    \bigg)\big(1+ \sup_{r\in[s,t]}|\lambda(r)|^{2}_{\Hh_1}\big)\bigg]\\&
        \leq \bigg( 1+t^{\frac{q-2}{q}}+\int_0^T|S(r)K|^2_{\Hh_1}\dr    \bigg)\EE\bigg[\big(1+ \sup_{r\in[s,t]}|\lambda(r)|^{2\rho+2}_{\Hh_1}\big)
        \bigg]<\infty,
    \end{aligned}
\end{equation*}
where the deterministic prefactor is finite from Assumption \ref{assumption:Kernel} and the expectation is finite from Proposition \ref{lemma:pathregularity} (or, alternatively, Proposition \ref{Prop:Feller}.1.). Consequently, the dominated convergence theorem furnishes
\begin{equation*}   
\begin{aligned}
\lim_{n\to\infty}&\int_s^{t\wedge\tau^n} D F\big(r, S(t\wedge \tau^n-r)\lambda(r)\big)S(t\wedge \tau^n-r)K\sigma_0\big(\lambda(r)\big)\dW_r\\&=\int_s^{t} D F\big(r, S(t-r)\lambda(r)\big)S(t-r)K\sigma_0\big(\lambda(r)\big)\dW_r
  \end{aligned}
\end{equation*}
in $L^2(\Omega;\Vv)$ and (up to a subsequence) almost surely. The convergence for the rest of the terms follows by similar but simpler arguments. The proof is complete.
\end{proof}
\begin{remark}[Comparison with the path-dependent It\^o formula from \cite{viens2019martingale}]
\label{rem:Ito_VZ_comparison} In the setting of SVEs with convolution-type kernels, the mild It\^o formula in Theorem \ref{thm:mildIto} is significantly more tractable compared to the path-dependent It\^o formula from \cite[Theorems 3.10, 3.17]{viens2019martingale}. Indeed, 
the differentials are all interpreted in the sense of classical Fr\'echet differentiation on Hilbert spaces and the mild It\^o formula holds both in the case of singular and non-singular kernels without additional assumptions on the coefficients. We refer to Section \ref{sec:comparison_VZ} for a comprehensive comparison.
\end{remark}

\subsection{Singular directional differentiation and singular It\^o formula}\label{sec:SingularIto}  We have showed that $\Hh-$ valued strong solutions of the transport-type SPDE \eqref{eq:SPDE} satisfy a strong It\^o formula (Proposition \ref{prop:StrongIto}) while the $\Hh_1-$valued mild solutions \eqref{eq:SPDE} satisfy the mild It\^o formula from Theorem \ref{thm:mildIto}. As discussed in Remark \ref{rem:ItoHvsH1}, the former cannot be applied to functionals of the Volterra process $X=ev_0(\lambda)$ and $\Hh$ is not an RKHS. As for the latter, even though it leads to an It\^o formula for functionals of $X$ and a Fokker--Planck equation (Section \ref{sec:FPE}), it cannot be directly applied to obtain a backward Kolmogorov equation for the Markov semigroup \eqref{eq:MarkovSemigroup}.
\footnote[3]{This observation can already be verified in the much more general framework of mild It\^o processes that was introduced in \cite{da2019mild}. Indeed, the derivation of a mild Fokker--Planck equation (equation (71), Section 3.2.2 therein) is a straightforward consequence of a mild It\^o formula. However, to the best of our knowledge, there is no ``mild backward Kolmogorov" equation for SPDE mild  solutions that a) follows from  the mild It\^o formula or b) holds for ``non-smooth" initial data $y\notin Dom(A).$  The non-invertibility and non-commutativity of the associated composition operators and semigroups $S$ (see e.g. display (70) in the same reference) appear to be essential sources of difficulty for the aforementioned task.}. More importantly, the mild It\^o formula does not provide a semimartingale representation for smooth functionals of $\lambda.$

In this section we aim to prove an It\^o formula for time-dependent, smooth (in an appropriate sense) functionals $F:[0,T]\times\Hh_1\rightarrow\RR$ that
\begin{enumerate}
    \item[a)] is well-adapted to the structure of \eqref{eq:SPDE},
    \item[b)] results in desirable semimartingale representations for the process $F(t, \lambda(t))$, and
    \item[c)] serves as a cornerstone for the well-posedness proof of our backward Kolmogorov equation, Theorem \ref{thm:backward_equation_singular} below.
\end{enumerate} 
We achieve this goal by studying the limit of strong It\^o formulae for $\Hh_1-$valued smooth approximations of the mild solutions $\lambda$ \eqref{eq:SPDE_mild}. 

A glimpse at the strong It\^o formula on $\Hh$ \eqref{eq:StrongIto} immediately reveals the following challenge:

\noindent In order to obtain an analogous ``strong" It\^o formula on $\Hh_1,$ one needs to make sense of $\Hh_1-$directional derivatives along the singular kernel $K.$ However, as we have explained in Section \ref{subsec:Goals} $K$ cannot be an element of $\Hh_1.$ For this reason, we introduce below a concept of ``singular" directional derivatives that appears naturally in the subsequent analysis and plays a key role in the analysis of Section \ref{sec:BackwardEquation}.

\begin{definition}[Singular directional derivatives]\label{dfn:Derivative} Let $\mathcal{V}$ be a Banach space and $L^0(\Omega;\mathcal{V} )$ the vector space of $\mathcal{V}-$valued random variables endowed with the metric topology of convergence in probability. Next, let $F:\Hh_1\rightarrow L^0(\Omega;\mathcal{V} )$ be a (random) continuously Gateaux differentiable map (i.e. $\forall h\in\Hh_1$, $\Hh_1\ni y\longmapsto DF(y)(h)\in\RR$ is a well-defined continuous map) and
\begin{align*}
     &\mathscr{K}=\bigcap_{t>0}S(t)^{-1}(\Hh_1)\subset\Hh
\end{align*}
be the linear subspace defined in \eqref{eq:SingularDirections}. 
The \textit{singular directional derivative} of $F$ at the point $y\in\Hh_1$ and along the direction $h\in\mathscr{K} $ is a map $\Hh_1\times\mathscr{K}\ni (y,h)\longmapsto\mathcal{D}F(y)(h)\in L^0(\Omega;\mathcal{V} )$ defined by
$$ \mathcal{D}F(y)(h):=\lim_{\delta\to 0} DF(y)\big(S(\delta)h\big),$$
provided that the limit on the right-hand side exists in $L^0(\Omega).$ Similarly, for $m\in\NN,$ we define the $m$-th singular directional derivative $\mathcal{D}^mF(y)(h_1,\dots,h_m)$ of an $m-$times continuously Gateaux differentiable (random) function $F,$ along the directions  $\{h_i\}_{i=1}^{m}\subset\mathscr{K},$ as a map $\Hh_1\times\mathscr{K}^m\ni (y,h_1,\dots, h_m)\longmapsto\mathcal{D}^mF(y)(h_1,\dots,h_m)\in L^0(\Omega;\mathcal{V} ),$ 
\begin{align}\label{eq:mthSingularDerivativeDef}
   \mathcal{D}^mF(y)(h_1,\dots,h_m):=\lim_{\delta\to 0} D^mF(y)(S(\delta)h_1,\dots, S(\delta) h_m), 
\end{align} provided that the limit exists in $L^0(\Omega).$ The class of $m-$times singularly differentiable functions along a subset $\tilde{\mathscr{K}}\subset \mathscr{K}$ is denoted by $\mathcal{D}^m_{\!\!\tilde{\mathscr{K}}}(\Hh_1; \mathcal{V})$ and we use the shorter notation $\mathcal{D}_{\!\!\tilde{\mathscr{K}}}(\Hh_1; \mathcal{V}):=\mathcal{D}^1_{\!\!\tilde{\mathscr{K}}}(\Hh_1; \mathcal{V}).$ Finally, when dealing with real-valued functions we shall use the shorthand notation $\mathcal{D}^m_{\!\!\tilde{\mathscr{K}}}(\Hh_1):=\mathcal{D}^m_{\!\!\tilde{\mathscr{K}}}(\Hh_1; \RR).$
\end{definition}

\begin{remark}\label{rem:singular_derivative} We collect here a few observations on the definition of singular directional differentiability.
\begin{itemize}
\item[i)] Our definition of singular directional derivatives as limits of classical directional derivatives is similar to one given in \cite[Definition 3.16 and Theorem 3.17]{viens2019martingale}. To be precise, such differentials are defined in the last reference as limits of Fréchet derivatives on a (Banach) space of continuous functions while $\mathcal{D}$ is defined here as a limit of Gateaux derivatives on the Hilbert space $\Hh_1.$     
    \item[ii)] According to the above definition, a (random) function $F\in\mathcal{D}^m_{\tilde{\mathscr{K}}}(\Hh_1; \mathcal{V})$ is $m-$times continuously Gateaux differentiable. However, throughout the rest of this work, we shall only consider functions of this class that are in fact  $m-$times continuously Fr\'echet differentiable (see also Remark \ref{rem:NemytskiiRem} and Corollary \ref{cor:regularTangentProcesses}).
    \item[iii)] For the sake of generality, we have defined singular directional differentiability for random functions in terms of convergence in probability. Nevertheless, $L^0(\Omega;\mathcal{V})$ can be typically replaced by the stronger topology $L^p(\Omega;\mathcal{V}),$ $p\in[1,\infty].$ In particular, our singular differentiability results for the (random) solution flow $y\mapsto\lambda^{s,y}(t)$ (Lemma \ref{lem:SingularDifferentiability}) are true with $p=2.$  
    \item[iv)] Definition \ref{dfn:Derivative} is general enough to include both random and deterministic functions. Clearly, an  $m-$times continuously Gateaux differentiable deterministic function $F:\Hh_1\rightarrow\mathcal{V}$ is in $\mathcal{D}^m_{\tilde{\mathscr{K}}}(\Hh_1; \mathcal{V})$ if and only if the limit on the right-hand side of \eqref{eq:mthSingularDerivativeDef} exists in the topology of $\mathcal{V}.$
    \item[v)] For $h\in\Hh_1,$ the directional derivative $\mathcal{D}F(h)$ coincides with the classical Gateaux derivative  $DF(h)$ on $\Hh_1.$
    \item[vi)] Similarly to Gateaux derivatives and as highlighted before Definition \ref{assumption:admissibleweights}, we will keep the same notations~$\Dd F(y)(h_1),\Dd^2F(y)(h_1, h_2)$ for Gateaux derivatives with matrix-valued directions~$h_1,h_2\in H^1_w(\RR_+;\RR^{d\times d})$.    
    These notations will enable us to consider the derivatives in the direction~$S(\delta)K$ without carrying around~$b(y)$ or~$\sigma(y)e_i$ (as outlined e.g. in \eqref{eq:AbuseofNotations}).
    \end{itemize}
\end{remark}

We clarify the notion of singular directional differentiation by presenting a few examples and non-examples of ``singularly" smooth functions. 
\begin{example}[Singularly differentiable functionals]\label{ex:SingularDifferentiability} In general, differentiating a functional $F:\Hh_1\rightarrow\RR$ along directions that lie outside of $\Hh_1$ is not well-defined. However, singular directional derivatives are appropriate when dealing with functionals that are composed with the shift operators $S(t).$ Fortunately, and despite their particular form, such functionals arise naturally in the context of mild solutions of the SPDE \eqref{eq:SPDE_mild}.

Concrete examples of singularly differentiable functionals can be constructed as follows: 

Let $d=1$ so that $\Hh_1=H^1_w(\RR^+), \Hh=L^2_w(\RR^+)$, moreover let $t>0, H\in(0, 1/2), f\in \Cc_b^1(
\RR), w\in\mathcal{W}_a$ be an admissible weight for the singular kernel $K(\xi)=\xi^{H-1/2}.$ For some $y\in\Hh_1,$ we consider the functional $F_t:\Hh_1\rightarrow\RR$ defined by 
$$F_t(x)=f\bigg(\int_0^\infty x'(t+\xi)y'(\xi)w(\xi)d\xi\bigg),\quad x\in\Hh_1.$$
As already explained, $K\notin\Hh_1.$ Nevertheless, since $K$ is smooth away from the origin, it is possible to show that $F_t\in\mathcal{D}^1_{\{K\}}(\Hh_1).$ Indeed, for any $\delta>0,$ $K_\delta:=S(\delta)K\in\Hh_1$ and $F_t$ is Gateaux differentiable along the direction of $K_\delta.$ Writing 
$ \int_0^\infty x'(t+\xi)y'(\xi)w(\xi)d\xi=\langle S(t)\partial_\xi x, \partial_\xi y\rangle_{\Hh}$
and using chain rule, the Gateaux derivative is given by
$$DF_t(x)(K_\delta)=f'(\langle S(t)\partial_\xi x, y\rangle_{\Hh})\langle S(t)\partial_\xi K_\delta, \partial_\xi y\rangle_{\Hh}=f'(\langle S(t)\partial_\xi x, \partial_\xi y\rangle_{\Hh})\langle S(t+\delta)\partial_\xi K, \partial_\xi y\rangle_{\Hh}.$$
In fact, since $\Hh_1\ni x\mapsto \langle S(t)\partial_\xi x, \partial_\xi y\rangle_{\Hh}\in\RR$ is continuous linear and $f'\in \Cc(\RR),$ the map $\Hh_1\ni x\mapsto DF_t(x)(K_\delta)$
is continuous for any $\delta, t>0.$ From the above computations and the fact that $S(t)K\in\Hh_1$ it follows that the singular directional derivative $\mathcal{D}F_t(x)(K)$ exists and is given by
$$ \mathcal{D}F_t(x)(K)= f'(\langle S(t)\partial_\xi x, \partial_\xi y\rangle_{\Hh})\langle S(t)\partial_\xi K, \partial_\xi y\rangle_{\Hh}.$$
The same conclusion is actually true when $K$ is replaced by any element of the class $\mathscr{K}$ \eqref{eq:SingularDirections} (recall that $\Hh_1\subset \mathscr{K}\subset\Hh$)  since the latter consists of functions that are smooth away from the origin. Finally, the map $\Hh_1\ni x\mapsto \mathcal{D}F_t(x)(K)\in\RR $ is continuous, in sharp contrast to the linear map $\mathscr{K}\ni h\mapsto \mathcal{D}F_t(x)(h)\in\RR.$ Indeed, the latter is clearly not continuous when $\mathscr{K}$ is endowed with the subspace topology of $\Hh.$
\end{example}

\begin{example}[Non-differentiability of point evaluations]\label{ex:EvaluationNonDifferentiability} Let $d=1,$ $K$ be the power-law kernel and $K_\delta, \Hh_1$ as in the previous example. An important example of a functional that fails to be in $\Dd^1_{\{K\}}(\Hh_1)$ is the evaluation functional $ev_0:\Hh_1\rightarrow\RR.$ Indeed, this functional is linear and bounded and its Gateaux derivative along the regular direction $K_\delta\in\Hh_1$ is given by $$D ev_0(K_\delta)=ev_0(K_\delta)=K(\delta)\longrightarrow\infty$$
as $\delta\to 0.$ Hence $ev_0$ is not singularly differentiable along $K.$
\end{example}
\begin{remark}[Comparison to Malliavin derivatives] It is perhaps instructive to provide a comparison between  singular directional derivatives and the widely used Malliavin derivatives (see e.g. \cite{nualart2006malliavin}) for functionals $F:H_1 \rightarrow\RR,$ on an underlying abstract Wiener space $(H_1, \mathcal{K}, \gamma)$ (i.e. $\gamma$ is a Gaussian measure on a Hilbert space $H_1$ with Cameron-Martin space $\mathcal{K}\subset H_1$). Malliavin derivatives can be seen as ``restrictions" of classical Gateaux derivatives of $F$ to the smaller set of directions $\mathcal{K}.$ In contrast, singular derivatives $\mathcal{D}F(h)$ are rather extensions of Gateaux derivatives of $F:\Hh_1\rightarrow\RR$ and are defined with respect to the larger set of directions $h\in\mathscr{K}\supset\Hh_1.$ Indeed, when $h\in\Hh_1,$ $\mathcal{D}F(h)$ reduces to a classical Gateaux derivative per Remark \ref{rem:singular_derivative}.
\end{remark}

Next, we introduce an appropriate class of functionals that are classically (respectively singularly) differentiable in time (respectively space) and satisfy certain growth conditions. This is precisely the class of functionals for which we state our singular It\^o formula (Theorem \ref{thm:SingularIto}).

Define $K_\delta:=S(\delta)K$ for any~$\delta>0$ and the space of mixed singular and regular directions 
\begin{align}\label{def:spanK}
    \vspan(K,\Hh_1) := \Big\{ a_1K + a_2h: h\in\Hh_1,a_1,a_2\in\RR\Big\}. 
\end{align}
Clearly, $\vspan(K,\Hh_1)\subset\mathscr{K}$ and if the singular derivative $\Dd f(y)(K)$ exists then~$\Dd f(y)(\widetilde{K})$ also exists for any~$\widetilde{K}\in\vspan(K,\Hh_1)$.

\begin{definition}[The class $\Cc^{1,2}_{T,\mathscr{K}}$]\label{dfn:CKclass_alt} Let $T>0, \mathscr{K}_1$ as in \eqref{eq:yspace}.
    We say that $F:[0,T]\times\Hh_1\to\RR$ belongs to the class~$\Cc^{1,2}_{T,\mathscr{K}}$ if $F$ is bounded and there exists $\varpi\in L^1([0,T];\RR)$ such that the following hold:
    \begin{itemize}
        \item[i)] For all $y\in\mathscr{K}_1$, $F(\cdot,y)$ has a right-derivative~$\partial_t F$, for all $t\in[0,T)$ the map $\mathscr{K}_1\ni y\mapsto \partial_tF(t,y)\in\RR $ is continuous in the topology of $\Hh_1$ and
        \begin{align}\label{eq:defC12_dtF}
            \abs{\partial_t F(t,y)} \lesssim \varpi(t)+\abs{S(T-t)\partial_x y}_{\Hh_1}.
        \end{align}
         \item[ii)] For all~$t\in[0, T)$, $F(t,\cdot)$ is twice continuously Fréchet differentiable in~$\Hh_1$, there exists $\kappa:[0,1]^2\to\RR_+$ such that for all $\ep>0$ there is $\delta_0>0$ for which~$\delta,\delta'<\delta_0$ implies~$\kappa(\delta,\delta')<\ep$ and, for all~$y\in\mathscr{K}_1,\,h\in\mathscr{K}$, $\delta,\delta'>0,$  
\begin{equation}\label{eq:defC12_D2F_bound}
\begin{aligned}
   &\abs{DF(t,y)(h)}\lesssim  \abs{S(T-t)h}_{\Hh_1},
   \\&\abs{D^2F(t,y)(K_\delta,K_\delta)}\lesssim \varpi(t),
    \\&\abs{D^2F(t,y)\Big(K_{\delta}-K_{\delta'}, K_{\delta}+K_{\delta'} \Big)}
\lesssim 
\kappa(\delta,\delta')
\varpi(t).
    \end{aligned} 
    \end{equation}
    \item[iii)] For all $t\in[0,T), h_1\in\mathscr{K}, h_2\in\vspan(K,\Hh_1)$ the families 
    $$ \Big(  y\mapsto DF(t,y)(S(\delta)h_1)\Big)_{\delta\in(0,\delta')},\, \Big(  y\mapsto D^2F(t,y)(S(\delta)h_2, S(\delta)h_2)\Big)_{\delta\in(0,\delta')}       $$
    are equicontinuous at each~$y\in\Hh_1$ for $\delta'$ small enough.
    \end{itemize}  
\end{definition}
In Section~\ref{sec:BackwardEquation} we apply this definition to the map~$(s,y)\mapsto u(s,y):=\EE[\varphi(\lambda^{s,y}(T))]$. We prove in Section \ref{subsec:BackwardProof} that, under certain conditions including $q>4$ in Assumption~\ref{assumption:Kernel}, for any~$\delta,\delta'>0$,
\begin{align*}
& |Du(t,y)(h)|
\lesssim \abs{S(T-t)h}_{\Hh_1}\\
& |D^2u(t,y)(K_\delta, K_\delta)|^2
\lesssim 1 + \abs{S((T-t)/2)K}_{\Hh_1}^2+ \abs{S((T-t)/2)K}_{\Hh_1}^4 \\    &| D^2u(t,y)(K_{\delta}-K_{\delta'}, K_{\delta}+K_{\delta'} )|^2
   \\& \lesssim \Big(\abs{\delta-\delta'}^{\frac{q}{q-2}}+\abs{(S(\delta)-S(\delta'))S((T-s)/2) K}^2_{\Hh_1}\Big) \big(1 + \abs{S((T-t)/2)K}_{\Hh_1}^2\big),
\end{align*}
and identify $\kappa(\delta,\delta')\equiv\abs{\delta-\delta'}^{\frac{q}{q-2}}+\abs{(S(\delta)-S(\delta'))S((T-s)/2) K}^2_{\Hh_1}$. 
We stress that boundedness of $D^2u$ in the sense~$\abs{D^2u(t,y)(h_1,h_2)}\lesssim \abs{h_1}_{\Hh_1}\abs{h_2}_{\Hh_1}$ is not sufficient in this context; instead we exploit the regularisation property of the semigroup which provides time-integrable bounds. In view of this interlacing of time and space variables, this class of functions does not have a natural analogue for time-independent functions $F$.

Item ii) provides sufficient conditions for the (twice) singular differentiability of $F$ and item iii) ensures that the singular derivatives are continuous in $y$, as the following lemma guarantees. 
\begin{lemma}\label{lemma:singular_diff}
    If $F:[0,T]\times\Hh_1\to\RR$ is such that Definition \ref{dfn:CKclass_alt} ii) holds then for all $t\in[0, T),$         $F(t, \cdot)\in\mathcal{D}^1_{\!\mathscr{K}}(\Hh_1)\cap \mathcal{D}^2_{\vspan(K,\Hh_1)}(\Hh_1)$. If moreover Definition \ref{dfn:CKclass_alt} iii) holds then the maps
    $$ \Hh_1\ni y\longmapsto\mathcal{D}F(t,y)(h_1), \mathcal{D}F(t,y)(h_2, h_2)\in\RR            $$
    are continuous.
\end{lemma}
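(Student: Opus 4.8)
The plan is to establish the two assertions in turn: first that the limits defining the singular derivatives exist, using only the uniform-in-$y$ bounds in Definition~\ref{dfn:CKclass_alt}~ii); and then that the resulting maps are continuous, using the equicontinuity in item iii). Throughout I fix $t\in[0,T)$. Since $F(t,\cdot)$ is (twice, hence once) continuously Fréchet differentiable it is continuously Gateaux differentiable, so the standing hypothesis of Definition~\ref{dfn:Derivative} is met and it remains only to produce the limits as $\delta\to0$; as $F$ is deterministic these are limits in $\RR$.

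For the first singular derivative I fix $y\in\Hh_1$, $h\in\mathscr{K}$, note that $S(\delta)h\in\Hh_1$ for $\delta>0$, and verify the Cauchy criterion. By linearity of $DF(t,y)$ and the first bound in \eqref{eq:defC12_D2F_bound} applied to $S(\delta)h-S(\delta')h\in\Hh_1\subset\mathscr{K}$ (Remark~\ref{rem:InvariantSpaces}),
\[
\bigl|DF(t,y)(S(\delta)h)-DF(t,y)(S(\delta')h)\bigr|\lesssim\bigl|S(\delta)g-S(\delta')g\bigr|_{\Hh_1},\qquad g:=S(T-t)h\in\Hh_1,
\]
where $g\in\Hh_1$ since $T-t>0$ and $h\in\mathscr{K}$. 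Strong continuity of the shift semigroup on $\Hh_1$ (Remark~\ref{rem:H1Semigroup}) makes $\delta\mapsto S(\delta)g$ uniformly continuous on $[0,1]$, so the right-hand side tends to $0$ as $\delta,\delta'\to0$; hence $\mathcal{D}F(t,y)(h)$ exists and $F(t,\cdot)\in\mathcal{D}^1_{\mathscr{K}}(\Hh_1)$. For the second singular derivative along $K$ I argue the same way: bilinearity and symmetry of $D^2F(t,y)$ give $D^2F(t,y)(K_\delta,K_\delta)-D^2F(t,y)(K_{\delta'},K_{\delta'})=D^2F(t,y)(K_\delta-K_{\delta'},K_\delta+K_{\delta'})$, which the third bound in \eqref{eq:defC12_D2F_bound} dominates by $\kappa(\delta,\delta')\varpi(t)$; since $\kappa(\delta,\delta')\to0$ as $\delta,\delta'\to0$, the limit $\mathcal{D}^2F(t,y)(K,K)$ exists.

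Upgrading this to a general direction $\widetilde K=a_1K+a_2h\in\vspan(K,\Hh_1)$ with $h\in\Hh_1$ is the step I expect to be the main obstacle. Expanding $D^2F(t,y)(S(\delta)\widetilde K,S(\delta)\widetilde K)$ bilinearly, the term $a_1^2D^2F(t,y)(K_\delta,K_\delta)$ converges by the previous step and $a_2^2D^2F(t,y)(S(\delta)h,S(\delta)h)$ converges to $a_2^2D^2F(t,y)(h,h)$ because $S(\delta)h\to h$ in $\Hh_1$ and $D^2F(t,y)$ is a bounded bilinear form; so everything reduces to the cross term $D^2F(t,y)(K_\delta,S(\delta)h)$. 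This is delicate precisely because $|K_\delta|_{\Hh_1}\to\infty$, so the naive bilinear bound is of no help and the needed cancellation must be harvested through identities of the type appearing in the third line of \eqref{eq:defC12_D2F_bound}. My approach would be to polarise, $4D^2F(t,y)(K_\delta,S(\delta)h)=D^2F(t,y)(S(\delta)(K+h),S(\delta)(K+h))-D^2F(t,y)(S(\delta)(K-h),S(\delta)(K-h))$, and to run a Cauchy estimate for each term directly, checking that the estimates of \eqref{eq:defC12_D2F_bound} remain valid (with the same modulus $\kappa$) along the perturbed directions $S(\delta)(K\pm h)$, the regular part $h$ entering only through the uniformly continuous curves $\delta\mapsto S(\delta)h$ and $\delta\mapsto S(\delta)S(T-t)h$ on $[0,1]$. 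Granting this, $\lim_{\delta\to0}D^2F(t,y)(S(\delta)\widetilde K,S(\delta)\widetilde K)$ exists for every $\widetilde K\in\vspan(K,\Hh_1)$, and polarisation of the bilinear form $D^2F(t,y)$ extends it to mixed pairs of such directions, yielding $F(t,\cdot)\in\mathcal{D}^2_{\vspan(K,\Hh_1)}(\Hh_1)$.

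Finally, for continuity in $y$ I would fix $y_0\in\Hh_1$ and $h\in\mathscr{K}$ and combine pointwise convergence (first part) with equicontinuity (item iii)). Each $y\mapsto DF(t,y)(S(\delta)h)$ is continuous because $DF(t,\cdot)$ is continuous into $\mathscr{L}(\Hh_1;\RR)$, and by iii) the family $\{y\mapsto DF(t,y)(S(\delta)h)\}_{\delta\in(0,\delta')}$ is equicontinuous at $y_0$ for $\delta'$ small enough: given $\ep>0$ there is $r>0$ with $|DF(t,y)(S(\delta)h)-DF(t,y_0)(S(\delta)h)|<\ep$ for all such $\delta$ whenever $|y-y_0|_{\Hh_1}<r$. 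Passing to the limit $\delta\to0$ yields $|\mathcal{D}F(t,y)(h)-\mathcal{D}F(t,y_0)(h)|\le\ep$ on that ball, so $y\mapsto\mathcal{D}F(t,y)(h)$ is continuous at $y_0$; the identical argument with the second equicontinuous family in iii), for $h\in\vspan(K,\Hh_1)$, gives continuity of $y\mapsto\mathcal{D}^2F(t,y)(h,h)$, completing the proof.
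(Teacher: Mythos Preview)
Your argument follows the paper's: a Cauchy criterion for the limits from the bounds in ii), then continuity in $y$ as the pointwise limit of an equicontinuous family. You are right to single out the extension of the second singular derivative to all of $\vspan(K,\Hh_1)$ as the delicate step---the paper dispatches it in a single sentence (``the extension to $\vspan(K,\Hh_1)$ presents no difficulty''). Your polarisation route, however, requires the third bound in \eqref{eq:defC12_D2F_bound} to persist with $K$ replaced by $K\pm h$ for $h\in\Hh_1$, which is not part of Definition~\ref{dfn:CKclass_alt}~ii) as written; so strictly speaking both arguments leave the cross term $D^2F(t,y)(K_\delta,S(\delta)h)$ unresolved from the stated hypotheses alone. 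In the paper's intended application to $u(t,y)=\EE[\varphi(\lambda^{t,y}(T))]$ the needed bounds do hold for general pairs $h_1,h_2\in\vspan(K,\Hh_1)$ (see the estimates preceding \eqref{eq:BoundD2V}), so the lemma is valid there; but as a derivation purely from ii), your explicit flagging of the gap is more honest than the paper's one-line dismissal.
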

\begin{proof}
     We are going to prove that~$(DF(s,y)(S(\delta)h))_{\delta>0}$ and $(D^2F(s,y)(S(\delta)K, S(\delta)K))_{\delta>0}$ are Cauchy sequences, for any~$h\in\mathscr{K}_1$. For this purpose, take $\delta,\delta'>0$ and invoke \eqref{eq:defC12_D2F_bound} 
     \begin{align*}
         \abs{DF(s,y)(S(\delta)h)-DF(s,y)(S(\delta')h)}
         &= \abs{DF(s,y)\big((S(\delta)-S(\delta'))h\big)}\\
&\le \abs{(S(\delta)-S(\delta'))S(T-s)h}_{\Hh_1}.
     \end{align*}
     The first claim follows by continuity of the semigroup in~$\Hh_1$ (Lemma~\ref{lemma:semigroup_pties}) and the fact that $S(T-s)h\in\Hh_1$. The same reasoning applies to the second derivative since, by \eqref{eq:defC12_D2F_bound},
     \begin{align*}    &\abs{D^2F(s,y)\big(K_{\delta},K_{\delta}\big) - D^2F(s,y)\big(K_{\delta'},K_{\delta'}\big)}\\
    &=\abs{D^2F(s,y)\Big(K_{\delta}-K_{\delta'},K_{\delta}+K_{\delta'} \Big)}
    \lesssim
    \kappa(\delta,\delta') \varpi(s).
\end{align*}
These inequalities ensure that the sequences are Cauchy and therefore converge. The extension to~$\vspan(K,\Hh_1)$ presents no difficulty. 

As pointwise limits of a family of equicontinuous functions, the singular derivatives are continuous. This concludes the proof.
\end{proof}

\noindent A few comments on Definition \ref{dfn:CKclass_alt} are in order: 
\begin{remark}\
\begin{itemize}
    \item [i)] The fact that the upper bound for $\partial_t F$ is allowed to depend on $\partial_xy$ is not arbitrary. Such functions appear naturally in our study of backward Kolmogorov equations (Section \ref{sec:BackwardEquation}, Theorem \ref{thm:backward_equation_singular}). In fact, the definition of the class  $\Cc^{1,2}_{T,\mathscr{K}}$ is modelled after the behaviour of the probabilistic solution $u$ \eqref{eq:ValueFunction}  which is the ``canonical" example we have in mind.
    \item[ii)] We emphasise here that $\mathscr{K}_1$ apppearing in  Definition \eqref{dfn:CKclass_alt} is the largest subspace of $\Hh$ for which $S(T-s)\partial_x y\in\Hh_1$ for all $s\in[0,T).$ 
    \item[iii)] In view of Definition \ref{dfn:Derivative}, it is straightforward to verify that any $F\in \Cc^{1,2}_{T,\mathscr{K}}$ satisfies
 \begin{equation}\label{eq:CKSingularGrowth}
    \begin{aligned}
\abs{\mathcal{D}F(s,y)(h_1)}\lesssim \abs{S(T-s)h_1}_{\Hh_1}, \quad \abs{\mathcal{D}^2F(s,y)(K,K)}\lesssim \varpi(s)
    \end{aligned}
    \end{equation}
    for all $y\in\Hh_1$ and any singular directions $h_1\in\mathscr{K}$ \eqref{eq:SingularDirections}.
\end{itemize} 
\end{remark}

We are now ready to prove the main result of this section, a ``singular" It\^o formula for functionals in the class $\Cc^{1,2}_{T,\mathscr{K}}.$ Its proof is presented below, after a few preliminary remarks.

\begin{theorem}[Singular It\^o formula]\label{thm:SingularIto} 
Let Assumptions \ref{assumption:SVEassumptions} i)-ii) and \ref{assumption:Kernel} hold. 
Let $ T>0, t\in[0,T]$ and $F\in \Cc^{1,2}_{T,\mathscr{K}}$ per Definition \ref{dfn:CKclass_alt}. For all $(s,y)\in[0,t]\times\mathscr{K}_1$ we have
\begin{equation}\label{eq:SingularIto}
\begin{aligned}
    F(t,\lambda^{s,y}(t))&=F(s, y)+\int_s^{t}\bigg[ \partial_tF(r, \lambda^{s,y}(r))+\mathcal{D}F(r,\lambda^{s,y}(r))\Big( \partial_x \lambda^{s,y}(r)+K b_0(\lambda^{s,y}(r))\Big)\bigg]\dr
    \\&\quad +\int_s^t\frac{1}{2}\textnormal{Tr}\Big[ \mathcal{D}^2F(r,\lambda^{s,y}(r))K \sigma_0(\lambda^{s,y}(r))\big(K \sigma_0(\lambda^{s,y}(r))\big)^*\Big]\dr\\&\quad
    +\int_s^t \mathcal{D}F(r,\lambda^{s,y}(r))\big(K \sigma_0(\lambda^{s,y}(r)\dW_r\big),
\end{aligned}
   \end{equation}
    $\PP-$almost surely. In particular, for any $F\in\Cc^{1,2}_{T,\mathscr{K}},$ the process $\{F(t,\lambda^{s,y}(t))\}_{t\in[0,T]}$ is a semimartingale.
\end{theorem}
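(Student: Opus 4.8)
The plan is to mollify the solution by the shift semigroup, apply a classical Hilbert-space Itô formula to the regularised process, and then let the mollification vanish; the class $\Cc^{1,2}_{T,\mathscr{K}}$ is designed precisely so that this last limit exists and produces the singular derivatives. First, fix $\delta>0$ and set $\Lambda_\delta:=S(\delta)\lambda^{s,y}$. By \eqref{eq:lambdaSemimart} and its analogue for $\lambda^{s,y}$, the latter is an $\Hh$-valued semimartingale with drift $\partial_x\lambda^{s,y}(r)+Kb_0(\lambda^{s,y}(r))$ and diffusion $K\sigma_0(\lambda^{s,y}(r))$; applying the bounded operator $S(\delta)\in\mathscr{L}(\Hh)$, which commutes with $\partial_x$ on its domain, gives
\[
\Lambda_\delta(t)=S(\delta)y+\int_s^t\bigl(\partial_x\Lambda_\delta(r)+S(\delta)Kb_0(\lambda^{s,y}(r))\bigr)\dr+\int_s^t S(\delta)K\sigma_0(\lambda^{s,y}(r))\dW_r
\]
in $\Hh$. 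Invoking the invariance of $\mathscr{K}_1$ (Proposition~\ref{prop:invariant subspaces}): since $y\in\mathscr{K}_1$ we have $\lambda^{s,y}(r)\in\mathscr{K}_1$ for all $r$, hence $\partial_x\Lambda_\delta(r)=S(\delta)\partial_x\lambda^{s,y}(r)\in\Hh_1$; together with $S(\delta)K\in\Hh_1$ (Assumption~\ref{assumption:Kernel}) this makes the drift $\Hh_1$-valued and the diffusion $\mathscr{L}_2(\RR^m;\Hh_1)$-valued, their integrability over $[s,t]$ — in particular $\EE\int_s^t|S(\delta)\partial_x\lambda^{s,y}(r)|_{\Hh_1}\dr<\infty$ — following from exactly the bounds used in the proof of Proposition~\ref{prop:invariant subspaces} (decompose $\lambda^{s,y}$ into its three summands and use Assumption~\ref{assumption:Kernel} and $y\in\mathscr{K}_1$). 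Thus $\Lambda_\delta$ is a continuous $\Hh_1$-valued semimartingale whose trajectory stays in $\mathscr{K}_1$.

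Next, since $F(t,\cdot)\in\Cc^2(\Hh_1)$ by Definition~\ref{dfn:CKclass_alt}(ii) and $F(\cdot,y)$ has a right time-derivative along $\mathscr{K}_1$ — where the trajectory of $\Lambda_\delta$ lives — I would apply the standard infinite-dimensional Itô formula (e.g.\ \cite[Theorem~4.32]{da2014stochastic} or \cite[Theorem~2.9]{gawarecki2010stochastic}; the one-sided time derivative forces only a routine adjustment of the usual proof) to obtain
\begin{align*}
F(t,\Lambda_\delta(t))&=F(s,S(\delta)y)+\int_s^t\Bigl[\partial_tF(r,\Lambda_\delta(r))+DF(r,\Lambda_\delta(r))\bigl(\partial_x\Lambda_\delta(r)+S(\delta)Kb_0(\lambda^{s,y}(r))\bigr)\Bigr]\dr\\
&\quad+\tfrac{1}{2}\int_s^t\textnormal{Tr}\Bigl[D^2F(r,\Lambda_\delta(r))S(\delta)K\sigma_0(\lambda^{s,y}(r))\bigl(S(\delta)K\sigma_0(\lambda^{s,y}(r))\bigr)^*\Bigr]\dr\\
&\quad+\int_s^t DF(r,\Lambda_\delta(r))\bigl(S(\delta)K\sigma_0(\lambda^{s,y}(r))\dW_r\bigr).
\end{align*}

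The heart of the argument is the limit $\delta\downarrow0$. Strong continuity of $S$ on $\Hh_1$ gives $\Lambda_\delta(r)\to\lambda^{s,y}(r)$ in $\Hh_1$ almost surely, so $F(t,\Lambda_\delta(t))\to F(t,\lambda^{s,y}(t))$, $F(s,S(\delta)y)\to F(s,y)$ and $\partial_tF(r,\Lambda_\delta(r))\to\partial_tF(r,\lambda^{s,y}(r))$ by the $\Hh_1$-continuity of $\partial_tF(r,\cdot)$ on $\mathscr{K}_1$. For the genuinely singular terms one must treat a \emph{joint} limit in which both the base point $\Lambda_\delta(r)$ and the direction $S(\delta)(\cdot)$ degenerate; for $h\in\mathscr{K}$ I would write
\[
DF(r,\Lambda_\delta(r))(S(\delta)h)=DF(r,\lambda^{s,y}(r))(S(\delta)h)+\bigl[DF(r,\Lambda_\delta(r))-DF(r,\lambda^{s,y}(r))\bigr](S(\delta)h),
\]
where the first term converges to $\mathcal{D}F(r,\lambda^{s,y}(r))(h)$ by Definition~\ref{dfn:Derivative} and the second vanishes by the equicontinuity of Definition~\ref{dfn:CKclass_alt}(iii); applying this with $h=\partial_x\lambda^{s,y}(r)\in\mathscr{K}$ and (in the matrix sense of Remark~\ref{rem:singular_derivative}) with $h=K$ handles the $DF$-terms, and the same scheme — now using the Cauchy-type estimates of Definition~\ref{dfn:CKclass_alt}(ii) (after polarisation) together with Definition~\ref{dfn:CKclass_alt}(iii) — gives $D^2F(r,\Lambda_\delta(r))(K_\delta e_i,K_\delta e_j)\to\mathcal{D}^2F(r,\lambda^{s,y}(r))(Ke_i,Ke_j)$ and hence the convergence of the trace term. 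The exchange of limit and integration is by dominated convergence: via the bounds \eqref{eq:defC12_dtF}--\eqref{eq:defC12_D2F_bound} everything reduces to the $\delta$-uniform integrability over $[s,t]$ of $r\mapsto|S(T-r)\partial_x\lambda^{s,y}(r)|_{\Hh_1}$ (finite since $\lambda^{s,y}(r)\in\mathscr{K}_1$, cf.\ Proposition~\ref{prop:invariant subspaces}), of $r\mapsto|S(T-r+\delta)K|^2_{\Hh_1}$ (finite uniformly in $\delta\in(0,1)$ by \eqref{eq:cond_K1}), and of $\varpi\in L^1([0,T])$; the stochastic integral is handled in $L^2(\Omega)$ through the Itô isometry, passing to a subsequence if needed. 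Letting $\delta\downarrow0$ in the display above then yields \eqref{eq:SingularIto}. Finally, on the right-hand side of \eqref{eq:SingularIto} the $\dr$-integrals define a continuous adapted process of integrable variation — their integrands being dominated, up to a constant, by $\varpi(r)+|S(T-r)\partial_x\lambda^{s,y}(r)|_{\Hh_1}+|S(T-r)K|_{\Hh_1}$ — and the $\dW$-integral is a square-integrable martingale, so $t\mapsto F(t,\lambda^{s,y}(t))$ is a semimartingale.

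I expect the main obstacle to be this last step. Making the joint limit $\delta\downarrow0$ rigorous requires all three ingredients of the definition of $\Cc^{1,2}_{T,\mathscr{K}}$ simultaneously — the equicontinuity of Definition~\ref{dfn:CKclass_alt}(iii) to move the base point, the Cauchy-type estimates of Definition~\ref{dfn:CKclass_alt}(ii) to move the direction, and the $\delta$-uniform dominations supplied by Assumption~\ref{assumption:Kernel} together with the invariance of $\mathscr{K}_1$ — and it is exactly here that the hypothesis $y\in\mathscr{K}_1$ (hence $\lambda^{s,y}(r)\in\mathscr{K}_1$ for all $r$) is indispensable, both for the a priori regularity of $\Lambda_\delta$ in the first step and for the dominating functions in the limiting step.
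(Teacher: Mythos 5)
Your strategy -- mollify the solution by $S(\delta)$, apply a Hilbert-space Itô formula to $\Lambda_\delta=S(\delta)\lambda^{s,y}$, then let $\delta\downarrow 0$ using the structure of $\Cc^{1,2}_{T,\mathscr{K}}$ -- is exactly the paper's, and your treatment of the limit (splitting $DF(r,\Lambda_\delta(r))(S(\delta)h)$ into a singular-direction part controlled by the Cauchy bounds of Definition~\ref{dfn:CKclass_alt}(ii) and a base-point part controlled by the equicontinuity of Definition~\ref{dfn:CKclass_alt}(iii), and invoking the $\mathscr{K}_1$-invariance both for the semimartingale structure of $\Lambda_\delta$ and for the dominating functions) matches the paper's Step~2 term by term.

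Where you deviate is that you apply the classical Itô formula directly for $y\in\mathscr{K}_1$, whereas the paper inserts an intermediate layer: it first establishes the pre-limit identity \eqref{eq:SingularItoPrelimit} for $y\in\Hh_2\subset\mathscr{K}_1$, then extends to $y\in\mathscr{K}_1$ by the density sequence $y_n=S(1/n)y$ with $\delta$ held \emph{fixed} -- a plain $\Hh_1$-continuity argument in $(y,h)\in\mathscr{K}_1\times\Hh_1$ -- and only afterwards takes $\delta\downarrow 0$. The paper flags this ordering explicitly (Remark~\ref{rem:singularItoProof}): it allows the $y$-approximation to rely on joint $\Hh_1$-continuity of the \emph{classical} differentials $DF,D^2F,\partial_t F$, a property that would not be available if one tried to approximate in $y$ after the direction had already been made singular. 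Your shortcut avoids any $y$-approximation entirely, so the ordering concern does not bite in the same way, but it puts more weight on the claim that $\Lambda_\delta$ is a bona fide analytically strong $\Hh_1$-valued semimartingale for arbitrary $y\in\mathscr{K}_1$ and that the cited infinite-dimensional Itô theorems apply with $\partial_t F$ only defined (and continuous) on $\mathscr{K}_1$ rather than on $\Hh_1$-neighbourhoods. You acknowledge both points and sketch why they should hold (invariance of $\mathscr{K}_1$ plus the $L^1$-in-time integrability built into its definition for the first, the trajectory living in $\Hh_2\subset\mathscr{K}_1$ for the second), but these are exactly the technicalities the paper sidesteps by working first in the smoother class $\Hh_2$; spelling out the Itô-formula application on $\mathscr{K}_1$ would require either a mild generalisation of the standard statements or a restatement of the density argument you omit. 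In short, correct in substance, essentially the same proof, but with a shortcut that, while defensible, is precisely the point the authors chose to spend an extra step to insulate.
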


\begin{remark} Theorem \ref{thm:SingularIto} is, in a sense, interpolating between the strong and mild It\^o formulae from Sections \ref{subsec:StrongIto}, \ref{subsec:MildIto}. In particular, it shows that sufficiently smooth functionals of $\Hh_1-$valued mild solutions $\lambda$ \eqref{eq:SPDE_mild} satisfy a ``strong"-type It\^o formula that holds for initial conditions in $\mathscr{K}_1.$ Of course, such an ``intermediate" It\^o formula relies crucially on the special structure of \eqref{eq:SPDE}. Thus, an analogue of Theorem \ref{thm:SingularIto} is not expected to hold for generic SPDEs that do not enjoy the invariance and smoothing properties of \eqref{eq:SPDE}.

Furthermore, while the strong Itô formula holds when the data~$y,K\in\Hh_1$, this singular version operates a tradeoff: it allows the relaxation~$K\in\mathscr{K}$ and in exchange restricts~$y\in\mathscr{K}_1$ (recall that~$\mathscr{K}_1\subset \Hh_1\subset \mathscr{K}$).
\end{remark}
\begin{remark}
    The strategy of mollifying, applying Itô formula and then taking the limit does not apply at the level of the Volterra process $X$ itself. Its infinite quadratic variation appears in the limit. The lift, even though it is not a semimartingale either, is thus better behaved in that sense.
\end{remark}

\begin{proof}
   \noindent Throughout the proof, we consider $\mathscr{K}_1$ \eqref{eq:yspace} with the subspace topology of $\Hh_1.$ Our strategy proceeds as follows: First, we consider smooth initial data $y\in\Hh_2\subset\mathscr{K}_1$ and apply a strong It\^o formula to a ``smooth" approximation $\Lambda^{s,y}_\delta$ of $\lambda^{t,y}.$ Then, we extend this strong It\^o formula to all $y\in\mathscr{K}_1$ by a density argument. Finally, the main part of the proof is concerned with taking $\delta\to 0$ and arriving at the singular It\^o formula \eqref{eq:SingularIto}.

\noindent \underline{\textbf{Step 1:} Strong It\^o formula for $y\in\mathscr{K}_1.$} Let  $y\in\Hh_2\subset\mathscr{K}_1,$  $\delta>0, s\leq r\leq t\leq T,$ consider the process $\Lambda^{s,y}_\delta(r):=S(\delta)\lambda^{s,y}(r),$ as in the proof of Theorem \ref{thm:mildIto}, and recall that for all $y\in\Hh_2$ the latter is a strong $\Hh_1-$valued solution.  In view of Remark \ref{rem:InvariantSpaces} and Proposition \ref{prop:invariant subspaces} we have $y\in\mathscr{K}_1$ and, by invariance, $\Lambda^{s,y}_\delta(r)\in\mathscr{K}_1\implies \partial_x\Lambda^{s,y}_\delta(r)=S(\delta)\partial_x\lambda^{s,y}(r)\in\Hh_1$ almost surely for any $s.$  Since $F\in \Cc_{T,\mathscr{K}}^{1,2},$ it is twice continuously Gateaux differentiable in space and differentiable in time. 
Moreover, the estimates \eqref{eq:defC12_D2F_bound} imply that, for each fixed $r,$ the continuous Gateaux differentials $y\mapsto(h\mapsto DF(r,y)(h)),y\mapsto((h_1,h_2)\mapsto D^2F(r,y)(h_1, h_2))$ are respectively in $\Cc(\Hh_1;\mathscr{L}(\Hh_1;\RR)), $ $\Cc(\Hh_1;\mathscr{L}(\Hh_1;\Hh_1))$ by Riesz representation, 
hence $F(r,\cdot)$ is twice continuously Fr\'echet differentiable in~$\Hh_1.$ 
An application of the strong It\^o formula to the process   $\{ F(r, \Lambda^{s,y}_\delta(r)); r\in[s,t]\}$  yields
\begin{equation}
   \label{eq:SingularItoPrelimit} 
\begin{aligned}
    F(t, \Lambda^{s,y}_\delta(t))-F(s, S(\delta)y)=&\int_s^{t}\partial_t F(r, \Lambda^{s,y}_\delta(r))\dr+\int_s^t DF(r,\Lambda^{s,y}_\delta(r))\bigg(K_\delta \sigma_0(\lambda^{s,y}(r))\dW_r\bigg)\\
    &+\int_s^t\bigg[ DF(r,\Lambda^{s,y}_\delta(r))\bigg( \partial_x \Lambda^{s,y}_\delta(r)+K_\delta b_0(\lambda^{s,y}(r))\bigg)\\
    &+\frac{1}{2}\textnormal{Tr}\Big[ D^2F(r,\Lambda^{s,y}_\delta(r))K_\delta \sigma_0(\lambda^{s,y}(r))(K_\delta \sigma_0(\lambda^{s,y}(r)))^*\Big]\bigg]\dr
    .
\end{aligned}
\end{equation}
 In order to extend the singular It\^o formula to $y\in\mathscr{K}_1$ we proceed as follows: For any $y\in\mathscr{K}_1,$ consider the sequence $\{y_n\}_{n\in\NN}:=\{S(1/n)y\}_{n\in\NN}\subset\Hh_2$ that converges to $y$ in $\Hh_1.$ Applying \eqref{eq:SingularItoPrelimit} for $y=y_n$ we obtain 
\begin{equation}\label{eq:nSingularIto}
\begin{aligned}
    F(t,\Lambda^{s,y_n}_\delta(t))&-F(s, y_n)=\int_s^t\bigg[ \partial_tF(r, \Lambda^{s,y_n}_\delta(r))+DF(r,\Lambda^{s,y_n}_\delta(r))\bigg( \partial_x \Lambda^{s,y_n}_\delta(r)+K b_0(\Lambda^{s,y_n}_\delta(r))\bigg)\bigg]\dr
    \\&+\int_s^t\frac{1}{2}\textnormal{Tr}\Big[ D^2F(r,\Lambda^{s,y_n}_\delta(r))K \sigma_0(\Lambda^{s,y_n}_\delta(r))(K \sigma_0(\Lambda^{s,y_n}_\delta(r)))^*\Big]\dr\\&
    +\int_s^t DF(r,\Lambda^{s,y_n}_\delta(r))\big(K \sigma_0(\Lambda^{s,y_n}_\delta(r))\dW_r\big).
\end{aligned}
\end{equation}
Again by invariance, and for all $s, t$ and $\delta>0$ the sequence $\{\Lambda^{s,y_n}_\delta(r)\}_{n\in\NN}=\{S(\delta)\lambda^{s,y_n}(r)\}_{n\in\NN}\subset\Hh_2$ hence $\{\partial_x\Lambda^{s,y_n}_\delta(r)\}_{n\in\NN}\subset\Hh_1$ almost surely. By definition of $\mathscr{K}_1,$ the same is true with $y_n$ replaced by~$y.$

By assumption, the real-valued functions  $b_0(y), \sigma_0(y), F(r,y)$ $,DF(r,y)(h_1),$ $D^2F(r,y)(h_1, h_2), $ $\partial_tF(r,y)$ are jointly continuous functions of $(y, h_1, h_2)\in \mathscr{K}_1\times\Hh_1\times\Hh_1.$ By Proposition \ref{Prop:Feller}, $\lambda^{s,y}(r)$ is continuous in probability with respect to $y\in\Hh_1.$ Thus, by passing to a subsequence in $n\in\NN,$ and with $\delta>0$ fixed, the left-hand side and all integrands on the right-hand side of \eqref{eq:nSingularIto} converge almost surely, as $n\to\infty,$ to the same expressions, with $y_n$ replaced by $y.$ Finally, from linear growth of $b_0,\sigma_0$ and the integrable bounds in Definition~\ref{dfn:CKclass_alt}, one can interchange the almost sure limit as $n\to\infty$ via dominated convergence (or via dominated convergence for stochastic integrals as in Step 1). Thus, for any $\delta>0,$ \eqref{eq:SingularItoPrelimit} remains true for $y\in\mathscr{K}_1$ and the proof is complete.

\underline{\textbf{Step 2:} $\delta\to 0$.} We fix $y\in\mathscr{K}_1$ and study the limit of \eqref{eq:SingularItoPrelimit} as $\delta\to 0.$ Starting from the two terms on the left-hand side, the almost sure convergence $ |\Lambda^{s,y}_\delta(t)-\lambda^{s,y}(t)|_{\Hh_1}\to 0,$ continuity of $F(t,\cdot)$ along with the convergence $\lim_{\delta\to 0}|(S(\delta)-I)y|_{\Hh_1}=0$ furnish
\begin{equation}
    \label{eq:SingularItoDeltaLimit1}
    \lim_{\delta\to 0} \bigg(F(t, \Lambda^{s,y}_\delta(t))-F(s, S(\delta)y)\bigg)=F(t, \lambda^{s,y}(T))-F(s,y)
\end{equation}
almost surely.

\noindent The time-derivative term requires some extra care since $\partial_tF(r,\cdot)$ is only defined and continuous on $\mathscr{K}_1$ (recall Definition \ref{dfn:CKclass_alt}). First, we notice that $\Lambda_\delta^{s,y}(r)\in\Hh_2\subset\mathscr{K}_1$ almost surely for all $s$ and $\delta>0.$ Moreover, $\Lambda_\delta^{s,y}(r)\rightarrow \lambda^{s,y}(r)\in\mathscr{K}_1$ almost surely and by continuity it follows that 
$$   \partial_tF(r,\Lambda^{s,y}_\delta(r))\rightarrow \partial_tF(r,\lambda^{s,y}(r))      $$
almost surely. Moreover, the growth estimate \eqref{eq:defC12_dtF} implies
\begin{align*}    \int_s^t\EE|\partial_tF(r,\Lambda^{s,y}_\delta(r))|\dr&\lesssim  \int_s^t \Big(\varpi(r) + \EE\abs{S(T-r)\partial_x \Lambda^{s,y}_\delta(r)}_{\Hh_1}\Big)\dr\\&\lesssim 1+ \int_s^t\EE\abs{S(t-r)\partial_x S(\delta)\lambda^{s,y}(r)}_{\Hh_1}\dr\\&
\lesssim 1+\int_s^t\EE\abs{S(t-r)\partial_x\lambda^{s,y}(r)}_{\Hh_1}\dr,
\end{align*}
where we used the semigroup property on the second line and Assumption \ref{assumption:Kernel} to claim that~$\int_t^T \varpi(r)\dr$ is finite.

In order to apply the dominated convergence theorem we need to show that the last integral is also finite. To this end, we use the semigroup property once again to obtain
\begin{equation*}
    \begin{aligned}
        S(t-r)\lambda^{s,y}(r)&=S(t-r)S(r-s)y+S(t-r)\int_s^r S(r-\theta)Kb_0(\lambda^{s,y}(\theta))\D\theta\\&
        \quad+S(t-r)\int_s^r S(r-\theta)K\sigma_0(\lambda^{s,y}(\theta))\dW_\theta\\&
        =S(t-s)y+\int_s^r S(t-\theta)Kb_0(\lambda^{s,y}(\theta))\D\theta+\int_s^r S(t-\theta)K\sigma_0(\lambda^{s,y}(\theta))\dW_\theta.
    \end{aligned}
\end{equation*}
From linear growth of $b_0, \sigma_0,$ the BDG inequality for the stochastic integral, estimate \eqref{eq:lambdaestimate} 
Assumption \ref{assumption:Kernel} and 
the estimate
\begin{align}\label{eq:ShiftedDxlambda}
    \int_s^t\bigg(\int_s^r|S(t-\theta)\partial_x K|^2_{\Hh_1}\D \theta\bigg)^{\frac{1}{2}}\dr 
    &= \int_s^t\bigg(\int_s^r|S(t-r)S(r-\theta)\partial_xK|^2_{\Hh_1}\dr\bigg)^{\frac{1}{2}}\dr \nonumber\\
    &= \int_0^{t-s}\bigg(\int_0^{r-s} |S(r+\theta)\partial_x K|^2_{\Hh_1}\D \theta\bigg)^{\frac{1}{2}}\dr <+\infty 
\end{align}
\noindent (which follows by substitution and crude bounds on the domain of integration) we deduce that, indeed, 
$$   \int_s^t\EE\abs{S(t-r)\partial_x\lambda^{s,y}(r)}_{\Hh_1}\dr<\infty$$
(notice that similar estimates can be found in the proof of Proposition \ref{prop:invariant subspaces}). 
Hence, by virtue of the dominated convergence theorem it follows that \begin{equation}  \label{eq:SingularItoDeltaLimit2}
\lim_{\delta\to 0}\int_s^t\partial_tF(r,\Lambda^{s,y}_\delta(r))\dr=\int_s^t\partial_tF(r,\lambda^{s,y}(r))\dr
\end{equation}
in $L^1(\Omega).$

Next, we study the first order spatial derivatives of $F.$ To this end, we invoke again Proposition \ref{prop:invariant subspaces} which says that for all $y\in\mathscr{K}_1, s\geq t$, $\partial_x \lambda^{s,y}(r)\in\mathscr{K}$ is an admissible singular direction.

By continuity of the (Gateaux) derivative $DF(r,\cdot)(h),$ and singular directional differentiability of $F$ 
(recall Lemma \ref{lemma:singular_diff}) along any direction in $\mathscr{K}$ we have, for all $s,r,y,$ 
\begin{equation}\label{eq:SingularItoDFConvergence}
    \begin{aligned}        DF(r,\Lambda^{s,y}_\delta(r))\bigg( \partial_x S(\delta)\lambda^{s,y}(r)&+K_\delta b_0(\lambda^{s,y}(r))\bigg)\\&=DF(r,S(\delta)\lambda^{s,y}(r))\bigg(S(\delta)\bigg(\partial_x \lambda^{s,y}(r)+K b_0(\lambda^{s,y}(r))\bigg)\bigg)\\&
        \longrightarrow \mathcal{D}F(r,\lambda^{s,y}(r))\bigg(\partial_x \lambda^{s,y}(r)+K b_0(\lambda^{s,y}(r))\bigg),
    \end{aligned} 
    \end{equation}
    as $\delta\to 0,$ almost surely.

    As before, we shall pass the limit as $\delta\to 0$ under the sign of expectation and Riemann integration to deduce the  $L^1(\Omega)$ limit   \begin{equation}\label{eq:SingularItoDeltaLimit3}
    \begin{aligned}
        \lim_{\delta\to 0}\int_s^t DF(r,\Lambda^{s,y}_\delta(r))&\bigg( \partial_x S(\delta)\lambda^{s,y}(r)+K_\delta b_0(\lambda^{s,y}(r))\bigg)\dr\\&=\int_s^t\mathcal{D}F(r,\lambda^{s,y}(r))\bigg(\partial_x \lambda^{s,y}(r)+K b_0(\lambda^{s,y}(r))\bigg)\dr.
    \end{aligned}
     \end{equation}
     Indeed, since $F$ satisfies \eqref{eq:defC12_D2F_bound} and $b$ has linear growth we have the pathwise estimate     
     \begin{align*}
     \bigg|DF(r,\Lambda^{s,y}_\delta(r))\bigg( \partial_x S(\delta)\lambda^{s,y}(r)&+K_\delta b_0(\lambda^{s,y}(r))\bigg)\bigg|\lesssim\bigg| S(T-r)\partial_x S(\delta)\lambda^{s,y}(r)+ S(T-r)K_\delta b_0(\lambda^{s,y}(r))\bigg|_{\Hh_1}\\&
     \lesssim \big| S(T-r)\partial_x \lambda^{s,y}(r)\big|_{\Hh_1}+ \big|S(T-r)K\big|_{\Hh_1} \big(1+\big|\lambda^{s,y}(r)\big|_{\Hh_1}\big).
     \end{align*}
 
Arguing as above, the right hand side is integrable over $(s,\omega)\in[t,T]\times\Omega$ and independent of $\delta.$ Hence \eqref{eq:SingularItoDeltaLimit3} holds true by dominated convergence.

A similar argument holds for the second derivative terms. Indeed, twice singular directional differentiability of $F(r,\cdot)$ along directions in $\vspan(K,\Hh_1)$ (Lemma \ref{lemma:singular_diff}) implies (at least up to a subsequence) the almost sure limit 
\begin{align*}
    \lim_{\delta\to0} \textnormal{Tr}\big[ D^2F(r,\Lambda^{s,y}_\delta(r))&K_\delta \sigma_0(\lambda^{s,y}(r))(K_\delta \sigma_0(\lambda^{s,y}(r)))^*\big]\\
    &=\lim_{\delta\to0}\sum_{i=1}^{m}D^2F(r,S(\delta)\lambda^{s,y}(r))\big(K_\delta \sigma_0(\lambda^{s,y}(r)e_i, K_\delta \sigma_0(\lambda^{s,y}(r))e_i\big)\\&
    = \sum_{i=1}^{m}\mathcal{D}^2F(r,\lambda^{s,y}(r))\big(K \sigma_0(\lambda^{s,y}(r)e_i, K \sigma_0(\lambda^{s,y}(r))e_i\big) \\&
    =\textnormal{Tr}\big[ \mathcal{D}^2F(r,\lambda^{s,y}(r))K \sigma_0(\lambda^{s,y}(r))(K \sigma_0(\lambda^{s,y}(r)))^*\big],
\end{align*}
which holds all $s,t,y$ and $\{e_i\}_{i=1,\dots m}$ is the standard basis of $\RR^m.$

Then, \eqref{eq:defC12_D2F_bound} and linear growth of $\sigma$ allows the application of dominated convergence to obtain
\begin{equation}\label{eq:SingularItoDeltaLimit4}
    \begin{aligned}
        \lim_{\delta\to 0}\int_s^t\textnormal{Tr}&\big[ D^2F(r,\Lambda^{s,y}_\delta(r))K_\delta \sigma_0(\lambda^{s,y}(r))(K_\delta \sigma_0(\lambda^{s,y}(r)))^*\big]\dr
        \\&
        =\lim_{\delta\to 0}\sum_{i=1}^{m}\int_s^tD^2F(r,\Lambda^{s,y}_\delta(r))\bigg(K_\delta \sigma_0(\lambda^{s,y}(r))e_i, K_\delta \sigma_0(\lambda^{s,y}(r))e_i\bigg)\dr  
        \\&=\int_s^t\textnormal{Tr}\big[ \mathcal{D}^2F(r,\lambda^{s,y}(r))K \sigma_0(\lambda^{s,y}(r))(K \sigma_0(\lambda^{s,y}(r)))^*\big]\dW
    \end{aligned}
     \end{equation}
     in $L^1(\Omega).$

Turning to the stochastic integral in \eqref{eq:SingularItoPrelimit}, we write in coordinates 
$$ \int_s^tDF(r,\Lambda^{s,y}_\delta(r))\bigg(K_\delta \sigma_0(\lambda^{s,y}(r))\dW_r\bigg)=\sum_{i=1}^{m} \int_s^tDF(r,\Lambda^{s,y}_\delta(r))\bigg(K_\delta \sigma_0(\lambda^{s,y}(r))e_i\bigg)\dW^i_r.$$
For each $i=1,\dots,m,$ It\^o isometry along with the bounds \eqref{eq:defC12_D2F_bound}, linear growth of $\sigma_0$ and Assumption \ref{assumption:Kernel} furnish
\begin{equation*}
    \begin{aligned}   \EE&\bigg|\int_s^tDF(r,\Lambda^{s,y}_\delta(r))\bigg(K_\delta \sigma_0(\lambda^{s,y}(r))e_i\bigg)\dW^i_r\bigg|^2=   \int_s^t\EE\bigg|DF(r,\Lambda^{s,y}_\delta(r))\bigg(K_\delta \sigma_0(\lambda^{s,y}(r))e_i\bigg)\bigg|^2\dr\\&
    \leq  \int_s^t \EE|S(T-r)K_\delta \sigma_0(\lambda^{s,y}(r))e_i\bigg|_{\Hh_1}^2\dr
    \lesssim \left(1+\sup_{r\in[s,T]}\EE \abs{\lambda^{s,y}(r)}^2_{\Hh_1}\right) \int_s^t|S(T-r)K\big|^2_{\Hh_1}\dr<\infty.
\end{aligned}
\end{equation*}
Furthermore, the arguments leading to \eqref{eq:SingularItoDFConvergence} also imply the almost sure convergence 
\begin{equation*}
    \begin{aligned}        DF(r,\Lambda^{s,y}_\delta(r))\bigg( K_\delta \sigma_0(\lambda^{s,y}(r))e_i\bigg)
        \longrightarrow \mathcal{D}F(r,\lambda^{s,y}(r))\bigg(K \sigma_0(\lambda^{s,y}(r))e_i\bigg).
    \end{aligned} 
    \end{equation*}
By the dominated convergence theorem for stochastic integrals it follows that, for all $t,T,y,$ 
\begin{equation} \label{eq:SingularItoDeltaLimit5}
    \begin{aligned}   \lim_{\delta\to 0}\int_s^tDF(r,\Lambda^{s,y}_\delta(r))\bigg(K_\delta &\sigma_0(\lambda^{s,y}(r))\dW_r\bigg)=\int_s^t\mathcal{D}F(r,\lambda^{s,y}(r))\bigg(K\sigma_0(\lambda^{s,y}(r))\dW_r\bigg)
\end{aligned}
\end{equation}
in $L^0(\Omega).$

Finally, by passing to a subsequence over which the limits \eqref{eq:SingularItoDeltaLimit1}, \eqref{eq:SingularItoDeltaLimit2} and \eqref{eq:SingularItoDeltaLimit3}-\eqref{eq:SingularItoDeltaLimit5} hold almost surely, we take $\delta\to 0$ in \eqref{eq:SingularItoPrelimit} to obtain \eqref{eq:SingularIto} for all $(t,y)\in[0,T]\times\mathscr{K}_1.$ 
\end{proof}

We conclude with a technical remark on the proof strategy of Theorem \ref{thm:SingularIto}:
\begin{remark}\label{rem:singularItoProof}
As we already saw in Example \ref{ex:SingularDifferentiability}, singular derivatives at a fixed point $y\in\Hh_1$ $\mathcal{D}F(y)(\cdot)$ are not continuous with respect to directions in $\mathscr{K}.$ For this reason, in the above proof, we first extend the ``smooth" It\^o formula from initial conditions $y\in\Hh_2$ to $y\in\mathscr{K}_1$ by taking $n\to\infty$ and then pass to the ``singular" limit $\delta\to 0.$ Taking limits in this order allows us to use joint continuity properties of standard differentials $DF$ which may not be available in the singular limit.
\end{remark}


\subsection{Applications of the It\^o formulae}  \label{sec:applications_mildito}
In this section we present a number of applications of Theorems \ref{thm:mildIto}, \ref{thm:SingularIto}. The first two, an It\^o formula with random coefficients for the SVE $X$ and a mild Fokker--Planck Equation (FPE) for the law of mild solutions $\lambda$ \eqref{eq:SPDE_mild} follow from the mild It\^o formula. The third, a singular FPE follows from the mild It\^o formula. Before we proceed, we make here the following important remarks:

\begin{remark}[Comparisons between the mild and singular It\^o formulae and FPEs]\label{rem:FPEcomparison}\

\begin{itemize}
    \item[i)] In contrast to the the mild It\^o formula, the singular one does not lead to an It\^o formula for~$X.$ Indeed, the latter holds for functionals in the class $\Cc^{1,2}_{T,\mathscr{K}}$ (Definition \ref{dfn:CKclass_alt}).  Thus it cannot be applied to evaluation functionals $ev_0:\Hh_1\rightarrow\RR$ which, per Example \ref{ex:EvaluationNonDifferentiability}, are not singularly differentiable.
    \item[ii)] In Corollary \ref{cor:FPEuniqueness}, we show that the singular FPE \eqref{eq:singular_FPE} with initial data $\{\delta_y\}_{y\in\mathscr{K}_1}$ is \textit{uniquely} solved by the law of the state process $\lambda^{0,y}.$ This is done by duality and essentially follows from existence of solutions to the backward PDE \ref{thm:backward_equation_singular}(1). In contrast, due to absence of a ``mild" backward PDE we only show existence of solutions to  the mild FPE \eqref{eq:mildFP}.
   \item[iii)] The mild and singular It\^o formulae and FPEs apply to different classes of test functions. In particular the class $\Cc^{1,2}_{T,\mathscr{K}}$ are space-time test functions and satisfy much stronger conditions compared to (spatial only) test functions in $\Cc_b^2(\Hh_1).$  
\end{itemize}
    
\end{remark}

\subsubsection{Itô formula for Volterra processes}\label{subsubsec:ItoSVE}
For the first application of this section, we  ``project" the mild It\^o formula \eqref{eq:MildIto} back to the SVE \eqref{eq:SVE} and obtain a finite-dimensional It\^o formula for functionals of the non-semimartingale process $X.$ The main result of this subsection is the following:

\begin{corollary}
    [It\^o formula for SVEs]\label{prop:ItoSVE} Let $T>0, p\geq 2q/(q-2), X_0\in L^p(\Omega;\Hh_1),$ $X, \lambda$  denote the unique solutions of \eqref{eq:SVE} and \eqref{eq:SPDE} respectively on $[0,T].$ Next, let $f\in \Cc^{1,2}([0,T]\times\RR^d; \RR)$ such that  $$[0,T]\times\RR^d\ni(t,x)\longmapsto \partial_t f(t,x)\in \RR, \nabla_x f(t,x)\in\RR^d, D^2_x f(t,x)\in\mathscr{L}(\RR^d;\RR^d)$$
satisfy the growth conditions \eqref{eq:MildItoFgrowth} with $\Vv=\RR^d$ and $\Hh_1$ replaced by $\RR$. With probability $1,$ for all $s\leq t\in[0,T],$ it holds that
        \begin{equation}\label{eq:SVEIto}
    \begin{aligned}  f\big(t, X_t\big)&=f\big(s, \lambda(s, t-s)\big)+\int_s^t \nabla_x f\big(r, \lambda(r, t-r)\big)K(t-r)\sigma\big(X_r\big)\dW_r\\&
  +\int_s^t\bigg[\partial_tf\big(r, \lambda(r, t-r)\big)+ \nabla_x f\big(r,\lambda(r, t-r)\big)K(t-r)b\big(X_r\big)\bigg]\dr\\&
  +\frac{1}{2}\int_s^t\textnormal{Tr}\bigg[ D^2_xf\big(r,\lambda(r, t-r)\big)K(t-r)\sigma\big(X_r\big)\Big( K(t-r)\sigma\big(X_r\big)\Big)^{\top}\bigg]\dr.    
    \end{aligned}
\end{equation}
\end{corollary}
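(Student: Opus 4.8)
The plan is to deduce \eqref{eq:SVEIto} from the mild It\^o formula (Theorem~\ref{thm:mildIto}) applied to a carefully chosen functional on $\Hh_1$. Since $\Hh_1$ is an RKHS (Lemma~\ref{lemma:Hwproperties}), the point evaluation $ev_0:\Hh_1\to\RR^d$ is a bounded linear operator, so
\[
F:[0,T]\times\Hh_1\to\RR,\qquad F(r,y):=f\big(r,ev_0(y)\big)
\]
is well defined. The first step is to check that $F$ satisfies the hypotheses of Theorem~\ref{thm:mildIto} with $\Vv=\RR$. The chain rule gives $\partial_tF(r,y)=\partial_tf(r,ev_0(y))$, $DF(r,y)(h)=\nabla_xf(r,ev_0(y))\cdot ev_0(h)$ and $D^2F(r,y)(h_1,h_2)=D^2_xf(r,ev_0(y))\big(ev_0(h_1),ev_0(h_2)\big)$, so $F\in\Cc^{1,2}([0,T]\times\Hh_1;\RR)$ because $f$ is $\Cc^{1,2}$ and $ev_0$ is bounded linear; moreover the polynomial growth of $f,\partial_tf,\nabla_xf,D^2_xf$ in $|x|$, combined with $|ev_0(y)|\lesssim|y|_{\Hh_1}$, yields the growth bound \eqref{eq:MildItoFgrowth} for $F$. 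Note also that $p\ge 2q/(q-2)\ge2$ ensures $\lambda_0=X_0\in L^2(\Omega;\Hh_1)$, as required by Theorem~\ref{thm:mildIto}.

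The second step is to plug this $F$ into \eqref{eq:MildIto} and translate each term using the elementary identities $ev_0\circ S(\tau)=ev_\tau$, $(S(\tau)K)(0)=K(\tau)$, and the lift property $ev_0(\lambda(r))=X_r$ from \eqref{eq:liftproperty} (valid because $\lambda_0=X_0$), so that in particular $b_0(\lambda(r))=b(X_r)$ and $\sigma_0(\lambda(r))=\sigma(X_r)$. Then $F(t,\lambda(t))=f(t,X_t)$ and $F(s,S(t-s)\lambda(s))=f\big(s,(S(t-s)\lambda(s))(0)\big)=f(s,\lambda(s,t-s))$, while for the first-order terms, using the matrix conventions \eqref{eq:AbuseofNotations},
\[
DF\big(r,S(t-r)\lambda(r)\big)\big(S(t-r)K\,g\big)=\nabla_xf\big(r,\lambda(r,t-r)\big)\cdot K(t-r)g
\]
for $g=b(X_r)$ and $g=\sigma(X_r)e_i$. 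For the trace term I would expand $\mathrm{Tr}[\cdot]=\sum_{i=1}^m D^2F\big(r,S(t-r)\lambda(r)\big)\big(S(t-r)K\sigma_0(\lambda(r))e_i,\,S(t-r)K\sigma_0(\lambda(r))e_i\big)$, insert the chain-rule expression for $D^2F$ together with $ev_0\big(S(t-r)K\sigma(X_r)e_i\big)=(K(t-r)\sigma(X_r))e_i$, and recognise the resulting sum as $\mathrm{Tr}\big[D^2_xf(r,\lambda(r,t-r))\,K(t-r)\sigma(X_r)(K(t-r)\sigma(X_r))^{\top}\big]$. The stochastic integral is handled identically, coordinate by coordinate.

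Since all these identifications are pathwise and hold on the full-measure event on which \eqref{eq:MildIto} and \eqref{eq:liftproperty} hold simultaneously for all $s\le t$, this produces \eqref{eq:SVEIto} with probability one for all $s\le t\in[0,T]$. I expect the only delicate points to be bookkeeping rather than analysis: keeping the matrix-valued conventions for $K$ and $\sigma$ (the abuse of notation \eqref{eq:AbuseofNotations}) straight when passing from inner products/traces on $\Hh_1$ to their finite-dimensional counterparts, and correctly reading off the growth hypotheses of Theorem~\ref{thm:mildIto} for $F$ --- in particular that $ev_0$ is bounded on $\Hh_1$ but not on $\Hh$, which is precisely why the mild It\^o formula on $\Hh_1$, and not the strong one on $\Hh$ (cf. Remark~\ref{rem:ItoHvsH1}), is the appropriate tool here.
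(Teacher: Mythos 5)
Your proof is correct and follows exactly the same route as the paper: apply Theorem~\ref{thm:mildIto} to $F(r,y)=f(r,ev_0(y))$, verify the $\Cc^{1,2}$ regularity and growth bounds via the chain rule and boundedness of $ev_0$ on $\Hh_1$, then translate each term through $ev_0\circ S(\tau)=ev_\tau$ and the lift property~\eqref{eq:liftproperty}. The only cosmetic difference is that the paper states the final identifications more tersely; your bookkeeping of the matrix conventions and the observation about why the mild rather than strong It\^o formula is needed are both accurate and align with the paper's Remark~\ref{rem:ItoHvsH1}.
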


\begin{proof} We shall apply Theorem \ref{thm:mildIto} with $V:=\RR$ and $F:[0,T]\times\Hh_1\rightarrow V$ given by $F(t,x):=f(t, ev_0(x)).$ From the lift property \ref{eq:liftproperty} it follows that $F(t, \lambda(t))=f(t, X_t).$ In order to show that $F$ is Fr\'echet differentiable in the spatial variable we omit the time-dependence as it does not affect the proof. 
To this end, Lemma \ref{lemma:Hwproperties} guarantees that  $ev_0:\Hh_1\rightarrow \RR^d$ is well-defined and continuous. Since it is a linear operator, $ev_0\in \Cc^2(\Hh_1)$ with directional Fr\'echet derivatives along $h_1, h_2\in\Hh_1$ given by $Dev_0(h_1)=ev_0(h_1), D^2ev_0(h_1,h_2)=0.$ For $f\in \Cc^2(\RR^d;\RR),$ let 
    $F:=f\circ ev_0$. By Fr\'echet chain rule we have $F\in \Cc^2(\Hh_1;\RR)$ and $D_hF(x)=Df(ev_0(x))D_hev_0=Df(x(0))h(0)$
for $x\in\Hh_1, h\in\Hh_1.$ For the second derivative, an application of the product rule yields:
\begin{equation*}
\begin{aligned}
    D^2F(x)(h_1,h_2)&= D^2f(ev_0(x))\bigg(D ev_0(h_1), Dev_0(h_2)\bigg)+Df(ev_0(x))D^2ev_0(h_1, h_2)\\&=D^2f(x(0))h_1(0)h_2(0).
\end{aligned}
\end{equation*}
Hence the function $F$ satisfies the required differentiability assumptions. Finally, notice that $F$ inherits the growth properties of $f$ since the evaluation operator is linear and bounded. By noticing that $ev_0(S(t-r)x)=x(t-r)$ and~$ev_0(S(t-r)h)=h(t-r)$, the proof is complete.
\end{proof}

Several observations on the SVE It\^o formula are collected in the following remark

\begin{remark}\
\begin{enumerate}
    \item[i)] Letting $s=0$ in \eqref{eq:SVEIto} we see that the first term on the right-hand side reduces to $f(0,\lambda(0,t))=f(0, X_0(t))$ which is a function of the initial curve of the SVE \eqref{eq:SVE}.
    \item[ii)] In general, the It\^o formula for $X$ is tied to the SPDE solution $\lambda.$ The presence of this infinite-dimensional process on the right-hand side of \eqref{eq:SVEIto} compensates for the lack of semimartingality of $X.$ Nevertheless, a smooth functional $f(X)$ of an SVE remains an SVE (albeit with a random, non-convolutional kernel).
    \item[iii)]  
    The It\^o formula \eqref{eq:SVEIto} can also be obtained from the (singular) functional Itô formula derived in \cite[Theorem 3.17]{viens2019martingale}. 
    Nevertheless, we emphasise that the assumptions in force here are weaker than those needed for the functional Itô formula. We refer to Section \ref{sec:comparison_VZ} for a more detailed comparison. 
    A similar It\^o formula for SVEs was obtained in \cite{huber2024markovian} by using the OU lift (see Section \ref{subsec:OUlift}) in negative Sobolev spaces.
    \item[iv)] The mild and SVE It\^o formulae obtained above differ from strong It\^o formulae due to the presence of the processes $S(t-s)\lambda(s), \lambda(s, t-s)$  that appear respectively as arguments in the differentials of the chosen test functions. Nevertheless, for fixed $t\in[0,T],$ these processes are in fact semimartingales in $s\in[0,t].$
\end{enumerate}   
\end{remark}

\begin{remark} All results in this section continue to hold for SVEs with (probabilistically) weak solutions.
\end{remark}

\subsubsection{Mild Fokker--Planck equation}\label{sec:FPE}
\noindent From the mild It\^o formula \eqref{eq:MildIto}, it is possible to derive a mild Fokker--Planck Equation (FPE) for the law of $\lambda^{s,y}(t)$, defined as the measure $\mu_t^y\in\mathscr{P}(\Hh_1)$ such that $\langle \mu_t^y,\varphi\rangle:=\EE[\varphi(\lambda^{0,y}(t))],  \varphi\in \Cc_b(\Hh_1),$ and for $y\in\Hh_1$. As shown in \cite[Section 3.2.2]{da2019mild} and in particular, remark 3 therein, we have:
\begin{equation*}
    \begin{aligned}
        \langle \mu_t^y, \varphi\rangle&
        =\int_{\Hh_1} \varphi(S(t)z) \D\mu_0^y(z)+ \int_0^t\int_{\Hh_1}D\varphi\big(S(t-s)z \big)S(t-s) Kb_0(y)\D\mu_s^y(z)\ds\\&
  +\frac{1}{2}\int_0^t\int_{\Hh_1}
  \frac{1}{2}{\rm Tr} \bigg[D^2\varphi\big(S(t-s)z\big)S(t-s)K\sigma_0(z) \big(S(t-s)K \sigma_0(z)\big)^\ast \bigg]  
  \D\mu_s^y(z)\ds.  
    \end{aligned}
\end{equation*}
Letting, for each $s\geq 0, \varphi\in \Cc_b^2(\Hh_1)$, 
\begin{equation}\label{eq:mildgenerator}
\begin{aligned}
     \mathcal{L}_{s}\varphi(z):=D\varphi\big(S(s)z \big)S(s)K b_0(z)
     +\frac{1}{2}{\rm Tr} \bigg[D^2\varphi\big(S(s)z\big)S(s)K\sigma_0(z) \big(S(s)K \sigma_0(z)\big)^\ast \bigg],
\end{aligned}
\end{equation}
and noticing that~$\mu_0^y=\delta_y$ we can write the mild FPE in the form
\begin{equation}\label{eq:mildFP}
\begin{aligned}
    \langle \mu_t^y, \varphi\rangle&= \varphi(S(t)y) + \int_0^t\big\langle \mu_s^y,\mathcal{L}_{t-s}\varphi\big\rangle \ds.
\end{aligned}
\end{equation}

 \subsubsection{Singular Fokker--Planck equation}
Similarly to the mild FPE being derived from the mild Itô formula in Section~\ref{sec:FPE}, we are able to obtain a singular FPE. Recall that the law~$\mu^y_t\in\mathscr{P}(\Hh_1)$ of~$\lambda^{0,y}(t),t\in[0,T], y\in\mathscr{K}_1$ \eqref{eq:yspace} is defined by~$\langle\mu_t^y,\varphi\rangle := \EE[\varphi(\lambda^{0,y}(t))]$ for all~$\varphi\in \Cc_b(\Hh_1)$. Letting $\Phi\in\Cc^{1,2}_{T,\mathscr{K}}$ as per Definition \ref{dfn:CKclass_alt} (note that for each fixed $t\in[0,T]$ the section $\Phi_t(\cdot):=\Phi(t,\cdot)\in C_b(\Hh_1)$), we apply the singular Itô formula \eqref{eq:SingularIto} (with $s=0$) and take expectations. Thus, the stochastic integral vanishes and we obtain
\begin{equation*}
\begin{aligned}
    \langle \mu^y_t,\Phi_t\rangle 
    &=    \langle \delta_y,\Phi_0\rangle
    +\int_0^{t} \int_{\mathscr{K}_1} \bigg[ \partial_t \Phi(s,z)+\mathcal{D}\Phi(s,z)\bigg( \partial_x z+K b_0(s,z)\bigg)\bigg] \D\mu_s( z) \dr
    \\&\quad +\int_0^{t} \int_{\mathscr{K}_1} \frac{1}{2}\textnormal{Tr}\Big[ \mathcal{D}^2 \Phi(s,z)K \sigma_0(s,z)(K \sigma_0(s,z))^*\Big]  \D\mu_s( z) \ds \\
    &=:\langle \delta_y,\Phi_0\rangle + \int_0^t\int_{\mathscr{K}_1} \mathcal{L}\Phi(s,z)  \D\mu_s(z)\ds,
\end{aligned}
\end{equation*}
where $\delta_y$ is a Dirac measure centered at the function $y\in\mathscr{K}_1.$ The fact that the spatial integrals on the right hand side are restricted to the subspace $\mathscr{K}_1\subset\Hh_1$ is a consequence of invariance (Proposition~\ref{prop:invariant subspaces})\footnote[4]{We stress out that invariance plays, once again, a crucial role: for all $z\in\mathscr{K}_1,$ $\partial_xz\in\mathscr{K}$ is an ``admissible" singular direction per Definition \ref{dfn:Derivative}. Without this property, it is not clear whether the directional derivative $\mathcal{D}\Phi(r,z)( \partial_x z)$ is well-defined.}. We note here that both the test function $\Phi\in\Cc^{1,2}_{T,\mathscr{K}}$ and the formal ``generator'' 
\begin{equation}\label{eq:singularFPEgenerator}  \mathcal{L}\Phi(t,y)=\partial_t\Phi(t,y)+\mathcal{D}\Phi(t,z)\bigg( \partial_x z+K b_0(t,z)\bigg)+\frac{1}{2}\textnormal{Tr}\Big[ \mathcal{D}^2 \Phi(t,z)K \sigma_0(t,z)(K \sigma_0(t,z))^*\Big],
\end{equation}
where $ (t,y)\in[0,T]\times\mathscr{K}_1 $, depend on the time variable.

An important difference between the mild and singular FPEs is that the latter admits unique solutions in the sense of the following corollary:
\begin{corollary}[Uniqueness of singular Fokker--Planck equation]\label{cor:FPEuniqueness}
 Let $T>0, y\in\mathscr{K}_1$, $\mathcal{L}$ as in~\eqref{eq:singularFPEgenerator}, Assumptions \ref{assumption:SVEassumptions}i), iii) and \ref{assumption:Kernel} with $q>4$ hold.    If  $\{\mu_t\}_{t\in[0,T]}\subset\mathscr{P}(\Hh_1)$ is a family of probability measures that satisfies, for all $\Phi\in\Cc^{1,2}_{T,\mathscr{K}},$  the singular Fokker--Planck equation
 \begin{equation}\label{eq:singular_FPE}
  \langle \mu_t,\Phi_t\rangle=\langle \delta_y,\Phi_0\rangle + \int_0^t\int_{\mathscr{K}_1} \mathcal{L}\Phi(s,z)  \D\mu_s(z)\ds,   
 \end{equation}
then $\mu_t^y=\textnormal{Law}(\lambda_t^y)$ for all $t\in[0,T].$ In other words, the singular FPE admits a unique solution.  If $\sigma$ is constant, the same conclusion holds when $q>2.$ 
\end{corollary}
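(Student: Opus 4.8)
The plan is to prove uniqueness by a duality argument against the backward Kolmogorov equation, exactly as announced in Remark~\ref{rem:FPEcomparison}(ii). The key observation is that the singular FPE~\eqref{eq:singular_FPE} is tested precisely against the class $\Cc^{1,2}_{T,\mathscr{K}}$, and Theorem~\ref{thm:backward_equation_singular} (together with the fact, to be proved there, that $u(t,y):=\EE[\varphi(\lambda^{t,y}(T))]$ belongs to $\Cc^{1,2}_{T,\mathscr{K}}$) provides, for each $\varphi\in\Cc^2_b(\Hh_1)$, an element of this class that is a pointwise solution of the PDE $\mathcal{L}u=0$ on $[0,T)\times\mathscr{K}_1$ with terminal condition $u(T,\cdot)=\varphi$. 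Plugging such a $\Phi=u$ into~\eqref{eq:singular_FPE} kills the time-integral (since the integrand $\mathcal{L}u(s,z)$ vanishes for $z\in\mathscr{K}_1$, which is where the measures $\mu_s$ are supported by invariance), leaving
\begin{equation*}
    \langle\mu_t, u(t,\cdot)\rangle = \langle\delta_y,u(0,\cdot)\rangle = u(0,y)
\end{equation*}
for all $t\in[0,T]$. Here one must be slightly careful: $\mathcal{L}u=0$ is guaranteed on $[0,T)\times\mathscr{K}_1$, so strictly speaking one applies the identity~\eqref{eq:singular_FPE} with $t$ replaced by $t'<T$ and then lets $t'\uparrow T$, using boundedness of $u$ (part of membership in $\Cc^{1,2}_{T,\mathscr{K}}$) and dominated convergence on the left-hand side together with the integrable bound on $\mathcal{L}u$ implicit in Definition~\ref{dfn:CKclass_alt}; for $t<T$ no limiting is needed.

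First I would recall that, under Assumptions~\ref{assumption:SVEassumptions}i),iii) and~\ref{assumption:Kernel} with $q>4$ (or $q>2$ when $\sigma$ is constant), Theorem~\ref{thm:backward_equation_singular} applies; in particular $u\in\Cc^{1,2}_{T,\mathscr{K}}$ and $u$ solves~\eqref{eq:BackwardPDE_intro}, i.e.
\begin{equation*}
    \partial_t u(s,z)+\mathcal{D}u(s,z)\big(\partial_x z + Kb_0(z)\big) + \tfrac12\textnormal{Tr}\big[\mathcal{D}^2 u(s,z)K\sigma_0(z)(K\sigma_0(z))^*\big]=0
\end{equation*}
for all $(s,z)\in[0,T)\times\mathscr{K}_1$, which is exactly $\mathcal{L}u(s,z)=0$ in the notation~\eqref{eq:singularFPEgenerator}. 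Then, for an arbitrary family $\{\mu_t\}_{t\in[0,T]}$ satisfying~\eqref{eq:singular_FPE}, I would apply that identity with $\Phi=u$ and deduce $\langle\mu_t,u(t,\cdot)\rangle=u(0,y)$ as above. Next I would substitute the probabilistic representation $u(t,z)=P_{T-t}\varphi(z)=\EE[\varphi(\lambda^{0,z}(T-t))]$ (Markov property, Proposition~\ref{prop:Markov}) and $u(0,y)=\EE[\varphi(\lambda^{0,y}(T))]=\langle\mathrm{Law}(\lambda^{0,y}(t)),\,P_{T-t}\varphi\rangle$, where the last equality uses the semigroup/Markov property $\EE[\varphi(\lambda^y(T))]=\EE[P_{T-t}\varphi(\lambda^y(t))]$. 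This yields
\begin{equation*}
    \langle\mu_t, P_{T-t}\varphi\rangle = \langle \mathrm{Law}(\lambda^{0,y}(t)), P_{T-t}\varphi\rangle
\end{equation*}
for every $\varphi\in\Cc^2_b(\Hh_1)$ and every $t\in[0,T)$.

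Finally I would argue that this forces $\mu_t=\mathrm{Law}(\lambda^{0,y}(t))$. Since $t<T$, the map $z\mapsto P_{T-t}\varphi(z)$ ranges, as $\varphi$ runs over $\Cc^2_b(\Hh_1)$, over a set of bounded continuous functions on $\Hh_1$; to conclude I would show this set is measure-determining on $\mathscr{P}(\Hh_1)$. One clean way: fix $\varepsilon>0$ and, for a given $\psi\in\Cc_b(\Hh_1)$, first approximate $\psi$ uniformly on the relevant compact sublevel sets and then note $P_{T-t}\varphi\to\varphi$ pointwise and boundedly as $t\uparrow T$ (Feller property, Proposition~\ref{Prop:Feller}, plus sample-path continuity of $\lambda$), so that for any fixed $\varphi\in\Cc^2_b(\Hh_1)$ one recovers $\langle\mu_T,\varphi\rangle=\langle\mathrm{Law}(\lambda^{0,y}(T)),\varphi\rangle$ by letting $t\uparrow T$ in the displayed identity, having already established $\langle\mu_t,P_{T-t}\varphi\rangle=\langle\mathrm{Law}(\lambda^{0,y}(t)),P_{T-t}\varphi\rangle$ for $t<T$; then repeating the whole argument with $T$ replaced by an arbitrary $T_0\in(0,T]$ (the FPE restricted to $[0,T_0]$ still holds and $\Cc^{1,2}_{T_0,\mathscr{K}}$ functions are available) gives $\mu_{T_0}=\mathrm{Law}(\lambda^{0,y}(T_0))$ for every $T_0\in(0,T]$, and $\mu_0=\delta_y$ is immediate. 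The density of $\Cc^2_b(\Hh_1)$ in $\Cc_b(\Hh_1)$ for bounded-pointwise convergence on the compact sublevel sets $K_R$ of the admissible weight $\varrho(x)=1+|x|_{\Hh_1}$ (cf.\ the discussion of $\Bb^\varrho(\Hh_1)$) then upgrades equality of integrals against $\Cc^2_b$ to equality of the measures. The main obstacle in writing this out carefully is the terminal-time limit $t\uparrow T$: one has to justify interchanging limits in $\langle\mu_t,P_{T-t}\varphi\rangle$, which requires either a uniform integrability / tightness input on $\{\mu_t\}$ or, more simply, exploiting that $T_0$ may be chosen freely so that one never actually needs to take the limit — this is the route I would favour, as it reduces everything to the clean identity $\langle\mu_{T_0},\varphi\rangle=u(0,y)|_{T:=T_0}$ with $\varphi\in\Cc^2_b(\Hh_1)$ arbitrary and $\Cc^2_b$ measure-determining.
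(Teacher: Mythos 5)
Your proposal follows the same duality strategy as the paper: plug the value function $u(t,z)=\EE[\varphi(\lambda^{t,z}(T))]$, which Theorem~\ref{thm:backward_equation_singular}(1) certifies to lie in $\Cc^{1,2}_{T,\mathscr{K}}$ and solve $\mathcal{L}u=0$, into~\eqref{eq:singular_FPE} to obtain $\langle\mu_t,u_t\rangle=u(0,y)$ and hence $\langle\mu_T,\varphi\rangle=\langle\nu_T,\varphi\rangle$ for two candidate solutions. Up to this point you and the paper coincide. Two points are worth flagging.

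First, your worry about the $t'\uparrow T$ limit is unnecessary. The singular FPE~\eqref{eq:singular_FPE} is assumed to hold for all $t\in[0,T]$, and the integral $\int_0^T\int_{\mathscr{K}_1}\mathcal{L}u(s,z)\D\mu_s(z)\ds$ vanishes simply because $\mathcal{L}u(s,z)=0$ for $s\in[0,T)$ and $\{T\}$ has Lebesgue measure zero, with integrability over $[0,T]$ guaranteed by the $\varpi\in L^1([0,T])$ bound built into Definition~\ref{dfn:CKclass_alt}. One can therefore plug $t=T$ directly; no limiting argument, tightness input, or uniform integrability is needed.

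Second, and more substantively, your closing step differs from the paper's and is the weaker of the two. You propose to conclude that the class of admissible test functions is measure-determining via a density-on-compacts argument ($\Cc^2_b$ dense in $\Cc_b$ on the sublevel sets $K_R$ of $\varrho$), possibly combined with $P_{T-t}\varphi\to\varphi$. In an infinite-dimensional Hilbert space this density is not routine (there is no mollification available and you have not indicated a construction), so as written the final step has a gap. The paper instead observes that the characteristic functions $\varphi_h(x)=\exp(i\langle x,h\rangle_{\Hh_1})$, split into real and imaginary parts, are $\Cc^\infty$ with all derivatives uniformly bounded, hence belong to the admissible class; equality of the integrals against all $\varphi_h$ means the Fourier transforms of $\mu_T$ and $\nu_T$ coincide, whence $\mu_T=\nu_T$ by the standard injectivity of the Fourier transform on probability measures over a separable Hilbert space. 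Running the argument with $T$ replaced by arbitrary $T_0\in(0,T]$ (as you also note) finishes the proof. If you replace your density argument by the Fourier transform step, your proof aligns with the paper's and is complete.
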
 
The proof relies on a classical duality argument and follows from our existence result, Theorem \ref{thm:backward_equation_singular}(1),  of the backward Kolmogorov equation. A similar argument can also be found for example in \cite[Corollary 3.7]{diehl2017stochastic}.
\begin{proof}  Let $\gamma_0>0$ and $\varphi\in C_b^2(\Hh_1)$ with $\gamma_0-$H\"older continuous second derivative. Next, consider the function $u_t(y):=P_{T-t}\varphi(y)=\EE[\varphi(\lambda_T^{t,y})], y\in\Hh_1, t\in[0,T].$ Under our assumptions and by virtue of Theorem \ref{thm:backward_equation_singular}(1), $u\in \Cc^{1,2}_{T,\mathscr{K}}$ solves the Cauchy problem \eqref{eq:BackwardPDECauchyProblem} with terminal condition $\varphi$. Thus the second term on the right-hand side of \eqref{eq:singular_FPE} vanishes and, for all $t,$ we have $ \langle \mu^y_t,u_t\rangle=\langle \delta_y,u_0\rangle=u(0,y),$ i.e. the function $[0, T]\ni t\mapsto \langle \mu^y_t,u_t\rangle\in\RR$ is constant. 
Consequently, any two solutions $\mu^y_t, \nu^y_t$ of the singular FPE must satisfy
$$\langle \mu^y_T, \varphi\rangle=\langle \mu^y_T, u_T\rangle=u(0,y)=\langle\nu^y_T, u_T\rangle=\langle\nu^y_T, \varphi\rangle,$$
for all~$\varphi\in \Cc_b^2(\Hh_1)$ with the aforementioned regularity. A fortiori, the last equality holds for all test functions $\varphi_{h}(x):=\exp(i\langle x,h\rangle_{\Hh_1}), x,h\in\Hh_1$ (these are indeed smooth and their derivatives of all orders are uniformly bounded in $x$) after splitting into real and imaginary parts. This means that the Fourier transforms $\hat{\mu^y}, \hat{\nu^y}$ agree and hence $\mu^y_T=\nu^y_T$ upon inversion. Since $T>0$ was arbitrary the proof is complete.    \end{proof}

\section{Kolmogorov backward equation}\label{sec:BackwardEquation}

From this point on, Assumption \ref{assumption:SVEassumptions}(iii) is in place. This section is devoted to the proof of our main result, Theorem \ref{thm:backward_equation_singular}. Before we proceed, we recall the notation $\Hh_2:=H^2_w(\RR^+;\RR^d),$ where $w\in\mathcal{W}_a$ is an admissible weight per Assumption \ref{assumption:Kernel}.

In particular, and in analogy to our proof of the singular It\^o formula \eqref{eq:SingularIto}, we shall obtain the backward Kolmogorov equation \eqref{eq:BackwardPDECauchyProblem} as a limit of classical Kolmogorov equations associated to a ``mollified" process $\lambda_\delta.$ The latter solves the same equation \eqref{eq:SPDE} as $\lambda,$ but with $K$ replaced by the shifted kernel $K_\delta:=S(\delta)K$ which is smooth away from the origin by Assumption \ref{assumption:Kernel}. Note that~$\lambda_\delta$ is different from~$\Lambda_\delta=S(\delta)\lambda$ defined in the previous section. For any~$\lambda_0\in\Hh_2$ and~$\delta>0$, $\lambda_\delta$ is an $\Hh_1$--mild solution to the corresponding SPDE and, furthermore, Assumption~\ref{assumption:Kernel} guarantees that~$\lambda_\delta(t)\in\Hh_2$. Thus~$\lambda_\delta$ is an analytically strong solution in~$\Hh_1$ and, for $T>0, \varphi\in \Cc^2_b(\Hh_1),$ the function 
\begin{equation}\label{eq:vdelta} u_\delta(t,y)=\EE[\varphi(\lambda_\delta^{t,y}(T))], \quad (t,y)\in[0,T]\times\Hh_1
\end{equation} 
 satisfies, for all $(t, y)\in[0,T]\times\Hh_2,$ 
 the classical (backward) Kolmogorov equation
\begin{equation}\label{eq:KolmoBackwardDelta}
   \begin{aligned}   u_\delta(t,y)=\varphi(y)+\int_t^T\bigg(\big\langle Dv_\delta(s,y),  \partial_x y+ K_\delta b_0(y)\big\rangle_{\Hh_1}+\frac{1}{2}\textnormal{Tr}\Big[ D^2v_\delta(s,y)K_\delta \sigma_0(y)(K_\delta \sigma_0(y))^*\Big]             \bigg)\ds.
   \end{aligned} 
\end{equation}
The proof of this fact can be derived from~\cite[Theorem 9.25]{da2014stochastic} and the homogeneity of the Markov semigroup discussed in Section~\ref{subsec:MarkovSemigroup} (which associates the Markov semigroups~$P_{T-t}$ and~$P_{t,T}$). Equation~\eqref{eq:KolmoBackwardDelta} can also be written in differential form after applying the temporal derivative on both sides of the equality.

Upon taking the limit $\delta\to0$, our main result states that~$u(t,y):=\EE[\varphi(\lambda^{t,y}(T))]$, $(t,y)\in[0,T]\times\Hh_1$ is the unique $\Cc^{1,2}_{T,\mathscr{K}}$ solution of the backward Kolmogorov equation
\begin{equation}\label{eq:BackwardPDECauchyProblem}
    \left\{\begin{aligned}
    &\partial_tu(t,y)+\mathcal{D}u(t,y)\Big( \partial_x y+K b_0(y)  \Big)+\frac{1}{2}\textnormal{Tr}\Big[ \mathcal{D}^2u(t,y)K \sigma_0(y)(K \sigma_0(y))^*\Big]=0\;,   (t,y)\in[0,T)\times\mathscr{K}_1,\\&
         u(T,y)=\varphi(y),\quad  y\in\Hh_1.
         \end{aligned}\right.
    \end{equation}

We state this result below in a rigorous fashion. Its proof requires several preparatory lemmas and is postponed to Section \ref{subsec:BackwardProof}.

\begin{theorem}[Kolmogorov backward equation] \label{thm:backward_equation_singular} Let $T>0, \gamma_0>0$ and $\varphi\in \Cc_b^2(\Hh_1)$ with $\gamma_0$-H\"older continuous $D^2\varphi$ 
and $\lambda$ be the SPDE mild solution \eqref{eq:SPDE_mild}. Under Assumptions \ref{assumption:SVEassumptions}(i), (iii) and \ref{assumption:Kernel} with $q>4$ and with
$\mathscr{K}, \mathscr{K}_1, \mathcal{D}, \Cc_{T,\mathscr{K}}^{1,2}$ as in \eqref{eq:SingularDirections}, \eqref{eq:yspace} and Definitions \ref{dfn:Derivative}, \ref{dfn:CKclass_alt} respectively the following hold:

\begin{enumerate}
    \item (Existence) The  function \begin{equation}\label{eq:ValueFunction}
     [0,T]\times\Hh_1\ni (t, y)\longmapsto u(t,y):=\EE[\varphi(\lambda^{t,y}(T))]\in\RR
 \end{equation}
  is in 
 $\Cc^{1,2}_{T,\mathscr{K}}.$
 Moreover, for all $(t,y)\in[0,T]\times \mathscr{K}_1$,
 $u$ solves the Kolmogorov equation \eqref{eq:BackwardPDECauchyProblem}.
\item (Uniqueness) Let $u\in \Cc_{T,\mathscr{K}}^{1,2}$ be a solution of the Cauchy problem~\eqref{eq:BackwardPDECauchyProblem}. Then $u$ admits the probabilistic representation 
   $ u(t,y):=\EE[\varphi(\lambda^{t,y}(T))].$ Consequently, \eqref{eq:BackwardPDECauchyProblem} admits a unique solution in the space $\Cc_{T,\mathscr{K}}^{1,2}.$ 
    \item (Computation of conditional expectations) For all $0\leq s\leq t\leq T, y\in\mathscr{K}_1,$  
\begin{align}\label{eq:conditionalExpectationsBackward}
    \EE[\varphi(\lambda^{s,y}(T))\big| \mathcal{F}_t]=u(t,\lambda^{s,y}(t)),
\end{align}      
almost surely,
where $u$ is the unique solution to \eqref{eq:BackwardPDECauchyProblem}. Furthermore, the following martingale representation stands
\begin{align*}
    u(t,\lambda^{s,y}(t)) = u(s,y) + \int_s^t \Dd u(r,\lambda^{s,y}(r)) \Big(K\sigma_0(\lambda^{s,y}(r))\D W_r\Big),\quad s\le t\le T.
\end{align*}
\end{enumerate}
All of the above remain true when $q>2$ and the noise coefficient $\sigma$ is constant.
\end{theorem}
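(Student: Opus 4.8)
The plan is to obtain the singular Kolmogorov equation as a limit, as $\delta\to 0$, of the classical Kolmogorov equations \eqref{eq:KolmoBackwardDelta} satisfied by the mollified value functions $u_\delta$ in \eqref{eq:vdelta}. For the \emph{existence} part, I would first establish first- and second-order differentiability of the mollified flow $y\mapsto \lambda_\delta^{t,y}(T)$ with respect to initial data, with the directions themselves taken along $K_\delta=S(\delta)K$ (and, after taking limits, along $K\in\mathscr{K}$). This is done by setting up and solving the first and second variation equations: the first-variation (tangent) process $\zeta_{h}$ solving a linear SPDE with coefficients $Db_0,D\sigma_0$ evaluated along $\lambda$, and the second-variation process $\zeta_{h,h'}$ solving a further linear SPDE with an inhomogeneous term built from $D^2b_0,D^2\sigma_0$; here Assumption~\ref{assumption:SVEassumptions}(iii) enters crucially because $b_0,\sigma_0$ depend only on the finite-dimensional projection $ev_0$ and are therefore genuinely $\Cc^2$ on $\Hh_1$ (see Remark~\ref{rem:NemytskiiRem}). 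One then identifies $Du_\delta(t,y)(h)=\EE[D\varphi(\lambda_\delta^{t,y}(T))\zeta_h^\delta]$ and $D^2u_\delta(t,y)(h,h')=\EE[D^2\varphi(\cdots)(\zeta_h^\delta,\zeta_{h'}^\delta)+D\varphi(\cdots)\zeta_{h,h'}^\delta]$, derive bounds of the type displayed right after Definition~\ref{dfn:CKclass_alt} — importantly using the smoothing estimate $|S(T-t)K|_{\Hh_1}$ in place of a naive $|K|$ bound, and the $L^q$-integrability with $q>4$ to control the stochastic-convolution terms in the second variation — and show these bounds are uniform in $\delta$. Passing $\delta\to0$ using the convergence results for the mollified variation equations (the $\mathscr{Z}^p$-spaces and the Volterra--Gr\"onwall inequality of the Appendix) yields that $u\in\Cc^{1,2}_{T,\mathscr{K}}$, with $\Dd u(t,y)(K),\Dd^2 u(t,y)(K,K)$ existing as the required limits. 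Continuity in $t$ and the bound \eqref{eq:defC12_dtF} on $\partial_t u$ follow by reading off the PDE: on $\mathscr{K}_1$ one has $\partial_t u(t,y)=-\Dd u(t,y)(\partial_x y+Kb_0(y))-\tfrac12\mathrm{Tr}[\cdots]$, and each term on the right is controlled by $|S(T-t)\partial_x y|_{\Hh_1}+\varpi(t)$.

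For the \emph{uniqueness} and \emph{conditional-expectation} parts, I would apply the singular It\^o formula, Theorem~\ref{thm:SingularIto}. Let $u\in\Cc^{1,2}_{T,\mathscr{K}}$ solve \eqref{eq:BackwardPDECauchyProblem}. Fix $(s,y)\in[0,T]\times\mathscr{K}_1$; since $\mathscr{K}_1$ is invariant (Proposition~\ref{prop:invariant subspaces}), $\lambda^{s,y}(r)\in\mathscr{K}_1$ for all $r$, so Theorem~\ref{thm:SingularIto} applies to the functional $F=u$ between times $t$ and $T$. The drift term vanishes identically because $u$ solves the PDE pointwise on $\mathscr{K}_1$, leaving
\begin{equation*}
u(t,\lambda^{s,y}(t)) = \varphi(\lambda^{s,y}(T)) - \int_t^T \Dd u(r,\lambda^{s,y}(r))\big(K\sigma_0(\lambda^{s,y}(r))\,\D W_r\big).
\end{equation*}
Since $u$ is bounded and the stochastic integral is a true martingale (by the bound $|\Dd u(r,y)(h)|\lesssim|S(T-r)h|_{\Hh_1}$ and Assumption~\ref{assumption:Kernel}), taking $\EE[\,\cdot\mid\Ff_t]$ gives $u(t,\lambda^{s,y}(t))=\EE[\varphi(\lambda^{s,y}(T))\mid\Ff_t]$, which is the martingale representation and \eqref{eq:conditionalExpectationsBackward}; setting $s=t$ and using the Markov property (Proposition~\ref{prop:Markov}) identifies $u(t,y)=\EE[\varphi(\lambda^{t,y}(T))]$, forcing uniqueness. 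Corollary~\ref{cor:SVEConditionalExp}, $\EE[\phi(X_T)\mid\Ff_t]=u(t,\lambda(t))$, then follows by choosing $\varphi=\phi\circ ev_0$ (which lies in $\Cc^2_b(\Hh_1)$) and invoking the lift property \eqref{eq:liftproperty} together with the tower property over $\Ff_t^\lambda\subset\Ff_t$.

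The main obstacle is the existence part — specifically, proving that $u$ is \emph{twice} singularly differentiable with bounds uniform in $\delta$, and that the difference quotient $\Dd^2 u(t,y)(K_\delta-K_{\delta'},K_\delta+K_{\delta'})$ is controlled by a modulus $\kappa(\delta,\delta')$ as required by \eqref{eq:defC12_D2F_bound}. This is where the second variation process $\zeta_{K_\delta,K_{\delta'}}$ must be shown to converge as $\delta,\delta'\to0$, which forces a delicate analysis of terms like $\int_0^\cdot S(\cdot-r)(K_\delta-K_{\delta'})\,[\cdots]\,\D W_r$ in $L^p$-in-time spaces; the integrand involves a product of two mollified-kernel factors, so to close the estimate one needs $K\in L^q$ with $q$ strictly larger than $4$ (whence the $H>1/4$ threshold for the power-law kernel, cf.\ Remark~\ref{rem:Honefourth}). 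When $\sigma$ is constant the second-variation SPDE has no noise-dependent inhomogeneity and the diffusion coefficient contributes only a deterministic $K_\delta$, so a single power of $K$ appears and $q>2$ suffices; I would treat this case as a simplified parallel argument at each step.
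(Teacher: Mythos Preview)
Your proposal is correct and follows essentially the same approach as the paper: variation equations for the tangent processes $\zeta_h,\zeta_{h,h'}$, uniform-in-$\delta$ bounds exploiting the semigroup smoothing $|S(T-t)K|_{\Hh_1}$ and the $q>4$ integrability, passage to the limit in the mollified Kolmogorov equation~\eqref{eq:KolmoBackwardDelta}, and the singular It\^o formula for uniqueness and the martingale representation. The only organizational difference is that the paper first establishes $u\in\Cc^{1,2}_{T,\mathscr{K}}$ directly via the \emph{non-mollified} tangent processes (Step~1), then proves the PDE first on $\Hh_2$ by taking $\delta\to0$ (Step~2) and extends to $\mathscr{K}_1$ by the density approximation $y_n=S(1/n)y$ (Step~3), whereas you describe obtaining the regularity of $u$ as a byproduct of the convergence of $u_\delta$; both routes rely on the same estimates.
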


An immediate corollary of the previous theorem is the following dynamic characterization for conditional expectations of the SVE \eqref{eq:SVE}.

\begin{corollary}[Conditional expectations of $X$]\label{cor:SVEConditionalExp} Let $T>0,$ $X$ denote the unique solution of the SVE~\eqref{eq:SVE} on $[0,T]$ with initial curve $X_0\in\mathscr{K}_1$ and $\phi\in \Cc_b^{2}(\RR^d)$ with $D^2\phi$ $\gamma_0-$H\"older continuous for some $\gamma_0>0.$ Under Assumptions \ref{assumption:SVEassumptions} i), iii) and \ref{assumption:Kernel} with $q>4$ we have
$$     \EE[\phi(X_T)\big| \mathcal{F}_t]=u(t,\lambda^{0,X_0}(t))\;,\;\; t\in[0,T],      $$
where $u$ is the unique solution of the Cauchy problem \eqref{eq:BackwardPDECauchyProblem} with terminal condition $\varphi=\phi\circ ev_0$ and $\lambda^{0, X_0}$ is the unique mild solution of  \eqref{eq:SPDE} with initial condition $X_0.$ If $\sigma$ is constant,  the same conclusion holds for all $q>2.$  
\end{corollary}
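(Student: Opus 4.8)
The plan is to deduce the corollary directly from Theorem~\ref{thm:backward_equation_singular}, applied to the test function $\varphi := \phi\circ ev_0 : \Hh_1\to\RR$. The first task is to check that $\varphi$ satisfies the hypotheses of that theorem, namely $\varphi\in\Cc_b^2(\Hh_1)$ with $\gamma_0$-H\"older continuous second Fr\'echet derivative. This follows from the RKHS property of $\Hh_1$ (Lemma~\ref{lemma:Hwproperties}): since $ev_0:\Hh_1\to\RR^d$ is bounded and linear, it is smooth with $D\,ev_0(y)(h)=ev_0(h)$ and $D^2 ev_0\equiv 0$, so by the chain and product rules $\varphi\in\Cc^2(\Hh_1)$ with $D\varphi(y)(h)=\langle\nabla\phi(ev_0(y)),ev_0(h)\rangle$ and $D^2\varphi(y)(h_1,h_2)=D^2\phi(ev_0(y))\big(ev_0(h_1),ev_0(h_2)\big)$. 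As $\phi\in\Cc_b^2(\RR^d)$ and $ev_0$ is bounded, $\varphi$ and its first two derivatives are bounded; and as $D^2\phi$ is $\gamma_0$-H\"older while $ev_0$ is Lipschitz, $D^2\varphi$ is $\gamma_0$-H\"older on $\Hh_1$. Hence $\varphi$ is an admissible terminal datum for Theorem~\ref{thm:backward_equation_singular}.

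With this verified, Theorem~\ref{thm:backward_equation_singular}(1)--(2) guarantees that $u(t,y):=\EE[\varphi(\lambda^{t,y}(T))]$ lies in $\Cc^{1,2}_{T,\mathscr{K}}$, solves the Cauchy problem~\eqref{eq:BackwardPDECauchyProblem} with terminal condition $\varphi=\phi\circ ev_0$, and is the unique solution in that class --- which is exactly the uniqueness assertion of the corollary. By the time-homogeneity of the Markov semigroup established in Section~\ref{subsec:MarkovSemigroup}, $u(t,y)=P_{T-t}\varphi(y)$. It then remains to identify $u(t,\lambda^{0,X_0}(t))$ with $\EE[\phi(X_T)\mid\Ff_t]$. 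Write $\lambda:=\lambda^{0,X_0}$ for the SPDE mild solution~\eqref{eq:SPDE_mild} started from $X_0\in\mathscr{K}_1$; the lift property~\eqref{eq:liftproperty} gives $ev_0(\lambda(t))=X_t$ $\PP$-a.s.\ for all $t\in[0,T]$, so in particular $\varphi(\lambda(T))=\phi\big(ev_0(\lambda(T))\big)=\phi(X_T)$ a.s. Applying Theorem~\ref{thm:backward_equation_singular}(3) with $s=0$ and $y=X_0$ (equivalently, combining the Markov property of $\lambda$, Proposition~\ref{prop:Markov}, with $u=P_{T-t}\varphi$) then yields $\EE[\phi(X_T)\mid\Ff_t]=\EE[\varphi(\lambda(T))\mid\Ff_t]=u(t,\lambda^{0,X_0}(t))$, which is the claim. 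The case of constant $\sigma$ is identical, invoking the $q>2$ clause of Theorem~\ref{thm:backward_equation_singular}.

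Since Theorem~\ref{thm:backward_equation_singular} carries all the analytic weight, this corollary presents no genuine obstacle: the only points needing (routine) care are the verification above that $\phi\circ ev_0$ inherits $\Cc_b^2$-regularity and the H\"older bound on its second derivative --- immediate from the RKHS structure --- and, if one wishes to allow $X_0$ to be a genuinely random $\Ff_0$-measurable curve taking values in $\mathscr{K}_1$, a standard conditioning argument on $\Ff_0$ using the independence of $W$ from $\Ff_0$ to reduce to the deterministic case already treated by Theorem~\ref{thm:backward_equation_singular}(3); the invariance of $\mathscr{K}_1$ (Proposition~\ref{prop:invariant subspaces}) moreover ensures $\lambda(t)\in\mathscr{K}_1$ a.s., so that~\eqref{eq:BackwardPDECauchyProblem} is indeed satisfied along the trajectory, although $u(t,\cdot)$ being defined on all of $\Hh_1$ this is not needed merely to evaluate $u(t,\lambda^{0,X_0}(t))$.
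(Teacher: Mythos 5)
Your proposal is correct and follows essentially the same route as the paper's own proof: verify that $\varphi=\phi\circ ev_0$ inherits $\Cc_b^2$-regularity with a $\gamma_0$-H\"older second derivative (via linearity and boundedness of $ev_0$ — the paper points to Corollary~\ref{prop:ItoSVE} for these computations), then apply Theorem~\ref{thm:backward_equation_singular}(1)--(3) together with the lift property~\eqref{eq:liftproperty}. Your added remark about conditioning on $\Ff_0$ for a random initial curve is sound but goes beyond what the paper records.
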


\begin{proof} The assumptions on $b,\sigma, K, X_0$ guarantee well-posedness for $\lambda^{0, X_0}$ per Theorem~\ref{thm:SPDE_wellposedness}. Moreover, the smoothness of $\phi,$ along with the computations in Corollary~\ref{prop:ItoSVE}  imply that the terminal condition $\varphi=\phi\circ ev_0\in \Cc_b^{2}(\Hh_1)$ with a H\"older continuous second derivative and then Theorem~\ref{thm:backward_equation_singular} (1), (2) asserts the well-posedness of the backward equation. Finally, from the lift property of $\lambda$~\eqref{eq:lift_property} and Theorem~\ref{thm:backward_equation_singular} 3) applied to this terminal condition we conclude that 
$$   \EE[\phi(X_T)\big| \mathcal{F}_t]=\EE[\varphi(\lambda^{0, X_0}_T)\big| \mathcal{F}_t]       $$
 almost surely.   
\end{proof}

The rest of this section is organised as follows: With Definition \ref{dfn:Derivative} of singular directional derivatives, we shall show that, for each $ 0\leq s\leq t, $ the map
$$\Hh_1 \ni y\longmapsto \lambda^{s,y}(t)\in L^0(\Omega;\Hh_1) $$
belongs to the class $\mathcal{D}^1_{\!\mathscr{K}}(\Hh_1;\Hh_1)\cap \mathcal{D}^2_{\vspan(K,\Hh_1)}(\Hh_1;\Hh_1)$  (Section \ref{subsec:singularDifferentiablity}). Before doing so, we introduce in Section \ref{subsec:SingularTangents} two processes $\zeta_h, \zeta_{K,K}$ that are natural candidates for the differentials $\mathcal{D}_y\lambda^{s,y}(t)(h),$ $ \mathcal{D}_y\lambda^{s,y}(t)(K,K)$ for $h\in\mathscr{K}$. 
Section \ref{subsec:ConvergenceDeltaTo0} is devoted to several technical approximation lemmas for (singular) tangent processes that are required in the proof of Theorem \ref{thm:backward_equation_singular}. The latter is presented in Section \ref{subsec:BackwardProof}.

Sections \ref{subsec:SingularTangents}, \ref{subsec:singularDifferentiablity} and \ref{subsec:ConvergenceDeltaTo0} provide a number of estimates regarding $\lambda,\zeta_h,\zeta_{K,K}$ and their mollified versions. All the proofs follow the same strategy that we outline here for the convenience of the reader. 
The first step consists in framing the process of interest into a Gr\"onwall-type inequality. For this purpose we will often rely on a Volterra--Gr\"onwall inequality which we state now and postpone its proof to Appendix~\ref{appendix:Gronwall}. 

For a kernel~$k\in L^1([0,T],\RR^+)$, we call $r$ its resolvent if it is the unique solution to
\begin{align*}
    r(t) - k(t) = \int_0^t r(t-s)k(s)\ds,\quad \text{for all  }t\in[0,T],
\end{align*}
or, said more concisely, $r-k=r\star k$.
\begin{lemma}[Volterra--Gr\"onwall inequality]\label{lemma:Volterra_Gronwall}
    Let $k\in L^1([0,T],\RR^+)$ and $r$ be its resolvent. Let $x,f\in L^1([0,T],\RR)$ and assume that, for almost all $t\in[0,T]$,
    \begin{align*}
        x(t)\le f(t)+\int_0^t k(t-s)x(s)\ds.
    \end{align*}
    Then it holds, for almost all $t\in[0,T]$,
    \begin{align*}
        x(t) \le f(t) + \int_0^t r(t-s)f(s)\ds.
    \end{align*}
\end{lemma}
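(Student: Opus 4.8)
The plan is to iterate the hypothesis and to recognise the resulting series as the Neumann series defining the resolvent. First I would write the convolution operator $k\star$ acting on $L^1([0,T],\RR)$ and note that, since $k\ge 0$, it is positivity preserving; the hypothesis reads $x\le f + k\star x$ pointwise a.e. Applying $k\star$ to both sides (which preserves the inequality by positivity) and substituting back yields $x\le f + k\star f + k\star k\star x$. Iterating $n$ times gives
\begin{equation*}
    x(t)\le \sum_{j=0}^{n-1} (k^{\star j}\star f)(t) + (k^{\star n}\star x)(t),\qquad \text{a.e. } t\in[0,T],
\end{equation*}
where $k^{\star 0}\star f := f$ and $k^{\star(j+1)} := k\star k^{\star j}$.

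\textbf{Controlling the remainder and summing the series.} The second step is to show the remainder term $k^{\star n}\star x$ tends to $0$ in $L^1([0,T])$ and that the series $\sum_{j\ge0} k^{\star j}$ converges in $L^1([0,T])$ to the resolvent $r$. For a kernel $k\in L^1([0,T],\RR^+)$ one has the standard estimate $\norm{k^{\star n}}_{L^1([0,t])}\le \frac{(\norm{k}_{L^1([0,T])}\, t)^{n-1}}{(n-1)!}\norm{k}_{L^1([0,T])}$ (or the cleaner bound obtained from the fact that iterated convolutions of an $L^1_{loc}$ kernel have super-exponentially decaying $L^1$ norms; see \cite[Chapter 2]{gripenberg1990volterra}), so $\norm{k^{\star n}\star x}_{L^1([0,T])}\le \norm{k^{\star n}}_{L^1([0,T])}\norm{x}_{L^1([0,T])}\to 0$ as $n\to\infty$. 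Likewise $\sum_{j\ge 1}k^{\star j}$ converges absolutely in $L^1([0,T])$; calling the sum $r$, one checks $r - k = \sum_{j\ge2}k^{\star j} = k\star\big(\sum_{j\ge1}k^{\star j}\big) = k\star r = r\star k$, so $r$ is indeed the (unique) resolvent of $k$. Passing to the limit $n\to\infty$ in the iterated inequality, using that $g\mapsto g\star x$ and $g\mapsto k\star g$ are continuous on $L^1$, gives $x\le f + \big(\sum_{j\ge1}k^{\star j}\big)\star f = f + r\star f$ a.e., which is the claim.

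\textbf{Main obstacle.} The only genuinely delicate point is justifying the interchange of the limit with the convolution and confirming $L^1$-convergence of $\sum_j k^{\star j}$; this is where the hypothesis $k\in L^1([0,T],\RR^+)$ (finite total mass, nonnegativity) is used, and it is precisely the content of the classical resolvent theory for Volterra kernels. Since the resolvent of a nonnegative $L^1$ kernel exists, is unique, and is given by the Neumann series (a fact we may quote from \cite[Chapter 2, Theorems 3.1 and 3.5]{gripenberg1990volterra}), this step is routine once set up; the positivity of $k$ is what makes each application of $k\star$ monotone, so no absolute values are lost along the iteration. Everything else is bookkeeping with Fubini's theorem to rearrange the finitely many convolutions at stage $n$.
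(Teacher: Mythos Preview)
Your argument is correct, but the paper takes a shorter, slicker route. Instead of iterating, the paper introduces a slack variable: set $g:=f+k\star x-x\ge0$, so that $x=(f-g)+k\star x$ is an \emph{equality}. The variation-of-constants formula for linear Volterra equations then gives $x=(f-g)+r\star(f-g)$ directly, and since $g\ge0$ and $r\ge0$ (the resolvent of a nonnegative $L^1$ kernel is nonnegative), one reads off $x\le f+r\star f$ in three lines.

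The trade-off: the paper's proof is compact but treats the resolvent formula and the nonnegativity of $r$ as black boxes, whereas your Neumann-series iteration is essentially how those facts are proved in the first place, so your argument is more self-contained. Two minor points on your write-up: the explicit factorial bound $\norm{k^{\star n}}_{L^1([0,t])}\le \frac{(\norm{k}_{L^1}t)^{n-1}}{(n-1)!}\norm{k}_{L^1}$ is not valid for a general $L^1$ kernel (it would require $k$ bounded), though your fallback citation to Gripenberg for the super-exponential decay of $\norm{k^{\star n}}_{L^1}$ is the correct fix. Second, $L^1$ convergence of the right-hand side only yields the pointwise a.e.\ inequality after passing to a subsequence; this is routine but should be said.
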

In the second step, we shall give an estimate for the source term~$f(t)$ (or convergence properties thereof as the relaxation parameter $\delta$ tends to zero); the third and last step is analogous for the other term~$\int_0^t r(t-s)f(s)\ds$. The restriction on the range of~$q$ arises because the kernel~$\overline{K}:=\abs{S(\cdot)K}^2_{\Hh_1}$ (and its resolvent) belongs to~$L^{q/2}_T$ according to Assumption~\ref{assumption:Kernel}.

\subsection{Properties of singular tangent processes}\label{subsec:SingularTangents}

We begin here our study of differentiability of the random solution flow of \eqref{eq:SPDE}. We emphasise once again that the abuse of notation outlined in \eqref{eq:AbuseofNotations} is also applied at the level of the tangent processes $D_y\lambda^y(h_1)=\zeta_{h},D_y\lambda^y(h_1, h_2)=\zeta_{h_1, h_2}.$ In particular, we will often plug in the matrix-valued kernel $K$ in the place of $h_1, h_2.$ Of course, as explained above, expressions like $zeta_{K}, zeta_{K, K}$ will always be interpreted componentwise in the direction.

Before proceding to our analysis, we make the following important remark on the (functional) differentiability of the nonlinear terms $b,\sigma.$

\begin{remark}[Fr\'echet differentiability of coefficients]\label{rem:NemytskiiRem} The nonlinear operators $b_0, \sigma_0:\Hh_1\rightarrow\RR^d, \RR^{d\times m}$ are twice Fr\'echet differentiable (for example, $Db_0(\lambda)(h)=\nabla b(ev_0(\lambda))ev_0(h)$ for $\lambda, h\in\Hh_1$) since they only depend on the finite-dimensional projections $ev_0(\lambda).$ Consequently, the same is true for the ``singular" coefficients $Kb_0, K\sigma_0:\Hh_1\rightarrow L^2_w(\RR^+; \RR^d),L^2_w(\RR^+; \RR^{d\times m}).$ This is in sharp contrast to Nemytskii operators $B:\Hh_1\rightarrow\Hh_1$ of the form $B(\lambda)(x):=b(\lambda(x)), x\in\RR^+,$ which, as is well known, are only Gateaux differentiable (except for trivial cases; see e.g. \cite[Remark 6]{gasteratos2023moderate}). This additional regularity means that (classical) directional derivatives $D^i_y\lambda^y, i=1,2,$ as defined in Corollary \ref{cor:regularTangentProcesses} below, can be taken along any direction in $\Hh_1.$ This differs to the setting of SPDEs with Nemytskii operators where, typically, tangent processes are well-defined under additional smoothness of the direction (see e.g. \cite[Chapter 4.2]{cerrai2001second}).   
\end{remark}

\begin{lemma}[Well-posedness of the first variation equation]\label{lem:zetaKWP} Let $y\in\Hh_1, 0\leq s< t\leq T$ and $h\in\mathscr{K}$ satisfy
\begin{equation}\label{eq:zetaDirectionIntegrability}
    \int_s^T|S(r-s)h|^2_{\Hh_1}\dr<\infty.
\end{equation}
\noindent Under assumptions \ref{assumption:SVEassumptions}i), iii), \ref{assumption:Kernel}, the integral equation 
\begin{equation}\label{eq:zetaKEqRough}
    \begin{aligned}
         \zeta(t) 
    &= S(t-s)h + \int_{s}^t S(t-r)K Db_0(\lambda^{s,y}(r)) (\zeta(r)) \D r\\
    &+ \int_{s}^t S(t-r)K D\sigma_0(\lambda^{s,y}(r)) (\zeta(r)) \D W_r\;, t\in(s, T],
    \end{aligned}
\end{equation}
admits a unique $\Hh_1-$valued (mild) solution with $\zeta(s)=h$ per Definition \ref{dfn:mildsolutions} with $p=2.$ Furthermore, for all $t\in(s,T],$ the map $\mathscr{K}\ni h\longmapsto \zeta(t)\in L^2(\Omega,\Hh_1)$ is linear.
\end{lemma}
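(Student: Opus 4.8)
The plan is to run a Banach fixed-point argument, exactly parallel to the proof of Theorem \ref{thm:SPDE_wellposedness}, but now on the $L^2$-in-time space $\mathscr{Z}^2_{s,T}(\Hh_1)$ of adapted processes $u\in L^2(\Omega;L^2([s,T];\Hh_1))$ — this is the natural space here because the coefficient $h$ in front of the semigroup satisfies only the time-integrability condition \eqref{eq:zetaDirectionIntegrability}, rather than $h\in\Hh_1$, so we cannot expect $\sup_t\EE|\zeta(t)|_{\Hh_1}^p<\infty$ in general. First I would define the solution map
\[
\mathcal{J}(\zeta)(t) := S(t-s)h + \int_s^t S(t-r)K\,Db_0(\lambda^{s,y}(r))(\zeta(r))\,\D r + \int_s^t S(t-r)K\,D\sigma_0(\lambda^{s,y}(r))(\zeta(r))\,\D W_r,
\]
and check that $\mathcal{J}$ maps $\mathscr{Z}^2_{s,T}(\Hh_1)$ to itself. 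The free term $S(\cdot-s)h$ lies in $L^2([s,T];\Hh_1)$ almost surely by assumption \eqref{eq:zetaDirectionIntegrability}. For the drift and diffusion terms one splits $|S(t-r)K|_{\Hh_1}$ into $|S(t-r)K|_{\Hh}+|\partial_xS(t-r)K|_{\Hh}$; since $Db_0,D\sigma_0$ are bounded (Assumption \ref{assumption:SVEassumptions}(iii)) by, say, $|\nabla b|_\infty$ and $|\nabla\sigma|_\infty$, and $ev_0$ is bounded on $\Hh_1$ (Lemma \ref{lemma:Hwproperties}), one gets $|Db_0(\lambda^{s,y}(r))(\zeta(r))|\lesssim|\zeta(r)|_{\Hh_1}$. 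Then Hölder's inequality in time with exponent $q/2>1$ against \eqref{eq:cond_K1} for the kernel factor, followed by BDG for the stochastic convolution, yields
\[
\EE\int_s^T|\mathcal{J}(\zeta)(t)|_{\Hh_1}^2\,\D t \lesssim \EE\int_s^T|S(t-s)h|_{\Hh_1}^2\,\D t + C(T,K,q)\,\EE\int_s^T|\zeta(r)|_{\Hh_1}^2\,\D r,
\]
so $\mathcal{J}$ is well-defined on $\mathscr{Z}^2_{s,T}$.

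Next I would establish the contraction property on a short interval $[s,s+T_0]$. By linearity of $\mathcal{J}$ in $\zeta$ (the coefficients $Db_0(\lambda^{s,y}(r)),D\sigma_0(\lambda^{s,y}(r))$ do not depend on $\zeta$), we have $\mathcal{J}(\zeta_1)-\mathcal{J}(\zeta_2)=\mathcal{J}(\zeta_1-\zeta_2)$ with the free term removed, hence
\[
\EE\int_s^t|\mathcal{J}(\zeta_1)(r)-\mathcal{J}(\zeta_2)(r)|_{\Hh_1}^2\,\D r \le F(t-s)\,\EE\int_s^t|\zeta_1(r)-\zeta_2(r)|_{\Hh_1}^2\,\D r
\]
for a continuous increasing function $F$ with $F(0)=0$, by the same kernel estimates as above (using that $\int_0^{T_0}|S(r)K|_{\Hh_1}^2\D r\to0$ as $T_0\to0$ by \eqref{eq:cond_K1} and Hölder). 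Choosing $T_0$ small so $F(T_0)<1$ gives a unique fixed point on $[s,s+T_0]$, and concatenating over successive subintervals extends uniqueness to $[s,T]$ — the concatenation works because \eqref{eq:zetaDirectionIntegrability} guarantees the free term is square-integrable on each piece and the solution at the endpoint of one interval serves as a valid (now $\Hh_1$-valued, in fact a.e. finite) initial datum for the next; alternatively, one applies the Volterra--Gr\"onwall inequality (Lemma \ref{lemma:Volterra_Gronwall}) with kernel $\overline K(t)=|S(t)K|_{\Hh_1}^2\in L^{q/2}_T$ directly to $x(t):=\EE\int_s^{s+t}|\zeta_1(r)-\zeta_2(r)|_{\Hh_1}^2\D r$ to conclude uniqueness on all of $[s,T]$ in one stroke, and to derive the a priori bound $\EE\int_s^T|\zeta(t)|_{\Hh_1}^2\,\D t\lesssim \EE\int_s^T|S(t-s)h|_{\Hh_1}^2\,\D t$.

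Finally, linearity of $h\mapsto\zeta(t)$ in $L^2(\Omega;\Hh_1)$ follows from uniqueness: if $\zeta^{(1)},\zeta^{(2)}$ are the solutions with data $h_1,h_2$ and $a_1,a_2\in\RR$, then $a_1\zeta^{(1)}+a_2\zeta^{(2)}$ solves \eqref{eq:zetaKEqRough} with datum $a_1h_1+a_2h_2$ (here one uses that $a_1h_1+a_2h_2\in\mathscr{K}$ and inherits \eqref{eq:zetaDirectionIntegrability}, and that the equation is linear in the unknown), so it coincides with the unique such solution. The one point requiring a little care — and the main (minor) obstacle — is making precise the functional framework $\mathscr{Z}^2_{s,T}(\Hh_1)$ and verifying that a process which is only in $L^2([s,T];\Hh_1)$ pathwise, rather than $\Cc([s,T];\Hh_1)$, still admits a well-defined stochastic convolution $\int_s^tS(t-r)K\,D\sigma_0(\lambda^{s,y}(r))(\zeta(r))\,\D W_r$; this is handled by the BDG estimate above together with Assumption \ref{assumption:Kernel}, exactly as the measurability and integrability conditions in Definition \ref{dfn:mildsolutions} are checked, so no new ideas are needed — only bookkeeping to confirm that the $L^2$-in-time norm is the right one and that the fixed-point argument closes with the kernel singularity absorbed by the Hölder/BDG pairing against \eqref{eq:cond_K1}.
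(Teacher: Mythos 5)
Your proposal mirrors the paper's proof almost step-for-step: a fixed-point argument on the $L^2$-in-time space $\mathscr{Z}^2_{s,T}(\Hh_1)$, with the free term controlled by \eqref{eq:zetaDirectionIntegrability}, the drift and diffusion convolutions controlled via BDG and a Young-type convolution bound against $\overline K(t)=|S(t)K|^2_{\Hh_1}\in L^1_T$, contraction on a short interval followed by concatenation, and linearity from uniqueness of the linear equation. Two small points: the splitting of $|S(t-r)K|_{\Hh_1}$ into $\Hh$-components of $K$ and $\partial_x K$ is unnecessary here, since the unknown $\zeta$ sits in $\Hh_1$ and the coefficients $Db_0(\cdot)(\zeta),D\sigma_0(\cdot)(\zeta)$ are scalar-valued so $|S(t-r)K\,Db_0(\lambda)(\zeta)|_{\Hh_1}\lesssim|S(t-r)K|_{\Hh_1}|\zeta|_{\Hh_1}$ directly; and if you prefer the Volterra--Gr\"onwall route over concatenation, the right choice is $x(t):=\EE|\zeta_1(s+t)-\zeta_2(s+t)|^2_{\Hh_1}$ (which satisfies $x\le \overline K\star x$ with zero free term), not the cumulative integral you wrote, which does not fit Lemma~\ref{lemma:Volterra_Gronwall}'s hypotheses.
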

\begin{proof}

Let $p\in[1, \infty)$ and $\mathscr{Z}^p_{s,T}(\Hh_1)$ denote the Banach space of of $\{\mathcal{F}_t\}_{t\geq 0}-$adapted processes $\zeta\in L^p([s,T]; L^p(\Omega;\Hh_1)),$ endowed with the norm
\begin{equation}\label{eq:Z2Space}    |\zeta|^p_{\mathscr{Z}^p_{s,T}}:=\int_s^T\EE[|\zeta(r)|^p_{\Hh_1} ]\dr.
\end{equation}

\noindent We shall first show that the map
\begin{equation}\label{eq:zetaFixedPoint}
\begin{aligned}
 \zeta\longmapsto\mathcal{I}_1(h,\zeta)(t)&:= S(t-s)h + \int_{s}^t S(t-r)K Db_0(\lambda^{s,y}(r)) (\zeta(r)) \D r
    \\&+ \int_{s}^t S(t-r)K D\sigma_0(\lambda^{s,y}(r)) (\zeta(r)) \D W_r\;,\;\;t\in[s,T],
\end{aligned}
\end{equation}
    maps $\mathscr{Z}^2_{s,T}(\Hh_1)$ to itself. Then, we prove that $\mathcal{I}_1(h,\cdot)$ is a contraction.
    Indeed, due to boundedness of the Fr\'echet derivatives $Db_0, D\sigma_0,$ integrability of $h,$ BDG inequality and H\"older's inequality with exponent $q/2>1$ we have 
    \begin{equation*}
        \begin{aligned}
            \EE|\mathcal{I}_1(h,\zeta)(t)|^2_{\Hh_1}\lesssim |S(t-s)h|^2_{\Hh_1}+2\int_s^t|S(t-r)K|^2_{\Hh_1}\EE|\zeta(r)|^2_{\Hh_1}\dr.
        \end{aligned}
    \end{equation*}
    By Young's inequality for convolutions in the form $|f\star g|_{L^1}\leq |f|_{L^1}|g|_{L^1}$ it follows that
    \begin{equation*}
        \begin{aligned}        |\mathcal{I}_1(h,\zeta)|_{\mathscr{Z}^2_{s,T}}^2&=  \int_s^T\EE|\mathcal{I}_1(h,\zeta)(t)|^2_{\Hh_1}
            \dt\\&\lesssim \int_s^T|S(t-s)h|^2_{\Hh_1}\dt+\int_s^T\int_s^t|S(t-r)K|^2_{\Hh_1}\EE|\zeta(r)|^2_{\Hh_1}\dr\dt\\&
            \leq \int_s^T|S(t-s)h|^2_{\Hh_1}\dt+\bigg(\int_s^T|S(t)K|^2_{\Hh_1}\dt\bigg)|\zeta|_{\mathscr{Z}^2_{s,T}}^2<\infty.
        \end{aligned}
    \end{equation*}
    Turning to the contraction property, fix $\zeta_1, \zeta_2\in \mathscr{Z}^2_{s,T}.$ Using linearity (in $\zeta$) of the right-hand side and repeating similar arguments, we arrive at the bound
    \begin{equation*}
        \begin{aligned}  |\mathcal{I}_1(h,\zeta_2)-\mathcal{I}_1(h,\zeta_1)|_{\mathscr{Z}^2_{s,T}}^2\lesssim \bigg(\int_s^T|S(t)K|^2_{\Hh_1}\dt\bigg)|\zeta_2-\zeta_1|_{\mathscr{Z}^2_{s,T}}^2
         \end{aligned}
    \end{equation*}
    which holds up to a constant independent of $T.$ Taking $T$ sufficiently small, the prefactor on the right-hand side can be taken to be strictly smaller than $1.$ This implies the contraction property which, by Banach's fixed point theorem, yields the existence of a unique solution $\zeta$ to \eqref{eq:zetaKEqRough} in the space $\mathscr{Z}^2_{s,T}$ for $T$ small enough. Well-posedness for all $T>0$ then follows by concatenation of unique solutions on small enough time intervals. Furthermore, the moment estimate from Definition \ref{dfn:mildsolutions}i) with $p=2$ follows automatically from the fact that $|\zeta|_{\mathscr{Z}^2_{s,T}}<\infty.$

   The fixed point property $|\zeta-\mathcal{I}_1(h,\zeta)|_{\mathscr{Z}^2_{s,T}}=0$ implies that $\zeta$ only satisfies \eqref{eq:zetaKEqRough} for almost all $t\in(s, T].$ However, in view of \eqref{eq:zetaFixedPoint}, standard arguments show that $\zeta\in \Cc((s,T];L^2(\Omega;\Hh_1))$ with possible discontinuity only at the initial time $s$ (recall that the initial condition $h$ need not be an element of $\Hh_1$). 
   This implies that $\PP(\forall t\in(s, T],   \zeta(t)=\mathcal{I}_1(h,\zeta)(t)    )=1.$ Uniqueness then follows for all $t\in[s, T],$ provided that we look for solution processes which are modified to equal $h$ at time $s.$ The proof is complete.
   
   Finally, linearity in the initial condition follows by Fr\'echet differentiability of the coefficients $b_0, \sigma_0,$ linearity of the semigroup and a standard Gr\"onwall argument. In particular, for $h_1, h_2\in\mathscr{K},$ the process $\Delta\zeta:=\zeta_{h_1+h_2}-\zeta_{h_1}-\zeta_{h_2}$ (i.e. with $\zeta_{h_1+h_2}(0)=h_1+h_2$) satisfies
   \begin{equation*}
       \begin{aligned}
         \Delta\zeta(t)=\int_{s}^t S(t-r)K Db_0(\lambda^{s,y}(r)) (\Delta\zeta(r)) \D r+\int_{s}^t S(t-r)K D\sigma_0(\lambda^{s,y}(r)) (\Delta\zeta(r)) \dW_r
       \end{aligned}
   \end{equation*}
   and Gr\"onwall's inequality yields $\Delta\zeta(t)=0$ for all $t\in(s, T]$ almost surely.
   \end{proof}

Our subsequent analysis of the backward Kolmogorov equation requires control of high moments for solutions to \eqref{eq:zetaKEqRough}. The following lemma provides such moment estimates, along with continuity bounds for $\zeta$ with respect to initial conditions $y$ of $\lambda.$

\begin{lemma}[Properties of $\zeta_h$]\label{lem:zetaKrough} Let $y,z\in\Hh_1, 0\leq s< t\leq T, h\in\mathscr{K}$ satisfying \eqref{eq:zetaDirectionIntegrability} and $\zeta^{s,y}_h$ be the unique $\Hh_1-$valued solution to \eqref{eq:zetaKEqRough}. Under assumptions \ref{assumption:SVEassumptions}i),iii), \ref{assumption:Kernel} and for any $p\geq 1$ there exists a constant $C_T>0$ such that 
\begin{align}\label{eq:estimate_zeta}
    &\EE|\zeta^{s,y}_{h}(t)|^p_{\Hh_1}\leq    C_T|S(t-s)h|^{p}_{\Hh_1}.
\end{align}  
Moreover, the following hold:
\begin{enumerate}
    \item If $\sigma$ is constant then 
\begin{align}    
    \EE|\zeta^{s,y}_{K}(t)-\zeta^{s,z}_{K}(t)|_{\Hh_1}^2\lesssim \abs{y-z}_{\Hh_1}^2.
    \label{eq:regularity_zeta2}
\end{align}
    \item If, instead, Assumption \ref{assumption:Kernel} holds with $q\ge 4$, then
\begin{align}\label{eq:regularity_zeta4}
    \EE|\zeta^{s,y}_{K}(t)-\zeta^{s,z}_{K}(t)|^4_{\Hh_1}\lesssim \abs{y-z}^4_{\Hh_1}.
\end{align}
\end{enumerate}

The above conclusions hold without change for the process $\zeta^{s,y}_{\delta, h}$ which solves the same equation, albeit with $K$ replaced by $K_\delta$ in the integral terms.
\end{lemma}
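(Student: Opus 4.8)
The plan is to prove the moment bound \eqref{eq:estimate_zeta} first, then derive the continuity estimates \eqref{eq:regularity_zeta2} and \eqref{eq:regularity_zeta4} as refinements of the same Volterra--Gr\"onwall machinery. Throughout, the core tool is Lemma~\ref{lemma:Volterra_Gronwall} applied with the kernel $\overline{K}:=\abs{S(\cdot)K}^2_{\Hh_1}$, which lies in $L^{q/2}([0,T])$ by Assumption~\ref{assumption:Kernel}\eqref{eq:cond_K1}; its resolvent $\overline{r}$ then also lies in $L^{q/2}([0,T])$ by standard resolvent theory \cite{gripenberg1990volterra}. First I would fix $p\ge 2$ (the case $p\in[1,2)$ following by Jensen) and, starting from the mild equation \eqref{eq:zetaKEqRough}, apply the triangle inequality in $\Hh_1$, then Jensen and the Burkholder--Davis--Gundy inequality to the drift and stochastic-convolution terms respectively. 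Using boundedness of $Db_0,D\sigma_0$ (Assumption~\ref{assumption:SVEassumptions}(iii), via Remark~\ref{rem:NemytskiiRem}) and H\"older's inequality with exponent $q/2$ on the time integrals, one gets, writing $x(t):=\EE\abs{\zeta^{s,y}_h(t)}^p_{\Hh_1}$,
\begin{align*}
    x(t)\lesssim \abs{S(t-s)h}^p_{\Hh_1}
    + \Big(\int_s^t \abs{S(t-r)K}^q_{\Hh_1}\dr\Big)^{\frac{p-2}{q}}\int_s^t \abs{S(t-r)K}^2_{\Hh_1}\, x(r)\dr.
\end{align*}
Since $q>2$, the first factor in the second term is bounded by a constant $C_T$, so the hypotheses of Lemma~\ref{lemma:Volterra_Gronwall} hold with $k=C_T\overline{K}(t-\cdot)$ and $f(t)=C\abs{S(t-s)h}^p_{\Hh_1}$. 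The conclusion then reads $x(t)\le f(t)+\int_s^t \overline r(t-r)f(r)\dr$; using $\abs{S(r)g}_{\Hh_1}\le C\E^{cr}\abs{g}_{\Hh_1}$ (Corollary~\ref{cor:SemigroupBound}) to bound $f(r)=C\abs{S(r-s)h}^p_{\Hh_1}\lesssim \E^{pc(r-s)}\abs{S(t-s)h}^p_{\Hh_1}$ uniformly for $r\le t$, and $\overline r\in L^1$, yields \eqref{eq:estimate_zeta} with a constant $C_T$ increasing in $T$. (Strictly speaking one should first run the argument on small intervals where a genuine contraction gives finiteness of $x$, then concatenate; but Lemma~\ref{lem:zetaKWP} already supplies $\zeta\in\mathscr{Z}^2_{s,T}$, and a bootstrap in $p$ analogous to the proof of Theorem~\ref{thm:SPDE_wellposedness} upgrades integrability, so $x$ is a priori finite.)

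For part (1), when $\sigma$ is constant the diffusion term in \eqref{eq:zetaKEqRough} vanishes (since $D\sigma_0\equiv 0$), so $\Delta\zeta(t):=\zeta^{s,y}_K(t)-\zeta^{s,z}_K(t)$ satisfies a purely deterministic-in-structure mild equation driven only by the drift:
\begin{align*}
    \Delta\zeta(t)=\int_s^t S(t-r)K\big[Db_0(\lambda^{s,y}(r))(\zeta^{s,y}_K(r))-Db_0(\lambda^{s,z}(r))(\zeta^{s,z}_K(r))\big]\dr.
\end{align*}
I would split the bracket as $Db_0(\lambda^{s,y}(r))(\Delta\zeta(r)) + \big(Db_0(\lambda^{s,y}(r))-Db_0(\lambda^{s,z}(r))\big)(\zeta^{s,z}_K(r))$. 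The first piece is handled by boundedness of $Db_0$ and feeds into the Gr\"onwall loop; the second is controlled using $\gamma_0$-H\"older continuity of $D^2b$ — more simply, Lipschitz continuity of $Db$ together with continuity of $ev_0$ on $\Hh_1$ gives $\abs{(Db_0(\lambda^{s,y}(r))-Db_0(\lambda^{s,z}(r)))(\cdot)}\lesssim \abs{\lambda^{s,y}(r)-\lambda^{s,z}(r)}_{\Hh_1}\abs{\cdot}_{\Hh_1}$ — followed by Cauchy--Schwarz in expectation, the Feller estimate \eqref{eq:initial_regularity} for $\EE\abs{\lambda^{s,y}(r)-\lambda^{s,z}(r)}^2_{\Hh_1}\lesssim\abs{y-z}^2_{\Hh_1}$, and the $p=2$ moment bound \eqref{eq:estimate_zeta} for $\zeta^{s,z}_K$. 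This produces an estimate of the form $\EE\abs{\Delta\zeta(t)}^2_{\Hh_1}\lesssim \abs{y-z}^2_{\Hh_1}\int_s^t\overline K(t-r)\dr + \int_s^t \overline K(t-r)\,\EE\abs{\Delta\zeta(r)}^2_{\Hh_1}\dr$, to which Lemma~\ref{lemma:Volterra_Gronwall} applies and gives \eqref{eq:regularity_zeta2}. For part (2) with $q\ge 4$, I would run the same decomposition but raise everything to the fourth power and use BDG for the (now present) stochastic convolution; the extra integrability $q\ge 4$ is exactly what is needed so that H\"older with exponent $q/4$ keeps the kernel factor $\big(\int \overline K^{q/2}\big)^{2/q}$ finite, and $\EE\abs{\lambda^{s,y}(r)-\lambda^{s,z}(r)}^4_{\Hh_1}\lesssim\abs{y-z}^4_{\Hh_1}$ comes again from \eqref{eq:initial_regularity} (valid for all $p\ge1$), while $\EE\abs{\zeta^{s,z}_K(r)}^4_{\Hh_1}$ is bounded by \eqref{eq:estimate_zeta} with $p=4$. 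Volterra--Gr\"onwall with kernel in $L^{q/4}\subset L^1$ then closes the loop and yields \eqref{eq:regularity_zeta4}.

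Finally, the statement for the mollified process $\zeta^{s,y}_{\delta,h}$ requires no new idea: replacing $K$ by $K_\delta=S(\delta)K$ throughout, one has $\abs{S(r)K_\delta}_{\Hh_1}=\abs{S(r+\delta)K}_{\Hh_1}\le \abs{S(r)K}_{\Hh_1}\cdot(\text{const})$ — more precisely the kernel $\abs{S(\cdot+\delta)K}^2_{\Hh_1}$ is dominated, uniformly in $\delta\in(0,1)$, by an $L^{q/2}$ function via \eqref{eq:cond_K1} and Corollary~\ref{cor:SemigroupBound} — so all the above bounds hold with constants independent of $\delta$. I expect the main obstacle to be bookkeeping rather than conceptual: namely keeping the H\"older exponents consistent so that the kernel always lands in $L^{q/2}$ (resp.\ $L^{q/4}$) and the resolvent estimate of Lemma~\ref{lemma:Volterra_Gronwall} can be invoked, and ensuring the a priori finiteness of the moments $\EE\abs{\zeta}^p_{\Hh_1}$ for $p>2$ (handled by the same bootstrap-and-concatenate scheme as in Theorem~\ref{thm:SPDE_wellposedness}) before the Gr\"onwall argument is legitimate. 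The genuinely delicate point is the cross term involving the difference of derivatives $Db_0(\lambda^{s,y})-Db_0(\lambda^{s,z})$ and its $\sigma$-counterpart, where one must be careful to pair the $\Hh_1$-continuity of $\lambda$ in its initial datum (\eqref{eq:initial_regularity}) with the correct moment of $\zeta_K$ so that the product integrates against the singular kernel.
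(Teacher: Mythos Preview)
Your overall plan matches the paper's, and for parts (1), (2) and the mollified process your reasoning is essentially what the paper does. The structural difference is that you carry the kernel $\overline K(t-r)=\abs{S(t-r)K}^2_{\Hh_1}$ through and close with the Volterra--Gr\"onwall inequality (Lemma~\ref{lemma:Volterra_Gronwall}), whereas the paper takes the simpler route of using H\"older (exponent $q/2$, requiring $p>\tfrac{2q}{q-2}$) to absorb the kernel integral into a constant $C_T$, reducing to $x(t)\le f(t)+C_T\int_s^t x(r)\,\dr$ and applying the \emph{standard} Gr\"onwall inequality; Jensen then handles smaller $p$. The same device recurs in parts (1) and (2): the paper pulls out $\int_s^t\abs{S(t-r)K}^2_{\Hh_1}\dr$ (resp.\ $\int_s^t\abs{S(t-r)K}^4_{\Hh_1}\dr$ when $q\ge4$) as a finite constant via Cauchy--Schwarz, and then ordinary Gr\"onwall closes the loop. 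Your Volterra--Gr\"onwall alternative is legitimate but buys nothing here; the paper's approach is cleaner and avoids the resolvent altogether for this lemma.

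There is a genuine error in your moment bound argument. After Volterra--Gr\"onwall you must control $\int_s^t \overline r(t-\theta)\,\abs{S(\theta-s)h}^p_{\Hh_1}\,\D\theta$, and you claim via Corollary~\ref{cor:SemigroupBound} that $\abs{S(\theta-s)h}_{\Hh_1}\lesssim e^{c(\theta-s)}\abs{S(t-s)h}_{\Hh_1}$ for $\theta\le t$. This inequality points the wrong way. Applying Corollary~\ref{cor:SemigroupBound} to $g=S(\theta-s)h\in\Hh_1$ (which is legitimate for $\theta>s$ since $h\in\mathscr K$) gives
\[
\abs{S(t-s)h}_{\Hh_1}=\abs{S(t-\theta)S(\theta-s)h}_{\Hh_1}\le Ce^{c(t-\theta)}\abs{S(\theta-s)h}_{\Hh_1},
\]
i.e.\ an \emph{upper} bound on the later shift in terms of the earlier one, not the reverse. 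For a singular direction such as $h=K$ one has $\abs{S(\theta-s)K}_{\Hh_1}\to\infty$ as $\theta\downarrow s$ while $\abs{S(t-s)K}_{\Hh_1}$ is finite, so no pointwise domination of $f(\theta)$ by $f(t)$ is available. You should follow the paper and absorb the kernel into $C_T$ before invoking Gr\"onwall rather than trying to bound this convolution directly. (A smaller slip: in your source term for part~(1), the moment bound \eqref{eq:estimate_zeta} gives $\EE\abs{\zeta^{s,z}_K(r)}^2_{\Hh_1}\lesssim\abs{S(r-s)K}^2_{\Hh_1}$, so the forcing is $\abs{y-z}^2\int_s^t\overline K(t-r)\overline K(r-s)\,\dr$, not $\abs{y-z}^2\int_s^t\overline K(t-r)\,\dr$; the conclusion is unaffected since this convolution is bounded.)
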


\begin{proof} 
    
    Turning to the desired moment bound (and recalling that, by definition of $\mathscr{K}$ \eqref{eq:SingularDirections},  $S(t-s)h\in\Hh_1$),  Jensen's, BDG and H\"older inequalities furnish
    \begin{equation*}
        \begin{aligned}
            \EE|\zeta^{s,y}_{h}(t) |^p_{\Hh_1}&\leq |S(t-s)h|^p_{\Hh_1}+\EE\bigg(\int_{s}^t| S(t-r)K  |_{\Hh_1}|\zeta^{s, y}_{h}(r)|_{\Hh_1} \D r\bigg)^{p}\\&
            +\EE\bigg(\int_{s}^t |S(t-r)K |^2_{\Hh_1} |\zeta^{s, y}_{h}(r)|^2\D r\bigg)^{\frac{p}{2}}\\&
            \leq |S(t-s)h|^p_{\Hh_1}+    C_T\int_{s}^t  \EE|\zeta^{s, y}_{h}(r)|^p\D r,
        \end{aligned}
    \end{equation*} 
    provided that $p>\frac{2q}{q-2}.$
    By Gr\"onwall's inequality we obtain the desired estimate for large $p.$ For $p'\leq p$ we apply Jensen's inequality and the bound for large $p$ to obtain
    $$ \EE|\zeta^{s,y}_{h}(t) |^{p'}_{\Hh_1}\leq   (\EE|\zeta^{s,y}_{h}(t) |^{p}_{\Hh_1})^{p'/p}\leq C_T|S(t-s)h|^{p'}_{\Hh_1}. $$ 

    (1) Now we assume that $\sigma$ is constant; this entails the stochastic integral vanishes in~\eqref{eq:zetaKEqRough}. Leveraging the Lipschitz continuity of~$Db$ and Cauchy--Schwarz inequality we obtain
    \begin{align*}
        \EE|\zeta^{s,y}_{h}(t)-\zeta^{s,z}_{h}(t)|^2_{\Hh_1}
        \lesssim &\, \EE\left(\int_s^t \abs{S(t-r)K}_{\Hh_1}\abs{\lambda^{s,y}(r)-\lambda^{s,z}(r)}_{\Hh_1}\abs{\zeta^{s,z}_h(r)}_{\Hh_1}\dr\right)^2\\
        &\quad+ \EE\left(\int_s^t \abs{S(t-r)K}_{\Hh_1}|\zeta^{s,y}_{h}(r)-\zeta^{s,z}_{h}(r)|_{\Hh_1}\dr\right)^2 \\
        &\le \int_s^t \abs{S(t-r)K}^2_{\Hh_1}\dr \int_s^t \EE\Big[\abs{\lambda^{s,y}(r)-\lambda^{s,z}(r)}_{\Hh_1}^2\abs{\zeta^{s,z}_h(r)}_{\Hh_1}^2\Big]\dr\\
        &\quad+\int_s^t \abs{S(t-r)K}^2_{\Hh_1}\dr \int_s^t \EE|\zeta^{s,y}_{h}(r)-\zeta^{s,z}_{h}(r)|_{\Hh_1}^2\dr
    \end{align*}
    Cauchy--Schwarz inequality, the moment bound~\eqref{eq:estimate_zeta} and Proposition \ref{Prop:Feller} yield
    \begin{align*}
        \int_s^t \EE\Big[\abs{\lambda^{s,y}(r)-\lambda^{s,z}(r)}_{\Hh_1}^2\abs{\zeta^{s,z}_h(r)}_{\Hh_1}^2\Big]\dr
        &\lesssim \int_s^t \EE\Big[\abs{\lambda^{s,y}(r)-\lambda^{s,z}(r)}_{\Hh_1}^4\Big]^\half \EE\Big[\abs{\zeta^{s,z}_h(r)}_{\Hh_1}^4\Big]^\half\dr \\
        &\lesssim \abs{y-z}^2_{\Hh_1} \int_s^t \abs{S(r-s)K}^2_{\Hh_1}\dr.
    \end{align*}
    Gr\"onwall's inequality thus yields the claim.
    
    (2) Let Assumption \ref{assumption:Kernel} hold with $q\ge4$ instead. In a similar fashion, the moment bound~\eqref{eq:estimate_zeta}, Proposition \ref{Prop:Feller} and Assumption~\ref{assumption:Kernel}, along with Cauchy--Schwarz inequality, yield 
    \begin{align*}
        \EE|\zeta^{s,y}_{h}(t)-\zeta^{s,z}_{h}(t)|^4_{\Hh_1}
        \lesssim &\, \EE\left(\int_s^t \abs{S(t-r)K}^2_{\Hh_1}\abs{\lambda^{s,y}(r)-\lambda^{s,z}(r)}^2_{\Hh_1}\abs{\zeta^z_h(r)}^2_{\Hh_1}\dr\right)^{2}\\
        &\qquad+ \EE\left(\int_s^t \abs{S(t-r)K}^2_{\Hh_1}\EE|\zeta^{s,y}_{h}(r)-\zeta^{s,z}_{h}(r)|^2_{\Hh_1}\dr \right)^{2}.
    \end{align*}
    The moment bound~\eqref{eq:estimate_zeta} and Assumption~\ref{assumption:Kernel}, along with Cauchy--Schwarz inequality, yields 
\begin{align*}
    \EE&\left(\int_s^t \abs{S(t-r)K}^2_{\Hh_1}\abs{\lambda^{s,y}(r)-\lambda^{s,z}(r)}^2_{\Hh_1}\abs{\zeta^{s,z}_h(r)}^2_{\Hh_1}\dr\right)^{2}\\
    &\le \int_s^t \abs{S(t-r)K}^4_{\Hh_1} \dr \;\EE\int_s^t \abs{\lambda^{s,y}(r)-\lambda^{s,z}(r)}^4_{\Hh_1}\abs{\zeta^{s,z}_h(r)}^4_{\Hh_1}\dr \\
    &\lesssim \int_s^t \left(\EE\abs{\lambda^{s,y}(r)-\lambda^{s,z}(r)}^8\right)^\half \left(\EE\abs{\zeta^{s,z}_h(r)}^8\right)^\half\dr\\
    &\lesssim \sup_{r\in[s,t]}\left(\EE\abs{\lambda^{s,y}(r)-\lambda^{s,z}(r)}^{8}_{\Hh_1}\right)^\half \int_s^t \abs{S(t-r)K}^4_{\Hh_1}\dr \lesssim  \abs{y-z}_{\Hh_1}^4,
\end{align*}
A further application of Cauchy--Schwarz inequality yields
$$
\EE\left(\int_s^t \abs{S(t-r)K}^2_{\Hh_1}\EE|\zeta^{s,y}_{h}(r)-\zeta^{s,z}_{h}(r)|^2_{\Hh_1}\dr \right)^{2}
\le \int_s^t \abs{S(t-r)K}^4_{\Hh_1}\dr \int_s^t \EE\abs{\zeta^{s,y}_{h}(r)-\zeta^{s,z}_{h}(r)}^4_{\Hh_1}\dr.
$$
Collecting the previous estimates and applying Gr\"ownall's inequality yields the claim.

    Finally, due to the estimate   
    $$|S(t-r)K_\delta|_{\Hh_1}=|S(\delta)S(t-r)K|_{\Hh_1}\lesssim |S(t-r)K|_{\Hh_1},$$  
    all of the above remains true for the process $\zeta^{s,y}_{\delta, h}.$
\end{proof}

The proof strategy for the well-posedness and estimates of the singular tangent processes $\zeta_{K,K}$ is analogue to that of~$\zeta_h$, with the difference that we can only obtain $L^2_T$ estimates. This notably implies that we rely on a version of Gr\"onwall's inequality for Volterra processes, Lemma~\ref{lemma:Volterra_Gronwall}

Besides the latter, estimates in Lemmas \ref{lem:zetaKKRough}, \ref{lem:zetadeltaKL1limit} and \ref{lem:2ndTangentConvergence} rely on the following auxiliary lemma.
\begin{lemma}\label{lemma:resolvent}
    Let Assumption \ref{assumption:Kernel} hold. 
    The real-valued kernel~$t\mapsto \overline{K}(t):=\abs{S(t)K}_{\Hh_1}^2$ has a resolvent~$\overline{R}$ and it belongs to $L^{q/2}_T$. 
\end{lemma}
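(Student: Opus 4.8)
The plan is to establish that $\overline{K}\in L^{q/2}([0,T];\RR^+)$ first, and then invoke the standard theory of Volterra resolvents to produce $\overline{R}$ and conclude that it inherits the same integrability.

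First I would verify $\overline{K}\in L^{q/2}_T$. By definition, $\overline{K}(t)=\abs{S(t)K}^2_{\Hh_1}=\abs{S(t)K}^2_{\Hh}+\abs{\partial_x S(t)K}^2_{\Hh}$, where I use that $S$ commutes with $\partial_x$ on $\Hh_1$ (Remark~\ref{rem:H1Semigroup}), so $\partial_x S(t)K = S(t)\partial_x K$. Then
\begin{align*}
\int_0^T \overline{K}(t)^{q/2}\dt
= \int_0^T \abs{S(t)K}_{\Hh_1}^q\dt
= \int_0^T \abs{K(t+\cdot)}_{\Hh_1}^q\dt < \infty,
\end{align*}
which is precisely condition~\eqref{eq:cond_K1} of Assumption~\ref{assumption:Kernel}. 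In particular $\overline{K}\in L^1([0,T];\RR^+)$ since $q/2>1$.

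Next I would produce the resolvent. Since $\overline{K}\in L^1([0,T];\RR^+)$, the classical Volterra theory (e.g.~\cite[Chapter 2, Theorem 3.1]{gripenberg1990volterra}) guarantees the existence of a unique resolvent $\overline{R}\in L^1([0,T];\RR)$ solving $\overline{R}-\overline{K}=\overline{R}\star\overline{K}$; moreover, because $\overline{K}\ge0$, one has $\overline{R}\ge0$ as well (the resolvent of a nonnegative kernel is nonnegative, see the same reference). The remaining point is to upgrade the integrability of $\overline{R}$ from $L^1_T$ to $L^{q/2}_T$. For this I would use the resolvent equation itself: $\overline{R}=\overline{K}+\overline{R}\star\overline{K}$, and apply Young's convolution inequality. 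Since $\overline{R}\in L^1_T$ and $\overline{K}\in L^{q/2}_T$, the convolution $\overline{R}\star\overline{K}\in L^{q/2}_T$ with $\abs{\overline{R}\star\overline{K}}_{L^{q/2}_T}\le \abs{\overline{R}}_{L^1_T}\abs{\overline{K}}_{L^{q/2}_T}$ (using the form $\abs{f\star g}_{L^p}\le\abs{f}_{L^1}\abs{g}_{L^p}$ on the bounded interval $[0,T]$, extending the kernels by zero outside $[0,T]$ if one prefers to work on $\RR$). Hence $\overline{R}=\overline{K}+\overline{R}\star\overline{K}$ is a sum of two $L^{q/2}_T$ functions, so $\overline{R}\in L^{q/2}_T$, which completes the proof.

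The main (and essentially only) obstacle here is the bootstrapping step: naively the resolvent is only known to be $L^1_T$, and one must notice that the convolution structure of the resolvent equation transfers the better integrability of $\overline{K}$ to $\overline{R}$ via Young's inequality. Everything else is a direct unpacking of Assumption~\ref{assumption:Kernel}\,\eqref{eq:cond_K1} together with off-the-shelf Volterra resolvent existence. One minor care point is ensuring the convolution inequality is applied correctly on the finite interval $[0,T]$ (rather than on $\RR_+$), but this is handled by the standard convention of extending $L^1_{loc}$ kernels by zero, under which $\abs{f\star g}_{L^p([0,T])}\le \abs{f}_{L^1([0,T])}\abs{g}_{L^p([0,T])}$ holds.
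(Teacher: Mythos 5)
Your proof is correct and follows essentially the same strategy as the paper's: establish $\overline{K}\in L^{q/2}_T\subset L^1_T$ from \eqref{eq:cond_K1}, obtain an $L^1_T$ resolvent from standard Volterra theory, and then bootstrap its integrability through the resolvent equation $\overline{R}=\overline{K}+\overline{R}\star\overline{K}$ via Young's inequality. The only (inessential) difference is the citation used for existence of the $L^1$ resolvent: you invoke the classical Gripenberg--Londen--Staffans theory directly, whereas the paper verifies the hypotheses of \cite[Lemma 2.1]{zhang2010stochastic}.
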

\begin{proof}
    We can check, for instance, that the conditions displayed in \cite{zhang2010stochastic} hold. Assumption \ref{assumption:Kernel} entails that $\sup_{t\in[0,T]} \abs{\int_0^t \overline{K}(s)\ds} <\infty$ and
    \begin{align*}
        &\sup_{t\in[0,T]}\abs{\int_t^{t+\ep} \overline{K}(t+\ep-s)\ds}
        = \abs{\int_0^\ep \overline{K}(s)\ds}
        \le \ep^{\frac{q}{q-2}} \int_0^T \abs{S(s)K}_{\Hh^1}^q\ds, 
    \end{align*}
    which converges to zero as $\ep\to0$. Lemma 2.1 from~\cite{zhang2010stochastic} thus ensures the existence of the resolvent~$\overline{R}\in L^1_T$.  Moreover, $\overline{K}\in L^{q/2}_T$ hence $\overline{R}\star\overline{K}\in L^{q/2}_T$ by Young's convolution inequality. Finally, $\overline{R}\in L^{q/2}_T$ since $\overline{R}=\overline{K} + \overline{R}\star\overline{K}$.
\end{proof}

\begin{lemma}[Well-posedness of the second variation equation]\label{lem:zetaKKWP} Let $y\in\Hh_1, 0\leq s< t\leq T$ and $h_1, h_2\in\vspan(K, \Hh_1)\subset\mathscr{K}.$
Next let $\lambda^{s,y}, \zeta^{s,y}_{h_1}, \zeta^{s,y}_{h_2}$ be the unique $\Hh_1-$valued solutions to \eqref{eq:generalised_SPDE} and~\eqref{eq:zetaKEqRough} respectively. 
\noindent Under assumptions \ref{assumption:SVEassumptions}i), iii) and \ref{assumption:Kernel} with $q>4$, the integral equation
\begin{equation}\label{eq:zetah1h2Rough}
\begin{aligned}
     \zeta(t) &= \int_s^t S(t-r)K D^2b_0(\lambda^{s,y}(r)) (\zeta^{s,y}_{h_1}(r),\zeta^{s, y}_{h_2}(r)) \D r\\ 
    &\quad+ \int_s^t S(t-r)K Db_0(\lambda^{s, y}(r)) (\zeta(r)) \D r \\
    &\quad + \int_s^t S(t-r )K D^2\sigma_0(\lambda^{s, y}(r)) (\zeta^{s, y}_{h_1}(r),\zeta^{s, y}_{h_2}(r)) \D W_r \\
    &\quad+ \int_s^t S(t-r)K D\sigma_0(\lambda^{s, y}(r)) (\zeta(r)) \D W_r
\end{aligned}  
\end{equation}
admits a unique $\Hh_1-$valued (mild) solution with $\zeta(0)=0,$ per Definition \ref{dfn:mildsolutions} with $p=2.$ Furthermore, for all $t\in(s,T],$ the map $\vspan(K, \Hh_1)\ni(h_1, h_2)\longmapsto \zeta^{s,y}_{h_1, h_2}(t):=\zeta(t)\in L^2(\Omega,\Hh_1)$ is bilinear. Finally, if $\sigma$ is constant well-posedness holds for all $q>2.$
\end{lemma}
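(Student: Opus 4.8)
The plan is to recast \eqref{eq:zetah1h2Rough} as a fixed point problem on the space $\mathscr{Z}^2_{s,T}(\Hh_1)$ \eqref{eq:Z2Space}, treating the two integrals that do not involve the unknown $\zeta$ as a prescribed (stochastic) forcing term. Denote that term by $\Psi$, i.e.
\begin{equation*}
\Psi(t):=\int_s^t S(t-r)K D^2b_0(\lambda^{s,y}(r))(\zeta^{s,y}_{h_1}(r),\zeta^{s,y}_{h_2}(r))\dr + \int_s^t S(t-r)K D^2\sigma_0(\lambda^{s,y}(r))(\zeta^{s,y}_{h_1}(r),\zeta^{s,y}_{h_2}(r))\dW_r,
\end{equation*}
and consider the solution map obtained by adding to $\Psi(t)$ the integrals $\int_s^t S(t-r)K Db_0(\lambda^{s,y}(r))(\zeta(r))\dr$ and $\int_s^t S(t-r)K D\sigma_0(\lambda^{s,y}(r))(\zeta(r))\dW_r$. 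First I would check that $\Psi\in\mathscr{Z}^2_{s,T}(\Hh_1)$: by boundedness of $D^2b_0,D^2\sigma_0$ (Assumption~\ref{assumption:SVEassumptions}~iii)), Jensen's, BDG and Cauchy--Schwarz inequalities, $\EE|\Psi(t)|^2_{\Hh_1}$ is controlled by $\int_s^t|S(t-r)K|^2_{\Hh_1}\,\EE\big[|\zeta^{s,y}_{h_1}(r)|^2_{\Hh_1}|\zeta^{s,y}_{h_2}(r)|^2_{\Hh_1}\big]\dr$ (up to the bounded factor $\int_s^t|S(t-r)K|^2_{\Hh_1}\dr$ for the drift part). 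Applying the moment estimate \eqref{eq:estimate_zeta} with $p=4$ and using that every $h_i\in\vspan(K,\Hh_1)$ satisfies $|S(r-s)h_i|_{\Hh_1}\lesssim 1+|S(r-s)K|_{\Hh_1}$ (Corollary~\ref{cor:SemigroupBound}), the integrand is bounded by $|S(t-r)K|^2_{\Hh_1}\big(1+|S(r-s)K|^4_{\Hh_1}\big)$; integrating in $t$ and using Fubini, the bound is finite precisely because $|S(\cdot)K|_{\Hh_1}\in L^q([0,T])$ with $q>4$ (equivalently $\overline K^2=|S(\cdot)K|^4_{\Hh_1}\in L^{q/4}([0,T])\subset L^1([0,T])$; cf.\ Lemma~\ref{lemma:resolvent}), which is exactly the threshold in the statement.

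With $\Psi\in\mathscr{Z}^2_{s,T}(\Hh_1)$ in hand, the remaining structure is identical to the proof of Lemma~\ref{lem:zetaKWP}: using boundedness of $Db_0,D\sigma_0$, Young's convolution inequality $|f\star g|_{L^1}\le|f|_{L^1}|g|_{L^1}$ and $\int_0^T|S(t)K|^2_{\Hh_1}\dt<\infty$, the map $\mathcal{I}_2(h_1,h_2,\cdot)$ sends $\mathscr{Z}^2_{s,T}(\Hh_1)$ into itself and is a contraction for $T$ small; Banach's fixed point theorem and concatenation over short sub-intervals then give a unique mild solution $\zeta^{s,y}_{h_1,h_2}$ on $[s,T]$, continuous on $(s,T]$ and, since the initial datum is $0\in\Hh_1$, continuous and unique on all of $[s,T]$. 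The a priori bound $\zeta^{s,y}_{h_1,h_2}\in\mathscr{Z}^2_{s,T}(\Hh_1)$ (with the expected dependence on $|S(\cdot-s)h_1|_{\Hh_1},|S(\cdot-s)h_2|_{\Hh_1}$) then follows by feeding the fixed-point identity into the Volterra--Gr\"onwall inequality Lemma~\ref{lemma:Volterra_Gronwall}, applied to $t\mapsto\EE|\zeta^{s,y}_{h_1,h_2}(t)|^2_{\Hh_1}$ with kernel a constant multiple of $\overline K=|S(\cdot)K|^2_{\Hh_1}$, whose resolvent $\overline R$ lies in $L^{q/2}([0,T])\subset L^1([0,T])$ by Lemma~\ref{lemma:resolvent}; integrating the resulting pointwise estimate over $[s,T]$ gives the claim.

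Bilinearity of $(h_1,h_2)\mapsto\zeta^{s,y}_{h_1,h_2}(t)$ is obtained exactly as the linearity of $h\mapsto\zeta^{s,y}_h$ in Lemma~\ref{lem:zetaKWP}: fixing $h_2$ and using the linearity of $h\mapsto\zeta^{s,y}_h$ together with the bilinearity of $D^2b_0,D^2\sigma_0$, the difference process $\Delta\zeta:=\zeta^{s,y}_{ah_1+h_1',h_2}-a\zeta^{s,y}_{h_1,h_2}-\zeta^{s,y}_{h_1',h_2}$ satisfies the homogeneous equation
\begin{equation*}
\Delta\zeta(t)=\int_s^t S(t-r)K Db_0(\lambda^{s,y}(r))(\Delta\zeta(r))\dr+\int_s^t S(t-r)K D\sigma_0(\lambda^{s,y}(r))(\Delta\zeta(r))\dW_r,
\end{equation*}
so $\Delta\zeta\equiv0$ on $(s,T]$ by Lemma~\ref{lemma:Volterra_Gronwall} with vanishing forcing; the second slot is symmetric. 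Finally, when $\sigma$ is constant, $D\sigma_0=D^2\sigma_0=0$, all stochastic integrals drop out, the term $\overline K^2$ never appears, and one only needs $\int_0^T|S(t)K|^2_{\Hh_1}\dt<\infty$ and $\overline K\in L^1([0,T])$, both of which hold for $q>2$; this yields the last assertion. The main obstacle is this very first step, the integrability $\Psi\in\mathscr{Z}^2_{s,T}(\Hh_1)$: it is there that a product of two first-order tangent processes forces a fourth-moment estimate on $\zeta_K$, hence the $L^4$-in-time integrability of $|S(\cdot)K|_{\Hh_1}$ and thus the condition $q>4$; everything else is a routine adaptation of Lemma~\ref{lem:zetaKWP}.
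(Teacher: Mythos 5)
Your fixed-point argument on $\mathscr{Z}^2_{s,T}(\Hh_1)$, treating the $(\zeta_{h_1},\zeta_{h_2})$-source terms as a prescribed forcing $\Psi$, is essentially the paper's approach; you also correctly identify that bounding $\EE|\Psi(t)|^2_{\Hh_1}$ forces a fourth-moment estimate on the first tangent processes and hence $\overline K=\abs{S(\cdot)K}^2_{\Hh_1}\in L^2_T$, i.e.\ $q>4$. The only genuine difference is cosmetic: the paper reduces to the diagonal case $h_1=h_2=h$ via polarization and then defines $\zeta_{h_1,h_2}$ by the polarization identity, whereas you solve the off-diagonal equation directly and obtain bilinearity through a homogeneous Gr\"onwall argument; both are correct and equivalent in difficulty.

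The gap is in the additive-noise case. You write that when $\sigma$ is constant ``the term $\overline K^2$ never appears, and one only needs $\int_0^T\abs{S(t)K}^2_{\Hh_1}\dt<\infty$ \ldots, both of which hold for $q>2$.'' This is incorrect as long as you stay in $\mathscr{Z}^2_{s,T}(\Hh_1)$: the $\overline K^2$ factor does not come from the It\^o isometry for the $D^2\sigma_0$ integral, it comes from the fourth-moment estimate on $\zeta_h$ that is needed to control $\EE\big[\abs{\zeta^{s,y}_{h_1}(r)}^2_{\Hh_1}\abs{\zeta^{s,y}_{h_2}(r)}^2_{\Hh_1}\big]$ when bounding the $L^2(\Omega)$-norm of the drift source term as well. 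Concretely, Cauchy--Schwarz in $r$ and then in $\Omega$ gives
\begin{equation*}
\EE\abs{\Psi(t)}^2_{\Hh_1}
\lesssim \int_s^t\abs{S(t-r)K}_{\Hh_1}\dr
\int_s^t \abs{S(t-r)K}_{\Hh_1}\,\abs{S(r-s)h_1}^2_{\Hh_1}\abs{S(r-s)h_2}^2_{\Hh_1}\dr,
\end{equation*}
and integrating in $t$ still requires $\overline h_i\in L^2_T$, i.e.\ $\abs{S(\cdot)h_i}_{\Hh_1}\in L^4_T$, which is again $q\ge4$. To recover $q>2$ one must abandon the square: the paper switches to the $L^1$-in-time space $\mathscr{Z}^1_{s,T}(\Hh_1)$, for which the source term only demands $\EE\abs{\zeta_{h_1}(r)\zeta_{h_2}(r)}_{\Hh_1}\lesssim\abs{S(r-s)h_1}_{\Hh_1}\abs{S(r-s)h_2}_{\Hh_1}$ via second moments, and Young's inequality then needs only $\abs{S(\cdot)K}_{\Hh_1}\in L^2_T$. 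Your proof should explicitly change function space in the $\sigma$-constant case; without that, the claimed relaxation to $q>2$ does not follow.
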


\begin{remark}[On $H>1/4$]\label{rem:Honefourth}
The well-posedness of \eqref{eq:zetah1h2Rough} is conditional to the existence of the third integral on the right-hand side which is intimately linked to the (ir)regularity of~$\zeta^{s,y}_{h_i},$ $i=1,2$. In order to better explain the condition $q\geq 4,$ we specialize the discussion to the case $h_1=h_2=K$ (where we recall the abuse of notations \eqref{eq:AbuseofNotations}). Then, the estimate~\eqref{eq:estimate_zeta} and Itô isometry yield
\begin{align}\label{eq:Honefourth_pbterm}
    \EE\abs{\int_s^t S(t-r )K D^2\sigma_0(\lambda^{s, y}(r)) (\zeta^{s, y}_{K}(r),\zeta^{s, y}_{K}(r)) \D W_r}^2_{\Hh_1}
    \lesssim \int_s^t \abs{S(t-r)K}^2_{\Hh_1} \abs{S(r-s)K}^4_{\Hh_1}\dr.
\end{align}
This at least requires $r\mapsto \abs{S(r)K}_{\Hh_1}$ to belong to $L^4_{T}$, hence the assumption~$q\ge4$. 
For the power-law kernel $K(t)=t^{H-\half}$, we verified in Example~\ref{example:powerlaw} that such a condition was satisfied if~$4< \frac{2}{1-2H}$, which is equivalent to~$H>1/4$.
We do not know at this stage whether this limitation can be relaxed but we emphasise that it is not a peculiarity of this choice of lift. Rather this treshold seems to be fundamental for the existence of the second derivative.
In the case of additive noise, stochastic integrals vanish from the equations of $\zeta_K$ and $\zeta_{K,K}$ thus enabling the use of $L^1_T$ norms and circumventing the aforementioned problem. This condition is intimately related to the study of the second tangent process which has not been treated elsewhere in the literature. However, a similar yet stricter condition (which translates to $H>1/3$ for power-law kernels) has appeared in the context of dynamic programming principles for SVE \cite[Remark 3.2]{hamaguchi2025global}.
\end{remark}

\begin{proof} In view of Assumption \ref{assumption:Kernel}, the initial directions $h_i\in \vspan(K, \Hh_1) $ automatically satisfy the integrability condition \eqref{eq:zetaDirectionIntegrability} which suffices for existence of the tangent processes $\zeta^{s,y}_{h_i}, i=1,2.$ We now proceed to the proof of well-posedness in the case $h_1=h_2=h.$ By means of polarization, we can then naturally define, for any $h_1, h_2\in \vspan(K,\Hh_1),$ 
\begin{equation}\label{eq:2ndTangentPolarization}
    \zeta^{s,y}_{h_1,h_2}:=\frac{1}{4}\bigg(\zeta^{s,y}_{h_1+h_2,h_1+h_2}- \zeta^{s,y}_{h_1-h_2,h_1-h_2}   \bigg) 
\end{equation} 
with similar properties. In particular, if the processes on the right-hand side are unique mild solutions to linear evolution equations, it is straightforward to deduce that $\zeta^{s,y}_{h_1,h_2}$ satisfies \eqref{eq:zetah1h2Rough}. Since $h\mapsto\zeta^{s,y}_h$ is linear and the Fréchet derivatives~$(h_1,h_2)\mapsto D^2b_0(y)(h_1,h_2),D^2\sigma_0(y)(h_1,h_2)$ are bilinear for any~$y\in\Hh_1$, it follows that $(h_1,h_2)\mapsto\zeta^{s,y}_{h_1,h_2}$ is bilinear.

Turning to well-posedness, we consider
the mapping
\begin{equation}\label{eq:2ndVarSolutionMap}
    \begin{aligned}
      \zeta\longmapsto I_2(h, \zeta)(t)&:= \int_s^t S(t-r)K D^2b_0(\lambda^{s,y}(r)) (\zeta^{s,y}_{h}(r),\zeta^{s, y}_{h}(r)) \D r\\ 
    &+ \int_s^t S(t-r)K Db_0(\lambda^{s, y}(r)) (\zeta(r)) \D r \\
    & + \int_s^t S(t-r )K D^2\sigma_0(\lambda^{s, y}(r)) (\zeta^{s, y}_{h}(r),\zeta^{s, y}_{h}(r)) \D W_r \\
    &+ \int_s^t S(t-r)K D\sigma_0(\lambda^{s, y}(r)) (\zeta(r)) \D W_r\;,\;\;t\in[s, T],
    \end{aligned}
\end{equation}
and show that it has a unique fixed point in the Banach space  $\mathscr{Z}^2_{s,T}(\Hh_1)$ \eqref{eq:Z2Space} as in the proof of Lemma~\ref{lem:zetaKrough}. Starting from the linear terms, we use It\^o's isometry, Cauchy--Schwarz ineuality and boundedness of the Fr\'echet derivatives $Db, D\sigma$ to obtain  
\begin{equation*}
    \begin{aligned}
        \EE\bigg|   \int_s^t S(t-r)K Db_0(\lambda^{s, y}(r)) (\zeta(r)) \D r&+\int_s^t S(t-r)K D\sigma_0(\lambda^{s, y}(r)) (\zeta(r)) \D W_r\bigg|^2_{\Hh_1}\\&
        \lesssim \int_s^t |S(t-r)K|^2|\zeta(r)|^2_{\Hh_1}\dr.
    \end{aligned}
\end{equation*}
Integrating in $t$ and applying Young's convolution inequality as in Lemma \ref{lem:zetaKWP} we get
\begin{equation}\label{eq:2ndTangentWP1}
    \begin{aligned}
        \bigg|   \int_s^t S(t-r)K Db_0(\lambda^{s, y}(r)) (\zeta(r)) \D r&+\int_s^t S(t-r)K D\sigma_0(\lambda^{s, y}(r)) (\zeta(r)) \D W_r\bigg|^2_{\mathscr{Z}^2_{s,T}}&
        \lesssim 1+|\zeta|^2_{\mathscr{Z}^2_{s,T}}.
    \end{aligned}
\end{equation}
As for the source terms, It\^o's isometry, H\"older's inequality and the bound \eqref{eq:estimate_zeta} furnish
\begin{equation*}
    \begin{aligned}
        &\EE\bigg|\int_s^t S(t-r)K D^2b_0(\lambda^{s,y}(r)) (\zeta^{s,y}_{h}(r),\zeta^{s, y}_{h}(r)) \D r+ \int_s^t S(t-r )K D^2\sigma_0(\lambda^{s, y}(r)) (\zeta^{s, y}_{h}(r),\zeta^{s, y}_{h}(r)) \D W_r\bigg|^2_{\Hh_1}\\&
        \lesssim \int_s^t \abs{S(t-r)K}^2_{\Hh_1} \abs{S(r-s)h}^4_{\Hh_1}\dr
        =\int_0^{t-s} \abs{S(t-s-r)K}^2_{\Hh_1} \abs{S(r)h}^4_{\Hh_1}\dr
        =:(\overline{K}\star\overline{h}^2)(t-s),
    \end{aligned}
\end{equation*}
where we recall the notation $\overline{h}(t):=\abs{S(t)h}^2_{\Hh_1}, h\in\vspan(K, \Hh_1)$ from Lemma \ref{lemma:resolvent}. The latter then satisfies the bound
\begin{equation}\label{eq:K*K^2bound}
\begin{aligned}
    \overline{K}\star\overline{h}^2 (t-s) &=\int_0^{(t-s)/2} \abs{S(t-s-r)K}^2_{\Hh_1} \abs{S(r)h}^4_{\Hh_1}\dr+\int_{(t-s)/2}^{t-s}\abs{S(t-s-r)K}^2_{\Hh_1} \abs{S(r)h}^4_{\Hh_1}\dr\\&
    \leq \int_0^{(t-s)/2} \abs{S((t-s)/2-r)S((t-s)/2)K}^2_{\Hh_1} \abs{S(r)h}^4_{\Hh_1}\dr\\&
    +   \int_{(t-s)/2}^{t-s}\abs{S(t-s-r)K}^2_{\Hh_1} \abs{S(r-(t-s)/2)S((t-s)/2)h}^4_{\Hh_1}\dr\\&
    \le C_T\bigg(|S((t-s)/2)K|^2_{\Hh_1}\abs{\overline{h}}_{L^2_T}^2+ \abs{\overline{K}}_{L^1_T} \abs{S((t-s)/2)h}^4_{\Hh_1}\bigg)
\end{aligned}
\end{equation}
which is finite from Assumption \ref{assumption:Kernel} with $q\geq 4.$ The latter, along with \eqref{eq:2ndTangentWP1} implies that, for all $h\in\vspan(K, \Hh_1),$
$|I_2(h, \zeta)(t)|_{\mathscr{Z}^2_{s,T}}\lesssim 1+ |\zeta|^2_{\mathscr{Z}_{s,T}}<\infty$
i.e. $I_2(h,\cdot)$ maps $\mathscr{Z}_{s,T}(\Hh_1)$ to itself. As for the contraction property, the proof follows in exactly the same manner as that of Lemma \ref{lem:zetaKWP}. In particular, there exists $T_0>0$ small enough and $C_{T_0}<1$ such that  for any  $\zeta_1, \zeta_2\in\mathscr{Z}_{s,T}(\Hh_1),$ $ |\mathcal{I}_1(h,\zeta_2)-\mathcal{I}_1(h,\zeta_1)|_{\mathscr{Z}^2_{s,T}}\leq C_{T_0}|\zeta_2-\zeta_1|_{\mathscr{Z}^2_{s,T}}.$ By Banach's fixed point theorem and concatenation in $T>0$ we obtain a unique solution $\zeta\equiv \zeta^{s,y}_{h,h}$ in $\mathscr{Z}^2_{s,T}(\Hh_1)$ for all $T>0.$ The integrability bound from Definition \ref{dfn:mildsolutions}i) follows automatically by finiteness in $\mathscr{Z}^2_{s,T}-$norm. Existence and uniqueness for all (as opposed to almost all) $t\in[s, T]$ is true since $\zeta\in C((s, T]; \Hh_1)$ (which in turn follows by standard arguments).

Finally if $\sigma$ is constant, the stochastic integrals in \eqref{eq:2ndVarSolutionMap} are identicallty zero. This allows us to work in the space $\mathscr{Z}^1_{s,T}(\Hh_1)$ \eqref{eq:Z2Space} which in turn leads to well-posedness for all $q>2.$ Indeed, we have thanks to Lemma \ref{lem:zetaKrough} and Young's inequality
\begin{align*}
    \int_s^T \EE&\abs{\int_s^t S(t-r)K D^2b_0(\lambda^{s,y}(r)) (\zeta^{s,y}_{h}(r),\zeta^{s, y}_{h}(r)) \D r}_{\Hh_1} \dt
    \lesssim \int_s^T \int_s^t \abs{S(t-r)K}_{\Hh_1} \EE\abs{\zeta^{s,y}_{h}(r)}^2_{\Hh_1}\dr\dt \\
    &\le \int_s^T \int_s^t \abs{S(t-r)K}_{\Hh_1} \abs{S(r-s)h}_{\Hh_1}^2 \dr\dt 
    \le \int_s^T \abs{S(T-r)K}_{\Hh_1} \dr \int_s^T \abs{S(T-r)h}^2_{\Hh_1} \dr <\infty. 
\end{align*}
This ensures that $I_2(h,\cdot)$ maps $\mathscr{Z}^1_{s,T}(\Hh_1)$ to itself. The rest of the proof unfolds analogously to the first case. 
\end{proof}

\begin{lemma}[Properties of $\zeta_{h_1, h_2}$]\label{lem:zetaKKRough} Let $y,z\in\Hh_1, 0\leq s< t\leq T,$~ $h_1,h_2\in \vspan(K,\Hh_1)$ and $\zeta^{s,y}_{h_1, h_2}$ be the corresponding unique mild solution to \eqref{eq:zetah1h2Rough}. \\
(1) Let Assumption \ref{assumption:SVEassumptions}i), iii) and~\ref{assumption:Kernel} with $q\ge4$ hold. 
For any $p\in[2,q/2]$
\begin{align}\label{eq:estimate_zetah1h2}
       \EE|\zeta^{s,y}_{h_1, h_2}(t)|^{p}_{\Hh_1}
&\lesssim|\overline{h}_1|_{L^{p}_T}^{p/2}|\overline{h}_2|_{L^{p}_T}^{p/2}\big(1 + \overline{K}((t-s)/2)^{p/2}\big) + \overline{h}_1((t-s)/2)^{p/2} \overline{h}_2((t-s)/2)^{p/2},
   \end{align}
   where we denote~$\overline{h}_i(t):=\abs{S(t)h_i}_{\Hh_1}^2$, for $i=1,2$ and~$t\ge0$
In the particular case $h_1=h_2=K$ we also have
    \begin{align*}
 \EE|\zeta^{s,y}_{K, K}(t)-\zeta^{s,z}_{K, K}(t)|^{2}_{\Hh_1}
\lesssim \abs{y-z}^{2\gamma_0}_{\Hh_1}\left(1+|S((t-s)/2)K|^{2}_{\Hh_1}+|S((t-s)/2)K|^4_{\Hh_1}\right).
  \end{align*}
(2) If instead Assumption~\ref{assumption:Kernel} holds with $q\ge2$ and $\sigma$ is constant, the estimate~\eqref{eq:estimate_zetah1h2} is satisfied with $p\in[1,q/2]$. Moreover we have
    \begin{align*}
 \EE|\zeta^{s,y}_{K, K}(t)-\zeta^{s,z}_{K, K}(t)|_{\Hh_1}
\lesssim \abs{y-z}^{\gamma_0}_{\Hh_1}\left(1+|S((t-s)/2)K|_{\Hh_1}+|S((t-s)/2)K|^2_{\Hh_1}\right).
  \end{align*}
  The same conclusions remain true for the process 
$\zeta^{s,y}_{\delta, h_1,h_2}$ which solves \eqref{eq:zetah1h2Rough} with $K, \lambda^{s,y}, \zeta_K^{s,y}$ replaced by $K_\delta, \lambda_\delta^{s,y}, \zeta_{\delta, K_\delta}^{s,y}$ respectively. 
\end{lemma}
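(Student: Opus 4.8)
The plan is to follow verbatim the three-step Volterra--Gr\"onwall strategy laid out after Lemma~\ref{lemma:Volterra_Gronwall}, applied to the equation \eqref{eq:zetah1h2Rough} with the kernel $\overline{K}(t)=\abs{S(t)K}_{\Hh_1}^2\in L^{q/2}_T$ (Lemma~\ref{lemma:resolvent}). First I would establish the moment bound \eqref{eq:estimate_zetah1h2}. Fixing $p\in[2,q/2]$, apply to \eqref{eq:zetah1h2Rough} the Jensen, BDG and H\"older inequalities exactly as in the proof of Lemma~\ref{lem:zetaKKWP}; using boundedness of $Db_0,D\sigma_0$ (Assumption~\ref{assumption:SVEassumptions}iii)) and the bilinearity structure one gets, with $x(t):=\EE|\zeta^{s,y}_{h_1,h_2}(t)|^p_{\Hh_1}$,
\begin{align*}
    x(t)\lesssim \EE\left(\int_s^t \overline{K}(t-r)\,\abs{\zeta^{s,y}_{h_1}(r)}^2_{\Hh_1}\abs{\zeta^{s,y}_{h_2}(r)}^2_{\Hh_1}\dr\right)^{p/2}+\int_s^t \overline{K}(t-r)^{?}\,x(r)\dr,
\end{align*}
where the linear part is handled by H\"older in the time variable with exponent $q/(q-2)$ (so that only $\overline{K}\in L^{q/2}_T$ is needed, just as in \eqref{eq:2ndTangentWP1}). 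For the source term $f(t)$, Cauchy--Schwarz in $r$ together with the bound \eqref{eq:estimate_zeta} for $\zeta^{s,y}_{h_i}$ (i.e. $\EE\abs{\zeta^{s,y}_{h_i}(r)}^{2p}_{\Hh_1}\lesssim \overline{h}_i(r-s)^{p}$) gives $f(t)\lesssim (\overline{K}\star(\overline{h}_1\overline{h}_2))(t-s)^{p/2}$, and one then splits the convolution at the midpoint $(t-s)/2$ exactly as in \eqref{eq:K*K^2bound}, using the semigroup bound and Assumption~\ref{assumption:Kernel}, to produce the right-hand side of \eqref{eq:estimate_zetah1h2}. The third step invokes Lemma~\ref{lemma:Volterra_Gronwall}: since $\overline{R}\in L^{q/2}_T$, the term $\int_0^t \overline{R}(t-r)f(r)\dr$ enjoys the same bound as $f$ up to a constant (again via Young's convolution inequality and the midpoint split), which closes the estimate. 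The case $\sigma$ constant is identical but works in $\mathscr{Z}^1_{s,T}$ with $L^1_T$-norms of $\overline{K}$, hence only $q>2$ is needed and $p$ may be taken in $[1,q/2]$.

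Next I would treat the continuity-in-initial-data estimate for $h_1=h_2=K$. Writing $\Delta\zeta(t):=\zeta^{s,y}_{K,K}(t)-\zeta^{s,z}_{K,K}(t)$ and subtracting the two copies of \eqref{eq:zetah1h2Rough}, each difference of coefficients decomposes into (i) differences of the second Fréchet derivatives $D^2b_0,D^2\sigma_0$ evaluated at $\lambda^{s,y}$ vs.\ $\lambda^{s,z}$ — here the $\gamma_0$-H\"older continuity of the second derivatives (Assumption~\ref{assumption:SVEassumptions}iii)) produces the factor $\abs{\lambda^{s,y}(r)-\lambda^{s,z}(r)}^{\gamma_0}_{\Hh_1}$, controlled in $L^p$ by $\abs{y-z}^{\gamma_0}_{\Hh_1}$ via Proposition~\ref{Prop:Feller}(1); (ii) the terms $D^2b_0(\lambda^{s,y})(\zeta^{s,y}_K,\zeta^{s,y}_K)-D^2b_0(\lambda^{s,y})(\zeta^{s,z}_K,\zeta^{s,z}_K)$ and its $\sigma$-analogue, where bilinearity turns the difference into $\zeta_K^{s,y}-\zeta_K^{s,z}$ paired with $\zeta^{s,y}_K$ or $\zeta^{s,z}_K$, and one applies \eqref{eq:regularity_zeta2} (constant $\sigma$) or \eqref{eq:regularity_zeta4} ($q\ge4$) for the former and \eqref{eq:estimate_zeta} for the latter; (iii) the linear part $D b_0(\lambda^{s,y})(\Delta\zeta)$ plus $Db_0(\lambda^{s,y})(\zeta^{s,z}_{K,K})-Db_0(\lambda^{s,z})(\zeta^{s,z}_{K,K})$, the first feeding the Gr\"onwall loop and the second again a H\"older-in-$\lambda$ term times $\zeta^{s,z}_{K,K}$ (bounded via \eqref{eq:estimate_zetah1h2}). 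Assembling these, raising to the appropriate power ($2$ when $q\ge4$, or $1$ when $\sigma$ is constant), taking expectations, bounding the resulting Riemann/stochastic integrals with Cauchy--Schwarz and the midpoint-split convolution trick, one arrives at $x(t)\le f(t)+\int_0^t\overline{K}(t-r)x(r)\dr$ with $f(t)\lesssim \abs{y-z}^{2\gamma_0}_{\Hh_1}(1+\abs{S((t-s)/2)K}^2_{\Hh_1}+\abs{S((t-s)/2)K}^4_{\Hh_1})$ and then applies Lemma~\ref{lemma:Volterra_Gronwall} once more. The reduction of general $h_1,h_2\in\vspan(K,\Hh_1)$ to $h_1=h_2$ is by the polarisation identity \eqref{eq:2ndTangentPolarization}.

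Finally, for the ``mollified'' process $\zeta^{s,y}_{\delta,h_1,h_2}$ every estimate above goes through unchanged because $\abs{S(t-r)K_\delta}_{\Hh_1}=\abs{S(\delta)S(t-r)K}_{\Hh_1}\lesssim\abs{S(t-r)K}_{\Hh_1}$ uniformly in $\delta>0$ (as already used at the end of Lemma~\ref{lem:zetaKrough}), so the same kernel $\overline{K}$ and resolvent $\overline{R}$ dominate, and the constants can be chosen independent of $\delta$. The main obstacle I anticipate is bookkeeping in the continuity estimate: one must carefully track which instances of $\zeta_K$ come with the good continuity bound \eqref{eq:regularity_zeta2}/\eqref{eq:regularity_zeta4} versus only the moment bound \eqref{eq:estimate_zeta}, and ensure that the H\"older exponent $\gamma_0$ from the second derivatives is what governs the final rate — in particular that all the $L^8$ (or $L^4$) moments of $\lambda^{s,y}-\lambda^{s,z}$ and of $\zeta_K$ invoked along the way are finite, which is exactly where the restriction $q\ge4$ (respectively $q>2$ with constant $\sigma$) is consumed, precisely as anticipated in Remark~\ref{rem:Honefourth}.
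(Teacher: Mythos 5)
Your plan is correct and follows the same Volterra--Gr\"onwall strategy as the paper: establish the linear integral inequality with kernel $\overline K$, estimate the source term via the moment bounds from Lemma~\ref{lem:zetaKrough} and the midpoint-split convolution trick \eqref{eq:K*K^2bound}, then close via Lemma~\ref{lemma:Volterra_Gronwall} with the resolvent $\overline R$. Two small points of comparison. First, for the source term the paper applies Jensen directly inside the $p$-th power \emph{before} BDG, which yields the cleaner closed form $\overline K^{p/2}\star\bigl(\overline h_1^{p/2}\overline h_2^{p/2}\bigr)(t-s)$; your ``Cauchy--Schwarz in $r$'' step actually produces $\overline K\star\bigl(\overline h_1^{p/2}\overline h_2^{p/2}\bigr)$ (the $p/2$ lands only on the $\overline h_i$'s, not on $\overline K$), which is a slightly different object than what you wrote, though still integrable and still splits at the midpoint to give a bound of the same shape, so the argument survives — just be careful that the exponent on $\overline K((t-s)/2)$ in the final display will differ. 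Second, in the continuity estimate you explicitly keep the cross term $\bigl[Db_0(\lambda^{s,y})-Db_0(\lambda^{s,z})\bigr]\zeta^{s,z}_{K,K}$, which the paper's displayed inequality \eqref{eq:Comps_zetaKK_reg} silently drops; this term is indeed present and needs handling (your instinct is right), but note that $Db$ is Lipschitz, not merely H\"older, and that bounding $\EE\lvert\zeta^{s,z}_{K,K}\rvert^m$ here forces you to choose $m=q/2$ (the endpoint allowed by \eqref{eq:estimate_zetah1h2}) so that the companion factor $\EE\lvert\lambda^{s,y}-\lambda^{s,z}\rvert^{2q/(q-4)}$ is finite — which consumes $q>4$ exactly as you anticipated. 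Everything else (polarisation, the $\sigma$-constant case in $L^1_T$, the uniform-in-$\delta$ bound via $\lvert S(\delta)S(t-r)K\rvert_{\Hh_1}\lesssim\lvert S(t-r)K\rvert_{\Hh_1}$) matches the paper.
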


\begin{remark}
An alternative estimate one gets from the proof is
\begin{align*}
    \EE|\zeta^{s,y}_{K, K}(t)|^{p}_{\Hh_1}
\lesssim 1+\overline{K}^{p/2}\star\big(\overline{h}_1^{p/2}\overline{h}_2^{p/2}\big) (t-s).
\end{align*}
    In particular, with the choice~$h_1=h_2=K$ these estimates translate into
\begin{align}\label{eq:zetaKK_estimate}
      &\EE|\zeta^{s,y}_{K, K}(t)|^{p}_{\Hh_1}
      \lesssim 1+ \overline{K}^{p/2}\star\overline{K}^{p} (t-s)
\lesssim 1+|S((t-s)/2)K|^{p/2}_{\Hh_1}+|S((t-s)/2)K|^p_{\Hh_1}.
\end{align}
\end{remark}

\begin{proof}

\textbf{Case 1) Moment bound.}
Note that for any~$h\in\vspan(K,\Hh_1)$ we have $\int_0^T \abs{S(t)h}_{\Hh_1}^q\dt <\infty$ by Assumption~\ref{assumption:Kernel}. 
Turning to the desired a priori bound, Jensen's and H\"older inequalities furnish the estimate, for $q\ge4$ and $p\in[2,q/2]$, 
    \begin{equation*}
        \begin{aligned}
            \EE|\zeta^{s,y}_{h_1,h_2}(t) |^{p}_{\Hh_1}
            &\le
            \EE\int_s^t \abs{S(t-r)K}^{p}_{\Hh_1} \abs{\zeta_{h_1}^{s,y}(r)}^{p}_{\Hh_1}\abs{\zeta_{h_2}^{s,y}(r)}^{p}_{\Hh_1}\dr
            +\EE\left(\int_{s}^t |S(t-r)K |^2_{\Hh_1} |\zeta^{s, y}_{h_1,h_2}(r)|^2\D r\right)^{p/2}\\
            &\lesssim   \int_s^t \abs{S(t-r)K}^{p}_{\Hh_1} \abs{S(r-s)h_1}^{p}_{\Hh_1}\abs{S(r-s)h_2}^{p}_{\Hh_1}\dr 
            \\&\qquad + \abs{ \overline{K}}_{L^2_T}^{p-2}\int_{s}^t  |S(t-r)K |^2_{\Hh_1}\EE|\zeta^{s, y}_{h_1,h_2}(r)|^{p}\D r.
        \end{aligned}
    \end{equation*} 
   We remark that, since $q\ge4$, $\overline{K}\in L^2_T$ and Lemma \ref{lemma:resolvent} entails that $\overline{K}$ has a resolvent $\overline{R}\in L^2_T$. Recalling the notation~$\overline{h}_i(t)=\abs{S(t)h_i}_{\Hh_1}^2$, for $i=1,2$ and~$t\ge0$, we note that~$\overline{h}_i\in L^{q/2}_T$. Lemma~\ref{lemma:Volterra_Gronwall} thus yields
\begin{align}\label{eq:zetah1h2_integral_estimate}
    \EE|\zeta^{s,y}_{h_1,h_2}(t) |^{p}_{\Hh_1}
    \leq \overline{K}^{p/2}\star\big(\overline{h}_1^{p/2}\overline{h}_2^{p/2}\big) (t-s) + \overline{R}\star\Big(\overline{K}^{p/2}\star\Big(\overline{h}_1^{p/2}\overline{h}_2^{p/2}\Big)\Big) (t-s).
\end{align}
By the Cauchy--Schwarz and Young convolution inequality (the latter being used as $\abs{f\star g}_{L^2}\le \abs{f}_{L^1}\abs{g}_{L^2}$) we get the bound
\begin{equation}\label{eq:Lq_bound_zetah1h2}
    \begin{aligned}    &\overline{R}\star\overline{K}^{p/2}\star\big(\overline{h}_1^{p/2}\overline{h}_2^{p/2}\big)(t-s)
=\int_0^{t-s}\overline{R}(t-s-r)\Big(\overline{K}^{p/2}\star\Big(\overline{h}_1^{p/2}\overline{h}_2^{p/2}\Big)\Big)(r)\dr\\&
    \leq \abs{\overline{R}}_{L^2_{t-s}}|\overline{K}^{p/2}|_{L^2_{t-s}}|\overline{h}_1^{p/2}\overline{h}_2^{p/2}|_{L^{1}_{t-s}}
    \le \abs{\overline{R}}_{L^2_{t-s}}|\overline{K}^{p/2}|_{L^2_{t-s}}|\overline{h}_1^{p/2}|_{L^{2}_{t-s}} |\overline{h}_2^{p/2}|_{L^{2}_{t-s}}.
\end{aligned}
\end{equation}
The other term is bounded as in \eqref{eq:K*K^2bound}
\begin{align*}
    &\overline{K}^{p/2}\star\big(\overline{h}_1^{p/2}\overline{h}_2^{p/2}\big) (t-s)\\
    &\lesssim \overline{K}^{p/2}((t-s)/2) \int_0^{(t-s)/2}\overline{h}_1^{p/2}(r)\overline{h}_2^{p/2}(r)\dr 
    + \overline{h}_1^{p/2}((t-s)/2)\overline{h}_2^{p/2}((t-s)/2) \int_0^{(t-s)/2} \overline{K}^{p/2}(r)\dr\\
    &\lesssim  \overline{K}^{p/2}((t-s)/2)|\overline{h}_1^{p/2}|_{L^{2}_{t-s}} |\overline{h}_2^{p/2}|_{L^{2}_{t-s}} + \overline{h}_1^{p/2}((t-s)/2)\overline{h}_2^{p/2}((t-s)/2)
\end{align*}
and this yields the estimate~\eqref{eq:estimate_zetah1h2}.

\textbf{Case 1) Regularity.} In virtue of the Lipschitz continuity of~$Db,D\sigma$ and the H\"older continuity of~$D^2b,D^2\sigma$ (see Assumption \ref{assumption:SVEassumptions}) and H\"older's inequality for expectations, we obtain via similar computations
\begin{align}\label{eq:Comps_zetaKK_reg}
    \EE|\zeta^{s,y}_{K,K}(t)-\zeta^{s,z}_{K,K}(t) |^{2}_{\Hh_1}
            &\leq 
            \EE\int_s^t \abs{S(t-r)K}^{2}_{\Hh_1} \abs{\lambda^{s,y}(r)-\lambda^{s,z}(r)}^{2\gamma_0}_{\Hh_1} \abs{\zeta_K^{s,y}(r)}^{4}_{\Hh_1}\dr\\
            &\quad + \EE\int_s^t \abs{S(t-r)K}^{2}_{\Hh_1} \abs{\zeta_K^{s,y}(r)+\zeta_K^{s,z}(r)}^{2}_{\Hh_1} \abs{\zeta_K^{s,y}(r)-\zeta_K^{s,z}(r)}^{2}_{\Hh_1}\dr \nonumber\\
            &\quad +\EE\int_{s}^t |S(t-r)K |^2_{\Hh_1} |\zeta^{s, y}_{K,K}(r)-\zeta^{s, z}_{K,K}(r)|^2\D r\nonumber\\
            &\lesssim \sup_{r\in[s,T]} \left(\EE\abs{\lambda^{s,y}(r)-\lambda^{s,z}(r)}^{4\gamma_0}_{\Hh_1} \right)^\half 
            \int_s^t \abs{S(t-r)K}^{2}_{\Hh_1}  \left(\EE\abs{\zeta_K^{s,y}(r)}^{8}_{\Hh_1}\right)^\half\dr  \nonumber\\
            &\quad +  \int_s^t  \abs{S(t-r)K}^{2}_{\Hh_1}  \left(\EE\abs{\zeta_K^{s,y}+\zeta^{s,z}_K}^4\right)^\half \left(\EE\abs{\zeta_K^{s,y}-\zeta^{s,z}_K}^4\right)^\half \dr \nonumber\\
            &\quad + \int_{s}^t |S(t-r)K |^2_{\Hh_1} \EE|\zeta^{s, y}_{K,K}(r)-\zeta^{s, z}_{K,K}(r)|^{2} \D r \nonumber
\end{align}
Estimates obtained in Proposition~\ref{Prop:Feller} and~\ref{eq:regularity_zeta4} combined with
the Volterra--Gr\"onwall inequality  (Lemma \ref{lemma:Volterra_Gronwall}) furnish
\begin{align*}
    \EE|\zeta^{s,y}_{K,K}(t)-\zeta^{s,z}_{K,K}(t) |^{2}_{\Hh_1}
    &\leq \abs{y-z}^{2\gamma_0}_{\Hh_1}\left(\overline{K}\star\overline{K}^{2} (t-s) + \overline{R}\star(\overline{K}\star\overline{K}^{2}) (t-s)\right) \\
    &\qquad+ \abs{y-z}^{2}\big(\overline{K}\star\overline{K}(t-s) + \overline{R}\star(\overline{K}\star\overline{K}) (t-s)\big)
\end{align*}
and thus estimates~\eqref{eq:K*K^2bound} and~\eqref{eq:zetaKK_estimate} prove the claim.

\textbf{Case 2)} Let Assumption \ref{assumption:Kernel} hold with any $q\ge2$ and $\sigma$ be constant. 
The disappearance of the stochastic integrals yield the integral equation 
\begin{align*}
    \zeta^{s,y}_{h_1,h_2}(t) = \int_s^t S(t-r)K D^2b_0(\lambda^{s,y}(r)) (\zeta^{s,y}_{h_1}(r),\zeta^{s, y}_{h_2}(r)) \D r + \int_s^t S(t-r)K Db_0(\lambda^{s, y}(r)) (\zeta^{s, y}_{h_1,h_2}(r)) \D r.
\end{align*}
Applications of Cauchy--Schwarz inequality yield, for any~$p\in[1,q/2]$,
\begin{align*}
    \EE&\abs{\zeta^{s,y}_{h_1,h_2}(t)}_{\Hh_1}^{p}
    \lesssim \int_s^t \abs{S(t-r)K}_{\Hh_1}^p \EE\abs{\zeta^{s,y}_{h_1}(r)\zeta^{s, y}_{h_2}(r)}_{\Hh_1}^p\dr + \EE\left(\int_s^t \abs{S(t-r)K}_{\Hh_1} \abs{\zeta^{s,y}_{h_1,h_2}(r)}_{\Hh_1}\dr\right)^{p} \\
    &\lesssim 
    \int_s^t \abs{S(t-r)K}_{\Hh_1}^p\left(\EE\abs{\zeta^{s,y}_{h_1}(r)}^{2p}_{\Hh_1}\EE\abs{\zeta^{s, y}_{h_2}(r)}_{\Hh_1}^{2p}\right)^\half\dr\\
    &\quad + \left(\int_s^t \abs{S(t-r)K}_{\Hh_1}\dr\right)^{p-1} \int_s^t \abs{S(t-r)K}_{\Hh_1} \EE\big\lvert\zeta^{s,y}_{h_1,h_2}(r)\big\lvert_{\Hh_1}^{p} \dr\\
    &\lesssim \int_s^t \abs{S(t-r)K}_{\Hh_1}^p\abs{S(r-s)h_1}_{\Hh_1}^p\abs{S(r-s)h_2}_{\Hh_1}^p\dr + \int_s^t \abs{S(t-r)K}_{\Hh_1} \EE\big\lvert\zeta^{s,y}_{h_1,h_2}(r)\big\lvert_{\Hh_1}^{p} \dr.
\end{align*}
We apply the Volterra--Gr\"onwall inequality (Lemma \ref{lemma:Volterra_Gronwall}) with the kernel $\overline{K}^\half$ and its resolvent~$\overline{R}_\half$, followed by Cauchy--Schwarz inequality
\begin{align*}
    \EE\big\lvert\zeta^{s,y}_{h_1,h_2}(t)\big\lvert_{\Hh_1}^{p}
    &\lesssim \overline{K}^{p/2}\star \Big(\overline{h}_1^{p/2}\overline{h}_2^{p/2}\Big)(t-s) + \overline{R}_\half \star\Big( \overline{K}^{p/2}\star\Big(\overline{h}_1^{p/2}\overline{h}_2^{p/2}\Big)\Big)(t-s).
\end{align*}
This is the same inequality as \eqref{eq:zetah1h2_integral_estimate} albeit with $\overline{R}_\half$ instead of $\overline{R}$, hence the same arguments yield the claim.

The estimate for $\EE|\zeta^{s,y}_{K,K}(t)-\zeta^{s,z}_{K,K}(t) |_{\Hh_1}$ is derived by the same computations as in \eqref{eq:Comps_zetaKK_reg} albeit working with the $L^1(\Omega)$ norm instead of~$L^2(\Omega)$. 

Finally, it is straightforward to verify that, due to the estimate   $|S(t-r)K_\delta|_{\Hh_1}\lesssim |S(t-r)K|_{\Hh_1},$
    the last implication of Lemma \ref{lem:zetaKrough} and the boundedness of $D^ib,D^i\sigma, i=1, 2$
    all of the above remains true for the process $\zeta^{s,y}_{\delta, K_\delta,  K_\delta}.$
\end{proof}
When $h_1, h_2\in\Hh_1,$ the processes $\zeta_{h_1}, \zeta_{h_1, h_2}$ coincide with the classical \textit{tangent processes} of the solution flow $y\mapsto \lambda^y,$ i.e. Gateaux derivatives of the mild solution with respect to initial data (in fact the solution flow is Fr\'echet differentiable with respect to initial data as shown in the next corollary). Indeed, differentiation of the mild solution map $y\mapsto\mathcal{I}(y, \cdot)$ \eqref{eq:lambdaSolutionMap} implies: 
\begin{corollary}\label{cor:regularTangentProcesses} Let $s\ge0,h_1, h_2\in\Hh_1, \zeta_{h_1}, \zeta_{h_1, h_2}$ as in \eqref{eq:zetaKEqRough}, \eqref{eq:2ndTangentPolarization}. The map $\Hh_1\ni y\longmapsto\lambda^{s,y}(t)\in L^0(\Omega;\Hh_1)$ is twice continuously Fr\'echet differentiable along any directions $h_1, h_2\in\Hh_1$, with derivatives denoted by~$D_y \lambda^{s,y}(t)$ and~$D_y^2 \lambda^{s,y}(t)$. Moreover, for any $0\leq s\leq t\leq T, y\in\Hh_1$ it holds
$$D_y\lambda^{s,y}(t)(h_1)=\zeta^{s,y}_{h_1}(t)\;,\;\;  D^2_y\lambda^{s,y}(t)(h_1, h_2)=\zeta^{s,y}_{h_1,h_2}(t)      $$
almost surely.    
\end{corollary}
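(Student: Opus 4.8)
The plan is to identify the Gateaux (in fact Fréchet) derivatives of the solution map $y\mapsto\lambda^{s,y}(t)$ with the mild solutions $\zeta^{s,y}_{h_1}$ and $\zeta^{s,y}_{h_1,h_2}$ of the first and second variation equations by a standard difference-quotient argument combined with the Volterra--Grönwall inequality (Lemma~\ref{lemma:Volterra_Gronwall}). The crucial input is Remark~\ref{rem:NemytskiiRem}: since $b_0=b\circ ev_0$ and $\sigma_0=\sigma\circ ev_0$ depend only on the finite-dimensional projection $ev_0(\lambda)$, the maps $\Hh_1\ni y\mapsto b_0(y),\sigma_0(y)$ are genuinely twice Fréchet differentiable with bounded, Lipschitz (resp. $\gamma_0$-Hölder for the second) derivatives under Assumption~\ref{assumption:SVEassumptions}(iii), so there is no Nemytskii-operator obstruction to differentiating in \emph{any} $\Hh_1$-direction.

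First I would treat the first derivative. Fix $h_1\in\Hh_1$, $\varepsilon\neq0$, and set $\eta^\varepsilon:=\varepsilon^{-1}\big(\lambda^{s,y+\varepsilon h_1}-\lambda^{s,y}\big)-\zeta^{s,y}_{h_1}$. Subtracting the mild equations \eqref{eq:generalised_SPDE} for $\lambda^{s,y+\varepsilon h_1}$ and $\lambda^{s,y}$, using the fundamental theorem of calculus $b_0(\lambda^{s,y+\varepsilon h_1}(r))-b_0(\lambda^{s,y}(r))=\int_0^1 Db_0(\lambda^{s,y}(r)+\theta(\lambda^{s,y+\varepsilon h_1}(r)-\lambda^{s,y}(r)))(\lambda^{s,y+\varepsilon h_1}(r)-\lambda^{s,y}(r))\,\D\theta$ (and likewise for $\sigma_0$), one sees that $\eta^\varepsilon$ satisfies a mild equation of the same linear type as \eqref{eq:zetaKEqRough} with an affine source term; the source is a sum of two contributions: one controlled by $\EE\abs{\eta^\varepsilon(r)}^2_{\Hh_1}$ itself (closed by Grönwall), and one of the form $\sup_{r}\abs{DF(\cdots+\theta\,\cdot)-DF(\cdots)}\cdot\abs{\zeta^{s,y}_{h_1}(r)}_{\Hh_1}$ which, by continuity of $Db_0,D\sigma_0$, the moment estimate \eqref{eq:estimate_zeta} with $h=h_1\in\Hh_1$, the continuity-in-$\varepsilon$ estimate of Proposition~\ref{Prop:Feller}(1), and dominated convergence, tends to $0$ in $L^2$ as $\varepsilon\to0$. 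Since the kernel $\overline K(t)=\abs{S(t)K}^2_{\Hh_1}$ has an $L^1_T$ resolvent (Lemma~\ref{lemma:resolvent}), the Volterra--Grönwall inequality (Lemma~\ref{lemma:Volterra_Gronwall}) upgrades this to $\sup_{t\le T}\EE\abs{\eta^\varepsilon(t)}^2_{\Hh_1}\to0$, giving $D_y\lambda^{s,y}(t)(h_1)=\zeta^{s,y}_{h_1}(t)$ in $L^2$, a fortiori in $L^0(\Omega;\Hh_1)$. Linearity of $h_1\mapsto\zeta^{s,y}_{h_1}(t)$ is Lemma~\ref{lem:zetaKWP}, and continuity in $y$ follows from \eqref{eq:regularity_zeta2}/\eqref{eq:regularity_zeta4} (or the analogous $\Hh_1$-direction estimate proved the same way), so the derivative is Fréchet, not merely Gateaux.

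Next I would iterate for the second derivative: differentiate the first variation equation \eqref{eq:zetaKEqRough} in $y$. Setting $\eta^\varepsilon_2:=\varepsilon^{-1}\big(\zeta^{s,y+\varepsilon h_2}_{h_1}-\zeta^{s,y}_{h_1}\big)-\zeta^{s,y}_{h_1,h_2}$, one subtracts the equations, expands $Db_0(\lambda^{s,y+\varepsilon h_2}(r))-Db_0(\lambda^{s,y}(r))$ and $D\sigma_0(\lambda^{s,y+\varepsilon h_2}(r))-D\sigma_0(\lambda^{s,y}(r))$ via the already-established first derivative ($\varepsilon^{-1}(\lambda^{s,y+\varepsilon h_2}-\lambda^{s,y})\to\zeta^{s,y}_{h_2}$), and checks that $\eta^\varepsilon_2$ obeys a linear mild equation of the type \eqref{eq:zetah1h2Rough} with a source converging to $0$; here one uses boundedness and Hölder continuity of $D^2b_0,D^2\sigma_0$ and the moment bounds \eqref{eq:estimate_zeta}, \eqref{eq:estimate_zetah1h2} together with $h_1,h_2\in\Hh_1$ (so all the relevant $\overline{h}_i$ belong to $L^q_T$ trivially, since $S(\cdot)h_i$ is bounded in $\Hh_1$ by Lemma~\ref{lemma:semigroup_pties}), and again the Volterra--Grönwall inequality. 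This yields $D^2_y\lambda^{s,y}(t)(h_1,h_2)=\zeta^{s,y}_{h_1,h_2}(t)$; symmetry and bilinearity come from \eqref{eq:2ndTangentPolarization} and Lemma~\ref{lem:zetaKKWP}, and continuity in $y$ from the $L^2$-Lipschitz estimate in Lemma~\ref{lem:zetaKKRough}(1) (proved for $h_1=h_2=K$, but the argument is identical, and easier, for fixed regular directions). The one point demanding care—and the main obstacle—is making the limiting source terms vanish uniformly in time with an $L^1_T$- or $L^2_T$-integrable bound, so that dominated convergence and the Volterra--Grönwall step apply: this is exactly where one must exploit the regularising bound $\abs{S(t-r)K}_{\Hh_1}\in L^q_T$ with $q>2$ (or $q\ge4$ for the second derivative in the multiplicative case) rather than a naive $L^\infty$ bound, mirroring the role this plays throughout Section~\ref{subsec:SingularTangents}. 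Everything else is the routine difference-quotient bookkeeping already encountered in the proofs of Lemmas~\ref{lem:zetaKWP}--\ref{lem:zetaKKRough}.
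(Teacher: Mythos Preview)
Your proposal is correct and complete as a route to the result, but it is genuinely different from the paper's own proof. The paper does not carry out the difference-quotient argument at all: instead it observes that, thanks to Remark~\ref{rem:NemytskiiRem}, the solution map $(y,\lambda)\mapsto\Ii(y,\lambda)$ from \eqref{eq:lambdaSolutionMap} is twice continuously Fr\'echet differentiable as a map $\Hh_1\times\mathscr{Z}^2_{s,T}(\Hh_1)\to\mathscr{Z}^2_{s,T}(\Hh_1)$, verifies the hypotheses of an abstract parameter-dependent contraction theorem (Cerrai, \emph{Second Order PDEs in Hilbert Spaces}, Appendix C, Proposition C.0.5; alternatively \cite[Theorem 3.9]{gawarecki2010stochastic}), and concludes in one line that the unique fixed point $y\mapsto\lambda^{s,y}$ inherits $\Cc^2$ dependence with derivatives solving the formally differentiated fixed-point equations, i.e.\ \eqref{eq:zetaKEqRough} and \eqref{eq:zetah1h2Rough}.

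Your hands-on approach has the advantage of being self-contained and of reusing exactly the Volterra--Gr\"onwall machinery already developed in Lemmas~\ref{lem:zetaKWP}--\ref{lem:zetaKKRough}; in particular it makes explicit that for regular directions $h_1,h_2\in\Hh_1$ the boundedness of $t\mapsto\abs{S(t)h_i}_{\Hh_1}$ removes any need for the restriction $q>4$ in the second variation, which the abstract theorem also delivers but less transparently. The paper's approach, by contrast, is much shorter and avoids rewriting a standard argument, at the cost of requiring the reader to check offline that the singular kernel does not obstruct the hypotheses of Cerrai's result when the fixed point is sought in $\mathscr{Z}^2_{s,T}(\Hh_1)$ rather than $\mathscr{H}^p_T(\Hh_1)$.
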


\begin{proof} Even though $\lambda$ solves an SPDE with singular coefficients, the proof of this result follows from standard fixed point results depending on parameters (e.g. \cite[Theorem 3.9]{gawarecki2010stochastic} or \cite[Chapter]{cerrai2001second}. To be a bit more precise, a close inspection of Assumptions \ref{assumption:SVEassumptions},\ref{assumption:Kernel}, along with the continuous Fr\'echet differentiability of the nonlinearities $b_0=b\circ ev_0, \sigma_0=\sigma\circ ev_0,$ asserts that the map $\Hh_1\times \mathscr{Z}^2_{s,T}(\Hh_1)\ni (y, \lambda)\mapsto \mathcal{I}(y, \lambda)\in \mathscr{Z}^2_{s,T}(\Hh_1)$ satisfies
Hypotheses C.1., C.2. from \cite[Appendix C]{cerrai2001second} and the argument is complete by invoking Proposition C.0.5. from the last reference. The fact that $\mathcal{I}$ maps the space $\mathscr{Z}^2_{s,T}(\Hh_1)$ \eqref{eq:Z2Space} to itself follows from standard modifications of Theorem \ref{thm:SPDE_wellposedness}.
\end{proof}

\subsection{Singular directional differentiability of $y\mapsto\lambda^y$}\label{subsec:singularDifferentiablity}

In this section, we shall show that, for any $t\in[s,T],$ the map $\Hh_1\ni y\mapsto\lambda^{s,y}(t)\in L^2(\Omega;\Hh_1)$ belongs to the class $\mathcal{D}^1_{\!\mathscr{K}}(\Hh_1;\Hh_1)\cap \mathcal{D}^2_{\vspan(K,\Hh_1)}(\Hh_1;\Hh_1)$ in the sense of Definition \ref{dfn:Derivative}. For any $h\in\mathscr{K}$, the singular directional derivatives $ \mathcal{D}_y\lambda^{s,y}(t)(h), \mathcal{D}_y\lambda^{s,y}(t)(K, K)$ are well-defined and coincide with the processes $\zeta^{s,y}_h(t), \zeta^{s,y}_{K,K}(t)$ as introduced in the previous section.

Let us here define $\overline{\Delta K}_\delta(t):=\abs{(S(\delta)-I)S(t)K}^2_{\Hh_1} =\abs{S(t)(K_\delta-K)}^2_{\Hh_1}$.
\begin{lemma}\label{lem:SingularDifferentiability} For any $ y\in\Hh_1, 0\leq s< t\leq T$ and $q>2$ as in Assumption \ref{assumption:Kernel} the following hold
\begin{enumerate}
    \item For any $p\geq 1, 
    \delta>0, h\in\mathscr{K}$ satisfying \eqref{eq:zetaDirectionIntegrability} and $h_\delta:=S(\delta)h,$ there exists a constant $C_T>0$ such that 
    \begin{align}\label{eq:zetaKdiffBnd}
        \EE|\zeta^{s,y}_{h_\delta}(t)-\zeta^{s,y}_{h}(t)|^p_{\Hh_1}\leq C_T |(S(\delta)-I)S(t-s)h|^p_{\Hh_1}. 
    \end{align}
    \item If $q\geq 4,$ there exists a constant $C_T^\delta>0$ with $\lim_{\delta\to 0}C_T^\delta=0$ such that     
    \begin{align}\label{eq:zetaKKdiffBnd_q4}
        \EE|\zeta^{s,y}_{K_\delta, K_\delta}(t)-\zeta^{s,y}_{K, K}(t)|^2_{\Hh_1}\lesssim C^\delta_T  \big(1 
        + \overline{K}((t-s)/2)\big) + \overline{\Delta K}_\delta((t-s)/2) \overline{K}((t-s)/2).
    \end{align} 
    If instead $q\ge2$ and $\sigma$ is constant then
    \begin{align}\label{eq:zetaKKdiffBnd_q2}
        \EE|\zeta^{s,y}_{K_\delta, K_\delta}(t)-\zeta^{s,y}_{K, K}(t)|_{\Hh_1}\lesssim C^\delta_T  \big(1 
        + \overline{K}((t-s)/2)^\half\big) + \overline{\Delta K}_\delta((t-s)/2)^\half \overline{K}((t-s)/2)^\half.
    \end{align} 
\end{enumerate}
Hence, for all $t\in(s, T],$ the limits
$$  \lim_{\delta\to 0}D_y \lambda^{s,y}(t)(h_\delta)=  \zeta_h^{s,y}(t)\;,\;\; \lim_{\delta\to 0}D^2_y \lambda^{s,y}(t)(K_\delta, K_\delta)=  \zeta_{K,K}^{s,y}(t)    $$
hold in probability, and in $L^2(\Omega),$ 
in the topology of $\Hh_1.$ In particular, $$\lambda^{s,\cdot}(t)\in \mathcal{D}^1_{\!\mathscr{K}}(\Hh_1;\Hh_1)\cap \mathcal{D}^2_{\vspan(K,\Hh_1)}(\Hh_1;\Hh_1)$$ per Definition \ref{dfn:Derivative}
and, for all $h\in\mathscr{K}$ \eqref{eq:SingularDirections}, $  \mathcal{D}_y\lambda^{s,y}(t)(h)= \zeta_h^{s,y}(t)$ and $\mathcal{D}^2_y\lambda^{s,y}(t)(K, K)= \zeta_{K,K}^{s,y}(t).$  
\end{lemma}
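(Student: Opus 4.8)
The plan is to reduce everything to linearity (resp.\ bilinearity) of the variation equations together with the moment bounds of Lemmas~\ref{lem:zetaKWP}, \ref{lem:zetaKrough}, \ref{lem:zetaKKRough} and the Volterra--Gr\"onwall inequality (Lemma~\ref{lemma:Volterra_Gronwall}). For~\eqref{eq:zetaKdiffBnd} I would first observe that for $h\in\mathscr{K}$ satisfying~\eqref{eq:zetaDirectionIntegrability}, the direction $h_\delta-h=(S(\delta)-I)h$ again lies in $\mathscr{K}$ and satisfies~\eqref{eq:zetaDirectionIntegrability} (since $|S(r-s)(h_\delta-h)|_{\Hh_1}\lesssim|S(r-s)h|_{\Hh_1}$ by Lemma~\ref{lemma:semigroup_pties}), so that $\zeta^{s,y}_{h_\delta-h}$ is well defined. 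Linearity of $h\mapsto\zeta^{s,y}_h(t)$ (Lemma~\ref{lem:zetaKWP}) gives $\zeta^{s,y}_{h_\delta}(t)-\zeta^{s,y}_h(t)=\zeta^{s,y}_{h_\delta-h}(t)$, and feeding $h_\delta-h$ into the moment estimate~\eqref{eq:estimate_zeta} (using that $S(\delta)$ and $S(t-s)$ commute) yields $\EE|\zeta^{s,y}_{h_\delta}(t)-\zeta^{s,y}_h(t)|^p_{\Hh_1}\le C_T|(S(\delta)-I)S(t-s)h|^p_{\Hh_1}$, which tends to $0$ since $S(t-s)h\in\Hh_1$ and $S(\delta)\to I$ strongly on $\Hh_1$ (Remark~\ref{rem:H1Semigroup}).

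For the second-order bounds I would write $\Delta_\delta:=\zeta^{s,y}_{K_\delta,K_\delta}-\zeta^{s,y}_{K,K}$, where both processes solve~\eqref{eq:zetah1h2Rough} with kernel $K$ in the integrals and first-tangent inputs $\zeta^{s,y}_{K_\delta}$, resp.\ $\zeta^{s,y}_K$. Bilinearity of $D^2b_0,D^2\sigma_0$ turns the difference of the quadratic source terms into $D^2b_0(\lambda^{s,y})(\zeta^{s,y}_{K_\delta-K},\zeta^{s,y}_{K_\delta})+D^2b_0(\lambda^{s,y})(\zeta^{s,y}_K,\zeta^{s,y}_{K_\delta-K})$ (and similarly for $\sigma_0$), so $\Delta_\delta$ satisfies a linear equation of the type~\eqref{eq:zetah1h2Rough} driven by these differences. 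Applying It\^o isometry/BDG, Cauchy--Schwarz and boundedness of $Db,D\sigma,D^2b,D^2\sigma$, the source term contributes (after integration in $t$) something of order $\int_s^t|S(t-r)K|^2_{\Hh_1}\,(\EE|\zeta^{s,y}_{K_\delta-K}(r)|^4_{\Hh_1})^{1/2}\big((\EE|\zeta^{s,y}_{K_\delta}(r)|^4_{\Hh_1})^{1/2}+(\EE|\zeta^{s,y}_K(r)|^4_{\Hh_1})^{1/2}\big)\dr$; by the first part (with $p=4$) and~\eqref{eq:estimate_zeta} this is $\lesssim\overline{K}\star(\overline{\Delta K}_\delta\,\overline{K})(t-s)$. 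I would then invoke the Volterra--Gr\"onwall inequality with kernel $\overline{K}$ (resolvent $\overline{R}\in L^2_T$ by Lemma~\ref{lemma:resolvent}) and split the convolutions at $(t-s)/2$ exactly as in~\eqref{eq:K*K^2bound}--\eqref{eq:Lq_bound_zetah1h2}, using $\overline{K}(r)\lesssim\overline{K}((t-s)/2)$ and $\overline{\Delta K}_\delta(r)\lesssim\overline{\Delta K}_\delta((t-s)/2)$ for $r\ge(t-s)/2$, to arrive at~\eqref{eq:zetaKKdiffBnd_q4} with $C^\delta_T$ proportional to $|\overline{\Delta K}_\delta|_{L^2_T}$. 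That $C^\delta_T\to0$ follows from $\overline{\Delta K}_\delta(r)\to0$ pointwise (strong continuity on $\Hh_1$), the uniform domination $\overline{\Delta K}_\delta\lesssim\overline{K}$ on $\delta\in(0,1)$ and $\overline{K}^2\in L^1_T$ (which is where $q\ge4$ enters), via dominated convergence. In the additive-noise case the stochastic integrals disappear, one works with $L^1(\Omega)$-norms and the kernel $\overline{K}^{1/2}$, and the argument goes through with $q>2$, giving~\eqref{eq:zetaKKdiffBnd_q2}.

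Finally, since $h_\delta,K_\delta\in\Hh_1$, Corollary~\ref{cor:regularTangentProcesses} identifies $D_y\lambda^{s,y}(t)(h_\delta)=\zeta^{s,y}_{h_\delta}(t)$ and $D^2_y\lambda^{s,y}(t)(K_\delta,K_\delta)=\zeta^{s,y}_{K_\delta,K_\delta}(t)$; taking $p=2$ in the bounds above and letting $\delta\to0$ gives convergence in $L^2(\Omega;\Hh_1)$, hence in probability, to $\zeta^{s,y}_h(t)$ and $\zeta^{s,y}_{K,K}(t)$. By Definition~\ref{dfn:Derivative} this is exactly the statement $\lambda^{s,\cdot}(t)\in\mathcal{D}^1_{\mathscr{K}}(\Hh_1;\Hh_1)\cap\mathcal{D}^2_{\vspan(K,\Hh_1)}(\Hh_1;\Hh_1)$ with $\mathcal{D}_y\lambda^{s,y}(t)(h)=\zeta^{s,y}_h(t)$ and $\mathcal{D}^2_y\lambda^{s,y}(t)(K,K)=\zeta^{s,y}_{K,K}(t)$; the general mixed direction $h_1,h_2\in\vspan(K,\Hh_1)$ is recovered from bilinearity and the polarisation identity~\eqref{eq:2ndTangentPolarization}.

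The main obstacle is the second-order estimate: one must isolate a vanishing prefactor $C^\delta_T$ while preserving the sharp spatial factor $\overline{\Delta K}_\delta((t-s)/2)\,\overline{K}((t-s)/2)$, which dictates the two-scale convolution splitting and forces $\overline K,\overline{\Delta K}_\delta\in L^2_T$, i.e.\ the structural restriction $q\ge4$ (needed only when $\sigma$ is not constant, as the problematic stochastic integral~\eqref{eq:Honefourth_pbterm} then vanishes). The remaining computations are a routine repetition of those already carried out for $\zeta_h$ and $\zeta_{K,K}$ in Lemmas~\ref{lem:zetaKrough} and~\ref{lem:zetaKKRough}.
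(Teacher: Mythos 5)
Your proof is correct, and for part (1) it is essentially the paper's argument: reduce $\zeta^{s,y}_{h_\delta}-\zeta^{s,y}_h$ to $\zeta^{s,y}_{h_\delta-h}$ by linearity and apply the moment bound \eqref{eq:estimate_zeta}.

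For part (2) you take a genuinely different route. The paper's proof exploits bilinearity one level higher than you do: it observes that the map $(h_1,h_2)\mapsto\zeta^{s,y}_{h_1,h_2}$ is itself bilinear (Lemma~\ref{lem:zetaKKWP}), so that
\[
\zeta^{s,y}_{K_\delta,K_\delta}(t)-\zeta^{s,y}_{K,K}(t)=\zeta^{s,y}_{K_\delta-K,\,K_\delta+K}(t),
\]
and then applies the ready-made moment estimate \eqref{eq:estimate_zetah1h2} from Lemma~\ref{lem:zetaKKRough} with $h_1=K_\delta-K$, $h_2=K_\delta+K$ (both in $\vspan(K,\Hh_1)$). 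The prefactor $C^\delta_T\propto|\overline{\Delta K}_\delta|_{L^2_T}$ then drops out automatically and vanishes by dominated convergence. You instead use bilinearity of $D^2b_0,D^2\sigma_0$ to difference the two equations for $\zeta_{K_\delta,K_\delta}$ and $\zeta_{K,K}$, obtain a new linear Volterra integral inequality for $\Delta_\delta$, and re-run the Volterra--Gr\"onwall argument together with the two-scale convolution splitting. This is sound (I checked that the source term bounds, the step $\overline{R}\star\overline{K}\star(\overline{\Delta K}_\delta\overline{K})\le |\overline{R}|_{L^2}|\overline{K}|_{L^2}^2|\overline{\Delta K}_\delta|_{L^2}$, and the $(t-s)/2$-split all go through and recover \eqref{eq:zetaKKdiffBnd_q4}), and both routes bottom out in the same requirement that $\overline{K}\in L^2_T$, i.e.\ $q\ge4$. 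The difference is purely in economy: the paper's algebraic identity saves the Gr\"onwall repetition by reusing Lemma~\ref{lem:zetaKKRough} wholesale, whereas yours effectively re-derives a special case of that lemma. Your remark that the mixed directions are recovered by polarisation is the same observation the paper uses up front; applying it at the outset rather than at the end is what shortens their proof.
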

\begin{proof}\
\begin{enumerate}
    \item In view of the linearity of $h\mapsto \zeta^{s,y}_h$ and the estimate \eqref{eq:estimate_zeta} we get 
    \begin{align*}
        \EE|\zeta^{s,y}_{h_\delta}(t)-\zeta^{s,y}_{h}(t)|^p_{\Hh_1} = \EE|\zeta^{s,y}_{h_\delta-h}(t)|^p_{\Hh_1} 
        \lesssim \abs{S(t-s)(h_\delta-h)}^p_{\Hh_1},
    \end{align*}
    which yields the first claim.
    \item  Let us start with the case $q\ge4$. Invoking the bilinearity of $(h_1,h_2)\mapsto \zeta^{s,y}_{h_1,h_2}$ as well as the estimate \eqref{eq:estimate_zetah1h2} with $p=4$ we obtain
    \begin{align*}
        &\EE\abs{\zeta^{s,y}_{K_\delta, K_\delta}(t)-\zeta^{s,y}_{K, K}(t)}^2_{\Hh_1} 
        = \EE\abs{\zeta^{s,y}_{K_\delta-K, K_\delta+K}(t)}^2_{\Hh_1} \\
        &\quad \lesssim |\overline{\Delta K}_\delta|_{L^{2}_{t-s}} \big(|\overline{K}|_{L^{2}}+ |\overline{K_\delta}|_{L^{2}}\big)  \Big(1 
        + \overline{K}((t-s)/2)\Big)\\& + \overline{\Delta K}_\delta((t-s)/2) \big(\overline{K_\delta}((t-s)/2)+\overline{K}((t-s)/2)\big).
    \end{align*}
    By continuity of the semigroup, $\overline{\Delta K}_\delta^2= |(S(\delta)-I)S(\cdot)K|^2_{\Hh_1} $ goes to zero pointwise on $[0,T]$, as $\delta\to0$. Thus, by dominated convergence the integral $|\overline{\Delta K}_\delta|_{L^{2}_{t-s}}$ converges to $0$ as $\delta\to 0.$ For a constant $C_T^\delta$ that vanishes as $\delta\to 0$ and up to unimportant constants it follows that 
    \begin{align*}
        &\abs{\zeta^{s,y}_{K_\delta, K_\delta}(t)-\zeta^{s,y}_{K, K}(t)}^2_{\Hh_1} 
        \lesssim C^\delta_T  \big(1 
        + \overline{K}((t-s)/2)\big) + \overline{\Delta K}_\delta((t-s)/2) \overline{K}((t-s)/2).
    \end{align*}
    The case of additive noise ($\sigma$ constant) follows the same lines but with the $L^1(\Omega)$ norm instead. 
\end{enumerate}
Finally, since $K_\delta\in\Hh_1$ for all $\delta>0$ (by Assumption \ref{assumption:Kernel}) Corollary \ref{cor:regularTangentProcesses} asserts that
    $D_y\lambda^{s,y}(t)(h_\delta)=\zeta_{h_\delta}^{s,y}(t), D^2_y\lambda^{s,y}(t)(K_\delta, K_\delta)=\zeta_{K_\delta, K_\delta}^{s,y}(t).$
    The convergence to the singular derivatives unfolds, concluding the proof.
\end{proof}

\subsection{Convergence of the mollified processes}\label{subsec:ConvergenceDeltaTo0}
We recall that the mollified ~$\lambda_\delta^{s,y}$, $\zeta_{\delta,h}^{s,y}$, $\zeta_{\delta,h_1,h_2}^{s,y}$ are solutions to the integral equations~\eqref{eq:generalised_SPDE}, \eqref{eq:zetaKEqRough} and~\eqref{eq:zetah1h2Rough} respectively, where the kernel~$K$ is replaced by~$K_\delta=S(\delta)K$. The auxiliary results in this section will be used in the following section, for the proof of our main result Theorem \ref{thm:backward_equation_singular}.
\begin{lemma}[Convergence of $\lambda_\delta$]\label{lem:lambdaConvergence}
Let $p\geq 1$, $0\leq s< T$ and~$\delta>0$. Under Assumptions \ref{assumption:SVEassumptions} and \ref{assumption:Kernel} with $q\geq 2$ we have
\begin{align}\label{eq:lambdaConvergence}
    \sup_{r\in [s, T]}\sup_{t\in[r,T]} \EE\abs{\lambda^{r,y}(t)-\lambda_\delta^{r,y}(t)}_{\Hh_1}^{p} \lesssim \delta^{\frac{p(q-2)}{2q}}.
\end{align}
\end{lemma}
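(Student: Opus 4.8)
The plan is to compare the two mild equations
\begin{equation*}
\lambda^{r,y}(t) = S(t-r)y + \int_r^t S(t-u)K b_0(\lambda^{r,y}(u))\du + \int_r^t S(t-u)K\sigma_0(\lambda^{r,y}(u))\dW_u
\end{equation*}
and the mollified one with $K$ replaced by $K_\delta = S(\delta)K$, subtract them, and run a Volterra--Gr\"onwall estimate. Writing $E_\delta(t):=\EE\abs{\lambda^{r,y}(t)-\lambda^{r,y}_\delta(t)}^p_{\Hh_1}$, splitting each of the drift and diffusion differences into a ``coefficient mismatch'' piece (terms involving $(K-K_\delta)$ evaluated at the \emph{same} process $\lambda^{r,y}$) and a ``Lipschitz'' piece (terms with $K_\delta$ but the difference of the two processes), and applying Jensen, BDG, Lipschitz continuity of $b_0,\sigma_0$, the linear growth of the coefficients, and the moment bound \eqref{eq:lambdaestimate}, yields an inequality of the form
\begin{equation*}
E_\delta(t) \lesssim f_\delta(t) + \int_r^t \overline{K}(t-u)^{p/2\vee1}\, E_\delta(u)\,\du,
\end{equation*}
where the source term $f_\delta(t)$ collects the mismatch contributions and $\overline{K}(u)=\abs{S(u)K}^2_{\Hh_1}$ has resolvent $\overline{R}\in L^{q/2}_T$ by Lemma~\ref{lemma:resolvent}. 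Then Lemma~\ref{lemma:Volterra_Gronwall} gives $E_\delta(t)\lesssim f_\delta(t) + \overline{R}\star f_\delta(t)$, and since both $\overline{K}$ and $\overline{R}$ lie in $L^{q/2}_T$, it remains to show $\sup_{t}f_\delta(t)\lesssim \delta^{p(q-2)/(2q)}$ (with a uniform-in-$r$ constant), which will propagate through the convolution.

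The crux is therefore estimating the source term. The relevant mismatch integrals are, for the drift,
\begin{equation*}
\EE\abs{\int_r^t \big(S(t-u)K - S(t-u)K_\delta\big) b_0(\lambda^{r,y}(u))\du}^p_{\Hh_1}
\end{equation*}
and the analogous stochastic convolution with $\sigma_0$. Using the linear growth of $b_0,\sigma_0$ together with \eqref{eq:lambdaestimate}, followed by H\"older in time with exponent $q/2$ (so that the remaining exponent on the kernel term is $q$), both reduce to controlling
\begin{equation*}
\int_r^t \abs{S(t-u)(K-K_\delta)}^q_{\Hh_1}\du = \int_0^{t-r}\abs{S(u)(K-K_\delta)}^q_{\Hh_1}\du = \int_0^{t-r}\abs{S(u)K - S(u+\delta)K}^q_{\Hh_1}\du,
\end{equation*}
and a crude time-shift bound together with the Hölder-in-time trick turns this into a quantity of order $\delta^{q-2}$ times $\int_0^{T}\abs{S(u)K}^q_{\Hh_1}\du<\infty$ from \eqref{eq:cond_K1}; alternatively one directly invokes the Hölder-type estimate \eqref{eq:cond_K2} which already controls $\int_0^T\abs{S(u)K-S(u+\delta)K}^2_{\Hh_1}\du\lesssim \delta^{1-2/q}$, raised to suitable powers after H\"older in time. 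Either route produces a source bound of order $\delta^{p(q-2)/(2q)}$. The proof is set up first for large $p$ (namely $p>2q/(q-2)$, as in Theorem~\ref{thm:SPDE_wellposedness}, so that the BDG-plus-H\"older manipulations close), and the case of general $p\ge1$ follows by Jensen's inequality.

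The main obstacle I anticipate is bookkeeping: tracking that all constants are uniform in the base point $r\in[s,T]$ and in $t\in[r,T]$ (which follows because every estimate only sees $t-r\le T$ and the moment bound \eqref{eq:lambdaestimate} / \eqref{eq:initial_regularity} is uniform in the initial time), and making sure the exponent arithmetic in the repeated H\"older applications yields exactly $\delta^{p(q-2)/(2q)}$ rather than a worse power — in particular that the $\delta$-power is not degraded when passing through $\overline{R}\star f_\delta$. These are routine but need care; no genuinely new idea beyond the Volterra--Gr\"onwall scheme already used repeatedly in this section is required.
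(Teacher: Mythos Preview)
Your proposal is essentially correct and follows the same strategy as the paper: decompose into ``kernel mismatch'' and ``Lipschitz'' pieces, bound the mismatch via \eqref{eq:cond_K2} to get the source of order $\delta^{p(q-2)/(2q)}$, close with a Gr\"onwall argument at large $p\ge 2q/(q-2)$, and descend via Jensen. The paper's execution is slightly cleaner in two places: it uses the \emph{boundedness} of $b_0,\sigma_0$ from Assumption~\ref{assumption:SVEassumptions}(iii) (rather than linear growth plus moments) so that the mismatch term reduces directly to $\big(\int_r^t|S(t-\theta)(K-K_\delta)|^2_{\Hh_1}\,\D\theta\big)^{p/2}$, and after the H\"older step in the Lipschitz term the kernel factor $\big(\int|S(\cdot)K_\delta|^q_{\Hh_1}\big)^{p/q}$ separates completely from $\int_r^t E_\delta(\theta)\,\D\theta$, so an ordinary Gr\"onwall inequality suffices and the Volterra--Gr\"onwall route (which would impose $\overline{K}^{p/2}\in L^1$, i.e.\ $p\le q$, in tension with your choice $p>2q/(q-2)$ when $q\le 4$) is not needed.
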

\begin{proof}  Since $\lambda^{r,y}, \lambda_\delta^{r,y}$ have the same initial conditions we can write 
\begin{equation}\label{eq:lambda-lambdadelta}
    \begin{aligned}
        \lambda^{r,y}(t)-\lambda_\delta^{r,y}(t)&=\int_r^t S(t-\theta)(K-K_\delta)b_0\big(\lambda^{r,y}(\theta)\big)\D\theta\\
        &\quad+\int_r^t S(t-\theta)K_\delta\big[ b_0\big(\lambda^{r,y}(\theta)\big)-b_0\big(\lambda_\delta^{r,y}(\theta)\big)\big]\D\theta\\&
        \quad+\int_r^t S(t-\theta)(K-K_\delta)\sigma_0\big(\lambda^{r,y}(\theta)\big)\dW_\theta\\&
        \quad+\int_r^t S(t-\theta)K_\delta\big[ \sigma_0\big(\lambda^{r,y}(\theta)\big)-\sigma_0\big(\lambda_\delta^{r,y}(\theta)\big)\big]\dW_\theta.
    \end{aligned}
\end{equation}
We detail only the estimates for stochastic integral terms since the same method applies to the other terms. By BDG inequality and the boundedness of~$\sigma_0$ we have
\begin{align*}
    \EE\abs{\int_r^t S(t-\theta)(K-K_\delta)\sigma_0\big(\lambda^{r,y}(\theta)\big)\D W_\theta}_{\Hh_1}^p \lesssim \left(\int_r^t \abs{S(t-\theta)(K-K_\delta)}_{\Hh_1}^2 \D \theta\right)^{p/2}.
\end{align*}
On the one hand, the fundamental theorem of calculus yields 
\begin{align*}
    &\int_0^t \abs{S(t-\theta)(K-K_\delta)}_{\Hh}^2 \D \theta
    = \int_0^t \abs{S(t-\theta)-S(t-\theta+\delta)K}_{\Hh}^2 \D \theta
    = \int_0^t \abs{\int_0^\delta \partial_x S(t-\theta+s)K\ds}_{\Hh}^2 \D \theta\\
    &\le \delta \int_0^\delta \int_0^t  \abs{\partial_x S(t-\theta+s)K}_{\Hh}^2\D \theta \ds  
    \le \delta^2\sup_{s\in[0,\delta]} \int_0^{t+s}  \abs{\partial_xS(t-\theta+s)K}_{\Hh}^2\D \theta 
    \le \delta^2 \int_0^T \abs{\partial_x S(\theta)K}^2_{\Hh}\D \theta,
\end{align*}
which is finite by Assumption \ref{assumption:Kernel}. 
The other integrability condition, \eqref{eq:cond_K2} from Assumption~\ref{assumption:Kernel} entails
\begin{align*}
    \int_0^t \abs{\partial_x S(t-\theta)(K-K_\delta)}_{\Hh}^2 \D \theta 
    &= \int_0^t \abs{\partial_x(S(\theta)-S(\theta+\delta))K}_{\Hh}^2 
    \le C_T \delta^{\frac{q-2}{q}}.
\end{align*}
Suppose now that $p\ge\frac{2q}{q-2}$. Regarding the second stochastic integral in~\eqref{eq:lambda-lambdadelta}, the Lipschitz continuity and H\"older's inequality yield
\begin{align*}
    &\EE\abs{\int_r^t S(t-\theta)K_\delta\big[ \sigma_0\big(\lambda^{r,y}(\theta)\big)-\sigma_0\big(\lambda_\delta^{r,y}(\theta)\big)\big]\dW_\theta}^p_{\Hh_1}\\
    &\le \EE\left( \int_r^t \abs{S(t-\theta)K_\delta}_{\Hh_1}^2 \abs{\lambda^{r,y}(\theta)-\lambda_\delta^{r,y}(\theta)}_{\Hh_1}^2 \D \theta\right)^{p/2} \\
    &\le \left(\int_r^t \abs{S(t-\theta)K_\delta}_{\Hh_1}^q \D \theta\right)^{p/q} \EE\left(\int_r^t \abs{\lambda^{r,y}(\theta)-\lambda_\delta^{r,y}(\theta)}_{\Hh_1}^{\frac{2q}{q-2}} \D \theta\right)^{\frac{p}{2}\frac{q-2}{q}} \\
    &\le C_T^{p/q}\int_r^t \EE\abs{\lambda^{r,y}(\theta)-\lambda_\delta^{r,y}(\theta)}_{\Hh_1}^{p} \D \theta,
\end{align*}
where $C_T$ is finite by assumption. It thus suffices to invoke Grönwall's lemma to conclude the proof for $p\ge\frac{2q}{q-2}$. If $p'< \frac{2q}{q-2}$ then
\begin{align*}
    \EE\abs{\lambda^{r,y}(t)-\lambda_\delta^{r,y}(t)}_{\Hh_1}^{p'} 
    \le
    \left(\EE\abs{\lambda^{r,y}(t)-\lambda_\delta^{r,y}(t)}_{\Hh_1}^{p}\right)^{p'/p} \lesssim \delta^{\frac{p'(q-2)}{2q}},
\end{align*}
and the former case yields the claim. 
\end{proof}

We continue with an integral convergence result, where we emphasise that integration takes place with respect to $r$, the starting time of the process~$\zeta^{r,y}_K$.
\begin{lemma}[Convergence of the tangent processes]\label{lem:zetadeltaKL1limit}
Let Assumptions \ref{assumption:SVEassumptions} and \ref{assumption:Kernel} hold.
For $0\leq s< t\leq T$ it holds 
    \begin{align*}
    \lim_{\delta\to0} \int_s^t \EE|\zeta^{r,y}_{ K}(t)-\zeta^{r,y}_{ \delta, K}(t)|_{\Hh_1}^2 \dr =0.
\end{align*}
The same conclusion holds when $K$ is replaced by $K_\delta.$ Furthermore, the same conclusion holds when $K$ (respectively $K_\delta$) is replaced by a smooth direction $h\in\Hh_1$ (respectively $h_\delta=S(\delta)h$).
\end{lemma}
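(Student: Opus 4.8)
\textbf{Proof plan for Lemma~\ref{lem:zetadeltaKL1limit}.} The plan is to set up a Gr\"onwall-type inequality for the difference process $\xi^{r,y}_\delta(t):=\zeta^{r,y}_K(t)-\zeta^{r,y}_{\delta,K}(t)$, integrate it in the starting time $r$, and then conclude by dominated convergence. First I would write out the integral equations for $\zeta^{r,y}_K$ (equation~\eqref{eq:zetaKEqRough}) and $\zeta^{r,y}_{\delta,K}$ (the same equation with $K$ replaced by $K_\delta$, and with the underlying solution $\lambda^{r,y}$ replaced by $\lambda_\delta^{r,y}$). Subtracting and using that both processes start at $K_\delta$ at time $r$ — so in fact $\xi^{r,y}_\delta(r)=K-K_\delta$, which contributes the source term $S(t-r)(K-K_\delta)$ — decomposes $\xi^{r,y}_\delta(t)$ into: (a) the initial-data term $S(t-r)(K-K_\delta)$; (b) terms where the coefficient $K$ is replaced by $K-K_\delta$, acting on $\zeta^{r,y}_K$ (or $\zeta^{r,y}_{\delta,K}$); (c) terms where the difference $Db_0(\lambda^{r,y})-Db_0(\lambda^{r,y}_\delta)$ (resp. for $\sigma_0$) appears, multiplied by a tangent process; and (d) the genuinely recursive terms $S(t-\theta)K_\delta Db_0(\lambda^{r,y}_\delta)(\xi^{r,y}_\delta(\theta))$ and its stochastic-integral analogue.

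For the estimate itself I would, exactly as in the proofs of Lemmas~\ref{lem:zetaKrough} and~\ref{lem:zetaKKRough}, apply It\^o isometry and Cauchy--Schwarz, use boundedness of $Db,D\sigma$ and Lipschitz continuity of $\lambda\mapsto Db_0(\lambda)$ (from Assumption~\ref{assumption:SVEassumptions}iii)), the moment bound~\eqref{eq:estimate_zeta} for $\zeta^{r,y}_K$ and $\zeta^{r,y}_{\delta,K}$, the convergence bound~\eqref{eq:lambdaConvergence} for $\lambda^{r,y}-\lambda^{r,y}_\delta$, and the elementary domination $|S(t-\theta)K_\delta|_{\Hh_1}\le C|S(t-\theta)K|_{\Hh_1}$. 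This yields, for $t\in[r,T]$,
\begin{align*}
\EE|\xi^{r,y}_\delta(t)|^2_{\Hh_1} \lesssim f_\delta(r,t) + \int_r^t |S(t-\theta)K|^2_{\Hh_1}\,\EE|\xi^{r,y}_\delta(\theta)|^2_{\Hh_1}\,\D\theta,
\end{align*}
where the source term $f_\delta(r,t)$ collects $|S(t-r)(K-K_\delta)|^2_{\Hh_1}$, a term controlled by $\big(\int_r^t|S(t-\theta)(K-K_\delta)|^2_{\Hh_1}\D\theta\big)$ times a moment of $\zeta^{r,y}_K$ (finite, integrable in $\theta$ by Assumption~\ref{assumption:Kernel} with $q>2$ combined with~\eqref{eq:estimate_zeta} and H\"older), and a term controlled by $\sup_{\theta}\EE|\lambda^{r,y}(\theta)-\lambda^{r,y}_\delta(\theta)|^{2\gamma_0}_{\Hh_1}\lesssim\delta^{\gamma_0(q-2)/q}$ times $\int_r^t|S(t-\theta)K|^2_{\Hh_1}\D\theta$. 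Since $\overline K=|S(\cdot)K|^2_{\Hh_1}\in L^{q/2}_T\subset L^1_T$ has a resolvent $\overline R\in L^1_T$ by Lemma~\ref{lemma:resolvent}, the Volterra--Gr\"onwall inequality (Lemma~\ref{lemma:Volterra_Gronwall}) gives
\begin{align*}
\EE|\xi^{r,y}_\delta(t)|^2_{\Hh_1} \le f_\delta(r,t) + \int_r^t \overline R(t-\theta)\,f_\delta(r,\theta)\,\D\theta.
\end{align*}

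Finally I would integrate this bound over $r\in[s,t]$. For the first piece, $\int_s^t|S(t-r)(K-K_\delta)|^2_{\Hh_1}\D r = \int_0^{t-s}|S(u)(K-K_\delta)|^2_{\Hh_1}\D u \to 0$ as $\delta\to0$ by continuity of the semigroup in $\Hh_1$ (Lemma~\ref{lemma:semigroup_pties}, Remark~\ref{rem:H1Semigroup}) and dominated convergence, using as a uniform dominating function $\big(\sup_{\delta\in(0,1)}|S(\cdot)K_\delta|_{\Hh_1}+|S(\cdot)K|_{\Hh_1}\big)^2\lesssim |S(\cdot)K|^2_{\Hh_1}\in L^1_T$; the $\delta$-integral-of-$(K-K_\delta)$ pieces vanish likewise; and the $\delta^{\gamma_0(q-2)/q}$-prefactor pieces vanish trivially. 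The resolvent term $\int_s^t\int_r^t\overline R(t-\theta)f_\delta(r,\theta)\D\theta\D r$ is handled by the same reasoning after swapping the order of integration (Tonelli) and using $\overline R\in L^1_T$. The only mildly delicate point — and the one I would be most careful about — is organising the source term $f_\delta(r,t)$ so that every constituent is jointly integrable in $(r,\theta)$ uniformly in $\delta\in(0,1)$ and converges to $0$; this is where the interplay between Assumption~\ref{assumption:Kernel}'s integrability of $|S(\cdot)K|_{\Hh_1}$ in $L^q$ and the moment bounds~\eqref{eq:estimate_zeta}, \eqref{eq:lambdaConvergence} is used, exactly as in the cited earlier lemmas, so no genuinely new idea is needed. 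The extension to $K_\delta$ in place of $K$, and to a smooth direction $h\in\Hh_1$ in place of $K$, is immediate since the only property used is $|S(t-\theta)K_\delta|_{\Hh_1}\lesssim|S(t-\theta)K|_{\Hh_1}$, respectively $h\in\Hh_1\subset\mathscr{K}$ satisfies~\eqref{eq:zetaDirectionIntegrability} trivially so all the above estimates apply verbatim with $\overline K$ still playing the role of the Gr\"onwall kernel and $\overline{h}$-type quantities only entering the (convergent) source term.
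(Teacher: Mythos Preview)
Your strategy matches the paper's: write the difference equation, apply the Volterra--Gr\"onwall inequality with kernel $\overline K$, integrate over $r$, and conclude by dominated convergence. One minor correction: both $\zeta^{r,y}_K$ and $\zeta^{r,y}_{\delta,K}$ start from the direction $K$ at time $r$ (the subscript $\delta$ only signals that $K_\delta$ replaces $K$ in the integral terms and $\lambda_\delta^{r,y}$ replaces $\lambda^{r,y}$), so there is no initial source term $S(t-r)(K-K_\delta)$. This is harmless but reflects a misreading of the notation.

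The substantive gap is in your description of $f_\delta$. The kernel-difference contribution is not ``$\int_r^t\overline{\Delta K}_\delta(t-\theta)\,\D\theta$ times a moment of $\zeta^{r,y}_K$'' but the convolution
\[
\int_r^t \overline{\Delta K}_\delta(t-\theta)\,\EE|\zeta^{r,y}_K(\theta)|^2_{\Hh_1}\,\D\theta\;\lesssim\;\overline K\star\overline{\Delta K}_\delta(t-r),
\]
because $\EE|\zeta^{r,y}_K(\theta)|^2_{\Hh_1}\lesssim\overline K(\theta-r)$ blows up as $\theta\downarrow r$ and cannot be supped out. Likewise the $\lambda-\lambda_\delta$ term (use Lipschitz continuity of $Db_0,D\sigma_0$, not $\gamma_0$-H\"older) yields $\delta^{(q-2)/q}\,\overline K\star\overline K(t-r)$, not $\delta^{(q-2)/q}\int_r^t\overline K(t-\theta)\,\D\theta$. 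Separating these convolutions into products via H\"older would force $q\ge 4$; the paper instead bounds them pointwise by the splitting trick of~\eqref{eq:K*K^2bound}, namely
\[
\overline K\star\overline{\Delta K}_\delta(t-r)\;\lesssim\;|\overline{\Delta K}_\delta|_{L^1_T}\,\overline K\big((t-r)/2\big)+\overline{\Delta K}_\delta\big((t-r)/2\big),
\]
which is $r$-integrable and tends to zero by dominated convergence under only $q>2$. The resolvent piece $\overline R\star f_\delta$ is then controlled in $L^1_{t-s}$ by Young's inequality. This splitting is precisely the ``mildly delicate point'' you correctly flag but do not resolve.
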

\begin{proof} We have 
\begin{align*}
    &\zeta^{r,y}_{ K}(t)-\zeta^{r,y}_{\delta, K}(t) 
    \\&= \int_{r}^t S(t-\theta)\bigg( K Db_0(\lambda^{r,y}(\theta)) (\zeta^{r,y}_{ K}(\theta))- K_\delta Db_0(\lambda_\delta^{r,y}(\theta)) (\zeta^{r,y}_{\delta, K}(\theta))\bigg)\D \theta\\
    &+ \int_{r}^t S(t-\theta)\bigg(K D\sigma_0(\lambda^{r,y}(\theta)) (\zeta^{r,y}_{ K}(\theta))-K_\delta D\sigma_0(\lambda_\delta^{r,y}(\theta))(\zeta^{r,y}_{\delta, K}(\theta))    \bigg) \D W_\theta.  
\end{align*}
Again, both terms above can be treated similarly so we shall only focus on the stochastic integral. To this end, Itô isometry furnishes 
\begin{equation*}
\begin{aligned}
    \EE\bigg|&\int_{r}^t S(t-\theta)\bigg(K D\sigma_0(\lambda^{r,y}(\theta)) (\zeta^{r,y}_{ K}(\theta))-K_\delta D\sigma_0(\lambda_\delta^{r,y}(\theta))(\zeta^{r,y}_{\delta, K}(\theta))    \bigg) \D W_\theta\bigg|^2_{\Hh_1}\\&
    \lesssim\EE\int_{r}^t \bigg|S(t-\theta)\bigg(K D\sigma_0(\lambda^{r,y}(\theta)) (\zeta^{r,y}_{ K}(\theta))-K_\delta D\sigma_0(\lambda_\delta^{r,y}(\theta))(\zeta^{r,y}_{\delta, K}(\theta))    \bigg)\bigg|_{\Hh_1}^2 \D\theta.
\end{aligned}
\end{equation*}
We proceed by decomposing the integrand as follows
\begin{equation*}
\begin{aligned}
\bigg|&S(t-\theta)\bigg(K D\sigma_0(\lambda^{r,y}(\theta)) (\zeta^{r,y}_{ K}(\theta))-K_\delta D\sigma_0(\lambda_\delta^{r,y}(\theta))(\zeta^{r,y}_{\delta, K}(\theta))    \bigg)\bigg|_{\Hh_1}^2\\
&\lesssim
\bigg|S(t-\theta)(K-K_\delta)D\sigma_0(\lambda^{r,y}(\theta)) (\zeta^{r,y}_{ K}(\theta))\bigg|_{\Hh_1}^2\\
&\quad
+ \bigg|S(t-\theta)K_\delta \bigg[D\sigma_0(\lambda^{r,y}(\theta))- D\sigma_0(\lambda_\delta^{r,y}(\theta))   \bigg] (\zeta^{r,y}_{ K}(\theta))\bigg|_{\Hh_1}^2\\&
\quad+ \bigg|S(t-\theta)K_\delta D\sigma_0(\lambda_\delta^{r,y}(\theta)) (\zeta^{r,y}_{ K}(\theta)-\zeta^{r,y}_{ \delta, K}(\theta))\bigg|_{\Hh_1}^2\\&
\lesssim |S(t-\theta)(K-K_\delta)|^2_{\Hh_1
}|\zeta^{r,y}_{K}(\theta)|_{\Hh_1}^2
\\&
\quad+|S(t-\theta)K_\delta|^2_{\Hh_1}|  \lambda^{r,y}(\theta)-\lambda_\delta^{r,y}(\theta)|^2_{\Hh_1}|\zeta^{r,y}_{K}(\theta)|_{\Hh_1}^2\\&
\quad
+|S(t-\theta)K_\delta|_{\Hh_1}^2|\zeta^{r,y}_{ K}(\theta)-\zeta^{r,y}_{ \delta, K}(\theta)|_{\Hh_1}^2
\end{aligned}
\end{equation*}
where we used the boundedness and Lipschitz continuity of $D\sigma$ (Assumption~\ref{assumption:SVEassumptions}). We recall the estimate $\EE|\zeta^{r,y}_{K}(\theta)|_{\Hh_1}^p\lesssim \abs{S(\theta-r)K}_{\Hh_1}^p$ 
from Lemma \ref{lem:zetaKrough}
 which holds for all~$p\ge1$. 
In view of the latter and using notation from Lemma \ref{lemma:resolvent} and the proof of Lemma \ref{lem:zetaKKRough}  the first term can be estimated as follows:
\begin{align}\label{eq:int_K-Kdelta_zeta}
    \int_r^t|S(t-\theta)(K-K_\delta)|^2_{\Hh_1
}\EE|\zeta^{r,y}_{K}(\theta)|_{\Hh_1}^2\D\theta
\leq \int_r^t|S(t-\theta)(K-K_\delta)|^2_{\Hh_1
}|K(\theta-r)|_{\Hh_1}^2\D\theta =\overline{K}\star\overline{\Delta K}_\delta(t-r),
\end{align}
where we recall $\overline{\Delta K}_\delta(t)= |S(t)(K-K_\delta)|^2_{\Hh_1}$.  As in \eqref{eq:K*K^2bound}, the latter can be upper bounded, up to unimportant constants, by 
$$C_T^\delta\overline{K}((t-r)/2)+ |(S(\delta)-I)S((t-r)/2)K|^2_{\Hh_1},$$
where $C_T^\delta$ tends to zero as~$\delta$ goes to zero. 

Lemmas \ref{lem:zetaKrough} and~\ref{lem:lambdaConvergence}, along with an application of the Cauchy--Schwarz, yield
\begin{align}\label{eq:bound_KastK}
    &\EE\int_r^t |S(t-\theta)K_\delta|^2_{\Hh_1}|  \lambda^{r,y}(\theta)-\lambda_\delta^{r,y}(\theta)|^2_{\Hh_1}|\zeta^{r,y}_{K}(\theta)|_{\Hh_1}^2 \D \theta \nonumber\\
    &\le \int_r^t |S(t-\theta)K|^2_{\Hh_1} \left(\EE |\lambda^{r,y}(\theta)-\lambda_\delta^{r,y}(\theta)|^4_{\Hh_1}\right)^{\half} \left(\EE|\zeta^{r,y}_{K}(\theta)|_{\Hh_1}^4\right)^\half \D \theta \nonumber\\
    &\le \sup_{\theta\in[r,T]} \left(\EE |\lambda^{r,y}(\theta)-\lambda_\delta^{r,y}(\theta)|^4_{\Hh_1}\right)^{\half} \int_r^t |S(t-\theta)K|^2_{\Hh_1} \abs{S(\theta-r)K}_{\Hh_1}^2\D\theta \nonumber \\&\leq \delta^{\frac{q-2}{q}} \int_0^{t-r}|S(t-r-\theta)K|^2_{\Hh_1} \abs{S(\theta)K}_{\Hh_1}^2\D\theta =\delta^{\frac{q-2}{q}}\overline{K}\star\overline{K}(t-r).
\end{align}
Similar to \eqref{eq:K*K^2bound}, $\overline{K}\star\overline{K}(t-r)$ is upper bounded, up to unimportant constants, by $\abs{S\left(\frac{t-r}{2}\right) K}^2_{\Hh_1}.$ 
Collecting the previous estimates \eqref{eq:int_K-Kdelta_zeta} and \eqref{eq:bound_KastK} and applying the Volterra--Gr\"ownall's inequality (Lemma~\ref{lemma:Volterra_Gronwall}) this entails 
\begin{align*}
    \EE|&\zeta^{r,y}_{ K}(t)-\zeta^{r,y}_{ \delta, K}(t)|_{\Hh_1}^2
    \\&\le \overline{K}\star\overline{\Delta K}_\delta(t-r)+\delta^{\frac{q-2}{q}}\overline{K}\star\overline{K}(t-r)\\&+\int_r^t \overline{R}(t-s) \bigg(\overline{K}\star\overline{\Delta K}_\delta(s-r)+\delta^{\frac{q-2}{q}}\overline{K}\star\overline{K}(s-r)\bigg)\ds\\& \lesssim C_T^\delta\overline{K}((t-r)/2)+ |(S(\delta)-I)S((t-r)/2)K|^2_{\Hh_1}+ \delta^{\frac{q-2}{q}} \abs{S\left((t-r)/2\right) K}^2_{\Hh_1} \\&+\overline{R}\star\overline{K}\star\overline{\Delta K}_\delta(t-r)+\delta^{\frac{q-2}{q}}\overline{R}\star\overline{K}\star\overline{K}(t-r).
\end{align*}
The first line above converges to $0$ pointwise in $r\in[s, T]$ and moreover it is  bounded, uniformly in $\delta,$ by the integrable function $   \overline{K}((t-\cdot)/2)+ |S((t-\cdot)/2)K|^2_{\Hh_1}+ \delta^{\frac{q-2}{q}}\abs{S\left((t-
\cdot)/2\right) K}^2_{\Hh_1}\in L^1([s, t]).$ 
Thus, by dominated convergence it follows that 
$$   \lim_{\delta\to 0}\int_{s}^t \bigg(C_T^\delta\overline{K}((t-r)/2)+ |(S(\delta)-I)S((t-r)/2)K|^2_{\Hh_1}+ \delta^\frac{q-2}{q}\abs{S\left((t-r)/2\right) K}^2_{\Hh_1}\bigg)\dr=0.$$
Turning to the second line, we have 
$$  \int_{s}^t\bigg(\overline{R}\star\overline{K}\star\overline{\Delta K}_\delta(t-r)+\delta^2\overline{R}\star\overline{K}\star\overline{K}(t-r)\bigg)\dr\leq | \overline{R}\star\overline{K}\star\overline{\Delta K}_\delta|_{L^1_{t-s}}+\delta^{\frac{q-2}{q}}|\overline{R}\star\overline{K}\star\overline{K}|_{L^1_{t-s}}.     $$
By Young's convolution inequality the latter is upper bounded by 
$$   \abs{\overline{R}}_{L^1_{t-s}}\bigg( |\overline{K}\star\overline{\Delta K}_\delta|_{L^1_{t-s}}+ \delta^\frac{q-2}{q}|\overline{K}\star\overline{K}|_{L^1_{t-s}}\bigg) $$
which in turn converges to $0$ as $\delta\to 0$  the computations in ~\eqref{eq:int_K-Kdelta_zeta}. The proof is complete.
\end{proof}
\begin{remark}
    We are only able to obtain an integrated convergence in Lemma~\ref{lem:zetadeltaKL1limit} because the resolvent~$\overline{R}$ is only known to be in~$L^{q/2}_T$, with no guarantee that~$\overline{R}(t)<\infty$ for all ~$t>0$. 
\end{remark}
We state the following result which is a variation of \eqref{eq:zetaKdiffBnd} and is proved in the same fashion.
\begin{lemma}\label{lem:ZetaDeltaKdeltaConvergence}
Let Assumptions \ref{assumption:SVEassumptions} and \ref{assumption:Kernel} hold.
Let $0\leq s\leq r\leq t\leq T$. For any $p\geq 1$ there exists a constant $C_T>0$ such that 
    \begin{align*}
        &\EE\left[ \big| \zeta^{r, y}_{\delta, K}(t)-\zeta^{r, y}_{\delta, K_\delta}(t)  \big|_{\Hh_1}^p\right]
        \le C_T \abs{S(t-r)(K-K_\delta)}^p_{\Hh_1}.
    \end{align*}
    In particular, for any $p\le q$ with $q$ as in Assumption \ref{assumption:Kernel}, we have  \begin{align}\label{eq:L1_cvg_zetadeltaK}
        \lim_{\delta\to0} \int_s^t \EE\left[ \big| \zeta^{r,y}_{\delta, K}(t)-\zeta^{r,y}_{\delta, K_\delta}(t)  \big|_{\Hh_1}^p\right]\dr=0.
    \end{align}
\end{lemma}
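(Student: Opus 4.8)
The plan is to mimic the well-posedness and convergence arguments already developed for the first variation equation, in particular the proof of Lemma~\ref{lem:zetaKrough} (estimate~\eqref{eq:estimate_zeta}) and the computation in part~(1) of Lemma~\ref{lem:SingularDifferentiability}, but now tracking the discrepancy that arises because \emph{two} instances of the kernel differ: the coefficient kernel ($K$ versus $K_\delta$) is the same in both processes $\zeta^{r,y}_{\delta,K}$ and $\zeta^{r,y}_{\delta,K_\delta}$, and only the \emph{initial condition} differs ($K$ versus $K_\delta=S(\delta)K$). Since both processes solve the \emph{same} linear equation~\eqref{eq:zetaKEqRough} with kernel $K_\delta$ — differing only through the data $h=K$ respectively $h=K_\delta$ — and since Lemma~\ref{lem:zetaKWP} records that the solution map $h\mapsto\zeta$ of that equation is linear, the difference $\zeta^{r,y}_{\delta,K}-\zeta^{r,y}_{\delta,K_\delta}$ is itself a mild solution of~\eqref{eq:zetaKEqRough} (with kernel $K_\delta$) started from $K-K_\delta$. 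This is the key structural observation and it reduces everything to an a~priori bound.

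First I would invoke linearity to write $\zeta^{r,y}_{\delta,K}(t)-\zeta^{r,y}_{\delta,K_\delta}(t)=\zeta^{r,y}_{\delta,K-K_\delta}(t)$, where the right-hand side denotes the mild solution of~\eqref{eq:zetaKEqRough} with kernel $K_\delta$ and initial datum $K-K_\delta$. The direction $K-K_\delta\in\mathscr{K}$ and, thanks to Assumption~\ref{assumption:Kernel} together with $|S(\delta)S(t-r)K|_{\Hh_1}\lesssim|S(t-r)K|_{\Hh_1}$, it satisfies the integrability requirement~\eqref{eq:zetaDirectionIntegrability}, so Lemma~\ref{lem:zetaKWP} applies. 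Next I would apply the moment estimate~\eqref{eq:estimate_zeta} of Lemma~\ref{lem:zetaKrough} — valid verbatim for the mollified process $\zeta^{r,y}_{\delta,\cdot}$ by the last sentence of that lemma — to get, for every $p\ge1$ and a constant $C_T$ independent of $\delta$,
\begin{align*}
    \EE\big[\,\big|\zeta^{r,y}_{\delta,K}(t)-\zeta^{r,y}_{\delta,K_\delta}(t)\big|_{\Hh_1}^p\,\big]
    =\EE\big[\,\big|\zeta^{r,y}_{\delta,K-K_\delta}(t)\big|_{\Hh_1}^p\,\big]
    \le C_T\,\big|S(t-r)(K-K_\delta)\big|_{\Hh_1}^p,
\end{align*}
which is the first claimed bound.

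For the integrated convergence~\eqref{eq:L1_cvg_zetadeltaK}, I would integrate the pointwise bound over $r\in[s,t]$ and argue by dominated convergence exactly as in Lemma~\ref{lem:zetadeltaKL1limit}. The integrand $|S(t-r)(K-K_\delta)|_{\Hh_1}^p$ converges to $0$ pointwise in $r$ as $\delta\to0$ by strong continuity of the shift semigroup on $\Hh_1$ (Lemma~\ref{lemma:semigroup_pties}, Remark~\ref{rem:H1Semigroup}), since $K-K_\delta=(I-S(\delta))K\to0$ after shifting away from the origin; for domination, one splits $|S(t-r)(K-K_\delta)|_{\Hh_1}\le|S(t-r)K|_{\Hh_1}+|S(t-r)K_\delta|_{\Hh_1}\lesssim|S(t-r)K|_{\Hh_1}$, and the function $r\mapsto|S(t-r)K|_{\Hh_1}^p$ is in $L^1([s,t])$ precisely for $p\le q$ by~\eqref{eq:cond_K1} in Assumption~\ref{assumption:Kernel}. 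The only mildly delicate point — and the "main obstacle" such as it is — is to double-check that the constant $C_T$ coming from the Grönwall step in~\eqref{eq:estimate_zeta} genuinely does not depend on $\delta$; this is already guaranteed by the uniform bound $|S(t-r)K_\delta|_{\Hh_1}\lesssim|S(t-r)K|_{\Hh_1}$ used throughout Lemma~\ref{lem:zetaKrough}, so in practice the proof is a short assembly of results already in hand. I would close by noting that the restriction $p\le q$ is exactly what makes the dominating function integrable, matching the hypothesis in the statement.
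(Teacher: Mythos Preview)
Your proof is correct and follows precisely the route the paper indicates: the paper merely states that this lemma ``is a variation of~\eqref{eq:zetaKdiffBnd} and is proved in the same fashion,'' and your argument---linearity of $h\mapsto\zeta^{r,y}_{\delta,h}$ to reduce to $\zeta^{r,y}_{\delta,K-K_\delta}$, then the moment bound~\eqref{eq:estimate_zeta} (valid uniformly in $\delta$), followed by dominated convergence with the dominating function $|S(t-r)K|_{\Hh_1}^p\in L^1([s,t])$ for $p\le q$---is exactly that.
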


We conclude this section with a technical lemma on the convergence of the mollified second tangent process, in integral form as Lemma~\ref{lem:zetadeltaKL1limit}.
\begin{lemma}[Convergence of the second tangent processes]\label{lem:2ndTangentConvergence}
Let Assumption \ref{assumption:SVEassumptions} and \ref{assumption:Kernel} with $q>4$ hold. For any $s<t\le T$ we have
    \begin{align*}
    \lim_{\delta\to0} 
    \int_s^{t} \EE|\zeta^{r,y}_{K_
    \delta, K_\delta}(t)-\zeta^{r,y}_{ \delta, K_\delta, K_\delta}(t)|_{\Hh_1}^2 \dr
    =0.
\end{align*}
If instead Assumption \ref{assumption:Kernel}  holds with $q>2$ and $\sigma$ is constant then
$$
\lim_{\delta\to0} 
    \int_s^{t} \EE|\zeta^{r,y}_{K_
    \delta, K_\delta}(t)-\zeta^{r,y}_{ \delta, K_\delta, K_\delta}(t)|_{\Hh_1} \dr
    =0.
$$
\end{lemma}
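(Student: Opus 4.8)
The plan is to follow the now-familiar template used for Lemmas~\ref{lem:zetadeltaKL1limit} and~\ref{lem:ZetaDeltaKdeltaConvergence}: write the difference of the two processes as a mild-type integral equation, split the integrand into the three ``natural'' sources of discrepancy (mismatch of the kernel $K_\delta$ vs.\ $K_\delta$ appearing in different places, mismatch of the arguments $\lambda^{r,y}$ vs.\ $\lambda_\delta^{r,y}$ and the lower-order tangent processes, and the linear self-referential term), bound each piece, feed the result into the Volterra--Gr\"onwall inequality (Lemma~\ref{lemma:Volterra_Gronwall}) with kernel $\overline{K}$ and resolvent $\overline{R}\in L^{q/2}_T$, and finally integrate in the starting time $r$ and invoke dominated convergence. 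First I would recall that $\zeta^{r,y}_{K_\delta,K_\delta}$ satisfies~\eqref{eq:zetah1h2Rough} with $K$ replaced by $K_\delta$ in the \emph{coefficients} but with the original $\lambda^{r,y},\zeta^{r,y}_{K_\delta}$ inside, whereas $\zeta^{r,y}_{\delta,K_\delta,K_\delta}$ is the fully mollified object where $\lambda^{r,y}\rightsquigarrow\lambda_\delta^{r,y}$ and $\zeta^{r,y}_{K_\delta}\rightsquigarrow\zeta^{r,y}_{\delta,K_\delta}$; subtracting the two equations isolates the self-referential linear term $\int_r^t S(t-\theta)K_\delta D\sigma_0(\lambda_\delta^{r,y})(\zeta^{r,y}_{K_\delta,K_\delta}-\zeta^{r,y}_{\delta,K_\delta,K_\delta})\,\D\theta$ plus analogous drift term, and source terms controlled by the differences already estimated elsewhere.

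The key steps, in order, are as follows. \textbf{Step 1.} Use It\^o isometry and the boundedness of $D\sigma_0,Db_0$ to reduce to estimating $\EE\int_r^t|S(t-\theta)K_\delta|^2_{\Hh_1}|\zeta^{r,y}_{K_\delta,K_\delta}(\theta)-\zeta^{r,y}_{\delta,K_\delta,K_\delta}(\theta)|^2_{\Hh_1}\,\D\theta$ for the linear part, so Lemma~\ref{lemma:Volterra_Gronwall} applies with $\overline{K}$. \textbf{Step 2.} Bound the source terms. The second-order source $\int_r^t S(t-\theta)K_\delta D^2\sigma_0(\lambda_\delta^{r,y})(\zeta^{r,y}_{\delta,K_\delta}(\theta),\zeta^{r,y}_{\delta,K_\delta}(\theta))\,\D W_\theta$ minus its unmollified counterpart splits, using bilinearity, into (a) terms with the $D^2\sigma$ Lipschitz/H\"older discrepancy $|\lambda^{r,y}-\lambda_\delta^{r,y}|^{\gamma_0}$, controlled by Lemma~\ref{lem:lambdaConvergence} (which gives $\delta^{\gamma_0(q-2)/2q}$ after Cauchy--Schwarz), and (b) terms of the form $D^2\sigma_0(\lambda^{r,y})(\zeta^{r,y}_{K_\delta}-\zeta^{r,y}_{\delta,K_\delta},\,\zeta^{r,y}_{K_\delta}+\zeta^{r,y}_{\delta,K_\delta})$, whose contribution is controlled using the moment bound~\eqref{eq:estimate_zeta}, Lemma~\ref{lem:zetadeltaKL1limit} (or rather its $\Hh_1$-norm squared version on $\zeta^{r,y}_{K_\delta}-\zeta^{r,y}_{\delta,K_\delta}$) and Cauchy--Schwarz in $\Omega$. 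Since $q>4$, all of $\overline{K},\overline{R}$ lie in $L^2_T$, so the convolutions $\overline{R}\star\overline{K}\star(\cdots)$ remain integrable. \textbf{Step 3.} Assemble: after Volterra--Gr\"onwall one gets $\EE|\zeta^{r,y}_{K_\delta,K_\delta}(t)-\zeta^{r,y}_{\delta,K_\delta,K_\delta}(t)|^2_{\Hh_1}\lesssim (\text{terms}\to 0) + \overline{R}\star(\text{same terms})(t-r)$, where each ``term'' is of the shape $\delta^{c}\,\overline{K}\star\overline{K}\star\overline{K}(t-r)$ or $\overline{\Delta K}_\delta\star\overline{K}\star\overline{K}(t-r)$ with constants vanishing as $\delta\to0$; integrating in $r\in[s,t]$ and using Young's convolution inequality $|f\star g|_{L^1}\le|f|_{L^1}|g|_{L^1}$ reduces everything to $|\overline{\Delta K}_\delta|_{L^1_{t-s}}\to0$ (dominated convergence, continuity of the semigroup) and $\delta^c\to0$. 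For the additive-noise case the stochastic integrals vanish, one works with the $L^1(\Omega)$ norm and $L^1_T$ kernels throughout, and the same argument goes through for $q>2$.

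The main obstacle I expect is the handling of the second-order source term in the multiplicative case: because $\zeta^{r,y}_{K_\delta}$ scales like $|S(\cdot-r)K|_{\Hh_1}\in L^q_T$ but only enters quadratically, the worst product $|S(t-\theta)K_\delta|^2_{\Hh_1}|\zeta^{r,y}_{\delta,K_\delta}(\theta)|^2_{\Hh_1}|\zeta^{r,y}_{K_\delta}(\theta)-\zeta^{r,y}_{\delta,K_\delta}(\theta)|^0$ forces one through exactly the $q>4$ integrability ceiling already flagged in Remark~\ref{rem:Honefourth}, and one must be careful to apply Cauchy--Schwarz in $\Omega$ \emph{before} integrating in $\theta$ so that the $L^4_T$-type bound on $\zeta^{r,y}_{K_\delta}$ can be invoked, then split the convolution as in~\eqref{eq:K*K^2bound} to extract a pointwise factor $|S((t-r)/2)K|^2_{\Hh_1}$. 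The only genuinely new bookkeeping relative to Lemma~\ref{lem:zetadeltaKL1limit} is that one needs the $L^2(\Omega)$ convergence of $\zeta^{r,y}_{K_\delta}-\zeta^{r,y}_{\delta,K_\delta}$ integrated in $r$, which is precisely Lemma~\ref{lem:zetadeltaKL1limit} together with Lemma~\ref{lem:ZetaDeltaKdeltaConvergence}, so no circularity arises; everything else is a routine repetition of the three-step scheme.
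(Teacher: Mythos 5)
Your overall plan — decompose the difference into source terms plus a linear self-referential term, estimate sources by separating the kernel mismatch, the H\"older discrepancy in $\lambda$ vs.\ $\lambda_\delta$, and the tangent-process discrepancy, then close with the Volterra--Gr\"onwall lemma and dominated convergence in the starting time $r$ — is exactly the route the paper takes. However, your Step~2(b) and the concluding sentence contain a genuine gap. You assert that the only new bookkeeping is the $L^2(\Omega)$ convergence of $\zeta^{r,y}_{K_\delta}-\zeta^{r,y}_{\delta,K_\delta}$ integrated in $r$ ``which is precisely Lemma~\ref{lem:zetadeltaKL1limit} together with Lemma~\ref{lem:ZetaDeltaKdeltaConvergence}''. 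This does not close the argument. In the multiplicative case, once you bound
\[
\EE\Big|\int_r^t S(t-\theta)K\,D^2\sigma_0(\lambda)\big(\zeta^{r,y}_{K_\delta}(\theta)-\zeta^{r,y}_{\delta,K_\delta}(\theta),\;\zeta^{r,y}_{K_\delta}(\theta)+\zeta^{r,y}_{\delta,K_\delta}(\theta)\big)\,\D W_\theta\Big|^2_{\Hh_1}
\]
by Itô isometry and then apply Cauchy--Schwarz in $\Omega$, you are forced into a factor of $\EE\big|\zeta^{r,y}_{K_\delta}(\theta)-\zeta^{r,y}_{\delta,K_\delta}(\theta)\big|^4_{\Hh_1}$ at each fixed $\theta\in(r,t]$, because $\zeta^{r,y}_{K_\delta}\pm\zeta^{r,y}_{\delta,K_\delta}$ are random and only controlled in moments. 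No rearrangement of H\"older's inequality (in $\theta$ or in $\Omega$, in either order) eliminates the need for this fourth moment, since the tangent processes enter quadratically. But Lemma~\ref{lem:zetadeltaKL1limit} gives only $\int_s^t \EE|\zeta^{r,y}_K(t)-\zeta^{r,y}_{\delta,K}(t)|^2_{\Hh_1}\,\dr\to 0$ (second moment, integrated in the starting time $r$ and evaluated at the fixed terminal time $t$), not an $L^4(\Omega)$ estimate pointwise in the running time $\theta$; and Lemma~\ref{lem:ZetaDeltaKdeltaConvergence} concerns the difference $\zeta^{r,y}_{\delta,K}-\zeta^{r,y}_{\delta,K_\delta}$ (same mollified coefficients, different initial directions), which is a different object from $\zeta^{r,y}_{K_\delta}-\zeta^{r,y}_{\delta,K_\delta}$ (same initial direction, different coefficients). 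Neither citation supplies what your decomposition requires.

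What actually makes the argument work is a fresh Gr\"onwall estimate, derived inside the proof, of the form $\EE\big|\zeta^{r,y}_{K_\delta}(t)-\zeta^{r,y}_{\delta,K_\delta}(t)\big|^4_{\Hh_1}\le C^\delta_T$ uniformly in $t$, with $C^\delta_T\to 0$ as $\delta\to 0$. One obtains it by writing the mild equation for the difference $\zeta^{r,y}_{K_\delta}-\zeta^{r,y}_{\delta,K_\delta}$ exactly as in the proof of Lemma~\ref{lem:zetadeltaKL1limit}, applying BDG and Cauchy--Schwarz to raise to the fourth power, invoking the moment bound~\eqref{eq:estimate_zeta} and Lemma~\ref{lem:lambdaConvergence}, and then the scalar Gr\"onwall inequality; the hypothesis $q>4$ is used precisely here to guarantee $\int_0^T|S(\theta)K|^4_{\Hh_1}\,\D\theta<\infty$. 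The corresponding $L^2(\Omega)$ estimate plays the same role in the additive case and yields the stated $L^1$-in-$r$ convergence for $q>2$. Your sketch should flag and carry out this auxiliary Gr\"onwall step explicitly rather than attributing it to the two earlier lemmas.
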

\begin{remark}
    The proof goes through with $K$ instead of $K_\delta$ but the result is stated this way to complement Lemma~\ref{lem:SingularDifferentiability}.
\end{remark}
\begin{proof}
Let us begin we the assumption that $q>4$. 
    We can decompose the difference as follows
    \begin{align}
    &  \zeta^{r,y}_{K_
    \delta, K_\delta}(t)-\zeta^{r,y}_{ \delta, K_\delta, K_\delta}(t) \nonumber\\
    =& \int_r^t \bigg(S(t-\theta)K D^2 b_0(\lambda^{r,y}(\theta))(\zeta^{r,y}_{K_
    \delta}(\theta),\zeta^{r,y}_{K_\delta}(\theta))- S(t-\theta)K_\delta 
    D^2 b_0(\lambda_\delta^{r,y}(\theta))(\zeta^{r,y}_{\delta,K_\delta}(\theta),
    \zeta^{r,y}_{\delta,K_\delta}(\theta))\Big)\D\theta \nonumber\\
    &+\int_r^t \bigg(S(t-\theta)K D^2\sigma_0(\lambda^{r,y}(\theta))(\zeta^{r,y}_{K_
    \delta}(\theta),\zeta^{r,y}_{K_\delta}(\theta))- S(t-\theta)K_\delta 
    D^2\sigma_0(\lambda_\delta^{r,y}(\theta))(\zeta^{r,y}_{\delta,K_\delta}(\theta),
    \zeta^{r,y}_{\delta,K_\delta}(\theta))\Big)\D W_\theta \nonumber\\
    &+\int_r^t \Big(S(t-\theta)K D b_0(\lambda^{r,y}(\theta))(\zeta^{r,y}_{K_\delta,K_\delta}(\theta)) 
    -S(t-\theta)K_\delta D b_0(\lambda_\delta^{r,y}(\theta))(\zeta^{r,y}_{\delta,K_\delta,K_\delta}(\theta)) \Big)\D\theta \nonumber\\
    &+\int_r^t \Big(S(t-\theta)K D \sigma_0(\lambda^{r,y}(\theta))(\zeta^{r,y}_{K_\delta,K_\delta}(\theta)) 
    -S(t-\theta)K_\delta D \sigma_0(\lambda_\delta^{r,y}(\theta))(\zeta^{r,y}_{\delta,K_\delta,K_\delta}(\theta)) \Big)\D W_\theta \nonumber\\
    &=: \mathbf{T_1} + \mathbf{T_2} + \mathbf{T_3} + \mathbf{T_4} .
    \label{eq:decomposition_ItoIV}
    \end{align}
     The first and second terms can be dealt with in the same way. We apply Jensen's inequality, leverage the bilinearity of $(h_1,h_2)\mapsto D^2b_0(x)(h_1,h_2)$, boundedness and H\"older
    continuity of~$x\mapsto D^2b(x)$ (Assumption~\ref{assumption:SVEassumptions}) to compute
    \begin{align*}
        \EE\abs{\mathbf{T_1}}_{\Hh_1}^2 
        \le &\,\EE\int_r^t \abs{ S(t-\theta)(K-K_\delta)}_{\Hh_1}^2 \abs{D^2 b_0(\lambda^{r,y}(\theta))(\zeta^{r,y}_{K_
    \delta}(\theta),\zeta^{r,y}_{K_\delta}(\theta)) }^2\D \theta \\
        &+ \EE\int_r^t \abs{ S(t-\theta)K_\delta}_{\Hh_1}^2 \abs{D^2 \big[b_0(\lambda^{r,y}(\theta))-b_0(\lambda_\delta^{r,y}(\theta))\big](\zeta^{r,y}_{K_
    \delta}(\theta),\zeta^{r,y}_{K_\delta}(\theta)) }^2\D \theta\\
        &+ \EE\int_r^t \abs{ S(t-\theta)K_\delta}_{\Hh_1}^2 \abs{D^2 b_0(\lambda_\delta^{r,y}(\theta))(\zeta^{r,y}_{K_
    \delta}(\theta)-\zeta^{r,y}_{\delta,K_
    \delta}(\theta),\zeta^{r,y}_{K_\delta}(\theta)+\zeta^{r,y}_{\delta,K_\delta}(\theta)) }^2\D \theta \\
        \le&\,\int_r^t \abs{ S(t-\theta)(K-K_\delta)}_{\Hh_1}^2 \EE \abs{\zeta^{r,y}_{K_\delta}(\theta)) }^4_{\Hh_1} \D \theta \\
        &+\EE \int_r^t \abs{ S(t-\theta)K}_{\Hh_1}^2 \abs{\lambda^{r,y}(\theta)-\lambda_\delta^{r,y}(\theta)}^{2\gamma_0}_{\Hh_1} \abs{\zeta^{r,y}_{K_\delta}(\theta)) }^4_{\Hh_1}\D \theta\\
        &+ \EE\int_r^t \abs{ S(t-\theta)K}_{\Hh_1}^2 \abs{\zeta^{r,y}_{K_
    \delta}(\theta)-\zeta^{r,y}_{\delta,K_
    \delta}(\theta)}^2_{\Hh_1} \big(\abs{\zeta^{r,y}_{K_\delta}(\theta)) }^2_{\Hh_1}+\abs{\zeta^{r,y}_{\delta,K_\delta}(\theta)) }^2_{\Hh_1}\big) \D \theta\\
    =:& \,\mathbf{T_{11}} + \mathbf{T_{12}}+ \mathbf{T_{13}}.
    \end{align*}
    By Lemma~\ref{lem:zetaKrough} we have
    \begin{align*}
        \mathbf{T_{11}}\lesssim  \overline{K}^2\star\overline{\Delta K}_\delta(t-r)
    \end{align*}
    which converges to zero by dominated convergence analogously to~\eqref{eq:int_K-Kdelta_zeta} with~$\overline{K}\in L^2_T$. Furthermore, Cauchy--Schwarz inequality and estimate~\eqref{eq:estimate_zeta} yield
    \begin{align*}
        \mathbf{T_{12}}
        &\le  \int_r^t \abs{ S(t-\theta)K}_{\Hh_1}^2 \left(\EE\abs{\lambda^{r,y}(\theta)-\lambda_\delta^{r,y}(\theta)}^{4\gamma_0}_{\Hh_1}\right)^{\half}\left(\EE\abs{\zeta^{r,y}_{K_\delta}(\theta)) }^8_{\Hh_1}\right)^\half\D \theta \\
        &\le \sup_{\theta\in[r,T]} \left(\EE\abs{\lambda^{r,y}(\theta)-\lambda_\delta^{r,y}(\theta)}^{4\gamma_0}_{\Hh_1} \right)^{\half} \int_r^t \abs{ S(t-\theta)K}_{\Hh_1}^2 \abs{S(\theta-r)K}^4_{\Hh_1}\D\theta\\
        &\le \delta^{2\gamma_0} \overline{K}\star\overline{K}^2(t-r)
    \end{align*}
    which also tends to zero by Lemma~\ref{lem:lambdaConvergence} since $\overline{K}\star\overline{K}^2(t-r)$ can be bounded as in~\eqref{eq:K*K^2bound}. Cauchy--Schwarz inequality and estimate \eqref{eq:zetaKdiffBnd} give
    \begin{align}
       \mathbf{T_{13}}
    &\le  \left(\int_r^t \abs{ S(t-\theta)K}_{\Hh_1}^4 \abs{S(\theta-r)K}^4_{\Hh_1}\D\theta \int_r^t \EE\abs{\zeta^{r,y}_{K_
    \delta}(\theta)-\zeta^{r,y}_{\delta,K_
    \delta}(\theta)}^4_{\Hh_1} \D\theta \right)^{\half}
    \label{eq:integral_zetadelta_four}
    \end{align}
    The first integral is finite as for~\eqref{eq:bound_KastK}, while the second requires some extra work. 
    For any $t\in[r,T]$, the BDG inequality, the initial decomposition in the proof of Lemma~\ref{lem:zetadeltaKL1limit} and Cauchy--Schwarz inequality entail
    \begin{align*}
     &   \EE\abs{\zeta^{r,y}_{K_
    \delta}(t)-\zeta^{r,y}_{\delta,K_
    \delta}(t)}^4_{\Hh_1}\\
    &\lesssim \EE\bigg[\bigg(\int_r^t \bigg\{\abs{S(t-\theta)(K-K_\delta)}^2_{\Hh_1} \abs{\zeta^{r,y}_{K_
    \delta}(\theta)}^2_{\Hh_1}  + \abs{S(t-\theta)K_\delta}^2_{\Hh_1} \abs{\lambda^{r,y}(\theta)-\lambda_\delta^{r,y}(\theta)}^2_{\Hh_1}\abs{\zeta^{r,y}_{K_
    \delta}(\theta)}^2_{\Hh_1}  \\
    &\qquad\qquad + \abs{S(t-\theta)K_\delta}^2_{\Hh_1} \abs{\zeta^{r,y}_{K_
    \delta}(\theta)-\zeta^{r,y}_{\delta,K_
    \delta}(\theta)}^2_{\Hh_1} \bigg\}  \D\theta \bigg)^2\bigg]\\
    &\le \int_r^t \abs{S(t-\theta)(K-K_\delta)}^4_{\Hh_1} \D\theta \int_r^t \EE\abs{\zeta^{r,y}_{K_
    \delta}(\theta)}^4_{\Hh_1}\D\theta \\
    &\qquad\qquad +\int_r^t \abs{S(t-\theta)K_\delta}^4_{\Hh_1} \D\theta \left(\sup_{\theta\in[r,T]}\EE \abs{\lambda^{r,y}(\theta)-\lambda_\delta^{r,y}(\theta)}^8_{\Hh_1}\right)^\half \int_r^t \left(\EE
    \abs{\zeta^{r,y}_{K_
    \delta}(\theta)}^8_{\Hh_1} \right)^\half\D\theta\\
    &\qquad\qquad + \int_r^t \abs{S(t-\theta)K_\delta}^4_{\Hh_1} \D\theta  \int_r^t \EE\abs{\zeta^{r,y}_{K_
    \delta}(\theta)-\zeta^{r,y}_{\delta,K_
    \delta}(\theta)}^4_{\Hh_1} \D\theta.
    \end{align*}
    The standard Grönwall inequality, along with Lemmas~\ref{lem:zetaKrough}, \ref{lem:lambdaConvergence} and Assumption \ref{assumption:Kernel} show that there exists~$C^\delta_T$, independent of~$t$, which tends to zero as $\delta\to0$, and such that
     \begin{align*}
       \EE\abs{\zeta^{r,y}_{K_
    \delta}(t)-\zeta^{r,y}_{\delta,K_
    \delta}(t)}^4_{\Hh_1}\le C^\delta_T. 
    \end{align*}
    Therefore Equation~\eqref{eq:integral_zetadelta_four} vanishes as $\delta$ goes to zero. 
    It is straigthforward to show that the same estimates hold for $\mathbf{T_2}$ hence, gathering estimates, we conclude that there is a (possibly different) $C^\delta_T$ such that
    \begin{align}\label{eq:def_Fdelta1}
        \EE\abs{\mathbf{T_1}}^2_{\Hh_1} + \EE\abs{\mathbf{T_2}}^2_{\Hh_1} \lesssim \overline{K}^2\star \overline{\Delta K}_{\delta}(t-r) + C^\delta_T \Big(\overline{K}\star\overline{K}^2(t-r) 
        +  \big( \overline{K}^2\star\overline{K}^2(t-r)\big)^\half\Big).
    \end{align}
    We deal with the third and fourth terms $\mathbf{T_3},\mathbf{T_4}$   in~\eqref{eq:decomposition_ItoIV} in the same way. Jensen's and H\"older's inequalities, combined with \eqref{eq:zetaKK_estimate} with $q=4$, yield
    \begin{align*}
        \EE\abs{\mathbf{T_3}}^2_{\Hh_1} 
        \le&\, \EE \int_r^t \abs{S(t-\theta)(K-K_\delta)}^2_{\Hh_1} \abs{D b_0(\lambda^{r,y}(\theta))(\zeta^{r,y}_{K_\delta,K_\delta}(\theta))}^2\D\theta\\
        &+ \EE\int_r^t \abs{ S(t-\theta)K_\delta}^2_{\Hh_1} \abs{D b_0(\lambda^{r,y}(\theta))(\zeta^{r,y}_{K_\delta,K_\delta}(\theta)) - D b_0(\lambda_\delta^{r,y}(\theta))(\zeta^{r,y}_{K_\delta,K_\delta}(\theta)) }^2_{\Hh_1} \D\theta \\
        &+ \EE\int_r^t \abs{S(t-\theta)K_\delta }^2_{\Hh_1}  \abs{ D b_0(\lambda_\delta^{r,y}(\theta))(\zeta^{r,y}_{K_\delta,K_\delta}(\theta)) - D b_0(\lambda_\delta^{r,y}(\theta))(\zeta^{r,y}_{\delta,K_\delta,K_\delta}(\theta))}^2 \D\theta \\
        \lesssim&\,\EE \int_r^t \abs{S(t-\theta)(K-K_\delta)}^2_{\Hh_1} \abs{\zeta^{r,y}_{K_\delta,K_\delta}(\theta)}_{\Hh_1}^2\D\theta\\
        &+ \EE\int_r^t \abs{ S(t-\theta)K_\delta}^2_{\Hh_1} \abs{\lambda^{r,y}(\theta) - \lambda_\delta^{r,y}(\theta)}^2_{\Hh_1} \abs{\zeta^{r,y}_{K_\delta,K_\delta}(\theta) }^2_{\Hh_1} \D\theta\\
        &+ \EE\int_r^t \abs{ S(t-\theta)K_\delta}^2_{\Hh_1} 
        \abs{\zeta^{r,y}_{K_\delta,K_\delta}(\theta)-\zeta^{r,y}_{\delta,K_\delta,K_\delta}(\theta)}^2_{\Hh_1} \D\theta \\
        \lesssim&\, \int_r^t 
        \overline{\Delta K_\delta}(t-\theta)
        \Big(1+\overline{K}\star\overline{K}^2 (\theta-r)  \Big)\D\theta \\        &+\left(\sup_{\theta\in[r,T]} \EE\abs{\lambda^{r,y}(\theta) - \lambda_\delta^{r,y}(\theta)}^{\frac{2q}{q-4}}_{\Hh_1} \right)^{\frac{q-4}{q}} \int_r^t \abs{S(t-\theta)K_\delta}^2_{\Hh_1} \left(\EE\abs{\zeta^{r,y}_{K_\delta,K_\delta}(\theta) }^{q/2}_{\Hh_1} \right)^{4/q}\D\theta \\
        &+ \int_r^t \abs{ S(t-\theta)K_\delta}^2_{\Hh_1} \EE\abs{\zeta^{r,y}_{ K_\delta,K_\delta}(\theta)-\zeta^{r,y}_{\delta,K_\delta,K_\delta}(\theta)}^2_{\Hh_1} \D\theta\\
        &=: \mathbf{T_{31}} +  \mathbf{T_{32}} +  \mathbf{T_{33}}.
    \end{align*}
    Regarding the first term, Cauchy--Schwarz inequality and Young's inequalitiy $\abs{f\star g}_{L^2}\le \abs{f}_{L^2}\abs{g}_{L^1}$ yield 
    \begin{align*}
         \int_r^t 
        \overline{\Delta K}_\delta(t-\theta) \overline{K}\star\overline{K}^2 (\theta-r)  \D\theta 
         &\le  \left(\int_r^t\overline{\Delta K}_\delta(t-\theta)^2 \D\theta\right)^\half \left(\int_r^t \abs{\overline{K}\star\overline{K}^2 (\theta-r)}^2 \D\theta\right)^\half\\
         &\le \abs{\overline{\Delta K}_\delta^2}_{L^2_{t-r}}         \abs{\overline{K}}_{L^2_{t-r}}^3,
    \end{align*}
    hence~$\mathbf{T_{31}}\lesssim  \lvert \overline{\Delta K}_\delta\lvert_{L^1_{t-r}}+ \lvert \overline{\Delta K}_\delta^2\lvert_{L^2_{t-r}}$
     goes to zero as $\delta\to0$. By \eqref{eq:zetah1h2_integral_estimate} and~\eqref{eq:Lq_bound_zetah1h2}, H\"older's and Young's inequalities we get 
    \begin{align*}
         \int_r^t \abs{S(t-\theta)K_\delta}^2_{\Hh_1} \left(\EE\abs{\zeta^{r,y}_{K_\delta,K_\delta}(\theta) }^{q/2}_{\Hh_1} \right)^{4/q}\D\theta
         &\lesssim \int_r^t \overline{K}(t-\theta)\big(1+\overline{K}^{q/4}\star\overline{K}^{q/2}(\theta)\big)^{4/q}\D\theta\\
         &\le \abs{\overline{K}}_{L^{q/2}_{t-r}} \abs{\big(1+\overline{K}^{q/4}\star\overline{K}^{q/2}\big)^{4/q}}_{L^{\frac{q}{q-2}}_{t-r}}\\
         &\lesssim \abs{\overline{K}}_{L^{q/2}_{t-r}} \Big(1+\abs{\overline{K}^{q/4}\star\overline{K}^{q/2}}_{L^{\frac{4}{q-2}}_{t-r}}^{4/q}\Big)
         \\
         &\le \abs{\overline{K}}_{L^{q/2}_{t-r}} \Big(1+\abs{\overline{K}^{q/4}}_{L^{\frac{4}{q-2}}_{t-r}}^{4/q}\abs{\overline{K}^{q/2}}_{L^1_{t-r}}^{4/q}\Big)
    \end{align*}
    which is finite because $\overline{K}\in L^q$ with $q>4$. Hence we have~$\mathbf{T_{32}}\lesssim \delta^\frac{q-2}{q}$ and thus there is $C_T^\delta\to0$ such that $\mathbf{T_{31}}+\mathbf{T_{32}}\lesssim C^\delta_T$.
    We notice that once more the same estimates can be derived for~$\mathbf{T_4}$. 
    Gathering this estimate with~\eqref{eq:def_Fdelta1} we finally obtain
    \begin{align*}
        \EE\abs{\zeta^{r,y}_{K_
    \delta, K_\delta}(t)-\zeta^{r,y}_{ \delta, K_\delta, K_\delta}(t) }^2  
    \lesssim\, \mathbf{F}_\delta(t-r) + \int_r^t \overline{K}(t-\theta)\EE\abs{\zeta^{r,y}_{K_\delta,K_\delta}(\theta)-\zeta^{r,y}_{K_\delta,K_\delta}(\theta)}^2_{\Hh_1} \D\theta, 
    \end{align*}
    where
    \begin{align}
         \mathbf{F}_\delta(t-r) := \overline{K}^2\star \overline{\Delta K}_{\delta}(t-r) + C^\delta_T \Big(1+\overline{K}\star\overline{K}^2(t-r) 
        +  \big(  \overline{K}^2\star\overline{K}^2(t-r)\big)^\half\Big).
    \end{align}
    The Volterra--Gr\"onwall inequality (Lemma~\ref{lemma:Volterra_Gronwall}) followed by Cauchy--Schwarz inequality implies
    \begin{align*}
        \EE\abs{\zeta^{r,y}_{K_
    \delta, K_\delta}(t)-\zeta^{r,y}_{ \delta, K_\delta, K_\delta}(t) }^2
    \lesssim \mathbf{F}_\delta(t-r) + \int_r^t \overline{R}(t-\theta) \mathbf{F}_\delta(\theta-r)\D\theta\le \mathbf{F}_\delta(t-r) + \abs{\overline{R}}_{L^2} \abs{\mathbf{F}_\delta}_{L^2_{t-r}
    }.
    \end{align*}
    We have already established that $\mathbf{F}_\delta$ tends to zero pointwise. Young convolution's inequality shows that $\mathbf{F}_\delta$ also converges in $L^2_T$:
    \begin{align*}
        \abs{\mathbf{F}_\delta}_{L^2_T}
        &\le \abs{\overline{\Delta K}_\delta\star\overline{K}^2}_{L^2_T}+ C^\delta_T\Big(1+\abs{\overline{K}\star\overline{K}^2}_{L^2_T} + \abs{(\overline{K}^2\star\overline{K}^2)^\half}_{L^2_T}\Big)\\
        &\le \abs{\overline{\Delta K}_\delta}_{L^2_T}\abs{\overline{K}^2}_{L^1_T} +C^\delta_T\Big( 1+  \abs{\overline{K}}_{L^2_T}\abs{\overline{K}^2}_{L^1_T} + \abs{\overline{K}^2}_{L^1_T}\Big).
    \end{align*}
    This proves the pointwise convergence of $\EE|\zeta^{r,y}_{K_
    \delta, K_\delta}(t)-\zeta^{r,y}_{ \delta, K_\delta, K_\delta}(t)|^2 $. The $L^1_T$ convergence follows by dominated convergence since 
    \begin{align}
        \mathbf{F}_\delta(t-r) \lesssim \overline{K}\star\overline{K}^2(t-r) 
        +  \big(  \overline{K}^2\star\overline{K}^2(t-r)\big)^\half
    \end{align}
    and the right side is integrable in $r$ by Young's convolution inequality. 

    The additive noise case is proved in the same way but with $L^1(\Omega)$ norms instead of $L^2(\Omega)$.
\end{proof}

\subsection{Proof of Theorem \ref{thm:backward_equation_singular}}\label{subsec:BackwardProof}
 \textit{\textbf{(1) (Existence)}}     We start by showing that the value function \eqref{eq:ValueFunction} is sufficiently regular. In particular, we will prove that 
$u\in\Cc_{T,\mathscr{K}}^{1,2}$ per Definition \ref{dfn:CKclass_alt}. Then we will show that $u$ satisfies \eqref{eq:BackwardPDECauchyProblem} by taking the limit, as $\delta\to 0,$ of the ``smooth" Kolmogorov equation \eqref{eq:KolmoBackwardDelta} which is satisfied by $u_\delta$ \eqref{eq:vdelta}.

\underline{\textbf{Step 1}: $u\in \mathcal{D}^1_{\!\mathscr{K}}(\Hh_1)\cap \mathcal{D}^2_{\vspan(K,\Hh_1)}(\Hh_1).$}  We prove first that $u$ is twice continuously Gateaux differentiable along $\Hh_1-$directions under the assumption that $q>4$; the case $q>2$ with $\sigma$ constant is proved by exploiting the analogous estimates and convergence results.

Differentiating under the sign of expectation and using chain rule for Fr\'echet derivatives, along with Corollary \ref{cor:regularTangentProcesses}, we have for all $s\in[t,T),y,h_1,h_2\in\Hh_1$
\begin{equation}\label{eq:vGateauxDerivatives}
    \begin{aligned}
       &Du(s,y)(h_1)=\EE\big[D\varphi\big(\lambda^{s,y}(T)\big)\big(D_y\lambda^{s,y}(T)(h_1)\big)\big]=\EE\big[D\varphi\big(\lambda^{s,y}(T)\big)\big(\zeta^{s,y}_{h_1}(T)\big)\big],\\&
       D^2u(s,y)(h_1, h_2)\\&\quad\quad=\EE\big[D^2\varphi\big(\lambda^{s,y}(T)\big)\big(D_y\lambda^{s,y}(T)(h_1), D_y\lambda^{s,y}(T)(h_2)\big)\big]+\EE\big[D\varphi\big(\lambda^{s,y}(T)\big)\big(D^2_y\lambda^{s,y}(T)(h_1, h_2)\big)\big]
\\&\quad\quad=\EE\big[D^2\varphi\big(\lambda^{s,y}(T)\big)\big(\zeta^{s,y}_{h_1}(T), \zeta^{s,y}_{h_2}(T)\big)\big]+\EE\big[D\varphi\big(\lambda^{s,y}(T)\big)\big(\zeta^{s,y}_{h_1,h_2}(T)\big)\big],
    \end{aligned}
\end{equation}
where $\zeta^{s,y}_{h_1}, \zeta^{s,y}_{h_1,h_2}$ are respectively given by \eqref{eq:zetaKEqRough}, \eqref{eq:zetah1h2Rough}.
Notice that, due to Proposition \ref{Prop:Feller}, Lemmas~\ref{lem:zetaKrough} and \ref{lem:zetaKKRough} these are continuous functions of $y\in\Hh_1.$ Moreover, it is straightforward to verify from this computation that $u$ is in fact twice Fr\'echet differentiable.

Turning to the setting of Definition \ref{dfn:CKclass_alt} we immediately see that $u$ is bounded since $\varphi\in \Cc^2_b(\Hh_1)$ and in view of \eqref{eq:estimate_zeta} we have
\begin{align}
    |Du(s,y)(h_1)|\leq \EE\big[|D\varphi\big(\lambda^{s,y}(T)\big)|_{\Hh_1}|\zeta^{s,y}_{h_1}(T)|_{\Hh_1}\big]\leq |D\varphi|_{\infty}\EE\big[|\zeta^{s,y}_{h_1}(T)|_{\Hh_1}\big]\lesssim |S(T-s)h_1|_{\Hh_1},
\end{align}
thereby satisfying the first estimate in \eqref{eq:defC12_D2F_bound}. Assume first that $q\ge4$.  
For any $h\in \vspan(K,\Hh_1)$ let $\overline{h}=\abs{S(\cdot)h}^2_{\Hh_1}$, the estimates \eqref{eq:estimate_zeta} and \eqref{eq:estimate_zetah1h2} (with $p=4$) then yield 
\begin{align*}
& |D^2u(s,y)(h_1, h_2)|^2\leq |D^2\varphi|_{\infty}^2 |\zeta^{s,y}_{h_1}(T)|_{L^2(\Omega;\Hh_1)}^2 |\zeta^{s,y}_{h_2}(T)|_{L^2(\Omega;\Hh_1)}^2+|D\varphi|_{\infty}^2 |\zeta^{s,y}_{h_1,h_2}(T)|_{L^2(\Omega;\Hh_1)}^2\\&
    \qquad\lesssim \overline{h}_1(T-s)\overline{h}_2(T-s)
    +|\overline{h}_1|_{L^2_T}|\overline{h}_2|_{L^2_T}\big(1 +
\overline{K}((T-s)/2)\big) + \overline{h}_1((T-s)/2)\overline{h}_2((T-s)/2)
    \\&
    \qquad\lesssim |\overline{h}_1|_{L^2_T}|\overline{h}_2|_{L^2_T}\big(1 +
\overline{K}((T-s)/2)\big) + \overline{h}_1((T-s)/2)\overline{h}_2((T-s)/2).   
\end{align*}
This furnishes, for any~$\delta>0$,
\begin{align}\nonumber
 |D^2u(s,y)(K_\delta, K_\delta)|^2
&\lesssim \int_0^T \abs{S(r+\delta)K}^2_{\Hh_1}\dr \big(1 + \abs{S((T-s)/2)K}_{\Hh_1}^2\big) + \abs{S((T-s)/2)K}_{\Hh_1}^4\\
&\lesssim 1 + \abs{S((T-s)/2)K}_{\Hh_1}^2+ \abs{S((T-s)/2)K}_{\Hh_1}^4 =: \varpi(s)^2.
\label{eq:BoundD2V}
\end{align}
Thus $u$ satisfies the second estimate in ~\eqref{eq:defC12_D2F_bound} since the right side of the last estimate is independent of~$\delta$ and integrable over $s$ in virtue of Assumption~\ref{assumption:Kernel}. The latter also helps us derive, for all~$\delta,\delta'>0$,
\begin{align*}
    &\abs{D^2u(t,y)\Big(K_{\delta}-K_{\delta'}, K_{\delta}+K_{\delta'} \Big)}^2\\
    &\lesssim \int_0^T \abs{S(t)\big(K_{\delta}-K_{\delta'}\big)}_{\Hh_1}^2\dt \int_0^T \abs{S(t)\big(K_{\delta}+K_{\delta'}\big)}_{\Hh_1}^2\dt \big(1 + \abs{S((T-s)/2)K}_{\Hh_1}^2\big) \\
    &\qquad +\abs{S((T-s)/2) \big(K_{\delta}-K_{\delta'}\big)}^2_{\Hh_1} \abs{S((T-s)/2) \big(K_{\delta}+K_{\delta'}\big)}^2_{\Hh_1}.
\end{align*}
The strong continuity of the semigroup combined with \eqref{eq:cond_K2} and \eqref{eq:diff_StK_H} yield
\begin{align*}
    \int_0^T \abs{S(t)\big(K_{\delta}-K_{\delta'}\big)}_{\Hh_1}^2\dt 
    \lesssim \int_0^T \abs{S(t+\delta-\delta')K-S(t)K}^2_{\Hh_1}\dt \lesssim \abs{\delta-\delta'}^{\frac{q}{q-2}}.
\end{align*}
Moreover, the strong continuity helps again to derive that
\begin{align*}
    \abs{S((T-s)/2) \big(K_{\delta}-K_{\delta'}\big)}^2_{\Hh_1} 
    =  \abs{(S(\delta)-S(\delta'))S((T-s)/2) K}^2_{\Hh_1} 
\end{align*}
converges as $\delta-\delta'$ tends to zero. In particular the function~$\kappa:[0,1]^2\to\RR$ defined by $\kappa(\delta,\delta'):=\abs{\delta-\delta'}^{\frac{q}{q-2}}+\abs{(S(\delta)-S(\delta'))S((T-s)/2) K}^2_{\Hh_1}$ satisfies the conditions of Assumption \ref{dfn:CKclass_alt} and, since~$s\mapsto\varpi(s)=\abs{S((T-s)/2)K}_{\Hh_1}^2$ is integrable by Assumption~\ref{assumption:Kernel}, the last bound in~\eqref{eq:defC12_D2F_bound} holds.

If instead $q\ge2$ and $\sigma$ is constant, then the analogous estimates yield 
\begin{align*}\nonumber
& |D^2u(s,y)(K_\delta, K_\delta)|^2
\lesssim 1 + \abs{S((T-s)/2)K}_{\Hh_1}+ \abs{S((T-s)/2)K}_{\Hh_1}^2,\\
&\abs{D^2u(t,y)\Big(K_{\delta}-K_{\delta'}, K_{\delta}+K_{\delta'} \Big)}
\lesssim \kappa(\delta,\delta') \big(1 + \abs{S((T-s)/2)K}_{\Hh_1}\big)
\end{align*}
and the same conclusion follows.

Hence Lemma \ref{lemma:singular_diff} guarantees that  $u(s,\cdot)\in \mathcal{D}^1_{\!\mathscr{K}}(\Hh_1)\cap \mathcal{D}^2_{\vspan(K,\Hh_1)}(\Hh_1)$ per Definition \ref{dfn:Derivative}. We can identify the singular directional derivatives to be respectively given by
\begin{equation}\label{eq:vSingularDerivatives}
    \begin{aligned}
        &\mathcal{D}u(s,y)(h)=\EE[D\varphi\big(\lambda^{s,y}(T)\big)\big(\zeta^{s,y}_{h}(T)\big)]\;,\\&
\mathcal{D}^2u(s,y)(K, K)=\EE[D^2\varphi\big(\lambda^{s,y}(T)\big)\big(\zeta^{s,y}_K(T), \zeta^{s,y}_{K}(T)\big)]+\EE\big[D\varphi\big(\lambda^{s,y}(T)\big)\big(\zeta^{s,y}_{K, K}(T)\big)\big].
    \end{aligned}
\end{equation}
Indeed, denoting $h_\delta:=S(\delta)h\in\Hh_1$ for any $h\in\mathscr{K}, \delta>0$, and in view of \eqref{eq:vGateauxDerivatives} and Lemma \ref{lem:SingularDifferentiability} we have, for any $s,y,$
\begin{equation*}
    \begin{aligned}
      \big|\EE\big[D\varphi\big(\lambda^{s,y}(T)\big)\big(\zeta^{s,y}_{h_\delta}(T)\big)\big]- \EE\big[D\varphi\big(\lambda^{s,y}(T)\big)\big(\zeta^{s,y}_{h_\delta}(T)\big)\big]\big|\leq |D\varphi|_{\infty}\EE|      \zeta^{s,y}_{h_\delta}(T)-\zeta^{s,y}_{h}(T)|\longrightarrow 0, 
    \end{aligned}
\end{equation*}
and
\begin{equation*}
    \begin{aligned}      &\big|\EE\big[D^2\varphi\big(\lambda^{s,y}(T)\big)\big(\zeta^{s,y}_{K}(T), \zeta^{s,y}_{K}(T)\big)\big]-\EE\big[D^2\varphi\big(\lambda^{s,y}(T)\big)\big(\zeta^{s,y}_{K_\delta}(T), \zeta^{s,y}_{K_\delta}(T)\big)\big]\big|\\&   +\big|\EE\big[D\varphi\big(\lambda^{s,y}(T)\big)\big(\zeta^{s,y}_{K,K}(T)\big)\big]-\EE\big[D\varphi\big(\lambda^{s,y}(T)\big)\big(\zeta^{s,y}_{K_\delta,K_\delta}(T)\big)\big]    \big|\\&
    \leq |D^2\varphi|_{\infty}\big|\zeta^{s,y}_{K}(T)-\zeta^{s,y}_{K_\delta}(T)|^2_{L^2(\Omega;\Hh_1)}
    +|D\varphi|_{\infty}\big|\zeta^{s,y}_{K,K}(T)-\zeta^{s,y}_{K_\delta,K_\delta}(T)|_{L^1(\Omega;\Hh_1)}
    \longrightarrow 0,
    \end{aligned}
\end{equation*}
as $\delta\to 0.$ 
In particular, $\mathcal{D}u(s,y):\mathscr{K}\to\RR$ and~$\mathcal{D}^2u(s,y):\vspan(K,\Hh_1)\times\vspan(K,\Hh_1)\to\RR$ are linear and bilinear, respectively.

We now check the equicontinuity required in Definition \ref{dfn:CKclass_alt} iii).
 From \eqref{eq:vGateauxDerivatives}, Lemmas \ref{Prop:Feller} \ref{lem:zetaKrough}, Lipschitz continuity of $D\varphi$ and Cauchy--Schwarz inequality, for any $y,z\in\Hh_1,\,\delta>0$,
 \begin{align*}
     &\abs{Du(s,y)(K_\delta)-Du(s,z)(K_\delta)}^2\\
     &\qquad\le \EE\abs{\lambda^{s,y}(T)-\lambda^{s,z}(T)}_{\Hh_1}^2 \EE\abs{\zeta^{s,y}_{K_\delta}(T)}^2_{\Hh_1}
     +\EE\abs{\lambda^{s,z}(T)}_{\Hh_1}^2 \EE\abs{\zeta^{s,y}_{K_\delta}(T)-\zeta^{s,z}_{K_\delta}(T)}^2_{\Hh_1}\\
     &\qquad\le \abs{y-z}^2_{\Hh_1} \abs{S(T-s)K_\delta}^2_{\Hh_1} + C_T \abs{y-z}^2_{\Hh_1} \le C_T \abs{y-z}^2_{\Hh_1},
 \end{align*}
 where $C_T$ does not depend on $\delta$. Similarly, with the help of Propositon~\ref{Prop:Feller}, Lemmas \ref{lem:zetaKrough} and \ref{lem:zetaKKRough}, Lipschitz continuity of $D\varphi$ and $\gamma_0$--H\"older continuity of $D^2\varphi$, along with Cauchy--Schwarz inequality, we obtain 
 \begin{align*}
     &\abs{D^2u(s,y)(K_\delta,K_\delta)-D^2u(s,z)(K_\delta,K_\delta)}\\
     &\le \EE\Big[\abs{\lambda^{s,y}(T)-\lambda^{s,z}(T)}_{\Hh_1}\abs{\zeta^{s,y}_{K_\delta}(T)}^2_{\Hh_1} \Big] 
     + \EE\Big[\abs{\zeta^{s,y}_{K_\delta}(T)-\zeta^{s,z}_{K_\delta}(T)}_{\Hh_1} \abs{\zeta^{s,y}_{K_\delta}(T)+\zeta^{s,z}_{K_\delta}(T)}_{\Hh_1}\Big] \\
     &\quad + \EE\Big[\abs{\lambda^{s,y}(T)-\lambda^{s,z}(T)}_{\Hh_1}^{\gamma_0} \abs{\zeta^{s,y}_{K_\delta,K_\delta}(T)}_{\Hh_1} \Big] 
     + \EE\Big[\abs{\zeta^{s,y}_{K_\delta,K_\delta}(T)-\zeta^{s,z}_{K_\delta,K_\delta}(T)}_{\Hh_1} \Big]\\
     &\lesssim \abs{y-z}_{\Hh_1} \abs{S(T-s)K_\delta}^2_{\Hh_1} + \abs{y-z}_{\Hh_1} \abs{S(T-s)K_\delta}_{\Hh_1} 
     + \abs{y-z}^{\gamma_0}_{\Hh_1} 
     \left(\EE\big\lvert\zeta^{s,y}_{K_\delta,K_\delta}\big\lvert_{\Hh_1}^{q/2} \right)^{2/q} \\
     &\quad + \abs{y-z}_{\Hh_1}^{\gamma_0}\left(1+ \abs{S((T-s)/2)K_\delta}_{\Hh_1}+ \abs{S((T-s)/2)K_\delta}_{\Hh_1}^2\right)
 \end{align*}
Thanks to the estimate $\abs{S(\delta)S(t)K}_{\Hh_1}\le C_T \abs{S(t)K}_{\Hh_1}$ for any $t>0$ and some $C_T>0$ independent of~$\delta$ we conclude
\begin{equation*}
\begin{aligned}
    |D^2u(s,y)(K_\delta,K_\delta)&-D^2u(s,z)(K_\delta,K_\delta)|
    \\&\lesssim \Big(\abs{y-z}_{\Hh_1}^{\gamma_0}\vee\abs{y-z}_{\Hh_1}\Big)\bigg(1+ \abs{S((T-s)/2)K}_{\Hh_1}+ \abs{S((T-s)/2)K}_{\Hh_1}^2\bigg).
    \end{aligned}
\end{equation*}
This guarantees the equicontinuity demanded by Definition~\ref{dfn:CKclass_alt} iii).
In particular, Lemma \ref{lemma:singular_diff} ensures that the maps 
$$  y\longmapsto \mathcal{D}u(s,y)(h),  \mathcal{D}^2u(s,y)(K,K)$$
are continuous for any fixed $s,h.$

\underline{\textbf{Step 2}: $u$ solves \eqref{eq:BackwardPDECauchyProblem} on $[0,T]\times\Hh_2.$}

Fix $t\in[0,T]$ and recall that the ``mollified" value function \eqref{eq:vdelta} 
\begin{equation*}
u_\delta(s,y)=\EE[\varphi(\lambda_\delta^{s,y}(T))], (s,y)\in[t,T]\times\Hh_1
\end{equation*} 
 satisfies, for all $t\in[0,T]$ and $y\in\Hh_2$, the PDE
\begin{equation}\label{eq:ExistenceKolmoBackwardDelta}
   \begin{aligned}   u_\delta(t,y)=\varphi(y)+\int_t^T\bigg(\big\langle Dv_\delta(s,y),  \partial_x y+ K_\delta b_0(y)\big\rangle_{\Hh_1}+\frac{1}{2}\textnormal{Tr}\Big[ D^2u_\delta(s,y)K_\delta \sigma_0(y)(K_\delta \sigma_0(y))^*\Big]             \bigg)\ds.
   \end{aligned} 
\end{equation}
Throughout this step we fix $y\in\Hh_2$ and aim at taking the limit $\delta\to0$ on both sides of the equation. Starting from the left-hand side, Lemma \ref{lem:lambdaConvergence} along with the dominated convergence theorem yield
\begin{equation}
    \label{eq:existenceDeltaLimit1}
    \lim_{\delta\to 0}u_\delta(t,y)=\lim_{\delta\to 0}\EE[\varphi(\lambda_\delta^{t,y}(T))]=\EE[\varphi(\lambda^{t,y}(T))]=u(t,y).
\end{equation}
Recalling \eqref{eq:vSingularDerivatives}, we turn to the first derivative terms. From the linearity of the derivatives and the triangle inequality we obtain
\begin{equation}\label{eq:Dvdeltaestimate}
\begin{aligned}
    &|\mathcal{D}u\big(s,y\big) \big( K b_0(y)\big)-Du_\delta\big(s,y\big) \big( K_\delta b_0(y)\big)|\\&
    \leq \abs{b_0(y)} |\mathcal{D}u\big(s,y\big) \big( K \big)-Du_\delta\big(s,y\big) \big( K_\delta \big)|\\    
    &\lesssim\bigg|\EE\left[ D\varphi\big(\lambda^{s,y}(T)\big)\big(\zeta^{s,y}_{ K}(T)-\zeta^{s,y}_{\delta, K}(T)\big)\right]\bigg|
    +\bigg|\EE\left[ \big(D\varphi\big(\lambda^{s,y}(T)\big)-D\varphi\big(\lambda_\delta^{s,y}(T)\big)\zeta^{s,y}_{\delta, K}(T)\right]\bigg|
    \\&
    \quad+\bigg|  \EE\left[ D\varphi\big(\lambda_\delta^{s,y}(T)\big)\big(\zeta^{s,y}_{\delta, K}(T)-\zeta^{s,y}_{\delta, K_\delta}(T)\big)\right]     \bigg|\\&
    \leq |D\varphi|_{\infty}\EE\left[ \big|\zeta^{s,y}_{ K}(T)-\zeta^{s,y}_{\delta, K}(T)\big|_{\Hh_1}\right]
+|D\varphi|_{Lip}\EE\left[ \big|\lambda^{s,y}(T)-\lambda_\delta^{s,y}(T)\big|_{\Hh_1}\big|\zeta^{s,y}_{\delta, K}(T)\big|_{\Hh_1}\right]\\&
\quad +|D\varphi|_{\infty}\EE\left[ \big| \zeta^{s,y}_{\delta, K}(T)-\zeta^{s,y}_{\delta, K_\delta}(T)  \big|_{\Hh_1}\right].
\end{aligned}  
\end{equation}
Lemmas \ref{lem:lambdaConvergence}, \ref{lem:zetadeltaKL1limit}, \ref{lem:ZetaDeltaKdeltaConvergence} and the a priori bound from Lemma \ref{lem:zetaKrough}  furnish
    \begin{align*}
    \lim_{\delta\to0} \int_t^T \EE|\zeta^{s,y}_{ K}(T)-\zeta^{s,y}_{ \delta, K}(T)|_{\Hh_1} \dr =\lim_{\delta\to0} \int_t^T \EE|\zeta^{s,y}_{ \delta,K}(T)-\zeta^{s,y}_{ \delta, K_\delta}(T)|_{\Hh_1} \ds=0,
\end{align*}
and 
\begin{align*}
    \lim_{\delta\to0} &\int_t^T\EE\left[ \big|\lambda^{s,y}(T)-\lambda_\delta^{s,y}(T)\big|_{\Hh_1}\big|\zeta^{s,y}_{\delta, K}(T)\big|_{\Hh_1}\right]\dr\\&\lesssim \lim_{\delta\to0}\sup_{r\in[s, T]}\EE\left[ \big|\lambda^{s,y}(T)-\lambda_\delta^{s,y}(T)\big|^2_{\Hh_1}\right]^{\frac{1}{2}}\int_t^T|S(T-r)K|_{\Hh_1}\ds=0,
\end{align*}
where we also applied the Cauchy--Schwarz inequality. Thus, integrating \eqref{eq:Dvdeltaestimate} with respect to $s\in[t,T]$ we obtain 
\begin{equation}\label{eq:existenceDeltaLimit2}
    \lim_{\delta\to 0}\int_t^T Du_\delta\big(s,y\big) \big(K_\delta b_0(y)\big)\ds=\int_t^T \mathcal{D}u\big(s,y\big) \big(K b_0(y)\big)\ds.
\end{equation}
\noindent Since $\partial_xy\in\Hh_1$ is a smooth direction, a similar yet simpler argument yields 
\begin{equation}\label{eq:existenceDeltaLimit3}
    \lim_{\delta\to 0}\int_t^T Du_\delta\big(s,y\big) \big(\partial_xy\big)\ds=\int_t^T Du\big(s,y\big) \big(\partial_xy\big)\ds.
\end{equation}
To avoid repetition, its proof shall be omitted. Next, we show convergence of the second-derivative terms in \eqref{eq:ExistenceKolmoBackwardDelta}. Let  $\{e_k\}_{k=1,\dots m}$ be the standard basis of $\RR^m.$ In view of \eqref{eq:vSingularDerivatives} and the bilinearity of~$\Dd^2u(s,y)$ and~$D^2u(s,y)$, we apply the triangle inequality and the boundedness of $\sigma$ to obtain the decomposition 
\begin{equation}\label{eq:BackwardPDE2ndDerivativeConvergence}
    \begin{aligned}
        &\big|\mathcal{D}^2u\big(s,y\big) \big(K\sigma_0(y)e_k,K\sigma_0(y)e_k\big)-D^2u_\delta\big(s,y\big) \big(K_\delta\sigma_0(y)e_k,K_\delta\sigma_0(y)e_k\big)\big|\\
        &\leq \abs{\sigma_0(y)}^2 \big|\mathcal{D}^2u\big(s,y\big) \big(K,K\big)-D^2u_\delta\big(s,y\big) \big(K_\delta,K_\delta\big)\big|\\&
        \lesssim \big|\EE[  D\varphi\big(\lambda_\delta^{s,y}(T)\big)\zeta^{s,y}_{\delta, K_\delta,K_\delta}(T)-D\varphi\big(\lambda^{s,y}(T)\big)\zeta^{s,y}_{K,K}(T)   ]\big|\\& \quad +\big|\EE\big[D^2\varphi\big(\lambda_\delta^{s,y}(T)\big)(\zeta^{s,y}_{\delta, K_\delta}(T),\zeta^{s,y}_{\delta, K_\delta}(T))-D^2\varphi\big(\lambda^{s,y}(T)\big)\big(\zeta^{s,y}_{K}(T),\zeta^{s,y}_{K}(T)\big)\big]\big|\\&
    \leq \big|\EE\big[  \big( D\varphi\big(\lambda_\delta^{s,y}(T)\big)-D\varphi\big(\lambda^{s,y}(T)\big)\big)\zeta^{s,y}_{\delta, K_\delta,K_\delta}(T)\big]\big|
    +\big|\EE\big[ D\varphi\big(\lambda^{s,y}(T)\big)\big(\zeta^{s,y}_{\delta, K_\delta,K_\delta}(T)-  \zeta^{s,y}_{ K_\delta,K_\delta}(T)     \big)\big]\big|\\&
    \quad+\big|\EE\big[ D\varphi\big(\lambda^{s,y}(T)\big)\big(\zeta^{s,y}_{K_\delta,K_\delta}(T)-  \zeta^{s,y}_{ K,K}(T)     \big)\big]\big| \\&+\bigg|\EE\bigg[\bigg(D^2\varphi\big(\lambda_\delta^{s,y}(T)\big)-D^2\varphi\big(\lambda^{s,y}(T)\big)\bigg)(\zeta^{s,y}_{\delta, K_\delta}(T),\zeta^{s,y}_{\delta, K_\delta}(T))\bigg]\bigg|\\&
 \quad +\bigg|\EE\bigg[D^2\varphi\big(\lambda^{s,y}(T)\big)(\zeta^{s,y}_{\delta, K_\delta}(T)-\zeta^{s,y}_{K_\delta}(T),\zeta^{s,y}_{\delta, K_\delta}(T)+\zeta^{s,y}_{K_\delta}(T))\bigg]\bigg|\\&
 \quad +\bigg|\EE\bigg[D^2\varphi\big(\lambda^{s,y}(T)\big)(\zeta^{s,y}_{ K_\delta}(T)-\zeta^{s,y}_{K}(T),\zeta^{s,y}_{K_\delta}(T)+\zeta^{s,y}_{K}(T))\bigg]\bigg|\\&
 \leq |D\varphi|_{Lip}\EE\big[\big|  \lambda_\delta^{s,y}(T)-\lambda^{s,y}(T)\big|_{\Hh_1} |\zeta^{s,y}_{\delta, K_\delta,K_\delta}(T)\big|_{\Hh_1}\big] +|D\varphi|_{\infty}\EE\big[\big|  \zeta^{s,y}_{\delta, K_\delta,K_\delta}(T)-  \zeta^{s,y}_{ K_\delta,K_\delta}(T)     \big|_{\Hh_1} \big]\\&
 \quad +|D\varphi|_{\infty}\EE\big[\big|  \zeta^{s,y}_{ K_\delta,K_\delta}(T)-  \zeta^{s,y}_{K,K}(T)     \big|_{\Hh_1} \big]
 +|D^2\varphi|_{Lip}\EE\big[\big|  \lambda_\delta^{s,y}(T)-\lambda^{s,y}(T)\big|_{\Hh_1} |\zeta^{s,y}_{\delta, K_\delta}(T)\big|^2_{\Hh_1}\big]\\&
 \quad +|D^2\varphi|_{\infty}\left(\EE\big[\big| \zeta^{s,y}_{\delta, K_\delta}(T)-\zeta^{s,y}_{K_\delta}(T)\big|^2_{\Hh_1}\big] \EE\big[\big| \zeta^{s,y}_{\delta, K_\delta}(T)+\zeta^{s,y}_{K_\delta}(T)\big|^2_{\Hh_1}\big]\right)^\half\\
 &\quad +|D^2\varphi|_{\infty}\left(\EE\big[\big| \zeta^{s,y}_{ K_\delta}(T)-\zeta^{s,y}_{K}(T)\big|^2_{\Hh_1}\big]\EE\big[\big| \zeta^{s,y}_{ K_\delta}(T)+\zeta^{s,y}_{K}(T)\big|^2_{\Hh_1}\big]\right)^\half\\
 &=: \sum_{i=1}^6 \mathbf{V}_i^\delta(s),
    \end{aligned}
\end{equation}  
where we used Cauchy--Schwarz inequality at the end. 
In view of Lemmas  \ref{lem:zetaKKRough} and \ref{lem:lambdaConvergence} the first term satisfies 
\begin{equation}\label{eq:Existence2ndDerivativeLimit1}
    \begin{aligned}
        \int_t^T\mathbf{V}_1^\delta(s)\ds&
        \leq \sup_{s\in[t,T]}\big|  \lambda_\delta^{s,y}(T)-\lambda^{s,y}(T)\big|_{L^2(\Omega;\Hh_1)}\int_t^T \big|\zeta^{s,y}_{\delta, K_\delta,K_\delta}(T)\big|_{L^2(\Omega;\Hh_1)}\ds\\&
        \leq \delta^{\frac{q-2}{2q}}   \int_t^T\bigg(     1+|S((T-s)/2)K|_{\Hh_1}+|S((T-s)/2)K|^2_{\Hh_1}\bigg)\ds
    \end{aligned}
\end{equation}
which goes to zero
as $\delta\to 0,$ where the last integral is finite by Assumption \ref{assumption:Kernel} and the second line follows by the Cauchy--Schwarz inequality.

Turning to the second term, Lemma \ref{lem:2ndTangentConvergence} along with Jensen's inequality furnish
  \begin{align}\label{eq:Existence2ndDerivativeLimit2}
   \lim_{\delta\to0}
    \bigg(\int_t^{T} \mathbf{V}_2^\delta(s) \ds\bigg)^2 \lesssim \lim_{\delta\to0}
    \int_t^{T} \EE|\zeta^{s,y}_{K_
    \delta, K_\delta}(T)-\zeta^{s,y}_{ \delta, K_\delta, K_\delta}(T)|_{\Hh_1}^2 \ds 
    =0.
\end{align}
In view of Lemma \ref{lem:SingularDifferentiability} and the dominated convergence theorem, the third term  satisfies
\begin{equation}\label{eq:Existence2ndDerivativeLimit3}
    \begin{aligned}
      \lim_{\delta\to0}\int_t^T\ \mathbf{V}_3^\delta(s)  \ds
      & \lesssim \lim_{\delta\to0}\int_t^T\bigg(C_T^\delta\bigg(1+|S((T-s)/2)K|_{\Hh_1}\bigg)+ |(S(\delta)-I)S((T-s)/2)K|^2_{\Hh_1}\bigg)\ds=0.
    \end{aligned}
\end{equation}
Regarding the fourth term, Lemmas \ref{lem:lambdaConvergence}, the a priori bounds from \ref{lem:zetaKrough}, along with the Cauchy--Schwarz inequality furnish
\begin{equation}\label{eq:Existence2ndDerivativeLimit4}
    \begin{aligned}
\lim_{\delta\to0}
\int_t^T \mathbf{V}_4^\delta(s)\ds
&\le \lim_{\delta\to0}
\int_t^T \EE\big[\big|  \lambda_\delta^{s,y}(T)-\lambda^{s,y}(T)\big|_{\Hh_1} |\zeta^{s,y}_{\delta, K_\delta}(T)\big|^2_{\Hh_1}\big]\ds\\&
\leq \lim_{\delta\to0}\sup_{s\in[t,T]}\big|  \lambda_\delta^{s,y}(T)-\lambda^{s,y}(T)\big|_{L^2(\Omega;\Hh_1)}\int_t^T  \big|\zeta^{s,y}_{\delta, K_\delta}(T)\big|^2_{L^4(\Omega;\Hh_1)}\ds\\&
\lesssim \lim_{\delta\to0}\delta^{\frac{q-2}{2q}} \int_t^T|S(T-s)K|^{2}_{\Hh_1}\ds = 0.
    \end{aligned}
\end{equation}
As for the fifth and sixth terms, Lemma \ref{lem:zetadeltaKL1limit} (with $K$ replaced by $K_\delta$), the estimates \eqref{eq:estimate_zeta}, \eqref{eq:zetaKdiffBnd} and Cauchy--Schwarz inequality imply
\begin{equation}\label{eq:Existence2ndDerivativeLimit56}
    \begin{aligned}
       \lim_{\delta\to0}
\int_t^T \big(\mathbf{V}_5^\delta(s)&+\mathbf{V}_6^\delta(s)\big)\ds
\\&\le \lim_{\delta\to0}\int_t^T\Big(\EE\big| \zeta^{s,y}_{\delta, K_\delta}(T)-\zeta^{s,y}_{K_\delta}(T)\big|^2_{\Hh_1}+\EE\big| \zeta^{s,y}_{ K_\delta-K}(T)\big|^2_{\Hh_1}\Big)\ds \int_t^T \abs{S(T-s)K}_{\Hh_1}^2\ds\\&
       \lesssim \lim_{\delta\to0}\int_t^T\EE\big| \zeta^{s,y}_{\delta, K_\delta}(T)-\zeta^{s,y}_{K_\delta}(T)\big|^2_{\Hh_1}\ds+ \int_t^T|(S(\delta)-I)S(T-s)K|^p_{\Hh_1}\ds= 0.      \end{aligned}
\end{equation}
Collecting the convergence results \eqref{eq:BackwardPDE2ndDerivativeConvergence} to \eqref{eq:Existence2ndDerivativeLimit56} we deduce that
\begin{equation}\label{eq:existenceDeltaLimit4}
    \lim_{\delta\to 0} \int_t^T D^2u_\delta\big(s,y\big) \big(K_\delta\sigma_0(y)e_k,K_\delta\sigma_0(y)e_k\big)\ds=    \int_t^T \mathcal{D}^2u\big(s,y\big) \big(K\sigma_0(y)e_k,K\sigma_0(y)e_k\big)\ds,  
\end{equation}
for all $k=1,\dots, m$.
From the latter, along with \eqref{eq:existenceDeltaLimit1}, \eqref{eq:existenceDeltaLimit2}, \eqref{eq:existenceDeltaLimit3}, we take $\delta\to 0$ in \eqref{eq:ExistenceKolmoBackwardDelta} to conclude that 
\begin{equation}\label{eq:BackwardPDEStep1}
    \begin{aligned}  u(t, y)-\varphi(y)=\int_t^T\bigg[Du(s,y)\big( \partial_x y\big)+\mathcal{D}u(s,y)\big(K b_0(y)\big)+\frac{1}{2}\textnormal{Tr}\Big[ \mathcal{D}^2u(s,y)K \sigma_0(y)(K \sigma_0(y))^*\Big]             \bigg]\ds.
    \end{aligned}
\end{equation}
holds for all $(t,y)\in [0,T]\times\Hh_2.$ This concludes Step 2.

\underline{\textbf{Step 3}: $u$ solves \eqref{eq:BackwardPDECauchyProblem} on $[0,T]\times\mathscr{K}_1$ and $\partial_tu$ satisfies \eqref{eq:defC12_dtF}.} 
For any $y\in\mathscr{K}_1$ (recall \eqref{eq:yspace}), the sequence $\{y_n\}_{n\in\NN}:=\{S(1/n)y\}_{n\in\NN}\subset\Hh_2$ converges to $y$ in $\Hh_1.$ Moreover, since $y\in\mathscr{K}_1,$  $\partial_xy_n=S(1/n)\partial_x y\in\Hh_1$ for all $n\in\NN,$  hence $\partial_xy\in\mathscr{K}$ (recall \eqref{eq:SingularDirections}) is an admissible singular direction per Definition \ref{dfn:Derivative}.

From Step 1, $u(s,\cdot), Du(s,\cdot),D^2u(s,\cdot)$ are continuous in $\Hh_1.$ Again from Step 1, the singular differentials
$\Hh_1 \ni y \mapsto\mathcal{D}u(t,y)(h_1), \mathcal{D}^2u(t,y)(h_2,h_2)$  for any $h_1\in\mathscr{K}, h_2\in\vspan(K,\Hh_1).$ Thus the following pointwise limits hold:
\begin{equation*}
   \begin{aligned}
   &\lim_{n\to\infty}u(s, y_n)=u(s, y),
      \\&\lim_{n\to\infty}Du(s,y_n)\big( \partial_x y_n\big)=  \lim_{n\to\infty}Du(s,y_n)\big(S(1/n)\partial_x y\big)=\mathcal{D}u(s,y)\big( \partial_x y\big)\\&      \lim_{n\to\infty}\mathcal{D}u(s,y_n)\big(K b_0(y_n)\big)=\mathcal{D}u(s,y)\big(K b_0(y)\big)\\&
      \lim_{n\to\infty}\textnormal{Tr}\big[ \mathcal{D}^2u(s,y_n)K \sigma_0(y_n)(K \sigma_0(y_n))^*\big]= \textnormal{Tr}\big[ \mathcal{D}^2u(s,y)K \sigma_0(y)(K \sigma_0(y))^*\big],     \end{aligned} 
\end{equation*}
where we recall that (since $W$ is finite dimensional) the trace is given by a finite sum and we implicitly used Lemma \ref{lem:SingularDifferentiability} 1) and continuity of $b_0, \sigma_0$ as functions on $\Hh_1.$ In turn, the integrable (over $s\in[t,T]$) upper bounds
\eqref{eq:defC12_D2F_bound} (which transfer to the singular differentials \eqref{eq:vSingularDerivatives}) and the dominated convergence theorem allow us to interchange the limit as $n\to\infty$ and the Riemann integrals over $s$ in \eqref{eq:BackwardPDEStep1} to obtain
\begin{equation*}
    \begin{aligned}  u(t, y)-\varphi(y)=\int_t^T\bigg[\mathcal{D}u(s,y)\big( \partial_x y\big)+\mathcal{D}u(s,y)\big(K b_0(y)\big)+\frac{1}{2}\textnormal{Tr}\big[ \mathcal{D}^2u(s,y)K \sigma_0(y)(K \sigma_0(y))^*\big]             \bigg]\ds
    \end{aligned}
\end{equation*}
for all $(t,y)\in[0,T]\times\mathscr{K}_1.$ 
Clearly, the last display shows that $(0,T)\ni t\longmapsto u(\cdot,y)\in\RR$ is differentiable for all $y\in\mathscr{K}_1$ and
$$ 
\partial_tu(t, y)=- \mathcal{D}u(t,y)\big( \partial_x y\big)-\mathcal{D}u(t,y)\big(K b_0(y)\big)-\frac{1}{2}\textnormal{Tr}\big[ \mathcal{D}^2u(t,y)K \sigma_0(y)(K \sigma_0(y))^*\big].              
$$
We have seen that $Du$ and $D^2u$ satisfy the estimates~\eqref{eq:defC12_D2F_bound}, hence there is~$\varpi\in L^1([0,T];\RR^+)$, derived in~\eqref{eq:BoundD2V}, such that also
\begin{align*}
    &\abs{\Dd u(s,y)(\partial_x y +Kb_0(y))}\lesssim \abs{S(T-s)\partial_x y}_{\Hh_1} + \abs{S(T-s)K}_{\Hh_1},\\
    &\abs{\Dd u(s,y)(K,K)} \lesssim \varpi(s).
\end{align*}
Therefore there is $\hat\varpi\in L^1([0,T];\RR^+)$ given by~$\hat\varpi(s):= \varpi(s)+\abs{S(T-s)K}_{\Hh_1}$ such that~$
    \abs{\partial_t u(s,y)}\lesssim \hat\varpi(s).$ The proof is complete.\\
\noindent \textit{\textbf{(2) (Uniqueness)}}
Suppose $u:[0,T]\times\Hh_1\to\RR$ is a $\Cc^{1,2}_{T,\mathscr{K}}$ solution of the backward PDE~\eqref{eq:BackwardPDECauchyProblem}.  
Let $(t,y)\in[0,T]\times\mathscr{K}_1, 0\leq t\leq s\leq T.$ By virtue of Proposition \ref{prop:invariant subspaces} we have $\lambda^{t,y}(s)\in\mathscr{K}_1$. From an application of the singular It\^o formula (Theorem \ref{thm:SingularIto}) to the process $\{ u(s, \lambda^{t,y}(s)); s\in[t,T]\}$ (and noting that $u(T,y)=\varphi(y)$) we obtain 
\begin{equation*}
\begin{aligned}
    \varphi\big(\lambda^{t,y}(T)\big)-u(t, y)&=u(T,\lambda^{t,y}(T))-u(t, \lambda^{t,y}(t))\\&=
    \int_t^{T}\bigg[ \partial_tu(s, \lambda^{t,y}(s))+\mathcal{D}u(s,\lambda^{t,y}(s))\bigg( \partial_x \lambda^{t,y}(s)+K b_0(\lambda^{t,y}(s))\bigg)\bigg]\ds
    \\&\quad+\int_t^{T}\frac{1}{2}\textnormal{Tr}\Big[ \mathcal{D}^2u(s,\lambda^{t,y}(s))K \sigma_0(\lambda^{t,y}(s))(K \sigma_0(\lambda^{t,y}(s)))^*\Big]\ds\\&
    \quad +\int_t^T \mathcal{D}u(s,\lambda^{t,y}(s))\big(K \sigma_0(\lambda^{t,y}(s))\dW_s\big),
\end{aligned}
   \end{equation*}
   almost surely. Taking expectation, the stochastic integral vanishes and we obtain 
   \begin{equation*}
\begin{aligned}
    \EE\varphi\big(\lambda^{t,y}(T)\big)-u(t, y)=
    \int_t^{T}\EE\bigg[& \partial_tu(s, \lambda^{t,y}(s))+\mathcal{D}u(s,\lambda^{t,y}(s))\big( \partial_x \lambda^{t,y}(s)+K b_0(\lambda^{t,y}(s))\big)\\&+\frac{1}{2}\textnormal{Tr}\big[ \mathcal{D}^2u(s,\lambda^{t,y}(s))K \sigma_0(\lambda^{t,y}(s))(K \sigma_0(\lambda^{t,y}(s)))^*\big]\bigg]\ds.
\end{aligned}
   \end{equation*}
   Since $u$ is a pointwise solution of the Cauchy problem \eqref{eq:BackwardPDECauchyProblem}, the right-hand side vanishes and the desired probabilistic representation follows.

\noindent \textit{\textbf{(3) (Computation of conditional  expectations)}} The martingale representation is derived by applying the singular Itô formula \eqref{eq:SingularIto} and cancelling the finite variation terms thanks to the PDE~\eqref{eq:BackwardPDECauchyProblem}. We conclude by proving \eqref{eq:conditionalExpectationsBackward}. To this end, let $0\leq s\leq t\leq T, y\in\mathscr{K}_1.$ By the Markov property and time-homogeneity of $\lambda$ (Proposition \ref{prop:Markov}) we have 
\begin{equation*}
\begin{aligned}
     \EE[\varphi(\lambda^{s,y}(T))\big| \mathcal{F}_t]&=\EE[\varphi(\lambda^{0,y'}(T-t))\big]\big|_{y'=\lambda^{s,y}(t)} 
     \\&=\EE[\varphi(\lambda^{t,y'}(T))\big]\big|_{y'=\lambda^{s,y}(t)} 
     \\&=u(t,\lambda^{s,y}(t)),
\end{aligned}    
\end{equation*}   
where we used the invariance of $\mathscr{K}_1$ (Proposition \ref{prop:invariant subspaces}) to guarantee that $\lambda^{s,y}(t)\in\mathscr{K}_1$ on the last line.
The proof is complete.

\section{Connections and comparisons between infinite-dimensional SVE lifts}\label{sec:Conclusion}

We conclude this work by comparing our setting and results to those found in the context of different Markovian SVE lifts.

\subsection{Ornstein-Uhlenbeck lift} \label{subsec:OUlift}
Throughout this section, $\mathfrak{L}$ denotes the Laplace transform defined on a sufficiently large space of functions or measures over $\RR^+$ and we set $d=m=1$ in \eqref{eq:SVE}. As we have already mentioned in Section \ref{sec:Intro}, the transport SPDE \eqref{eq:SPDE} is related to the forward curve lift for SVEs. In this section, we show that the \textit{Ornstein-Uhlenbeck (OU) lift}, a well-studied process that has often been employed to restore Markovianity of SVEs such as \eqref{eq:SVE}, can be recovered by a transformation of the SPDE \eqref{eq:SPDE}. 

Initially introduced by Carmona--Coutin in \cite{carmona1998fractional} and further developed or employed  in stochastic analysis, stochastic control and mathematical finance settings (often with an emphasis on the special class of affine SVEs) \cite{cuchiero2019markovian, cuchiero2020generalized, harms2019affine, hamaguchi2023markovian, huber2024markovian, abi2021linear}, the OU lift is described as follows.

Assuming that $K$ is a completely monotone kernel, one can write 
\begin{equation}\label{eq:KLaplaceTransform}
    K(x)=\int_{\textnormal{supp}(\mu)} e^{-xz}\D\mu(z)=\mathfrak{L}(\mu)\equiv \mathfrak{L}(\rho)\;,\;\;x>0
\end{equation} as the Laplace transform of a measure $\mu$ with $\textnormal{supp}(\mu)\subset\RR^+.$ The last equality then holds if we further assume that $\mu$ is equivalent to Lebesgue measure with a strictly positive density $\frac{d\mu}{dx}(x)=\rho(x)>0, x\in\textnormal{supp}(\mu).$ 
Under these structural conditions one can then express, for all $t>0,$ the SVE solution \eqref{eq:SVE} with initial curve 
\begin{equation}\label{eq:OULiftInitialCurve}
    X_0(t)=\int_{\textnormal{supp}(\mu)}y_0(x)e^{-tx}\D\mu(x),
\end{equation}
for some $y_0\in L^2(\mu),$ by
$X_t=\langle Y_t, 1\rangle_{L^2(\mu)}.$ Here, for each $z\in\textnormal{supp}(\mu),$ $Y_t(z)$ is a random field that satisfies the SDE 
\begin{equation}\label{eq:OULiftDifferential}
    \D Y_t(z)=\bigg(-z\cdot Y_t(z)+b\big( \langle  Y_t, 1\rangle_{L^2(\mu)}  \big)\bigg)\dt+\sigma\big( \langle  Y_t, 1\rangle_{L^2(\mu)}  \big)\dW_t\;,\;\; Y_0(z)=y_0(z).   
\end{equation}

Notice that, when $b=0$ and for each fixed $z,$ $Y_t(z)$ is an Ornstein-Uhlenbeck process, with mean reversion rate $z.$ It is given in integral form by
\begin{equation}\label{eq:OULiftIntegral}
    \begin{aligned}
       Y_t(z)=e^{-zt}y_0(z)+\int_0^t e^{-z(t-s)}b\big( \langle  Y_s, 1\rangle_{L^2(\mu)}  \big)\ds+\int_0^t e^{-z(t-s)}\sigma\big( \langle  Y_s, 1\rangle_{L^2(\mu)}  \big)\dW_s.
    \end{aligned}
\end{equation}
Borrowing some notation from \cite{hamaguchi2023markovian}, the above integral equation admits a unique solution that takes values in the space $$\Hh_\mu:=\bigg\{y:\textnormal{supp}(\mu)\rightarrow\RR \bigg| \|y\|^2_{\Hh_\mu}:=\int_{\textnormal{supp}(\mu) } r(z)y(z)^2\D\mu(z)<\infty   \bigg\},$$
where $r$ is a non-increasing, nonnegative, Borel measurable function such that $1\wedge (z^{-1/2} )\leq  r(z) \leq 1$ for any $z\geq 0$ and $r\in L^1(\mu).$
The domain of the multiplication operator $(Ay)(z)=zy(z),$ which appears on the right-hand side, is given by the subspace 
$$\Vv_\mu=\bigg\{y\in\Hh_\mu \bigg| \|y\|^2_{\Vv_\mu}:=\int_{\textnormal{supp}(\mu) }(z+1)r(z)y(z)^2\D\mu(z)<\infty   \bigg\}.$$

In view of \eqref{eq:KLaplaceTransform}, \eqref{eq:OULiftInitialCurve} and  by virtue of strong uniqueness of the SVE, \eqref{eq:SVE} it is straightforward to verify that
\begin{equation*}
    \begin{aligned}
      \langle  Y_t, 1\rangle_{L^2(\mu)}  &=\int_{\textnormal{supp}(\mu)}y_0(x)e^{-tx}\D\mu(x)+\int_0^tK(t-s)b\big( \langle  Y_s, 1\rangle_{L^2(\mu)}  \big)\ds+\int_0^t K(t-s)\sigma\big( \langle  Y_s, 1\rangle_{L^2(\mu)}  \big)\dW_s\\&
      = X_t,
    \end{aligned}
\end{equation*}
and the map $\Vv_\mu\ni y\mapsto \langle y, 1\rangle_{L^2(\mu)}\in\RR $ is a continuous linear functional. From these facts, along with Markovianity of the $\Vv_\mu-$valued unique solutions \eqref{eq:OULiftIntegral} (as established e.g. in \cite[Theorem 2.19]{hamaguchi2023markovian}), $Y$ may be called a Markovian lift of $X.$

We further point out that there are striking analogies between Hamaguchi's spaces $\Vv_\mu\subset\Hh_\mu$ and our spaces~$\Hh_1\subset\Hh$. Recall that, when the kernel is singular (e.g. $K(t)=t^{H-\half},H\in(0,\half)$),  the coefficients of the SPDE~\eqref{eq:SPDE} belong to~$\Hh\setminus\Hh_1$ while the projection map~$y\mapsto ev_0(y)$ is only continuous with respect to~$\Hh_1$. Similarly, the coefficients of the evolution equation~\eqref{eq:OULiftDifferential} belong to~$\Hh_\mu\setminus\Vv_\mu$ but the projection map~$y\mapsto\langle y,1\rangle_{L^2(\mu)}$ is only continuous with respect to~$\Vv_\mu$. In both cases, the issue is solved by resorting to the mild solution theory in the smaller space.

With all necessary notation in place, we are ready to show that \eqref{eq:OULiftIntegral} can be recovered via a linear transformation of the unique $\Hh_1$ mild solution $\lambda$ from Theorem \ref{thm:SPDE_wellposedness} above.

\begin{proposition} Let $X_0, K, \mu, \rho$ and $y_0\in L^2(\mu)$ satisfy \eqref{eq:KLaplaceTransform}-\eqref{eq:OULiftDifferential}. Under Assumptions \ref{assumption:Kernel}, \ref{assumption:SVEassumptions} the following hold:
\begin{enumerate}
    \item $X_0\in\Hh_1$ and, for all $t>0,$ the unique $\Hh_1-$valued mild solution \eqref{eq:SPDE_mild} satisfies $$\lambda^{0, X_0}(t)\in\mathfrak{L}\big(L^1(\RR^+)\big)\;,\;\;\PP-\textnormal{almost surely}.$$
    \item For all $t\in \RR^+$  and $\PP\times\mu-$almost all
    $(\omega, z)\in\Omega\times\textnormal{supp}(\mu),$  the unique $\Hh_\mu-$valued solution~$Y^{0,y_0}$ of \eqref{eq:OULiftIntegral} with $Y_0=y_0$ satisfies
    \begin{equation}\label{eq:OULambdaLink}
        Y^{0,y_0}_t(z)=\tfrac{1}{\rho(z)}\mathfrak{L}^{-1}(\lambda^{0, X_0}(t))(z),
    \end{equation}
    where the inverse Laplace transform $\mathfrak{L}^{-1}$ is applied to the spatial variable. 
\end{enumerate}
\end{proposition}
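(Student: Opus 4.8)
The plan is to prove the two claims by leveraging the fact that both $\lambda^{0,X_0}$ and $Y^{0,y_0}$ solve linear-in-the-lift integral equations, and that the Laplace transform intertwines the shift semigroup $S(t)$ on $\Hh_1$ with the multiplication operator $e^{-tz}$ appearing in \eqref{eq:OULiftIntegral}.

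\textbf{Step 1: $X_0\in\Hh_1$ and membership in the image of the Laplace transform.} First I would verify that $X_0$, given by \eqref{eq:OULiftInitialCurve}, lies in $\Hh_1$. Writing $X_0(t)=\int_{\mathrm{supp}(\mu)} y_0(x)\E^{-tx}\D\mu(x)$, its weak derivative is $X_0'(t)=-\int_{\mathrm{supp}(\mu)} x\,y_0(x)\E^{-tx}\D\mu(x)$, and one estimates $|X_0|_{\Hh}$ and $|X_0'|_{\Hh}$ via Cauchy--Schwarz in the $\mu$-integral together with $\int_0^\infty \E^{-2tx}w(t)\dt<\infty$; this last bound follows from $w\in\Ww_a$ (the admissible weight has at most exponential growth, cf. Corollary~\ref{cor:SemigroupBound}, and is locally integrable). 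Since $X_0$ has the same integrability profile as $K$ (as remarked after \eqref{eq:MandelNessRep}), Assumption~\ref{assumption:Kernel} carries over. For the image claim, note that each term on the right-hand side of the mild equation \eqref{eq:SPDE_mild} is of the form $S(t-s)K$ acted on by a coefficient, and $S(r)K(x)=K(r+x)=\int \E^{-(r+x)z}\D\mu(z)=\mathfrak{L}(\E^{-r\cdot}\mu)(x)$; since $\E^{-r\cdot}\rho\in L^1(\RR^+)$ for $r>0$, each $S(t-s)K$ lies in $\mathfrak{L}(L^1(\RR^+))$, and so does $S(t)X_0$. The integrals over $s$ preserve this (Fubini, using the a.s.\ finiteness of the relevant integrals from Theorem~\ref{thm:SPDE_wellposedness}), hence $\lambda^{0,X_0}(t)\in\mathfrak{L}(L^1(\RR^+))$ almost surely.

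\textbf{Step 2: identification of the two processes.} With Step 1 in hand, $\mathfrak{L}^{-1}(\lambda^{0,X_0}(t))$ is well-defined $\mu$-a.e.; call it $g_t(z)$, and set $\widetilde Y_t(z):=g_t(z)/\rho(z)$. I would apply $\mathfrak{L}^{-1}$ to the mild equation \eqref{eq:SPDE_mild} term by term. Using $\mathfrak{L}^{-1}(S(r)f)(z)=\E^{-rz}\mathfrak{L}^{-1}(f)(z)$ (which holds because $S(r)$ corresponds to translation of the argument, dual to multiplication by $\E^{-rz}$ on the transform side), one gets for the initial term $\mathfrak{L}^{-1}(S(t)X_0)(z)=\E^{-tz}\rho(z)y_0(z)$, and for the drift/diffusion terms $\mathfrak{L}^{-1}(S(t-s)K)(z)=\E^{-(t-s)z}\rho(z)$. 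Dividing through by $\rho(z)$ and using $ev_0(\lambda^{0,X_0}(s))=X_s=\langle Y_s,1\rangle_{L^2(\mu)}$ (the lift property \eqref{eq:liftproperty}), $\widetilde Y$ satisfies exactly the integral equation \eqref{eq:OULiftIntegral}. By the uniqueness of $\Hh_\mu$-valued solutions to \eqref{eq:OULiftIntegral} (as in \cite[Theorem 2.19]{hamaguchi2023markovian}) — provided one checks $\widetilde Y_t\in\Hh_\mu$, which follows from the explicit form and the bound $r(z)\le 1$ — we conclude $Y^{0,y_0}_t(z)=\widetilde Y_t(z)=\tfrac{1}{\rho(z)}\mathfrak{L}^{-1}(\lambda^{0,X_0}(t))(z)$ for $\PP\times\mu$-almost all $(\omega,z)$.

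\textbf{Main obstacle.} The delicate point is the rigorous justification of applying $\mathfrak{L}^{-1}$ inside the stochastic (It\^o) convolution $\int_0^t S(t-s)K\sigma_0(\lambda(s))\dW_s$ and interchanging $\mathfrak{L}^{-1}$ with the stochastic integral. I would handle this by a stochastic Fubini theorem: since $S(t-s)K=\mathfrak{L}(\E^{-(t-s)\cdot}\mu)$ and the kernel $\E^{-(t-s)z}\rho(z)$ is jointly measurable with the required square-integrability (guaranteed by Assumption~\ref{assumption:Kernel} and $r\in L^1(\mu)$, $r\le 1$), one may write the $\Hh_1$-valued stochastic convolution as a $\mu$-superposition of real-valued Ornstein--Uhlenbeck stochastic integrals and invoke the stochastic Fubini theorem (e.g.\ \cite[Theorem 4.33]{da2014stochastic}) to commute $\mathfrak{L}^{-1}$ past $\int_0^t\cdots\dW_s$. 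A secondary subtlety is that $\mathfrak{L}^{-1}$ is only defined up to $\mu$-null sets and on a suitable domain; this is why both assertions are phrased $\PP\times\mu$-almost everywhere and the identification is ultimately closed by the uniqueness statement for \eqref{eq:OULiftIntegral} rather than by a pointwise computation.
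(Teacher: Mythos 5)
Your proposal is correct and follows essentially the same route as the paper's proof: both rest on the identity $S(r)K=\mathfrak{L}(\E^{-r\cdot}\rho)$ (and the analogue for $X_0$), pass the Laplace transform through the mild equation via Fubini, divide by $\rho$, and conclude by the lift property and uniqueness of the OU lift. The one place you go beyond the paper is in flagging the stochastic Fubini interchange explicitly and sketching why $X_0\in\Hh_1$; the paper glosses over the former with a single phrase and takes the latter as a hypothesis, so your additions are compatible refinements rather than a different strategy.
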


\begin{proof}

\begin{enumerate}
    \item For any $s\in[0,t], x\in\RR^+$ we have $K(t-s+x)=\mathfrak{L}(e^{-(t-s)\cdot}\rho)(x)$
    and $X_0(t+x)=\mathfrak{L}(e^{-t\cdot}\rho y_0)(x).$ Moreover, by the assumptions, we have $| e^{-(t-s)\cdot}\rho|_{L^1(\RR^+)}=K(t-s)<\infty,$ $ |e^{-t\cdot}\rho y_0|_{L^1(\RR^+)}<\infty.$ Interchanging spatial Laplace transform and temporal integration we get
    \begin{equation}\label{eq:lambdaLaplace}
    \begin{aligned}
        \lambda^{0, X_0}(t,x)&=X_0(t+x)+\int_0^tK(t-s+x)b_0(\lambda(s))\ds+\int_0^tK(t-s+x)\sigma_0(\lambda(s))\dW_s\\&
        =\mathfrak{L}(e^{-t\cdot}\rho y_0)(x)+ \int_0^t\mathfrak{L}(e^{-(t-s)\cdot}\rho)(x)b_0(\lambda(s))\ds+\int_0^t\mathfrak{L}(e^{-(t-s)\cdot}\rho)(x)\sigma_0(\lambda(s))\dW_s\\&
        =\mathfrak{L}\bigg(e^{-t\cdot}\rho y_0+\int_0^t e^{-(t-s)\cdot}\rho b_0(\lambda(s))\ds+\int_0^t e^{-(t-s)\cdot}\rho \sigma_0(\lambda(s))\dW_s\bigg)(x)
         \end{aligned}
    \end{equation}
    the conclusion follows.
    \item By the previous part, $\lambda^{0,X_0}(t,\cdot)$ is in the range of the Laplace transform. Taking inverse Laplace transform in \eqref{eq:lambdaLaplace} and dividing by the strictly positive density $\rho$ we obtain 
    \begin{equation*}
    \begin{aligned}
           \tfrac{1}{\rho(z)}\mathfrak{L}^{-1}(\lambda^{0, X_0}(t))(z)=e^{-tz} y_0(z)+\int_0^t e^{-(t-s)z}b_0(\lambda(s))\ds+\int_0^t e^{-(t-s)z} \sigma_0(\lambda(s))\dW_s,
     \end{aligned}
    \end{equation*}
    with $z\in\text{supp}(\mu).$ Setting
    $Y^{0,y_0}_t=\tfrac{1}{\rho}\mathfrak{L}^{-1}(\lambda^{0, X_0}(t))$ and recalling the lift property \eqref{eq:liftproperty} of $\lambda$ it follows that for all $t\in\RR^+,z\in\textnormal{supp}(\mu)$ 
\begin{equation*}
    \begin{aligned}
          Y^{0,y_0}_t(z) =e^{-tz} y_0(z)+\int_0^t e^{-(t-s)z}b(X(s))\ds+\int_0^t e^{-(t-s)z} \sigma(X(s))\dW_s
     \end{aligned}
    \end{equation*}
    almost surely. Finally, integrating with respect to $z$ against $\mu$ yields 
    \begin{equation*}
    \begin{aligned}
          \langle Y^{0, y_0}_t, 1\rangle_{L^2(\mu)} =X_0(t)+\int_0^t K(t-s)b(X(s))\ds+\int_0^t K(t-s) \sigma(X(s))\dW_s=X_t.
     \end{aligned}
    \end{equation*}
    Combining the last two displays we see that $Y^{0, y_0}$ coincides with the unique mild solution \eqref{eq:OULiftIntegral} of \eqref{eq:OULiftDifferential}. The proof is complete.
\end{enumerate}
\end{proof}

\begin{remark} The identity \eqref{eq:OULambdaLink} can be inverted to obtain an expression for $\lambda$ in terms of the OU lift $Y$:
\begin{equation}\label{eq:LambdaOULink}
    \lambda(t,x)=\mathfrak{L}(\rho Y_t)(x)=\int_0^\infty e^{-zx}\rho(z)Y_t(z)\D z=\int_0^\infty e^{-zx}Y_t(z)\D\mu(z),\;\;\forall (t,x)\in\RR^+\times\RR^+.
\end{equation}  
Following the rationale of the current section, it is reasonable to expect that analogues of our singular It\^o formula and backward Kolmogorov equation for $\lambda$ can be obtained for the process $Y$ via \eqref{eq:OULambdaLink}, \eqref{eq:LambdaOULink} and
Theorems \ref{thm:SingularIto}, \ref{thm:backward_equation_singular} from the previous sections. As explained in Section \ref{sec:Intro}, we choose here to work with \eqref{eq:SPDE} since a) it provides a natural choice for mathematical finance applications (in particular stochastic volatility modelling), b) it unifies Musiela's classical SPDE theory and study of singular SVEs under the same mathematical framework, c) it induces fewer structural assumptions ($X_0,K$ do not need to be completely monotone). For these reasons, we refrain from further exploring the connections between the two lifts in the present paper and leave such considerations for future work.
\end{remark}

\subsection{Path-dependent PDEs and the work of Viens and Zhang}\label{sec:comparison_VZ}
The theory of SVE lifts, that the present paper contributes to, mainly benefits from the lift solving a stochastic equation in infinite dimensions. Its solution has better properties than the original Volterra process (Markov, flow, Feller, see section~\ref{Sec:LiftRegularity}) and it is thus desirable to work exclusively in this new space. In parallel to this literature, Viens and Zhang~\cite{viens2019martingale} proposed a complementary, more ``hands-on" approach. Inspired by the work of Dupire~\cite{dupire2019functional}, their motivation was to figure out the representation of the conditional expectation~$\EE[\phi(X_{[0,T]})\lvert \Ff_t]$ with respect to~$\Ff_t$-measurable random variables, where~$X$ solves 
\begin{align}\label{eq:VZ_SVE}
    X_t = x + \int_0^t K(t,r)b(X_r)\dr + \int_0^t K(t,r)\sigma(X_r)\D W_r.
\end{align}
By considering the auxiliary $\Ff_t$-measurable process $\Theta^t=(\Theta^t_s)_{s\ge t}$
\begin{align}\label{eq:VZ_Theta}
    \Theta^t_s:= x + \int_0^t K(s,r)b(X_r)\dr + \int_0^t K(s,r)\sigma(X_r)\D W_r,\quad s\ge t,
\end{align}
and the orthogonal decomposition~$X_s=\Theta^t_s + (X_s-\Theta^t_s)$, they deduce that the conditional expectation can be represented as 
\begin{align}\label{eq:VZ_u}
    u\Big(t,\big(\widehat{\Theta}^t_s\big)_{s\in[0,T]}\Big) = \EE\Big[\phi(X_{[0,T]})\lvert\Ff_t\Big],
\end{align}
where~$\widehat{\Theta}^t_s:=X_s\one_{s\in[0,t)}+\Theta^t_s\one_{s\in[t,T]}$ corresponds to the concatenation of~$X$ and~$\Theta^t$ 
(see \cite{viens2019martingale,wang2022path,bonesini2023rough} for the Gaussian, regular Volterra and singular Volterra cases, respectively). 

Taking advantage of the semimartingality of~$(\Theta_s^t)_{t\in[0,s]}$, the innovative result of Viens and Zhang~\cite[Theorem 3.17]{viens2019martingale} is the following functional Itô formula for regular functionals~$f:[0,T]\times C[0,T]\to\RR$:
\begin{align}
    \D f(t,\widehat{\Theta}^t)& = \left(\partial_t f(t,\widehat{\Theta}^t) + \partial_{\bm{x}} f(t,\widehat{\Theta}^t)(K(\cdot,t)b(X_t)) + \half \partial_{\bm{x}}^2 f(t,\widehat{\Theta}^t)(K(\cdot,t)\sigma(X_t),K(\cdot,t)\sigma(X_t))\right)\dt \nonumber\\
    &\quad + \partial_{\bm{x}} f(t,\widehat{\Theta}^t)(K(\cdot,t)\sigma(X_t))\D W_t.
    \label{eq:VZ_Ito}
\end{align}
This formula applies to both regular and singular kernels, even though in the latter case~$K(\cdot,t)\notin C[t,T]$. Hence, in a similar fashion as our singular derivative (Definition~\ref{dfn:Derivative}), the object~$\partial_{\bm{x}}u$ is defined as the limit of the Fréchet derivatives
\begin{equation}\label{eq:VZ_singularderivative}
    \partial_{\bm{x}}f(t,\bm{x})(K(\cdot,t))=\lim_{\delta\to0} Df(t,\bm{x})(K^\delta(\cdot,t))
\end{equation} 
where~$K^\delta(\cdot,t)$ is a continuous truncation of~$K(\cdot,t)$. Analogously to the vertical derivative of Dupire~\cite{dupire2019functional}, the ``past''~$X_{[0,t]}$ is left untouched and only the semimartingale part~$\Theta_s$ is differentiated. This desirable property comes at a technical price as one has to resort to a more intricate domain for~$f$. This domain allows for a discontinuity at~$t$; denoting~$D[0,T]$ for real-valued càdlàg paths, it reads:
\begin{align}\label{eq:VZ_statespace}
    \Big\{(t,\bm{x})\in[0,T]\times D[0,T] : \bm{x}\lvert_{[t,T]}\in C[t,T] \Big\}.
\end{align}
Before comparing the necessary assumptions for~\eqref{eq:VZ_Ito} to hold and those set forth in this paper, let us discuss the (dis)similarities between~$\Theta^t$ and~$\lambda(t)$. 

The process $\Theta^t$ can be seen as a lift of~$X$ in the space~$C[t,T]$ (even though it is not presented as such) since~$\Theta^t_t=X_t$. It is very close in spirit to the lift~$\lambda$ studied in this paper and in fact we have~$\lambda(t,x)=\Theta^{t}_{t+x}$ for all~$x\in[0,T-t]$. Moreover, the Itô formula for Volterra processes derived in Corollary~\ref{prop:ItoSVE} via the mild Itô formula can also be obtained from~\eqref{eq:VZ_Ito}. This emphasises again that~$\Theta^t$ and~$\lambda(t)$ coincide in a pointwise sense.  However, the dynamics (and pathwise properties) of the two processes differ. This is evident from looking at the $t$-dependence in~\eqref{eq:VZ_Theta} (only in the integral, while $s$ is fixed) versus~\eqref{eq:SPDE_mild} (both in the integral and in the kernel). The consequences of this distinction are fully visible in the associated Itô formulae. 
As a consequence of the mild formulation~\eqref{eq:SPDE_mild}, our singular Itô formula (Theorem~\ref{thm:SingularIto}) features the additional term~$\Dd u(t,\lambda(t))(\partial_x \lambda(t))$ compared to~\eqref{eq:VZ_Ito}. Heuristically, this extra term comes from the differentiation inside the integral since $\partial_x$ is the generator of the shift semigroup~$S$.

The assumptions of Corollary~\ref{prop:ItoSVE} are however lighter than those necessary for~\eqref{eq:VZ_Ito}. In the former, the differentials are all interpreted in the sense of classical Fr\'echet differentiation on Hilbert spaces and the mild It\^o formula holds both in the case of singular and non-singular kernels without additional assumptions on the coefficients. Our convenient framework is tied to SVEs with convolution-type kernels while the results in \cite{viens2019martingale} also hold for more general kernels (we recall nevertheless that we do cover the Mandelbrot-van Ness fractional Brownian motion thanks to its representation with a random initial curve; see Section \ref{subsec:examples}). 
In order to guarantee the convergence~\eqref{eq:VZ_singularderivative}, Viens and Zhang instead impose regularity conditions on~$Df,D^2f$ with respect to the $L^\infty$-norm of the direction and to some rate of decay around the singularity~\cite[Definition 3.16]{viens2019martingale}. To the best of our knowledge, these conditions have only been verified for a few conditional expectations~\eqref{eq:VZ_u}, all pertaining to Gaussian Volterra processes ($b=0,\sigma=1$) (\cite[Theorem 4.1]{viens2019martingale}, \cite[Proposition 2.22]{bonesini2023rough}, \cite[Lemma 4.8]{pannier2023path}). All of these led to the well-posedness of associated backward path-dependent PDEs. The situation where~$X$ follows general Volterra dynamics~\eqref{eq:VZ_SVE} is, at the moment of writing, an open problem. 

In contrast, the framework introduced in this paper is tailor-made to include the conditional expectations
\begin{align*}   u(t,\lambda(t))=\EE\big[\phi(X_T)\lvert\Ff_t\big]. 
\end{align*}
The convenient formulation of the Markovian lift in a Hilbert space paves the way for the singular Itô formula (Theorem \ref{thm:SingularIto}) and the backward PDE (Theorem~\ref{thm:backward_equation_singular}). Furthermore, the choice of the forward curve/Musiela parametrisation avoids the awkward jump at~$t$ in the state space~\eqref{eq:VZ_statespace} but so far does not cover path-dependent payoffs.

\section*{Acknowledgements}
IG acknowledges financial support from the EPSRC grant EP/T032146/1 throughout the period 08/2022-07/2024. AP is grateful for the grant PEPS ``Jeunes chercheurs et jeunes chercheuses" which funded several visits between the two authors spent at Univeristé Paris Cité, Imperial College London and TU Berlin.

\appendix
\section{}\label{sec:appendix}
\subsection{Proof of Lemma \ref{lemma:Hwproperties}}\label{section:Appendix}
We start by showing that $L^2_w(\RR^+;\RR^d)$ is separable. To this end, note that the map 
    $$L_w^2(\RR^+;\RR^d)\ni f\longmapsto T_w(f):=fw^{1/2}\in L^2(\RR^+;\RR^d)$$
    is linear and bounded.  In fact, $T_w$ is a Hilbert space isometry. Indeed, for any $f,g\in L_w^2(\RR^+;\RR^d)  $ we have $\langle T_w(f), T_w(g) \rangle_{L^2}=\langle f, g\rangle_{L^2_w}$ and the inverse $T_w^{-1}:L^2(\RR^+;\RR^d)\rightarrow L^2_w(\RR^+;\RR^d),$ explicitly given by
    $T^{-1}(f)=fw^{-1/2},$ is also a bounded linear map. Thus, since $L^2(\RR^+;\RR^d)$ is a separable Hilbert space, the same is true for $L^2_w(\RR^+;\RR^d).$ Turning to separability of $H^1_w(\RR^+;\RR^d),$ let $J: H^1_w(\RR^+;\RR^d)\rightarrow L^2_w(\RR^+;\RR^d)\times L^2_w(\RR^+;\RR^d)$ be the linear map $ f\mapsto J(f):=(f, f').$ Clearly, $J$ is an isometry and thus $H^1_w(\RR^+;\RR^d)$ can be identified with the closed linear subspace $J(H^1_w(\RR^+;\RR^d))$ of $L^2_w(\RR^+;\RR^d)\times L^2_w(\RR^+;\RR^d).$ As a finite product of separable spaces, the latter is a separable Banach space and the same then follows for $H^1_w(\RR^+;\RR^d).$

    It remains to show that $H^1_w(\RR^+;\RR^d)$ is an RKHS. To this end let $x\in\RR^+, u\in H^1_w(\RR^+;\RR^d).$ Since $u$ is weakly differentiable and its weak derivative $f'\in L^1_{loc}(\RR^+;\RR^d),$ it follows that $u$ has a continuous representative (also denoted by $u$) which is furthermore absolutely continuous on any compact subinterval of $\RR^+.$ Thus for any $ L>x$ and $y\in(x, L)$ we can write
    $$u(x)-u(y)=\int_x^y u'(r)\dr.$$
    Using the Cauchy--Schwarz inequality and the admissibility of $w$ it follows that 
\begin{equation*}
\begin{aligned}
    \abs{u(y)-u(x)} &= \abs{\int_x^y u'(r) \sqrt{\frac{w(r)}{w(r)}} \dr }
\\&\le \left(\int_x^y u'(r)^2 w(r) \dr \right)^\half \left(\int_x^y w(r)^{-1}\dr\right)^\half 
\le |u|_{H^1_w}|w^{-1}|_{L^1(0,L)}.
\end{aligned}
\end{equation*}
By triangle inequality, $$
\abs{u(x)} \leq \abs{u}_{H^1_w} |w^{-1}|_{L^1(0,L)}+|u(y)|$$
which in turn implies
 $$  \abs{u(x)}^2w(y) \leq 2\abs{u}_{H^1_w}^2 |w^{-1}|^2_{L^1(0,L)}w(y)+2|u(y)|^2w(y).
$$
Integrating $y$ over $(x, L)$ we obtain
$$|w|_{L^1(x,L)}\abs{u(x)}^2\leq 2 \abs{u}_{H^1_w}^2 |w^{-1}|^2_{L^1(0,L)}|w|_{L^1(x,L)}+2|u|^2_{L^2_w}
$$
which in turn furnishes the estimate
$$|ev_x(u)|=\abs{u(x)}\leq C_{x}|u|_{H^1_w}
$$
with 
$$C^2_x:=2|w|^{-1}_{L^1(x,L)}\bigg(|w^{-1}|^2_{L^1(0,L)}|w|_{L^1(0,L)}+1\bigg).$$
From this we conclude that $ev_x$ is a bounded linear operator and the proof is complete.

\subsection{Proof of Lemma \ref{lemma:semigroup_pties}}\label{app:shift}

 We shall assume for simplicity that $d=1$ i.e. that $\Hh$ consists of real-valued functions. The vector-valued analogue of this result follows immediately by working componentwise.
\begin{enumerate}
\item We have 
\begin{equation*}
\begin{aligned}
&|S(t)f|^2_{\Hh}=\int_{\RR^+}f^2(s+t)w(s)\ds=\int_{t}^{\infty}f^2(s)w(s-t)\ds\leq \sup_{s\geq t}\frac{w(s-t)}{w(s)}|f|^2_{\Hh}.
\end{aligned}   
\end{equation*}
    \item Let $f\in\Hh$ and $\{\tilde{f}_n
\}_{n\in\NN}\subset \Cc_c^{\infty}(\RR^+)$
a sequence of functions that converges to $fw^{1/2}$ in $L^2(\RR^+).$ The sequence $\{f_n\}_n:=\{\tilde{f}_n w^{-1/2}\}_n\subset \Cc^{\infty}_c(\RR^+)$ converges to $f$ in $\Hh.$ Next let $t_0, t\geq 0$ with $|t-t_0|< 1, \epsilon>0$ and fix  $n=n(\epsilon)$ large enough to guarantee that 
$$ |S(t)(f-f_n)|_{\Hh}+|S(t_0)(f-f_n)|_{\Hh}\leq C_{t,t_0}|f-f_n|_{\Hh}<\epsilon,$$
where we used 1. to obtain the first inequality.
Finally, for a compact set $M\subset\RR^+$ that contains the support of $S(t)\tilde{f}_n-S(t_0)\tilde{f}_n$ we have 
$$ |S(t)f_n-S(t_0)f_n|_{\Hh}= |S(t)\tilde{f}_n-S(t_0)\tilde{f}_n|_{L^2(\RR^+)}\lesssim\sup_{s\in M}|\tilde{f}_n(s+t)-\tilde{f}_n(s+t_0)|     $$
and the right hand side converges to $0$ as $t\rightarrow t_0$ by uniform continuity of $f_n.$ A combination of the last two displays and the triangle inequality yields $|S(t)f-S(t_0)f|_{\Hh}<\epsilon.$ Since $\epsilon$ is arbitrary the proof is complete.
\item Let $\partial_x$ be the realization of the derivative operator on $\Hh.$ For any $f\in Dom(\partial_x)$ we have $\partial_xf\in\Hh$ and $\tfrac{d}{dt}S(t)f= \partial_xS(t)f=S(t)\partial_xf.$ Thus, $Dom(\partial_x)\subset\Hh_1.$ To show the reverse inclusion, let $f\in\Hh_1.$ 
For $t>0$ we have 
\begin{equation*}
\begin{aligned}    
\frac{S(t)f(s)-f(s)}{t}-\partial_x f(s)=\frac{f(s+t)-f(s)}{t}-f'(s)=\int_{0}^{1}[f'(s+rt)-f'(s)]dr.
\end{aligned}
\end{equation*}
An application of Fubini's theorem along with (2) and the dominated convergence theorem yield
\begin{equation*}
\begin{aligned}    
\bigg|\frac{S(t)f-f}{t}-\partial_xf\bigg|^2_{\Hh}\leq \int_{0}^{1}|S(rt)f'-f'\big|_{\Hh}^2dr\longrightarrow 0
\end{aligned}
\end{equation*}
as $t\to 0^{+}.$
Hence $f\in Dom(\partial_x).$ 
\end{enumerate} 
The more general statements with $\Hh_m$ and $\Hh_{m+1}$ are straightforward and we refer the reader to Remark \ref{rem:H1Semigroup} for a few more details. The proof is complete.

\subsection{Proof of Corollary \ref{cor:HmDensity}}\label{app:density}
 We assume again for simplicity that $d=1$ i.e. $\Hh$ consists of real-valued functions (the vector-valued analogue follows by working componentwise). In the proof of Lemma \ref{lemma:semigroup_pties}.2., we constructed, for any $f\in\Hh,$ a sequence $\{f_n\}_{n\in\NN}\subset \Cc_c^\infty(0,\infty)$ that converges to $f$ in $\Hh.$ Thus $\Cc_c^\infty(0,\infty)$ is dense in~$\Hh.$ Since $w\in L^1_{loc}(\RR^+)$ it follows that $\Cc_c^\infty(0,\infty)\subset \Hh_{m}$ and the first conclusion follows. Finally, from Lemma \ref{lemma:semigroup_pties}.3., $\Hh_{m+1}=Dom(\partial_x)$ where $\partial_x$ is the generator of the strongly continuous shift semigroup $\{S(t)\}_{t\geq 0}\subset\mathscr{L}(\Hh_m).$ By virtue of \cite[Corollary 2.5]{pazy2012semigroups}, $\partial_x$ is densely defined, hence $\Hh_{m+1}\subset\Hh_m$ is dense. In fact, from this abstract result we see that for any $m\in\NN$ and $f\in\Hh_m,$ the sequence $\{n\int_0^{1/n}S(t)f\dt\}_{n\in\NN}\subset\Hh_{m+1}$ converges, as $n\to\infty$, to $f$ in the topology of $\Hh_m.$

\subsection{Volterra--Gr\"onwall inequality}\label{appendix:Gronwall}

We recall that the resolvent $r$ for a kernel~$k\in L^1([0,T],\RR^+)$ is defined as the unique solution to
\begin{align*}
    r(t) - k(t) = \int_0^t r(t-s)k(s)\ds,\quad \text{for all  }t\in[0,T],
\end{align*}
or, said more concisely, $r-k=r\star k$.
The construction of $r$, as shown in~\cite[Lemma 2.1]{zhang2010stochastic} for instance, reveals that~$r\in L^1([0,T],\RR^+)$. 
Several forms of the Volterra-Grönwall inequality appeared in the literature, see e.g. \cite[9.8.2]{gripenberg1990volterra} and \cite[Lemma 2.2]{zhang2010stochastic}. For completeness we provide here a proof of Lemma \ref{lemma:Volterra_Gronwall} which is a version tailored to our purposes.
\begin{proof}
    We may write $x=f-g+k\star x$ for some $g\ge0$. The solution to this equation is given by $x=f-g + r\star(f-g)$ and hence
    \begin{align*}
        k\star x 
        = x-(f-g)
        =r\star(f-g).
    \end{align*}
    Therefore, $x\le f + r\star(f-g)\le f+r\star f$, since $r\ge0$.
\end{proof}

\bibliographystyle{alpha}
\nocite{}
\bibliography{bib}
\end{document}